\DeclareMathAlphabet{\mathscr}{LS1}{stixscr}{m}{n} 
\tikzset{notestyleraw/.append style={align=justify}}
\crefname{enumi}{item}{items}
\crefname{subsection}{Subsection}{Subsections}
\newtheorem{theorem}{Theorem}[section]
\newtheorem{definition}[theorem]{Definition}
\newtheorem{proposition}[theorem]{Proposition}
\newtheorem{corollary}[theorem]{Corollary}
\newtheorem{lemma}[theorem]{Lemma}
\newtheorem{setting}[theorem]{Setting}
\numberwithin{equation}{section}
\newcommand{\N}{\ensuremath{\mathbb{N}}}
\newcommand{\Z}{\ensuremath{\mathbb{Z}}}
\newcommand{\R}{\ensuremath{\mathbb{R}}}
\newcommand{\C}{\ensuremath{\mathbb{C}}}
\newcommand{\E}{\ensuremath{\mathbb{E}}}
\renewcommand{\P}{\ensuremath{\mathbb{P}}}
\newcommand{\1}{\mathbbm{1}}
\newcommand{\fwpr}{W}
\newcommand{\fwprr}{W}
\newcommand{\smallU}{u}
\newcommand{\smallF}{f}
\newcommand{\funcF}{F}
\newcommand{\funcG}{g}
\newcommand{\boundFG}{\mathfrak{L}}
\newcommand{\LipConstF}{L}
\newcommand{\mlp}{U}
\newcommand{\cF}{\mathcal{F}}
\newcommand{\cR}{\mathcal{R}}
\newcommand{\cU}{\mathcal{U}}
\newcommand{\fB}{\mathfrak{B}}
\newcommand{\fD}{\mathfrak{D}}
\newcommand{\fR}{\mathscr{n}}
\newcommand{\fV}{\mathfrak{V}}
\newcommand{\fa}{\mathfrak{a}}
\newcommand{\fb}{\mathfrak{d}}
\newcommand{\fk}{\mathfrak{k}}
\newcommand{\fm}{\mathfrak{m}}
\newcommand{\fq}{\mathfrak{p}}
\newcommand{\ft}{\mathfrak{t}}
\newcommand{\fu}{\mathfrak{u}}
\newcommand{\fw}{\mathfrak{w}}
\newcommand{\sigmaAlgebra}{\sigma}
\newcommand{\gronA}{\beta}
\newcommand{\gronB}{\gamma}
\newcommand{\gronC}{\alpha}
\newcommand{\Ffn}{a}
\newcommand{\Gfn}{b}
\newcommand{\gronSymbb}{\Lambda}
\newcommand{\gronSymbbb}{\mathfrak{a}}
\newcommand{\Msymb}{\mathcal{M}}
\newcommand{\firstConstant}[1]{\mathfrak{K}_{#1}}
\newcommand{\secondConstant}[1]{\firstConstant{#1}\sqrt{#1 -1}}
\newcommand{\fnSymb}{\mathfrak{f}}
\newcommand{\littleM}{m}
\newcommand{\littleMM}{m}
\newcommand{\maxfn}{\varphi}
\newcommand{\power}{\beta}
\newcommand{\smallsum}{\textstyle\sum}
\newcommand{\SmallSum}{\textstyle\sum\limits}
\newcommand{\induct}{\dashrightarrow}
\newcommand{\with}{\curvearrowleft}
\newcommand{\lrSpace}{\ensuremath{\mkern-1.5mu}}
\newcommand{\Fsymb}[4]{{\mathbf{F}}_{#1,#2}^{#3,#4}}
\newcommand{\Gsymb}[1]{{\mathbf{G}}_{#1}}
\newcommand{\cost}[3]{{\mathfrak{C}}_{#3,#1,#2}}
\newcommand{\powerset}{\mathbbm{2}^\Omega}
\newcommand{\funcM}{\phi}
\newcommand{\funcMrep}[1]{\max\{ k \in \N \colon k \le \exp( \abs{\ln(#1)}^{1/2} ) \}}
\DeclarePairedDelimiter{\pr}{(}{)}
\DeclarePairedDelimiter{\br}{[}{]}
\DeclarePairedDelimiter{\abs}{\lvert}{\rvert}
\DeclarePairedDelimiter{\norm}{\lVert}{\rVert}
\DeclarePairedDelimiter{\floor}{\lfloor}{\rfloor}
\begin{document}

\title{Strong $L^p$-error analysis of nonlinear\\ 
Monte Carlo approximations for high-dimensional\\
semilinear partial differential equations}

\author{
Martin Hutzenthaler$^1$,
Arnulf Jentzen$^{2,3}$, \\
Benno Kuckuck$^4$,
and
Joshua Lee Padgett$^{5,6}$
\bigskip
\\
\small{$^1$ Faculty of Mathematics, University of Duisburg-Essen,}
\vspace{-0.1cm}\\
\small{Germany, e-mail: \texttt{martin.hutzenthaler@uni-due.de}}
\smallskip
\\
\small{$^2$ Applied Mathematics: Institute for Analysis and Numerics,}
\vspace{-0.1cm}\\
\small{University of M{\"u}nster, Germany, e-mail: \texttt{ajentzen@uni-muenster.de}}
\smallskip
\\
\small{$^3$ School of Data Science and Shenzhen Research Institute of Big Data,}
\vspace{-0.1cm}\\
\small{The Chinese University of Hong Kong, Shenzhen, China, e-mail: \texttt{ajentzen@cuhk.edu.cn}}
\smallskip
\\
\small{$^4$ Applied Mathematics: Institute for Analysis and Numerics,}
\vspace{-0.1cm}\\
\small{University of M{\"u}nster, Germany, e-mail: \texttt{bkuckuck@uni-muenster.de}}
\smallskip
\\
\small{$^5$ Department of Mathematical Sciences, University of Arkansas,}
\vspace{-0.1cm}\\
\small{Arkansas, USA, e-mail: \texttt{padgett@uark.edu}}
\smallskip
\\
\small{$^6$ Center for Astrophysics, Space Physics, and Engineering Research,}
\vspace{-0.1cm}\\
\small{Baylor University, Texas, USA, e-mail: \texttt{padgett@uark.edu}}
}

\date{\today}

\maketitle

\begin{abstract}
Full-history recursive multilevel Picard (MLP) approximation schemes have been shown to overcome the 
curse of dimensionality in the numerical approximation of high-dimensional semilinear partial differential equations (PDEs) with general time horizons
and Lipschitz continuous nonlinearities.
However, each of the error analyses for MLP approximation schemes in the existing literature studies the $L^2$-root-mean-square distance
between the exact solution of the PDE under consideration and the considered MLP approximation and none
of the error analyses in the existing literature provides an upper bound for the more general $L^p$-distance between the
exact solution of the PDE under consideration and the considered MLP approximation.
It is the key contribution of this article to extend the $L^2$-error analysis for MLP approximation schemes in the literature
to a more general $L^p$-error analysis with $p\in (0,\infty)$.
In particular, the main result of this article proves that the proposed MLP approximation scheme indeed overcomes the curse of dimensionality in the numerical approximation of high-dimensional semilinear PDEs with the approximation error measured in the $L^p$-sense with $p \in (0,\infty)$.
\end{abstract}

\pagebreak

\tableofcontents


\section{Introduction}
\label{sec:intro}

It is one of the most challenging topics in computational mathematics to design and analyze algorithms for the approximative solution of high-dimensional partial differential equations (PDEs) and there are several promising approaches to this topic in the scientific literature.

We refer, for instance, to \cite{DarbonOsher2016}
for approximation methods for certain high-dimensional
first-order Hamilton--Jacobi--Bellman PDEs.
We refer, for instance, 
to~\cite{%
chang2016branching,
le2017particle,
le2018monte,
LeCavilOudjaneRusso2019%
}
and the references mentioned therein for approximation methods for PDEs based on density estimations and particle systems.
We refer, for instance,
to~\cite{%
BenderDenk2007,%
bender2008time,%
GobetLabart2010,%
labart2013parallel,%
chassagneux2021learning%
} and the references mentioned therein for approximation methods based on Picard iterations and suitable projections on function spaces.
We refer, for instance,
to~\cite{skorokhod1964branching,watanabe1965branching,Henry-Labordere2012,
Henry-Labordere2014,chang2016branching,%
henry2019branching}
and the references mentioned therein
for approximation methods for semilinear parabolic PDEs based on branching diffusion
approximations.
We refer, for instance,
to~\cite{warin2018nesting,warin2018monte%
}
for approximation methods for semilinear parabolic PDEs 
based on 
standard Monte Carlo approximations for nested conditional expectations.
We refer, for instance, to~\cite{%
beck2020overview,EHanJentzen17,sirignano2017dgm,Han2018PNAS,%
nusken2021solving,MR4154658,
MR4253972,fujii2019asymptotic,de2021error,
hure2019some
}
and the references therein for deep learning-based approximation methods for high-dimensional PDEs.
We refer, for instance, to~\cite{EHutzenthaler2016,
EHutzenthaler2019,
HutzenthalerJentzenKruse2018}
for full-history recursive multilevel Picard approximation methods
for semilinear parabolic PDEs 
(in 
the following we abbreviate \emph{full-history recursive multilevel Picard} by MLP).

As of today, to the best of our knowledge, MLP approximation schemes are the only approximation schemes for 
high-dimensional PDEs in the scientific literature for which it has been proven that they overcome the 
curse of dimensionality in the numerical approximation of semilinear heat PDEs with general time horizons
and Lipschitz continuous nonlinearities; cf.~Hutzenthaler et al.\ \cite{HutzenthalerJentzenKruse2018}. 

The complexity analysis in \cite{HutzenthalerJentzenKruse2018} has been extended 
to 
more general MLP approximation
schemes and more general classes of nonlinear PDEs.
More specifically, we refer to
\cite{HutzenthalerPricing2019,HutzenthalerJentzenKruseNguyen2020} 
for complexity analyses for MLP approximation
schemes for parabolic semilinear PDEs involving more general second-order differential operators than just the Laplacian,
we refer to \cite{BeckHornungEtAl2019} for complexity analyses for MLP approximation schemes
for parabolic semilinear PDEs with possibly non-Lipschitz continuous nonlinearities such as
Allen-Cahn equations,
we refer to \cite{BeckGononJentzen2020} for complexity analyses for
MLP approximation schemes for elliptic semilinear PDEs with Lipschitz
continuous nonlinearities,
we refer to \cite{HutzenthalerJentzenKruse201912,HutzenthalerKruse2017} for complexity analyses for MLP approximation
schemes for parabolic semilinear PDEs with gradient-dependent nonlinearities,
and we refer to \cite{GilesJentzenWelti2019} for complexity analyses for a general class of MLP approximation
schemes for semilinear heat PDEs.
We also refer to 
\cite{EHutzenthaler2019,BeckerBraunwarthEtAl2020}
for numerical simulations for MLP approximation schemes.
Each of the error analyses for MLP approximation schemes in the above-mentioned articles studies the $L^2$-root-mean-square distance
between the exact solution of the PDE under consideration and the considered MLP approximation and none
of the error analyses in the above-mentioned articles provides an upper bound for the more general $L^p$-distance where $p \in (0,\infty)$ between the
exact solution of the PDE under consideration and the considered MLP approximation.

It is precisely the subject of this article to extend the $L^2$-error analyses for MLP approximation schemes
in \cite{HutzenthalerJentzenKruse2018} to a more general $L^p$-error analysis with $p\in (0,\infty)$ and, thereby, also introduce a slightly different variation of the previously studied MLP approximation schemes; see \cref{mlp_main} below.

It turns out that it is not straightforward to extend the $L^2$-error analysis for MLP approximation schemes from the literature to a more general $L^p$-error analysis with $p \in [2,\infty)$ (cf., e.g., Rio \cite[Theorem 2.1]{rio2009moment}).
A central difficulty 
is related to the issue that in our $L^p$-error analysis the growth of the number of samples used to approximate expectations via Monte Carlo averages must be more carefully chosen; see \cref{eq:1_6_above} and \cref{eq:1_7_above}
below 
for 
details.

To better illustrate the findings of this work, we present in the following result, \cref{th:1} below, a special
case of \cref{cor:final}, the main result of this paper. Below \cref{th:1} we add some explanatory
comments regarding the statement of \cref{th:1} and the mathematical objects appearing
in \cref{th:1}
and we also present a brief sketch of our proof of \cref{th:1}.

\begin{samepage}
\begin{theorem}\label{th:1}
Let $T,\kappa, \delta,p \in (0,\infty)$, $\Theta = \bigcup_{n\in\N}\! \Z^n$, let $\smallF \colon \R \to \R$ be Lipschitz continuous, let $\smallU_d \in C^{1,2}([0,T]\times \R^d,\R)$, $d\in\N$, satisfy for all $d\in\N$, $t \in [0,T]$, $x=(x_1,\allowbreak x_2,\allowbreak \dots, \allowbreak x_d)\in\R^d$ that $\abs{ \smallU_d(t,x)} \le \kappa d^\kappa \pr[]{ 1 + \sum_{k=1}^d \abs{ x_k } }^\kappa$ and 
\begin{equation}\label{eq:1}
\pr[]{\tfrac{\partial}{\partial t}\smallU_d}(t,x) = \pr[]{\Delta_x \smallU_d}(t,x) + \smallF(\smallU_d(t,x)),
\end{equation}
let $(\Omega, \cF ,\P)$ be a probability space, let $\fu^\theta \colon \Omega \to [0,1]$, $\theta\in\Theta$, be i.i.d.\ random variables, 
assume for all $r \in (0,1)$ that $\P(\fu^0 \le r ) = r$,
let $W^{d,\theta} \colon [0,T] \times \Omega\to \R^d$, $d\in\N$, $\theta\in\Theta$, be independent standard Brownian motions, assume that $(\fu^\theta)_{\theta\in\Theta}$ and $(W^{d,\theta})_{(d,\theta)\in\N\times\Theta}$ are independent, 
let $\funcM \colon \N \to \N$ and
$\mlp_{n,\littleM}^{d,\theta} \colon [0,T] \times \R^d \times \Omega \to \R$, $d,n,\littleM \in \Z$, $\theta \in \Theta$, satisfy for all $n \in \N_0$, $d,\littleM \in \N$, $\theta \in \Theta$, $t \in [0,T]$, $x \in \R^d$ that 
$\funcM(\littleM) = \funcMrep{\littleM}$
and
\begin{align}\label{mlp_main}
& \mlp_{n,\littleM}^{d,\theta}(t,x) 
= \SmallSum_{i=0}^{n-1} \tfrac{t}{(\funcM(\littleM))^{n-i}} \biggl[\SmallSum_{k=1}^{(\funcM(\littleM))^{n-i}} \Bigl[ \smallF \pr[\big]{ \mlp_{i,\littleM}^{d,(\theta,i,k)}(t\fu^{(\theta,i,k)}, x + \sqrt{2}\,W_{t-t\fu^{(\theta,i,k)}}^{d,(\theta,i,k)}) } \\
& - \1_\N(i) \, \smallF \pr[\big]{ \mlp_{i-1,\littleM}^{d,(\theta,-i,k)} (t\fu^{(\theta,i,k)}, x + \sqrt{2}\,W_{t-t\fu^{(\theta,i,k)}}^{d,(\theta,i,k)}) } \Bigr]\biggr] 
+ \tfrac{\1_\N(n)}{(\funcM(\littleM))^n} \biggl[ \SmallSum_{k=1}^{(\funcM(\littleM))^n} \smallU_d \pr[\big]{ 0,x + \sqrt{2}\,W_{t}^{d,(\theta,0,-k)} } \biggr], \nonumber
\end{align}
and for every $d,n,\littleM \in \N$ let $\cost{n}{\littleM}{d} \in \N$ be the number of function evaluations of $\smallF$ and $\smallU_d(0,\cdot)$ and the number of realizations of scalar random variables which are used to compute one realization of $\mlp_{n,\littleM}^{d,0}(T,0) \colon \Omega \to \R$ (see \cref{fc_def1} for a precise definition).
Then there exist $c\in\R$ and $\fR \colon \N \times (0,1] \to \N$ such that for all $d \in \N$, $\varepsilon \in (0,1]$ it holds that 
\begin{equation}
\textstyle
\pr[\big]{\E\br[\big]{\abs{\smallU_d(T,0) - \mlp_{\fR(d,\varepsilon),\fR(d,\varepsilon)}^{d,0}(T,0)}^p}}^{\!\nicefrac{1}{p}} \le \varepsilon
\qquad \text{and} \qquad
\cost{\fR(d,\varepsilon)}{\fR(d,\varepsilon)}{d} \le c d^c \varepsilon^{-(2+\delta)}
.
\end{equation}
\end{theorem}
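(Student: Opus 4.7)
The strategy combines the Feynman--Kac representation of $\smallU_d$ as a fixed point with a multi-level Picard expansion, and controls the $L^p$-moments of the Monte Carlo increments via a sharp moment inequality for sums of independent centred random variables. Under the Lipschitz and polynomial growth hypotheses, $\smallU_d$ satisfies
\begin{equation*}
\smallU_d(t,x) \;=\; \E\br[\big]{\smallU_d(0, x + \sqrt{2}\,\fwpr_t^{d,0})} \,+\, \int_0^t \E\br[\big]{\smallF(\smallU_d(s, x + \sqrt{2}\,\fwpr_{t-s}^{d,0}))}\,ds,
\end{equation*}
and its Picard iterates, defined by $u_0^d \equiv 0$ and $u_{n+1}^d(t,x) = \E[\smallU_d(0,x+\sqrt{2}\fwpr_t^{d,0})] + \int_0^t \E[\smallF(u_n^d(s,x+\sqrt{2}\fwpr_{t-s}^{d,0}))]\,ds$, converge to $\smallU_d$ super-geometrically in every $L^p$-norm. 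By construction, the time-randomised telescoping in \cref{mlp_main} turns $\mlp_{n,\littleM}^{d,\theta}(t,x)$ into an unbiased Monte Carlo estimator of $u_n^d(t,x)$: at level $i$, the increment $u_{i+1}^d - u_i^d$ is averaged over $(\funcM(\littleM))^{n-i}$ independent samples generated from the uniform time randomisations $\fu^{(\theta,i,k)}$ and independent Brownian shifts.

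The first step is to split the total error into a deterministic bias (the Picard residual $\smallU_d - u_n^d$, which decays super-geometrically in $n$) and a stochastic fluctuation. For the fluctuation, each level-$i$ sum in \cref{mlp_main} is, conditionally on the independent copies at lower levels, a sum of $(\funcM(\littleM))^{n-i}$ independent and centred summands. For $p \ge 2$ I would invoke a Marcinkiewicz--Zygmund--Rio-type moment inequality (cf.~\cite[Theorem~2.1]{rio2009moment}), giving a factor of order $\sqrt{p-1}$ times $(\funcM(\littleM))^{-(n-i)/2}$ times the $L^p$-norm of a single summand; the Lipschitz bound on $\smallF$ then expresses that single-summand norm in terms of the level-$i$ error. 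The case $p \in (0,2)$ follows from the case $p=2$ via the inequality $\|\cdot\|_{L^p} \le \|\cdot\|_{L^2}$. Combining these estimates yields a Gronwall-type recursion for $\|\smallU_d - \mlp_{n,\littleM}^{d,0}(T,\cdot)\|_{L^p}$ in $n$, whose iteration (carried out via an inductive argument analogous to, but more delicate than, the one in \cite{HutzenthalerJentzenKruse2018}) produces an explicit super-geometric decay in $n$, with prefactor polynomial in $d$ inherited from the assumed growth of $\smallU_d$.

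The main technical obstacle is precisely the interplay between the $p$-dependent Rio constant $\sqrt{p-1}$ and the growth rate of $\funcM(\littleM)$: the former appears once per level and could in principle compound through the recursion, while the latter controls the per-sample cost. The non-standard choice $\funcM(\littleM) = \funcMrep{\littleM}$, which grows sub-exponentially in $\littleM$, is engineered to absorb the accumulated $\sqrt{p-1}$ factors and to drive the recursion into a contraction for all $\littleM$ past a $p$-dependent threshold, yet grows slowly enough that the per-sample cost $(\funcM(\littleM))^n$ remains polynomial in $\varepsilon^{-1}$. Balancing these two competing requirements is the genuinely new ingredient compared with the $L^2$-analysis of \cite{HutzenthalerJentzenKruse2018}, where no $p$-dependent constant appears and a simpler choice $\funcM(\littleM) = \littleM$ suffices.

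Finally, a routine induction on $n$ shows $\cost{n}{\littleM}{d} \leq c\, d\, n\, (\funcM(\littleM))^n$ for a suitable constant depending only on the fixed parameters. Choosing $\fR(d,\varepsilon)$ of order $\log(1/\varepsilon)$ (modulo slowly varying corrections) so that the super-geometric $L^p$-error estimate drops below $\varepsilon$, and then exploiting the sub-exponential growth of $\funcM$ together with the polynomial-in-$d$ prefactor, one verifies that the total cost is bounded by $c d^c \varepsilon^{-(2+\delta)}$ for any prescribed $\delta > 0$, which completes the proof of \cref{th:1}.
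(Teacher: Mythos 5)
Your outline follows the same overall route as the paper (Feynman--Kac/fixed-point reformulation, Rio's moment inequality \cite[Theorem 2.1]{rio2009moment} for the Monte Carlo sums, a Gronwall-type recursion in the level index, and the slowly growing $\funcM$ to handle the $p$-dependence), but its central structural claim is incorrect: the MLP approximation is \emph{not} an unbiased estimator of the exact Picard iterates. What is true (cf.\ \cref{lem:mlp_expectation} and \cref{eq:1_5b_above}) is that $\E[\mlp_{n,\littleM}^{d,0}(t,x)]$ equals $\E[\smallU_d(0,x+\sqrt2\,W^{d,0}_t)]+\int_0^t\E[\smallF(\mlp_{n-1,\littleM}^{d,0}(s,x+\sqrt2\,W^{d,0}_{t-s}))]\,ds$; since $\smallF$ is nonlinear and $\mlp_{n-1,\littleM}^{d,0}$ is random, this differs from the $n$-th Picard iterate as soon as $n\ge 2$. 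Consequently your proposed split of the error into a ``deterministic bias'' equal to the Picard residual plus a centred fluctuation does not match the estimator: $\mlp_{n,\littleM}^{d,0}-u_n^d$ (with $u_n^d$ the Picard iterate) is not mean-zero, and the level-$i$ averages are not centred at the Picard increments. The decomposition that actually closes — and the one your final ``Gronwall-type recursion for the true error'' implicitly needs — is the paper's: fluctuation of $\mlp_{n,\littleM}^{d,0}$ around its own expectation (estimated by Rio's inequality, \cref{lem:mlp_variance}) plus the distance of that expectation to $\smallU_d$, which is bounded by $\LipConstF\int\lVert\mlp_{n-1,\littleM}^{d,0}-\smallU_d\rVert_{L^p}$ via the fixed-point equation and Lipschitz continuity (\cref{mlp_stab_pre}). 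Relatedly, your diagnosis of why $\funcM$ must grow slowly (``absorbing the accumulated $\sqrt{p-1}$ factors'') misidentifies the mechanism: the Rio constant enters only as a fixed per-level prefactor; the genuine obstruction is the factor $\sup_{k\in\N_0} M^{\nicefrac{k}{2}}/(k!)^{\nicefrac{1}{p}}\approx\exp(M^{\nicefrac{p}{2}}/p)$ that appears when the full-history error recursion is resolved (\cref{lem:talk1,fn_gron}), which forces $(\funcM(\littleM))^{\nicefrac{p}{2}}/\littleM$ to remain bounded; this is exactly why $\funcM=\mathrm{id}$ suffices for $p\le 2$ but not for $p>2$.

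A second concrete error is the cost bound: the claimed ``routine induction'' giving $\cost{n}{\littleM}{d}\le c\,d\,n\,(\funcM(\littleM))^n$ is false. Each level-$i$ summand requires recomputing full realizations of $\mlp_{i,\littleM}$ and $\mlp_{i-1,\littleM}$, so the cost obeys the full-history recursion \cref{fc_def1}, whose solution carries an additional geometric factor in $n$: the paper obtains $\cost{n}{\littleM}{d}\le \kappa d^{\fb}(1+\sqrt2)^n(\funcM(\littleM))^n$ via \cref{cor:full_history_gamma_1}, and the true growth is of order $(2\funcM(\littleM))^n$ rather than $n\,(\funcM(\littleM))^n$. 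This slip happens not to damage the final conclusion — with $n=\littleM=\fR(d,\varepsilon)$ the extra $c^n$ is eventually dominated by $(\funcM(n))^{\nicefrac{\delta n}{2}}$ because $\funcM(n)\to\infty$ — but as stated the induction would not close. With these two repairs (decompose around $\E[\mlp_{n,\littleM}^{d,0}]$ rather than around the Picard iterates, and use the full-history cost estimate), your sketch coincides with the paper's proof.
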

\end{samepage}

\Cref{th:1} is an immediate consequence of \cref{cor:final} in \cref{sec:main_results} below.
\Cref{cor:final}, which is the main result of this article, in turn, follows from \cref{th:final} 
(see \cref{sec:main_results} below for details).
In the following we provide some explanatory comments concerning the mathematical objects appearing in \cref{th:1} above. 

In \cref{th:1} we intend to approximate the solutions of the PDEs in \cref{eq:1}.
The strictly positive real number $T \in (0,\infty)$ in \cref{th:1} describes the time horizon of the PDEs in \cref{eq:1}, 
the Lipschitz continuous function $f \colon \R \to \R$ specifies the nonlinearity of the PDEs in \cref{eq:1},
and the functions 
$\smallU_d \colon [0,T] \times \R^d \to \R$, $d\in\N$, are the solutions of the PDEs in \cref{eq:1}.

The strictly positive real number $\kappa \in (0,\infty)$ in \cref{th:1} is employed to formulate a regularity condition for the solutions $\smallU_d \colon [0,T] \times \R^d \to \R$, $d\in\N$, of the PDEs in \cref{eq:1} which we impose in \cref{th:1}.
More formally, in \cref{th:1} we assume that the solution functions $\smallU_d \colon [0,T] \times \R^d \to \R$, $d\in\N$, of the PDEs in \cref{eq:1} satisfy the regularity condition that for all $d \in \N$, $t\in[0,T]$, $x\in\R^d$ it holds that
\begin{equation}\label{th:1_cond}
\abs{ \smallU_d(t,x) } \le \kappa d^\kappa \pr[\big]{ 1 + \smallsum_{k=1}^d \abs{x_k} }^{\!\kappa} .
\end{equation}
This condition ensures that the solution functions $\smallU_d \colon [0,T] \times \R^d \to \R$, $d\in\N$, of the PDEs in \cref{eq:1} are at most polynomially growing both in the spatial variable $x\in\R^d$ and in the PDE dimension $d\in\N$.
Observe that the condition in \cref{th:1_cond} also ensures that solutions of the PDEs in \cref{eq:1} with the fixed initial value functions
$ \R^d \ni x \mapsto \smallU_d(0,x) \in \R $, $d \in \N$, are unique.

In \cref{mlp_main} we recursively specify the proposed MLP approximations which we employ in \cref{th:1} to approximate the solutions of the PDEs in \cref{eq:1}.
The proposed MLP approximation method is a random approximation 
algorithm which is defined on an artificial probability space. 
The probability space $( \Omega, \cF, \P )$ in \cref{th:1} is 
this artificial probability space on which we defined 
the proposed MLP approximations. 

To formulate the proposed MLP approximations, we need, 
roughly speaking, sufficiently many independent random variables as random input sources 
and to formulate these sufficiently many independent random variables, 
we need, roughly speaking, a sufficiently large index set over which 
the sufficiently many independent random variables are defined. 
The set $\Theta = \bigcup_{n\in\N}\! \Z^n$ in \cref{th:1} is precisely this sufficiently large index 
set which allows us to introduce sufficiently many independent random variables 
over this index set and the i.i.d.\ random variables $\fu^\theta \colon \Omega \to [0,1]$, $\theta\in\Theta$, and the independent standard Brownian motions $W^{d,\theta} \colon [0,T] \times \Omega\to \R^d$, $d\in\N$, $\theta\in\Theta$, are the sufficiently many independent random variables 
which we use to specify the MLP approximations in \cref{mlp_main}. 
Observe that the assumption that for all $r\in(0,1)$ it holds that $\P(\fu^0 \le r ) = r$ in \cref{th:1} 
ensures that the random variables $\fu^\theta \colon \Omega \to [0,1]$, $\theta\in\Theta$,
are on $[0,1]$ continuous uniformly distributed random variables. 

The MLP approximations specified in \cref{mlp_main} differ from 
previously introduced MLP approximations, roughly speaking, 
in the sense that a smaller number of Monte Carlo samples 
is employed. In \cref{th:1} this smaller number of Monte Carlo samples 
is formulated through the function $\funcM \colon \N \to \N$ 
which increases quite slowly to infinity. More formally, 
\cref{lem:fn_prop} in \cref{sec:4_4} below proves that for every 
$ \varepsilon \in (0,\infty) $
there exists $ c \in \R $ such that for all $ x \in [1,\infty) $
it holds that  
$ \lim_{ y \to \infty } \funcM(y) = \infty $
and
$ \funcM(x) \leq c x^{ \varepsilon } $. 
This slow increase to infinity is an important argument in 
our $L^p$-error analysis for the proposed MLP approximations 
(see \cref{eq:1_6_above}, \cref{eq:1_7_above}, and \cref{sec:4_4} below for further details).

The natural numbers $\cost{n}{\littleM}{d} \in \N$, $d,\littleM,n\in\N$, in \cref{th:1} measure the computational cost of the proposed MLP approximations.
More specifically, for every $d,\littleM,n\in\N$ we have that $\cost{n}{\littleM}{d}$ is the sum of the number of function evaluations of the nonlinearity $\smallF \colon \R \to \R$, of the number of function evaluations of the initial value function $\R^d \ni x \mapsto \smallU_d(0,x) \in \R$, 
and of the number of one-dimensional random variables which are used to compute one realization of the 
MLP approximation $\mlp_{n,\littleM}^{d,0}(T,0) \colon \Omega \to \R$.
We also refer to \cref{fc_def1} in \cref{th:final} in \cref{sec:main_results} below for the precise specification of the natural numbers $\cost{n}{\littleM}{d} \in \N$, $d,\littleM,n\in\N$.

\cref{th:1} reveals that the MLP approximations 
in \cref{mlp_main} approximate the values 
$ \smallU_d( T, 0 ) \in \R $, $ d \in \N$,
of the solution functions 
$\smallU_d \colon [0,T] \times \R^d \to \R$, $d\in\N$,
at the terminal time $ t = T $ 
and at the space point $ x = 0 \in \R^d $ 
with a computational effort which grows 
at most poly\-nom\-i\-al\-ly in the PDE dimension $ d \in \N $
and up to an arbitrarily small polynomial order 
at most quadratically in the reciprocal of the prescribed 
approximation accuracy $ \varepsilon > 0 $. 
This arbitrarily small polynomial order is described 
through the real number $ \delta \in (0,\infty) $ 
in \cref{th:1}.

Due to the fact that the MLP approximations
proposed in \cref{mlp_main} differ slightly from the MLP approximations which have been previously employed in $L^2$-error analyses in the scientific literature, we now briefly sketch the main ideas in the proof of \cref{th:1}.
The first step in our sketch of the proof of \cref{th:1}
is to reformulate the PDEs 
under consideration as stochastic fixed-point equations.
Specifically, in the context of \cref{eq:1} we have that the Feynman-Kac formula proves that the solution functions $\smallU_d \colon [0,T]\times \R^d \to \R$, $d\in\N$, of the PDEs in \cref{eq:1} are the unique at most polynomially growing functions which satisfy for all $d\in\N$, $\theta\in\Theta$, $t\in[0,T]$, $x\in\R^d$ that
\begin{equation}\label{eq:1_4_above}
\smallU_d(t,x) = \E\br[\big]{\smallU_d(0,x+\sqrt{2}\fwpr_{t}^{d,\theta})} + \int_0^t \E\br[\big]{\smallF(\smallU_d(s,x+\sqrt{2}\fwpr_{t-s}^{d,\theta}))}\,ds.
\end{equation}
In the next step 
we note that \cref{mlp_main},
the assumption that $W^{d,\theta} \colon [0,T] \times \Omega \to \R^d$, $d \in \N$, $\theta\in\Theta$, are independent standard Brownian motions, and the assumption that $\fu^\theta \colon \Omega \to [0,1]$, $\theta\in\Theta$, are i.i.d.\ random variables
assure that for all $n\in\N_0$, $d,m\in\N$, $\theta\in\Theta$, $t\in[0,T]$, $x\in\R^d$ it holds that
\begin{equation}\label{eq:1_5_above}
\begin{split}
& \E\br[\big]{ \mlp_{n,m}^{d,\theta}(t,x) }
- \1_\N(n)\,\E\br[\big]{ \smallU_d\pr[\big]{0, x + \sqrt{2} W^{d,\theta}_{t} } } \\
& = t \br[\Bigg]{ \sum_{i=0}^{n-1} \E\br[\Big]{ \smallF \pr[\big]{ \mlp_{i,m}^{d,(\theta,i)} (t \fu^\theta , x + \sqrt{2} W_{t-t \fu^\theta}^{d,\theta} ) } - \1_\N(i) \smallF \pr[\big]{ \mlp_{i-1,m}^{d,(\theta,-i)} ( t \fu^\theta , x + \sqrt{2} W_{t-t \fu^\theta}^{d,\theta} ) } } } \\
& = t \br[\Bigg]{ \sum_{i=0}^{n-1} \E\br[\Big]{ \smallF \pr[\big]{ \mlp_{i,m}^{d,\theta} (t \fu^\theta , x + \sqrt{2} W_{t-t \fu^\theta}^{d,\theta} ) } - \1_\N(i) \smallF \pr[\big]{ \mlp_{i-1,m}^{d,\theta} ( t \fu^\theta , x + \sqrt{2} W_{t-t \fu^\theta}^{d,\theta} ) } } } 
\end{split}
\end{equation}
(cf.\ \cref{properties_approx,lem:mlp_expectation} and \cref{mlp_stab_0a_pre1} in the proof of \cref{lem:mlp_expectation} for the details).
In addition, we observe that
the assumption that $W^{d,\theta} \colon [0,T] \times \Omega \to \R^d$, $d \in \N$, $\theta\in\Theta$, are independent standard Brownian motions, the assumption that $\fu^\theta \colon \Omega \to [0,1]$, $\theta\in\Theta$, are i.i.d.\ random variables, and a telescoping sum argument demonstrate that for all $n\in\N_0$, $d,m\in\N$, $\theta\in\Theta$, $t\in[0,T]$, $x\in\R^d$ it holds that
\begin{align}\label{eq:1_5a_above}
& t \br[\Bigg]{ \sum_{i=0}^{n-1} \E\br[\Big]{ \smallF \pr[\big]{ \mlp_{i,m}^{d,\theta} (t \fu^\theta , x + \sqrt{2} W_{t-t \fu^\theta}^{d,\theta} ) } - \1_\N(i) \smallF \pr[\big]{ \mlp_{i-1,m}^{d,\theta} ( t \fu^\theta , x + \sqrt{2} W_{t-t \fu^\theta}^{d,\theta} ) } } }
\\
& = \1_\N(n)\, t \, \E\br[\Big]{ \smallF \pr[\big]{ \mlp_{n-1,m}^{d,\theta} (t \fu^\theta , x + \sqrt{2} W_{t-t \fu^\theta}^{d,\theta} ) } } 
= \1_\N(n) \br*{ \int_0^t \E\br[\Big]{ \smallF \pr[\big]{ \mlp_{n-1,m}^{d,\theta} (s , x + \sqrt{2} W_{t-s}^{d,\theta} ) } } \, ds } \nonumber
\end{align}
(cf.\ \cref{lem:integrable,lem:mlp_expectation}).
Combining \cref{eq:1_4_above}, \cref{eq:1_5_above}, and \cref{eq:1_5a_above} indicates that for all $n\in\N_0$, $d,m\in\N$, $\theta\in\Theta$, $t\in[0,T]$, $x\in\R^d$ we have that
\begin{align}\label{eq:1_5b_above}
\E\br[\big]{ \mlp_{n,m}^{d,\theta}(t,x) }
& =  \1_\N(n) \pr*{ \E\br[\big]{ \smallU_d\pr[\big]{0, x + \sqrt{2} W^{d,\theta}_{t} } } 
+ \int_0^t \E\br[\Big]{ \smallF \pr[\big]{ \mlp_{n-1,m}^{d,\theta} (s , x + \sqrt{2} W_{t-s}^{d,\theta} ) } } \, ds }
\nonumber \\
& \approx \1_\N(n) \pr*{ \E\br[\big]{ \smallU_d\pr[\big]{0, x + \sqrt{2} W^{d,\theta}_{t} } } 
+ \int_0^t \E\br[\Big]{ \smallF \pr[\big]{ \smallU_d (s , x + \sqrt{2} W_{t-s}^{d,\theta} ) } } \, ds } \\
& = \1_\N(n) \, \smallU_d(t,x) \nonumber
\end{align}
(cf.\ \cref{lem:mlp_expectation,mlp_stab}).
Observe that \cref{eq:1_5b_above} suggests that the proposed MLP approximations $\mlp_{n,m}^{d,\theta} \colon [0,T] \times \R^d \times \Omega \to \R$, $n\in\N_0$, $d,m\in\N$, $\theta \in \Theta$, behave in expectation like Picard iterations for the stochastic fixed-point equations in \cref{eq:1_4_above}.
The final step in our sketch of the proof of \cref{th:1}
is to employ a Monte Carlo approach to approximate the expectations in \cref{eq:1_5_above}.
This final step is where the MLP approximations proposed in \cref{mlp_main} differ from the MLP approximations which have been previously employed in $L^2$-error analyses in the scientific literature.
Specifically, the MLP approximations proposed in \cref{mlp_main} use the fact that for all $n\in\N_0$, $i\in\{0,1,\ldots,n-1\}$, $d\in\N$, $\theta \in \Theta$, $t\in[0,T]$, $x\in\R^d$ we have that

\begin{samepage}
\begin{multline}\label{eq:1_6_above}
\tfrac{1}{(\funcM(\littleM))^{n-i}} \SmallSum_{k=1}^{(\funcM(\littleM))^{n-i}} \Bigl[ \smallF \pr[\big]{ \mlp_{i,m}^{d,(\theta,i,k)} (t \fu^{(\theta,i,k)} , x + \sqrt{2} W_{t-t \fu^{(\theta,i,k)}}^{d,{(\theta,i,k)}} ) } \\ - \1_\N(i) \smallF \pr[\big]{ \mlp_{i-1,m}^{d,(\theta,-i,k)} ( t \fu^{(\theta,i,k)} , x + \sqrt{2} W_{t-t \fu^{(\theta,i,k)}}^{d,{(\theta,i,k)}} ) } \Bigr]
\end{multline}
\end{samepage}
is a Monte Carlo approximation of 
\begin{equation}\label{eq:1_7_above}
\E\br[\Big]{ \smallF \pr[\big]{ \mlp_{i,m}^{d,(\theta,i)} (t \fu^\theta , x + \sqrt{2} W_{t-t \fu^\theta}^{d,\theta} ) } - \1_\N(i) \smallF \pr[\big]{ \mlp_{i-1,m}^{d,(\theta,-i)} ( t \fu^\theta , x + \sqrt{2} W_{t-t \fu^\theta}^{d,\theta} ) } }
\end{equation}
employing $(\phi(m))^{n-i} \in \N$ samples.
The function $\phi \colon \N \to \N$ thus determines the number of samples used in the Monte Carlo approximations in the MLP approximations proposed in \cref{mlp_main}.

In our $L^p$-error analysis the specific choice of $\phi$ is a subtle issue and, in particular, in our $L^p$-error analysis there is some fine-tuning needed in the choice of the function $\phi$.
On the one hand, the function $\phi$ must be chosen large enough so that the error due to approximating expectations via Monte Carlo averages is small enough.
On the other hand, in our recursive Gronwall-type $L^p$-error analysis in 
\cref{fn_gron} in \cref{sec:error_rec}
and \cref{mlp_stab} in \cref{sec:mlp_stab}
the exponential term $\exp(m^{\nicefrac{p}{2}}/p)$
arises in the upper bounds
(see \cref{eq:fn_gron} in \cref{fn_gron},
\cref{mlp_stab_estimate} in \cref{mlp_stab},
and \cref{lim_sup} in the proof of \cref{th:final})
where 
$\littleMM \in \N$ will be replaced by $\phi(\littleMM)$.
To control this term, our $L^p$-error analysis employs the assumption that 
$(\phi(\littleM)^{\nicefrac{p}{2}}/\littleM)_{\littleM\in\N}$ is a bounded sequence.
More specifically, if $p \in (0,2]$, then $\phi$ may be chosen to be the identity, but
if $p \in (2,\infty)$, then $\phi$ must grow much slower and the choice $\forall\,m\in\N \colon \funcM(\littleM) = \funcMrep{\littleM}$ is a suitable $p$-independent choice.

The remainder of this article is structured as follows.
In \cref{sec:2} we establish regularity properties for solutions of stochastic fixed-point equations.
Afterwards, in \cref{sec:3} we introduce MLP approximations for the stochastic fixed-point equations from \cref{sec:2}, we study their measurability and integrability properties, and we establish $L^p$-error bounds between the exact solutions of the stochastic fixed-point equations and the MLP approximations.
Finally, in \cref{sec:main_results} we establish some elementary estimates for full-history recursions and combine these estimates with the regularity properties for solutions of stochastic fixed-point equations, which we established in \cref{sec:2}, and the $L^p$-error analysis for MLP approximations for stochastic fixed-point equations, which we established in \cref{sec:3}, to obtain a computational complexity analysis for MLP approximations for semilinear partial differential equations.


\section{Stochastic fixed-point equations}\label{sec:2}

In this section we establish in \cref{cor:sol_bd1} below appropriate regularity results for solutions of stochastic fixed-point equations with polynomially growing solutions.
In \cref{cor:sol_bd1} we assume, among other things, that the nonlinearity $f \colon [0,T] \times \R^d \times \R \to \R$ and the terminal condition $g \colon \R^d \to \R$ of the stochastic fixed-point equation in \cref{eq:2_34} satisfy the polynomial growth bound that there exist $\boundFG , p \in [0,\infty)$ such that for all $t\in[0,T]$, $x = (x_1,x_2,\ldots,x_d) \in\R^d$ it holds that
\begin{equation}\label{explain_0_0}
\max\{ \abs{f(t,x,0)}, \abs{g(x)} \} \le \boundFG \pr[\big]{ 1 + [\textstyle\sum_{k=1}^d \abs{x_k}^2]^{\nicefrac{p}{2}} }
\end{equation}
(see above \cref{eq:2_34} in \cref{cor:sol_bd1}).
Observe that in the case $x = 0 \in \R^d$, $p=0$ we have that \cref{explain_0_0} reduces to the condition that for all $t\in[0,T]$ it holds that
$\max\{ \abs{f(t,0,0)}, \abs{g(0)} \} \le \boundFG \pr[]{ 1 + [\textstyle\sum_{k=1}^d \abs{0}^2]^{\nicefrac{p}{2}} }
= \boundFG ( 1 + 0^0 ) = 2\boundFG$.

Our proof of \cref{cor:sol_bd1} uses the regularity result for stochastic fixed-point equations with Lipschitz continuous nonlinearities in \cref{lemma:2.2} below. 
Similar regularity results for stochastic fixed-point equations can, e.g., be found in Hutzenthaler et al.\ \cite[Lemma 2.2]{HutzenthalerJentzenKruse2019}.
Our proof of \cref{lemma:2.2} uses the well-known backward formulation of the Gronwall inequality in \cref{lem:gronwall} below.
In our proof of \cref{lem:gronwall} we use the well-known forward formulation of the Gronwall inequality in
\cref{lem:gronwall_normal}.
\cref{lem:gronwall_normal} is a direct consequence of, e.g., the generalized Gronwall inequality in Henry \cite[Lemma 7.1.1]{h81}.

\subsection{Gronwall-type inequalities}

\begin{lemma}\label{lem:gronwall_normal}
Let $T, \gronB \in [0,\infty)$, let $\gronA \colon [0,T] \to [0,\infty)$ be a function,
let $\gronC  \colon [0,T] \to [0,\infty]$ be measurable, and assume for all $t\in[0,T]$ that $\int_0^t \gronC(s)\,ds < \infty$ and
\begin{equation}\label{1lem:gronwall_1}
\gronC(t) \le \gronA(t) + \gronB \int_0^t \gronC(s)\,ds. 
\end{equation}
Then
it holds for all $t\in[0,T]$ that 
$\gronC(t) \le \br[]{ \sup_{s\in[0,t]}\gronA(s) } \exp(\gronB t)$.
\end{lemma}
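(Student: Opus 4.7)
This is a classical forward Gronwall inequality and my plan is to first reduce to a constant majorant and then iterate the integral inequality. Fix $t\in[0,T]$ and set $A \coloneqq \sup_{s\in[0,t]}\gronA(s) \in [0,\infty]$. The conclusion is trivial if $A = \infty$, so suppose $A < \infty$. By monotonicity of the Lebesgue integral and \cref{1lem:gronwall_1}, for every $s \in [0,t]$,
\begin{equation*}
\gronC(s) \le A + \gronB \int_0^s \gronC(r)\,dr .
\end{equation*}

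\textbf{Iteration.} I would then prove by induction on $n \in \N$ that for every $s \in [0,t]$,
\begin{equation*}
\gronC(s) \le A \smallsum_{k=0}^{n-1} \tfrac{(\gronB s)^k}{k!} + \tfrac{\gronB^n}{(n-1)!} \int_0^s (s-r)^{n-1} \gronC(r)\,dr ,
\end{equation*}
the induction step substituting the current bound into itself inside the integral and using Fubini's theorem to collapse the resulting iterated integral into the single $(s-r)^{n-1}$-kernel shown above. The integrability hypothesis $\int_0^t \gronC(r)\,dr < \infty$, together with $(s-r)^{n-1} \le t^{n-1}$ on $[0,s]\subseteq[0,t]$, then bounds the remainder by $\tfrac{\gronB^n t^{n-1}}{(n-1)!} \int_0^t \gronC(r)\,dr$, which tends to zero as $n \to \infty$. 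Passing to the limit and recognizing the exponential series yields $\gronC(s) \le A \exp(\gronB s) \le A \exp(\gronB t)$ for every $s \in [0,t]$, which is precisely the claimed inequality.

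\textbf{Alternative route and main obstacle.} As the authors remark, this lemma can also be obtained as a one-line corollary of Henry's generalized Gronwall inequality \cite[Lemma 7.1.1]{h81}: the reduction to the constant majorant $A$ places the hypothesis in exactly the form required there, so invoking that result immediately delivers the exponential bound. I do not foresee any real obstacle in the argument; the only minor care point is that $\gronA$ is not assumed to be measurable, but this is harmless because $\gronA$ enters the iteration only through the pointwise supremum $A$ and never as an integrand, while the integrability and measurability of $\gronC$ (which are assumed) are exactly what is needed to justify the Fubini step and the vanishing of the remainder.
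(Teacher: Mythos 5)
Your proposal is correct, but it is worth noting that the paper itself does not prove this lemma at all: it simply records that \cref{lem:gronwall_normal} is a direct consequence of the generalized Gronwall inequality in Henry \cite[Lemma 7.1.1]{h81}, which is exactly your ``alternative route''. Your main argument is therefore genuinely different in character: you first reduce to the constant majorant $A = \sup_{s\in[0,t]}\gronA(s)$ (with the case $A=\infty$ trivial), then run the classical Picard-type iteration, using Tonelli/Fubini for the nonnegative measurable integrand $\gronC$ to collapse the iterated integrals into the kernel $(s-r)^{n-1}$, and finally kill the remainder via $\gronB^{n}t^{\,n-1}/(n-1)!\to 0$ together with the standing hypothesis $\int_0^t\gronC(s)\,ds<\infty$; passing to the limit gives $\gronC(s)\le A\exp(\gronB s)\le A\exp(\gronB t)$, which is the claim. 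The induction step and the Fubini interchange check out, and your two care points are exactly the right ones: $\gronA$ need not be measurable (it only enters through the supremum $A$), and the measurability and local integrability of $\gronC$ are what justify both the interchange and the vanishing remainder (note also that the hypothesis already forces $\gronC(t)<\infty$ pointwise, since $\gronA(t)<\infty$). What your elementary route buys is a self-contained proof independent of the literature; what the paper's citation buys is brevity, since the lemma is standard. Either is acceptable.
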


\begin{corollary}\label{lem:gronwall}
Let $T, \gronB \in [0,\infty)$, let $\gronA \colon [0,T] \to [0,\infty)$ be non-increasing, let $\gronC \colon [0,T] \to [0,\infty]$ be measurable, and assume for all $t\in[0,T]$ that $\int_t^T \gronC(s) \, ds < \infty$ and
\begin{equation}\label{lem:gronwall_1}
\gronC(t) \le \gronA(t) + \gronB \int_t^T \gronC(s) \, ds. 
\end{equation}
Then
it holds for all $t\in[0,T]$ that 
$\gronC(t) \le \gronA(t) \exp(\gronB (T-t)) < \infty$.
\end{corollary}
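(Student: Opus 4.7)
The plan is to reduce the backward Gronwall inequality in \cref{lem:gronwall} to the forward Gronwall inequality in \cref{lem:gronwall_normal} via the time reversal $t \mapsto T-t$. Specifically, I would define auxiliary functions $\tilde\gronA, \tilde\gronC \colon [0,T] \to [0,\infty]$ by $\tilde\gronA(r) := \gronA(T-r)$ and $\tilde\gronC(r) := \gronC(T-r)$ for $r \in [0,T]$. Measurability of $\tilde\gronC$ is inherited from measurability of $\gronC$ since $r \mapsto T-r$ is a measurable bijection of $[0,T]$.

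Next, the change of variables $u = T-s$ in the hypothesis yields for every $r \in [0,T]$ the identity $\int_0^r \tilde\gronC(u)\,du = \int_{T-r}^T \gronC(s)\,ds$. The assumption $\int_t^T \gronC(s)\,ds < \infty$ (applied at $t = T-r$) thus gives $\int_0^r \tilde\gronC(u)\,du < \infty$ for all $r \in [0,T]$, and \cref{lem:gronwall_1} (again at $t = T-r$) rewrites as
\begin{equation*}
\tilde\gronC(r) \le \tilde\gronA(r) + \gronB \int_0^r \tilde\gronC(u)\,du.
\end{equation*}
The hypotheses of \cref{lem:gronwall_normal} are therefore satisfied by $(\tilde\gronA, \tilde\gronC)$ in place of $(\gronA, \gronC)$, and that lemma furnishes for all $r \in [0,T]$ the bound $\tilde\gronC(r) \le \bigl[\sup_{s \in [0,r]} \tilde\gronA(s)\bigr]\exp(\gronB r)$.

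Finally, because $\gronA$ is non-increasing, $\tilde\gronA$ is non-decreasing, whence $\sup_{s \in [0,r]} \tilde\gronA(s) = \tilde\gronA(r) = \gronA(T-r)$. Substituting $t = T-r$ therefore delivers $\gronC(t) \le \gronA(t)\exp(\gronB(T-t))$ for all $t \in [0,T]$; the strict finiteness $\gronA(t)\exp(\gronB(T-t)) < \infty$ follows from $\gronA$ taking values in $[0,\infty)$. There is no real obstacle here beyond executing the time-reversal carefully; the only point requiring attention is the use of the monotonicity of $\gronA$ to dispose of the supremum appearing in the forward Gronwall conclusion, since without monotonicity the reduction would only yield $\gronC(t) \le \bigl[\sup_{s \in [t,T]} \gronA(s)\bigr]\exp(\gronB(T-t))$ rather than the sharper stated bound.
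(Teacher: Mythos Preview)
Your proposal is correct and follows essentially the same approach as the paper: both proofs perform the time reversal $t \mapsto T-t$, verify the hypotheses of the forward Gronwall lemma (\cref{lem:gronwall_normal}) for the reversed functions, and then use that $\gronA$ is non-increasing to collapse the supremum in the conclusion of \cref{lem:gronwall_normal} to the single value $\gronA(T-r)$.
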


\begin{proof}[Proof of \cref{lem:gronwall}]
Throughout this proof let $\gronSymbb \colon [0,T] \to [0,\infty]$ and $\gronSymbbb \colon [0,T] \to [0,\infty)$ satisfy for all $t\in[0,T]$ that
\begin{equation}\label{reverse1}
\gronSymbb(t) = \gronC(T-t) \qquad \text{and} \qquad \gronSymbbb(t) = \gronA(T-t).
\end{equation}
Note that the hypothesis that $\gronC$ is measurable and \cref{reverse1} ensure that $\gronSymbb$ is measurable. In addition, observe that the hypothesis that for all $t\in[0,T]$ it holds that $\int_t^T \gronC(s)\,ds < \infty$ and \cref{reverse1} assure that for all $t\in[0,T]$ it holds that
\begin{equation}\label{reverse3}
\int_0^t \gronSymbb(s) \, ds 
= \int_0^t \gronC(T-s) \, ds 
= \int_{T-t}^T \gronC(s) \, ds 
\le \int_{0}^T \gronC(s) \, ds < \infty.
\end{equation}
Moreover, note that the hypothesis that $\gronA$ is non-increasing and \cref{reverse1} guarantee that $\gronSymbbb$ is non-decreasing.
Furthermore, observe that \cref{lem:gronwall_1}, \cref{reverse1}, and \cref{reverse3} demonstrate that for all $t\in[0,T]$ it holds that
\begin{equation}
\gronSymbb(t)
= \gronC(T-t)
\le \gronA(T-t) + \gronB \int_{T-t}^T \gronC(s) \, ds 
= \gronSymbbb(t) + \gronB \int_0^t \gronC(T-s) \, ds
= \gronSymbbb(t) + \gronB \int_0^t \gronSymbb(s) \, ds.
\end{equation}
This, \cref{reverse1}, the fact that $\gronSymbb$ is measurable, \cref{reverse3}, the fact that $\gronSymbbb$ is non-decreasing, and \cref{lem:gronwall_normal} (applied with $T \with T$, $\gronB \with \gronB$, $\gronA \with \gronSymbbb$, $\gronC \with \gronSymbb$ in the notation of \cref{lem:gronwall_normal}) prove that for all $t\in[0,T]$ it holds that
\begin{equation}
\gronSymbb(t) \le \left[ \sup_{s\in[0,t]} \gronSymbbb(s) \right] \exp(\gronB t) = \gronSymbbb(t) \exp(\gronB t) < \infty.
\end{equation}
Combining this and \cref{reverse1} establishes that for all $t\in[0,T]$ it holds that
\begin{equation}
\gronC(t) \le \gronA(t) \exp(\gronB (T-t)) < \infty.
\end{equation}
The proof of \cref{lem:gronwall} is thus complete.
\end{proof}

\subsection{A priori bounds for solutions of stochastic fixed-point equations}

\begin{lemma}\label{lemma:2.2}
Let $d\in\N$, $T, \LipConstF\in[0,\infty)$, $q\in[1,\infty)$, 
$\smallF \in C([0,T]\times \R^d \times \R,\R)$, $\funcG \in C(\R^d,\R)$, $\smallU \in C([0,T]\times \R^d, \R)$,
let $(\Omega, \cF,\P)$ be a probability space, let $\fwpr \colon [0,T] \times \Omega \to \R^d$ be a standard Brownian motion, and assume for all $t\in[0,T]$, $x\in\R^d$, $v,w\in\R$ that $\abs{ \smallF(t,x,v) - \smallF(t,x,w)} \le \LipConstF \abs{v-w}$, 
$\E[\abs{ \funcG(x + \fwpr_{T-t})} + \int_t^T \abs{ \smallF(s,x+\fwpr_{s-t},\smallU(s,x+\fwpr_{s-t}))} \,ds] + \int_t^T (\E[\abs{\smallU(s,x+\fwpr_s)}^q])^{\nicefrac{1}{q}} \, ds < \infty$,
and
\begin{equation}\label{lemma:2.2_u_rep}
\smallU(t,x) = \E\br[\big]{\funcG(x+\fwpr_{T-t})} + \int_t^T \E\br[\big]{\smallF(s,x+\fwpr_{s-t},\smallU(s,x+\fwpr_{s-t}))}\,ds.
\end{equation}
Then 
it holds for all $t\in[0,T]$, $x\in\R^d$ that
\begin{align}
& \pr[\Big]{\E\br[\Big]{\abs[\big]{\smallU(t,x+\fwpr_t)}^q}}^{\!\!\nicefrac{1}{q}} \label{eq:3_18} \\
& \le \exp\pr[\big]{L(T-t)} \br[\Bigg]{ \pr[\Big]{ \E\br[\Big]{\abs[\big]{ \funcG(x+\fwpr_{T}) }^q } }^{\!\!\nicefrac{1}{q}} 
+ (T-t)^{\nicefrac{(q-1)}{q}} \pr[\bigg]{ \int_{t}^T \E\br[\Big]{\abs[\big]{ \smallF(s,x+\fwpr_{s},0)}^q} \, ds }^{\!\!\nicefrac{1}{q}} }. \nonumber
\end{align}
\end{lemma}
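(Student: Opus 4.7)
The plan is to substitute $y = x + \fwpr_t$ into the fixed-point equation \cref{lemma:2.2_u_rep} for $\smallU$, take the $L^q(\Omega)$-norm of both sides, exploit the Lipschitz property of $\smallF$ and the Brownian convolution structure to reduce to an inequality in the single quantity $\chi(t) := (\E[|\smallU(t,x+\fwpr_t)|^q])^{1/q}$, and then close the argument via the backward Gronwall \cref{lem:gronwall}.

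More concretely, first I would note that since $\smallU(t,\cdot)$ is the deterministic function given on the right-hand side of \cref{lemma:2.2_u_rep}, the relation remains valid if the Brownian motion inside the expectation is replaced by an independent copy $\tilde\fwpr$ of $\fwpr$ (on a possibly enlarged probability space). Substituting $y = x+\fwpr_t$ and conditioning on $\sigmaAlgebra(\fwpr_t)$, one obtains the representation
\begin{equation*}
\smallU(t,x+\fwpr_t) = \E\br[\big]{\funcG(x+\fwpr_t+\tilde\fwpr_{T-t}) \mid \fwpr_t} + \int_t^T \E\br[\big]{\smallF(s,x+\fwpr_t+\tilde\fwpr_{s-t},\smallU(s,x+\fwpr_t+\tilde\fwpr_{s-t})) \mid \fwpr_t}\,ds.
\end{equation*}
By Minkowski's (integral) inequality applied to the $L^q$-norm on $\Omega$, together with the conditional Jensen inequality $\norm{\E[X\mid \fwpr_t]}_{L^q} \le \norm{X}_{L^q}$, I can drop the conditional expectations at the cost of a triangle-type bound. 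Since $\fwpr_t$ and $\tilde\fwpr_{s-t}$ are independent Gaussians with total variance $s$, the random variable $\fwpr_t + \tilde\fwpr_{s-t}$ has the same law as $\fwpr_s$, so the norms match what appears in the target estimate.

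Next I would apply the Lipschitz hypothesis $\abs{\smallF(s,z,v)} \le \abs{\smallF(s,z,0)} + \LipConstF\abs{v}$ inside the integral and use triangle inequality in $L^q$ once more. Writing $\chi(t) = (\E[\abs{\smallU(t,x+\fwpr_t)}^q])^{1/q}$, this produces the integral inequality
\begin{equation*}
\chi(t) \le (\E[\abs{\funcG(x+\fwpr_T)}^q])^{1/q} + \int_t^T (\E[\abs{\smallF(s,x+\fwpr_s,0)}^q])^{1/q}\,ds + \LipConstF \int_t^T \chi(s)\,ds.
\end{equation*}
Applying Hölder's inequality to the middle term (with exponents $q$ and $q/(q-1)$) on the interval $[t,T]$ bounds it by $(T-t)^{(q-1)/q}(\int_t^T \E[\abs{\smallF(s,x+\fwpr_s,0)}^q]\,ds)^{1/q}$. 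Denoting the resulting sum of the first two terms by $\gronA(t)$, one checks that $\gronA$ is non-increasing in $t$ (the terminal term is constant and the other factor $(T-t)^{(q-1)/q}(\int_t^T\cdots\,ds)^{1/q}$ is manifestly non-increasing).

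The final step is to invoke \cref{lem:gronwall} with $\gronB = \LipConstF$, $\gronA$ as above, and $\gronC = \chi$; the finiteness requirement $\int_t^T \chi(s)\,ds < \infty$ is exactly what the hypothesis on $\int_t^T (\E[\abs{\smallU(s,x+\fwpr_s)}^q])^{1/q}\,ds$ provides, and the measurability of $\chi$ follows from the continuity of $\smallU$ and a standard Fubini argument. The conclusion $\chi(t) \le \gronA(t)\exp(\LipConstF(T-t))$ is precisely \cref{eq:3_18}. The main delicate point I expect is justifying cleanly the substitution $y \mapsto x+\fwpr_t$ in \cref{lemma:2.2_u_rep}, because one must separate the ``outer'' Brownian motion $\fwpr_t$ from the ``inner'' Brownian motion used in the defining expectation—this is handled by the independent-copy trick together with the Markov/Gaussian-convolution identity for $\fwpr_t+\tilde\fwpr_{s-t}$.
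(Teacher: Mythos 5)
Your proposal is correct and follows essentially the same route as the paper: plug $x+\fwpr_t$ into the fixed-point relation (justified via the independent-increments/Gaussian-convolution structure, which you make explicit with the independent-copy and conditioning argument), drop the inner expectations by Jensen/Minkowski, split $\smallF$ via the Lipschitz bound, apply H\"older to the $\smallF(\cdot,\cdot,0)$ term, and close with the backward Gronwall inequality of \cref{lem:gronwall}, using the hypothesis $\int_t^T (\E[\abs{\smallU(s,x+\fwpr_s)}^q])^{\nicefrac{1}{q}}\,ds<\infty$. The only cosmetic difference is that the paper disposes of the degenerate case by assuming without loss of generality that the Gronwall majorant is finite, a one-line remark you could add for completeness.
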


\begin{proof}[Proof of \cref{lemma:2.2}]
Throughout this proof let $\alpha \colon [0,T]\times \R^d \to [0,\infty]$ satisfy for all $t\in[0,T]$, $x\in\R^d$ that
\begin{equation}\label{alpha_def1}
\alpha(t,x) = \pr[\big]{ \E\br[\big]{\abs[]{ \funcG(x+\fwpr_{T}) }^q } }^{\!\nicefrac{1}{q}} + (T-t)^{\nicefrac{(q-1)}{q}} \pr[\bigg]{ \int_{t}^T \E\br[\big]{\abs[]{ f(s,x+\fwpr_{s},0)}^q} \, ds }^{\!\!\nicefrac{1}{q}}
\end{equation}
and assume without loss of generality that for all $x\in\R^d$ it holds that $\alpha(0,x) < \infty$.
Note that \cref{alpha_def1} 
ensures that for all $x\in\R^d$ it holds that $[0,T] \ni t \mapsto \alpha(t,x) \in [0,\infty)$ is non-increasing.
In addition, observe that \cref{lemma:2.2_u_rep}, the triangle inequality, Jensen's inequality, Fubini's theorem, and the fact that $\fwpr$ has independent 
increments
assure that
for all $t \in [0,T]$, $x\in\R^d$ it holds that 
\begin{align}\label{upper_exact:eq1}
& \pr[\big]{ \E\br[\big]{ \abs[]{ \smallU(t,x +\fwpr_t) }^q } }^{\!\nicefrac{1}{q}} \nonumber\\
& = \pr[\bigg]{ \E\br[\bigg]{\abs[\Big]{ \E\br[\big]{ \funcG(x+\fwpr_{T-t} + \fwpr_t) } + \textstyle\int_t^{T} \E\br[\big]{ \smallF(s,x+\fwpr_{s-t}+\fwpr_t,\smallU(s,x+\fwpr_{s-t} + \fwpr_{t})) } \,ds }^q } }^{ \! \! \nicefrac{1}{q}} \nonumber \\
& = \pr[\bigg]{ \E\br[\bigg]{ \abs[\Big]{ \E\br[\big]{ \funcG(x+\fwpr_{T}) } + \textstyle\int_t^{T} \E\br[\big]{ \smallF(s,x+\fwpr_{s},\smallU(s,x+\fwpr_{s})) } \,ds }^q } }^{ \! \! \nicefrac{1}{q}} \\
& \le \pr[\Big]{ \E\br[\Big]{ \abs[\big]{ \E\br[\big]{ \funcG(x+\fwpr_{T}) } }^q } }^{\!\!\nicefrac{1}{q}} 
+ \pr[\bigg]{ \E\br[\Big]{ \abs[\big]{ \textstyle\int_{t}^T \E\br[\big]{ \smallF(s,x+\fwpr_{s},\smallU(s,x+\fwpr_{s})) } \,ds }^q } }^{\!\!\nicefrac{1}{q}} \nonumber \\
& \le \pr[\big]{ \E\br[\big]{ \abs[]{ \funcG(x+\fwpr_{T}) }^q } }^{\!\nicefrac{1}{q}} 
+ \int_{t}^T 
\pr[\big]{ \E\br[\big]{ \abs[]{ \smallF(s,x+\fwpr_{s},\smallU(s,x+\fwpr_{s})) }^q } }^{\!\nicefrac{1}{q}} \, ds. \nonumber
\end{align}
Next note that the triangle inequality and the hypothesis that for all $t\in[0,T]$, $x\in\R^d$, $v,w\in\R$ it holds that $\abs{ f(t,x,v)-f(t,x,w) } \le \LipConstF \abs{ v-w }$ demonstrate that for all $t\in[0,T]$, $x\in\R^d$ it holds that
\begin{align} \label{eq:3_18a}
& \int_{t}^T \pr[\big]{ \E\br[\big]{ \abs[]{ \smallF(s,x+\fwpr_{s},\smallU(s,x+\fwpr_{s})) }^q } }^{\!\nicefrac{1}{q}} \, ds
\le \int_{t}^T \pr[\big]{ \E\br[\big]{ \abs[]{ \smallF(s,x+\fwpr_{s},0)) }^q } }^{\!\nicefrac{1}{q}} \, ds \nonumber \\
& + \int_t^T \pr[\big]{ \E\br[\big]{ \abs[]{ \smallF(s,x+\fwpr_{s},\smallU(s,x+\fwpr_{s})) - \smallF(s,x+\fwpr_{s},0) }^q } }^{\!\nicefrac{1}{q}} \, ds \\
& \le \int_{t}^T \pr[\big]{ \E\br[\big]{ \abs[]{ \smallF(s,x+\fwpr_{s},0)) }^q } }^{\!\nicefrac{1}{q}} \, ds
+ \LipConstF \int_{t}^T \pr[\big]{ \E\br[\big]{ \abs[]{ \smallU(s,x+\fwpr_{s}) }^q } }^{\!\nicefrac{1}{q}} \, ds. \nonumber
\end{align}
Furthermore, observe that H{\"o}lder's inequality shows that for all $t\in[0,T]$, $x\in\R^d$ it holds that
\begin{align}
\int_{t}^T \pr[\big]{ \E\br[\big]{ \abs[]{ \smallF(s,x+\fwpr_{s},0)) }^q } }^{\!\nicefrac{1}{q}} \, ds 
& = \pr[\Bigg]{ \br[\bigg]{ \int_{t}^T \pr[\big]{ \E\br[\big]{ \abs[]{ \smallF(s,x+\fwpr_{s},0)) }^q } }^{\!\nicefrac{1}{q}} \, ds }^q }^{\!\!\nicefrac{1}{q}} \nonumber \\
& \le \pr[\bigg]{ (T-t)^{q-1} \int_{t}^T \E\br[\big]{ \abs[]{ \smallF(s,x+\fwpr_{s},0)) }^q } \, ds }^{\!\!\nicefrac{1}{q}} \\
& = (T-t)^{\nicefrac{(q-1)}{q}} \pr[\bigg]{ \int_{t}^T \E\br[\big]{ \abs[]{ \smallF(s,x+\fwpr_{s},0)) }^q } \, ds }^{\!\!\nicefrac{1}{q}} . \nonumber
\end{align}
Combining this, \cref{alpha_def1}, \cref{upper_exact:eq1}, and \cref{eq:3_18a}
guarantees that for all $t \in [0,T]$, $x\in\R^d$ it holds that
\begin{equation}
\label{upper_exact:eq8}
\pr[\big]{ \E\br[\big]{ \abs[]{ \smallU(t,x +\fwpr_t) }^q } }^{\!\nicefrac{1}{q}}  
\le \alpha(t,x) + \LipConstF \int_{t}^T \pr[\big]{ \E\br[\big]{ \abs[]{ \smallU(s,x+\fwpr_{s}) }^q } }^{\!\nicefrac{1}{q}} \, ds.
\end{equation}
This,  
\cref{alpha_def1},
the fact that for all $x\in\R^d$ it holds that $[0,T] \ni t \mapsto \alpha(t,x) \in [0,\infty)$ is non-increasing, the hypothesis that for all $t\in[0,T]$, $x\in\R^d$ it holds that $\int_t^T (\E[\lvert u(s,x+\fwpr_s)\rvert^q])^{\nicefrac{1}{q}}\,ds < \infty$, and
\cref{lem:gronwall} (applied for every $x\in\R^d$ with $T \with T$, $\gronB \with \LipConstF$, $\gronA \with \pr[]{ [0,T] \ni t \mapsto \alpha(t,x) \in [0,\infty) }$, $\gronC \with \pr[]{ [0,T] \ni t \mapsto (\E[|u(t,x+\fwpr_t)|^q])^{\nicefrac{1}{q}} \in [0,\infty] }$ in the notation of \cref{lem:gronwall}) establish that
for all $t \in [0, T]$, $x\in\R^d$
it holds that
\begin{equation}\label{eq_2_31}
\pr[\big]{ \E\br[\big]{ \abs[]{ \smallU(t,x+\fwpr_{t}) }^q } }^{\!\nicefrac{1}{q}} 
\le \alpha(t,x) \exp\pr[\big]{ \LipConstF (T-t) }.
\end{equation}
The proof of \cref{lemma:2.2} is thus complete.
\end{proof}

\begin{definition}\label{euclid_norm}
We denote by $\norm{\cdot} \colon \pr{ \bigcup_{d\in\N}\! \R^d } \to [0,\infty)$ the function which satisfies for all $d\in\N$, $x = (x_1,x_2,\dots,x_d) \in \R^d$ that $\norm{x} = \br{ \sum_{k=1}^d \abs{x_k}^2 }^{\nicefrac{1}{2}}$.
\end{definition}

\begin{corollary}\label{cor:sol_bd1}
Let $d \in \N$, $T, \LipConstF, \boundFG, p \in [0,\infty)$, $q \in [1,\infty)$, $\smallF \in C([0,T]\times \R^d \times \R,\R)$, $\funcG \in C(\R^d,\R)$, $\smallU \in C([0,T]\times \R^d, \R)$,
let $(\Omega, \cF ,\P)$ be a probability space, let $\fwpr \colon [0,T] \times \Omega \to \R^d$ be a standard Brownian motion,
and assume for all $t\in[0,T]$, $x\in\R^d$, $v,w\in\R$ that $\abs{ \smallF(t,x,v) - \smallF(t,x,w) } \le \LipConstF \abs{ v-w }$, $\max\{\abs{ \smallF(t,x,0) },\abs{ \funcG(x) } \} \le \boundFG (1 + \norm{x}^p)$,
$\E\br[]{ \abs{ \funcG(x + \fwpr_{T-t}) } + \int_t^T \abs{ \smallF(s,x+\fwpr_{s-t},\smallU(s,x+\fwpr_{s-t})) } \,ds } < \infty$,
and
\begin{equation}
\smallU(t,x) = \E\br[\big]{ \funcG(x+\fwpr_{T-t}) } + \int_t^T \E\br[\big]{ \smallF(s,x+\fwpr_{s-t},\smallU(s,x+\fwpr_{s-t})) }\,ds
\end{equation}
(cf.\ \cref{euclid_norm}).
Then it holds for all $t\in[0,T]$, $x\in\R^d$ that
\begin{equation}\label{eq:2_34}
\pr[\Big]{ \E\br[\Big]{ \abs[\big]{ \smallU(t,x+\fwpr_t) }^q } }^{\!\!\nicefrac{1}{q}} 
\le \boundFG (T+1) \exp(\LipConstF T) \left[ \sup_{s\in[0,T]} \pr[\Big]{ \E\br[\Big]{ \pr[\big]{ 1 + \norm{ x + \fwpr_s}^p }^{\!q} } }^{\!\!\nicefrac{1}{q}} \right] < \infty.
\end{equation}
\end{corollary}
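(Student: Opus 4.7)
The plan is to invoke \cref{lemma:2.2} with the same $d$, $T$, $\LipConstF$, $q$, $\smallF$, $\funcG$, $\smallU$, and $\fwpr$ as in the hypothesis of \cref{cor:sol_bd1}, and then bound its conclusion using the polynomial-growth assumption. Specifically, after introducing the quantity $M := \sup_{s\in[0,T]} (\E[(1+\norm{x+\fwpr_s}^p)^q])^{\nicefrac{1}{q}}$, whose finiteness follows from standard Gaussian moment bounds, the pointwise hypothesis $\max\{|\smallF(s,y,0)|,|\funcG(y)|\} \le \boundFG(1+\norm{y}^p)$ immediately yields $(\E[|\funcG(x+\fwpr_T)|^q])^{\nicefrac{1}{q}} \le \boundFG M$ as well as $(\E[|\smallF(s,x+\fwpr_s,0)|^q])^{\nicefrac{1}{q}} \le \boundFG M$ for every $s\in[0,T]$.

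Substituting these two inequalities into the bracket on the right-hand side of \cref{eq:3_18} and carrying out the trivial integration in the $f$-term produces the upper bound $\boundFG M + \boundFG M(T-t) = \boundFG M(1+(T-t))$. Multiplying by $\exp(\LipConstF(T-t)) \le \exp(\LipConstF T)$ and estimating $1+(T-t) \le 1+T$ then recovers precisely the claimed bound \cref{eq:2_34}; finiteness of the right-hand side is immediate from $M<\infty$.

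The main obstacle is that \cref{lemma:2.2} additionally requires $\int_t^T (\E[|\smallU(s,x+\fwpr_s)|^q])^{\nicefrac{1}{q}}\,ds < \infty$, an $L^q$-integrability condition on $\smallU$ itself that is not directly among the hypotheses of \cref{cor:sol_bd1}. To verify it, I would combine the Lipschitz estimate $|\smallF(s,y,v)| \le |\smallF(s,y,0)| + \LipConstF|v|$ with the polynomial growth of $\smallF(\cdot,\cdot,0)$ and $\funcG$ and the fixed-point equation to derive a linear integral inequality for $t \mapsto \E[|\smallU(t,x+\fwpr_t)|]$, then apply \cref{lem:gronwall} to extract a polynomial-in-$\norm{x}$ bound for $|\smallU(t,x)|$, and finally deduce the required $L^q$-integrability along the Brownian path from Gaussian moment estimates. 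This Gronwall-bootstrap step is the only non-routine part of the argument; the remainder is the direct substitution described above.
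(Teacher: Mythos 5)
Your overall skeleton coincides with the paper's: both arguments consist of applying \cref{lemma:2.2} and then estimating the $\funcG$- and $\smallF(\cdot,\cdot,0)$-terms via the polynomial growth hypothesis (your arithmetic $\boundFG M(1+(T-t))\exp(\LipConstF(T-t))\le \boundFG M(T+1)\exp(\LipConstF T)$ is exactly the paper's computation), and you correctly identify that the only substantive issue is the hypothesis $\int_t^T(\E[\abs{\smallU(s,x+\fwpr_s)}^q])^{\nicefrac{1}{q}}\,ds<\infty$ of \cref{lemma:2.2}. Where you genuinely differ is in how that hypothesis is supplied: the paper does not bootstrap by hand but first checks (via a filtration construction and Hutzenthaler et al.\ \cite[Lemma 2.17]{HutzenthalerPricing2019}) that $\fwpr$ is a Brownian motion with respect to a suitable filtration and then invokes Beck et al.\ \cite[Corollary 3.9]{beck2020nonlinear} to obtain $\sup_{s\in[0,T]}\sup_{y\in\R^d}\abs{\smallU(s,y)}/(1+\norm{y}^p)<\infty$ directly, after which Gaussian moments give the integrability exactly as in your plan. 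Your self-contained Gronwall bootstrap avoids the external a priori growth result and is more elementary; the price is that you must reprove that a priori bound, which is where the care is needed.

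Two points of your bootstrap sketch must be made precise for it to close. First, \cref{lem:gronwall} requires the a priori finiteness $\int_t^T \E[\abs{\smallU(s,x+\fwpr_{s-t})}]\,ds<\infty$ before it can be applied (otherwise the argument is vacuous, since the inequality is satisfied by an identically infinite function); this finiteness does not follow from the Lipschitz and growth bounds you list, but it does follow from the displayed integrability hypothesis on $\funcG$ and $\smallF(\cdot,\cdot,\smallU(\cdot,\cdot))$ along the Brownian path, combined with the fixed-point equation with absolute values pulled inside and Tonelli plus independent increments; you should say so explicitly. Second, a Gronwall bound on $t\mapsto\E[\abs{\smallU(t,x+\fwpr_t)}]$ (paths started at time $0$) does not by itself yield a pointwise bound on $\abs{\smallU(t,x)}$, since the Gaussian average at time $t$ cannot control the value at the centre; you need to run the same $L^1$-Gronwall argument for paths started at an arbitrary $(t,x)$, i.e.\ for $[t,T]\ni s\mapsto\E[\abs{\smallU(s,x+\fwpr_{s-t})}]$, and then insert the resulting bound, which is polynomial in $\norm{x}$ uniformly in $s$ and $t$, back into the fixed-point equation at $(t,x)$ to obtain $\abs{\smallU(t,x)}\le C(1+\norm{x}^p)$. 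With these two repairs the Gaussian moment estimates give the required $L^q$-integrability and the rest of your argument goes through as written.
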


\begin{proof}[Proof of \cref{cor:sol_bd1}]
Throughout this proof let
$\mathbb{F}_t \subseteq \cF$, $t\in[0,T]$, satisfy
for all $t\in[0,T]$ that
\begin{equation}\label{filtration1}
\mathbb{F}_t = 
\begin{cases}
\bigcap_{s\in(t,T]} \sigmaAlgebra \pr[\big]{ \sigmaAlgebra \pr{ \fwprr_r \colon r \in [0,s] } \cup \{A \in \cF \colon \P(A) = 0\} } & \colon t < T \\
\sigmaAlgebra \pr[\big]{ \sigmaAlgebra \pr{ \fwprr_s \colon s \in [0,T] } \cup \{A \in \cF \colon \P(A) = 0\} } & \colon t = T
\end{cases}
\end{equation}
and let $\Ffn \in C([0,T]\times \R^d,\R^d)$ and $\Gfn \in C([0,T]\times \R^d,\R^{d\times d})$ satisfy for all $t\in[0,T]$, $x,v\in\R^d$ that
$\Ffn(t,x) = 0$ 
and $\Gfn(t,x)v = v$.
Note that \cref{filtration1} guarantees that $\mathbb{F}_t \subseteq \cF$, $t\in[0,T]$, satisfies that
\begin{enumerate}[label=(\Roman*)]
\item\label{filt1} it holds that $\{ A\in\cF \colon \P(A) = 0\} \subseteq \mathbb{F}_0$ and
\item\label{filt2} it holds for all $t\in[0,T]$ that $\mathbb{F}_t = \cap_{s\in(t,T]} \mathbb{F}_s$.
\end{enumerate}
Combining \cref{filt1,filt2}, \cref{filtration1}, and, e.g., Hutzenthaler et al.\ \cite[Lemma 2.17]{HutzenthalerPricing2019} 
(applied with $m \with d$, $T \with T$, $W \with \fwprr$, $\mathbb{H}_t \with \mathbb{F}_t$, $(\Omega,\cF,\P,(\mathbb{F}_t)_{t\in[0,T]}) \allowbreak \with (\Omega,\cF,\P,(\sigmaAlgebra(\fwprr_s \colon s \in [0,t]) \cup \{ A\in\cF \colon \P(A) = 0\} )_{t\in[0,T]})$ in the notation of \cite[Lemma 2.17]{HutzenthalerPricing2019})
hence assures that $\fwprr \colon [0,T] \times \Omega \to \R$ is a standard $(\Omega,\cF,\P,(\mathbb{F}_t)_{t\in[0,T]})$-Brownian motion.
Combining this, the hypothesis that for all $t\in[0,T]$, $x\in\R^d$, $v,w\in\R$ it holds that $\abs{ \smallF(t,x,v) - \smallF(t,x,w) } \le \LipConstF \abs{ v-w }$, the hypothesis that for all $t\in[0,T]$, $x\in\R^d$ it holds that $\max\{ \abs{ \smallF(t,x,0) },\abs{ \funcG(x) } \} \le \boundFG (1 + \norm{ x }^p)$,
and Beck et al.\ \cite[Corollary 3.9]{beck2020nonlinear}
(applied with $d \with d$, $m \with d$, $T \with T$, $\LipConstF \with \max\{d^{\nicefrac{1}{2}},\LipConstF\}$, $\mathfrak{C} \with 0$, $f \with f$, $g \with g$, $\mu \with \Ffn$, $\sigma \with \Gfn$, $W \with \fwprr$, $(\Omega, \cF, \P, (\mathbb{F}_t)_{t\in[0,T]}) \with (\Omega, \cF, \P, (\mathbb{F}_t)_{t\in[0,T]})$ in the notation of \cite[Corollary 3.9]{beck2020nonlinear})
ensures that
\begin{align}
\sup_{s\in [0,T]}\sup_{y\in\R^d} \pr*{ \frac{\abs{ \smallU(s,y) }}{1 + \norm{ y }^p} } <
\infty.
\end{align}
This, the fact that for all $r, v, w \in [0,\infty)$ it holds that $(v + w)^r \le 2^{\max\{r-1,0\}} (v^r + w^r)$, the triangle inequality, 
and the fact that for all $r \in [0,\infty)$ it holds that $ \E [\norm{ \fwpr_T }^{r} ] < \infty$ demonstrate that for all $t\in[0,T]$, $x\in\R^d$ it holds that
\begin{align}\label{m03e}
& \int_t^T \pr[\big]{ \E\br[\big]{ \abs[]{ \smallU(s,x+\fwpr_{s}) }^q } }^{\!\nicefrac{1}{q}} \, ds
\le \left[\sup_{s\in [0,T]}\sup_{y\in\R^d} \frac{\abs{ u(s,y) }}{1 + \norm{ y }^p}\right]
\int_{0}^{T} \pr[\big]{ \E\br[\big]{ \pr[]{ 1 + \norm{ x + \fwpr_s }^p }^{q} } }^{\!\nicefrac{1}{q}} \, ds \nonumber \\
& \le \left[\sup_{s\in [0,T]}\sup_{y\in\R^d} \frac{\abs{ u(s,y) }}{1 + \norm{ y }^p}\right]
\int_{0}^{T} \br[\Big]{ 1 + 2^{\max\{p-1,0\}} \pr[\big]{ \E\br[\big]{ \pr[]{ \norm{ x }^p + \norm{ \fwpr_s }^p }^{q} } }^{\!\nicefrac{1}{q}} } \, ds \\
& \le
T \left[\sup_{s\in [0,T]}\sup_{y\in\R^d} \frac{\abs{ u(s,y) }}{1 + \norm{ y }^p}\right]
\br[\Big]{ 1 + 2^{\max\{p-1,0\}} \norm{ x }^p +
2^{\max\{p-1,0\}} \pr[\big]{ \E\br[\big]{ \norm{ \fwpr_T }^{pq} } }^{\!\nicefrac{1}{q}} }
< \infty. \nonumber
\end{align}
Combining this, the hypothesis that for all $t\in[0,T]$, $x\in\R^d$, $v,w\in\R$ it holds that $\abs{ \smallF(t,x,v) - \smallF(t,x,w) } \le \LipConstF \abs{ v-w }$, the hypothesis that for all $t\in[0,T]$, $x\in\R^d$ it holds that $\E\br[]{ \abs{ \funcG(x + \fwpr_{T-t}) } + \int_t^T \abs{ \smallF(s,x+\fwpr_{s-t},\smallU(s,x+\fwpr_{s-t})) } \,ds } < \infty$, and \cref{lemma:2.2} establishes that for all $t\in[0,T]$, $x\in\R^d$ it holds that
\begin{align}\label{upper_exact:eq2}
& \pr[\big]{ \E\br[\big]{ \abs[]{ \smallU(t,x+\fwpr_t) }^q } }^{\!\nicefrac{1}{q}} \\
& \le \exp\pr[\big]{ \LipConstF(T-t) } \br[\Bigg]{ \pr[\big]{ \E\br[\big]{ \abs[]{ \funcG(x+\fwpr_{T}) }^q } }^{\!\nicefrac{1}{q}} 
+ (T-t)^{\nicefrac{(q-1)}{q}} \pr[\bigg]{ \int_{t}^T \E\br[\big]{ \abs[]{ 
\smallF(s,x+\fwpr_{s},0) }^q } \, ds }^{\!\!\nicefrac{1}{q}} } \nonumber \\
& \le \exp\pr{ \LipConstF T } \br[\Bigg]{ \pr[\big]{ \E\br[\big]{ \abs[]{ \funcG(x+\fwpr_{T}) }^q } }^{\!\nicefrac{1}{q}} 
+ T^{\nicefrac{(q-1)}{q}} \pr[\bigg]{ \int_{0}^T \E\br[\big]{ \abs[]{ 
\smallF(s,x+\fwpr_{s},0) }^q } \, ds  }^{\!\!\nicefrac{1}{q}} }. \nonumber
\end{align}
Next observe that the hypothesis that for all $x\in\R^d$ it holds that $\abs{ g(x) } \le \boundFG (1 + \norm{ x }^p)$ and \cref{m03e} show that
for all $x\in\R^d$ it holds that
\begin{equation}
\label{upper_exact:eq3}
\E\br[\big]{ \abs[]{ \funcG\pr{ x +\fwpr_{T}} }^q }
\le \E\br[\big]{ \boundFG^q \pr[]{ 1 + \norm{ x + \fwpr_{T} }^p }^{q} } 
\le \sup_{s\in [0,T]} \E\br[\big]{ \boundFG^q \pr[]{ 1 + \norm{ x + \fwpr_s }^p }^{q} } < \infty.
\end{equation} 
In addition, note that the hypothesis that for all $t\in[0,T]$, $x\in\R^d$ it holds that $\abs{ f(t,x,0) } \le \boundFG (1 + \norm{ x }^p)$ and \cref{m03e} assure that for all $x\in\R^d$ it holds that
\begin{equation}\label{upper_exact:eq4}
\begin{split}
\pr[\bigg]{ \int_{0}^T \E\br[\big]{ \abs[]{ \smallF(s,x+\fwpr_{s},0) }^q } \, ds }^{\!\!\nicefrac{1}{q}}
& \le \pr[\bigg]{ \int_{0}^T \E\br[\big]{ \boundFG^q \pr[]{ 1 + \norm{ x + \fwpr_s }^p }^{q} } \, ds }^{\!\!\nicefrac{1}{q}} \\
& \le \boundFG T^{\nicefrac{1}{q}} \left[\sup_{s\in [0,T]} \pr[\big]{ \E\br[\big]{ \pr[]{ 1 + \norm{ x + \fwpr_s }^p }^{q} } }^{\!\nicefrac{1}{q}} \right] < \infty.
\end{split}
\end{equation}
Combining this, \cref{upper_exact:eq2}, and \cref{upper_exact:eq3} proves that for all $t\in[0,T]$, $x\in\R^d$ it holds that
\begin{equation}
\pr[\big]{ \E\br[\big]{ \abs[]{ \smallU(t,x+\fwpr_t) }^q } }^{\!\nicefrac{1}{q}} 
\le \boundFG (T+1) \exp(\LipConstF T) \left[ \sup_{s\in[0,T]} \pr[\big]{ \E\br[\big]{ \pr[]{ 1 + \norm{ x + \fwpr_s}^p }^{q} } }^{\!\nicefrac{1}{q}} \right] < \infty.
\end{equation}
The proof of \cref{cor:sol_bd1} is thus complete.
\end{proof}


\section{Full-history recursive multilevel Picard (MLP) approximations}\label{sec:3}

In this section we introduce and provide the $L^p$-error analysis for MLP approximations for solutions of stochastic fixed-point equations.
More specifically, we prove \cref{mlp_stab_cor} below, which is a non-recursive $L^p$-error bound for MLP approximations for solutions of stochastic fixed-point equations.
Our proof of \cref{mlp_stab_cor} uses \cref{mlp_stab}, which provides a potentially sharper $L^p$-error bound for MLP approximations for solutions of stochastic fixed-point equations.
Our proof of \cref{mlp_stab}, in turn, employs \cref{mlp_stab_pre} and the elementary auxiliary results in \cref{prop:factorial}, \cref{lem:talk1}, and \cref{fn_gron}.
Our proof of the recursive error bound in \cref{mlp_stab_pre} employs \cref{lem:mlp_variance}.
Our proof of \cref{lem:mlp_variance}, in turn, is based on \cref{lem:mlp_expectation} and the elementary Monte Carlo approximation results in \cref{Lp_monte_carlo}, \cref{prop:mlp}, and \cref{cor:exp_bd}.
Our proof of \cref{lem:mlp_expectation} uses \cref{properties_approx} and \cref{lem:integrable}, which are elementary results regarding the measurability and integrability of MLP approximations for solution of stochastic fixed-point equations, respectively.

\cref{properties_approx} is, e.g., proved as Hutzenthaler et al.\ \cite[Lemma 3.2]{HutzenthalerJentzenKruse2018}.
\cref{lem:integrable} is, e.g., proved as Hutzenthaler et al.\ \cite[Lemma 3.3]{HutzenthalerJentzenKruse2018}.
Only for completeness we include in this section the detailed proofs of \cref{properties_approx} and \cref{lem:integrable}, respectively.
\cref{prop:factorial} and \cref{lem:talk1} are well-known elementary results and we include their proofs for completeness, as well.
The elementary result \cref{fn_gron} is a slight generalization of the result in Hutzenthaler et al.\ \cite[Lemma 3.11]{HutzenthalerJentzenKruseNguyen2020}.

\subsection{MLP approximations}

\begin{definition}\label{rand_const}
Let $p \in [2,\infty)$. Then we denote by $\firstConstant{p} \in \R$ the real number given by
\begin{equation}\label{eq:rand_const}
\firstConstant{p} = \inf\left\{ c \in \R \colon \left[ 
\begin{aligned}
& \text{It holds for every probability space $(\Omega,\cF,\P)$ and every} \\
& \text{random variable $X \colon \Omega \to \R$ with $\E[\abs{X}] < \infty$ that } \\
& \pr[\big]{ \E\br[\big]{ \abs{ X - \E[X] }^p } }^{\!\nicefrac{1}{p}} \le c \pr[\big]{ \E\br[\big]{ \abs{X}^p } }^{\!\nicefrac{1}{p}}
\end{aligned}
\right] \right\} .
\end{equation}
\end{definition}

\begin{setting}\label{setting1}
Let $d, \littleM \in \N$, $T, \LipConstF, \boundFG, p \in [0,\infty)$, 
$\fq \in [2,\infty)$, 
$\fm = \secondConstant{\fq}$,
$\Theta = \bigcup_{n\in\N}\! \Z^n$, 
$\smallF \in C([0,T] \times \R^d \times \R,\R)$, $\funcG \in C(\R^d,\R)$, 
let $\funcF \colon C([0,T]\times\R^d,\R) \to C([0,T]\times\R^d,\R)$, assume for all $t\in[0,T]$, $x\in\R^d$, $w,\fw\in\R$, $v\in C([0,T]\times\R^d,\R)$ that
\begin{equation}\label{fn_cond}
\abs{ \smallF(t,x,w) - \smallF(t,x,\fw) } \le \LipConstF \abs{ w-\fw }, \qquad \max\{\abs{ \smallF(t,x,0) }, \abs{ \funcG(x) } \} \le \boundFG (1 + \norm{x}^p),
\end{equation}
and
\begin{equation}\label{big_F}
(\funcF(v))(t,x) = \smallF(t,x,v(t,x)),
\end{equation} 
let $(\Omega,\cF,\P)$ be a probability space,
let $\fu^\theta \colon \Omega \to [0,1]$, $\theta \in \Theta$, be i.i.d.\ random variables, 
assume for all $\theta\in\Theta$, $r\in (0,1)$ that $\P(\fu^\theta \le r ) = r$,
let $\cU^\theta \colon [0,T]\times \Omega\to [0,T]$, $\theta\in\Theta$, satisfy for all $t\in [0,T]$, $\theta\in\Theta$ that $\cU_t^\theta = t + (T-t)\fu^\theta$, 
let $W^\theta \colon [0,T] \times \Omega \to \R^d$, $\theta\in\Theta$, be independent standard Brownian motions,
assume that $(\cU^\theta)_{\theta\in\Theta}$ and $(W^\theta)_{\theta\in\Theta}$ are independent,
let $\smallU \in C([0,T]\times \R^d, \R)$ satisfy for all $t\in[0,T]$, $x\in\R^d$ that
$\E[\abs{ \funcG(x + W^0_{T-t}) } + \int_t^T \abs{ (\funcF(\smallU))(s,x+W^0_{s-t}) } \,ds] < \infty$
and
\begin{equation}\label{assumed_u}
\smallU(t,x) = \E\br[\big]{ \funcG(x+W^0_{T-t}) } + \int_t^T \E\br[\big]{ (\funcF(\smallU))(s,x+W^0_{s-t}) } \,ds,
\end{equation}
and let $\mlp_n^\theta \colon [0,T] \times \R^d\times \Omega\to \R$, 
$n\in\Z$, 
$\theta\in\Theta$, satisfy for all 
$n\in\N_0$, 
$\theta\in\Theta$, $t\in[0,T]$, $x\in\R^d$ that 
\begin{equation}\label{b37}
\begin{split}
\mlp_n^\theta(t,x) 
& = \frac{\1_\N(n)}{\littleM^n} \br[\Bigg]{ \sum_{k=1}^{\littleM^n} \funcG\pr[\bigl]{x + W_{T-t}^{(\theta,0,-k)}} } \\
& \quad + \sum_{i=0}^{n-1} \frac{(T-t)}{\littleM^{n-i}}\br[\Bigg]{ \sum_{k=1}^{\littleM^{n-i}} \pr[]{ \funcF(\mlp_i^{(\theta,i,k)}) - \1_\N(i) \funcF( \mlp_{i-1}^{(\theta,-i,k)}) } \pr[]{ \cU_t^{(\theta,i,k)}, x + W_{\mathcal{U}_t^{(\theta,i,k)}-t}^{(\theta,i,k)} } }
\end{split}
\end{equation}
(cf.\ \cref{euclid_norm,rand_const}).
\end{setting}

\subsection{Measurability properties of MLP approximations}\label{sec:mlp_props}

\begin{lemma}
\label{properties_approx}
Assume \cref{setting1}.
Then
\begin{enumerate}[label=(\roman *)]
\item  \label{properties_approx:item1}
it holds 
for all $n \in \N_0$, 
$\theta\in\Theta $ that
$\mlp_{ n}^{\theta } \colon [0, T] \times \R^d \times \Omega \to \R$ is a continuous random field,
\item  \label{properties_approx:item2}
it holds\footnote{Note that for every $\mathcal{A} \subseteq \powerset$ it holds that $\sigmaAlgebra(\mathcal{A})$ is a sigma-algebra on $\Omega$ and note that for every $\mathcal{A} \subseteq \powerset$ and every sigma-algebra $\mathcal{B}$ on $\Omega$ with $\mathcal{A} \subseteq \mathcal{B}$ it holds that $\sigmaAlgebra(\mathcal{A}) \subseteq \mathcal{B}$.} for all $n \in \N_0$, 
$\theta \in \Theta$ that
$\sigmaAlgebra( \mlp^\theta_{n} ) \subseteq \sigmaAlgebra( (\cU^{(\theta, \vartheta)})_{\vartheta \in \Theta}, (W^{(\theta, \vartheta)})_{\vartheta \in \Theta})$,
\item  \label{properties_approx:item3}
it holds 
for all $n \in \N_0$ that
$(\mlp_{ n}^{\theta })_{\theta\in\Theta}$, $(W^\theta)_{\theta\in\Theta}$, and $(\cU^\theta)_{\theta\in\Theta}$ are independent,
\item  \label{properties_approx:item4}
it holds
for all $n, m \in \N_0$, 
$i,k,\mathfrak{i},\mathfrak{k} \in \Z$ with $(i,k) \neq (\mathfrak{i},\fk)$ 
that
$(\mlp^{(\theta,i,k)}_{n})_{\theta\in\Theta}$ and $(\mlp^{(\theta,\mathfrak{i},\mathfrak{k})}_{m})_{\theta\in\Theta}$ are independent,
and
\item  \label{properties_approx:item5}
it holds 
for all $n \in \N_0$
that 
$(\mlp^\theta_{n})_{\theta \in \Theta}$
are identically distributed random variables.
\end{enumerate}
\end{lemma}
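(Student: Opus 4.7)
The plan is to establish all five conclusions simultaneously by strong induction on $n \in \N_0$. In the base case $n = 0$ the indicator $\1_\N(0)$ vanishes and the inner sum $\sum_{i=0}^{-1}$ is empty, so \cref{b37} yields $\mlp_0^\theta \equiv 0$; \cref{properties_approx:item1,properties_approx:item2,properties_approx:item3,properties_approx:item4,properties_approx:item5} are then immediate from the i.i.d.\ hypothesis on $(\fu^\theta)_\theta$ and the independence and distributional hypotheses on $(W^\theta)_\theta$ and $(\cU^\theta)_\theta$.

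For the inductive step fix $n \in \N$ and assume the five conclusions hold for every $j \in \{0,1,\ldots,n-1\}$. Conclusion \cref{properties_approx:item1} follows by inspection of \cref{b37}: it writes $\mlp_n^\theta(t,x)$ as a finite sum of compositions of the continuous functions $\funcG$ and $\smallF$ with the continuous Brownian paths $W^{(\theta,0,-k)}, W^{(\theta,i,k)}$, the continuous time-changes $\cU^{(\theta,i,k)}$, and the random fields $\mlp_i^{(\theta,i,k)}, \mlp_{i-1}^{(\theta,-i,k)}$, which are continuous by the induction hypothesis on \cref{properties_approx:item1}. For \cref{properties_approx:item2}, the induction hypothesis on \cref{properties_approx:item2} gives the chain of inclusions
\begin{equation*}
\sigmaAlgebra(\mlp_i^{(\theta,i,k)}) \subseteq \sigmaAlgebra\pr[]{(\cU^{((\theta,i,k),\vartheta)})_{\vartheta\in\Theta},(W^{((\theta,i,k),\vartheta)})_{\vartheta\in\Theta}} \subseteq \sigmaAlgebra\pr[]{(\cU^{(\theta,\vartheta)})_\vartheta,(W^{(\theta,\vartheta)})_\vartheta},
\end{equation*}
and analogously for $\mlp_{i-1}^{(\theta,-i,k)}$; the explicit variables $W^{(\theta,\cdot)}$ and $\cU^{(\theta,\cdot)}$ appearing in \cref{b37} already live in $\sigmaAlgebra((\cU^{(\theta,\vartheta)})_\vartheta,(W^{(\theta,\vartheta)})_\vartheta)$, so \cref{b37} realises $\mlp_n^\theta$ as a measurable function of variables in this $\sigma$-algebra. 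Conclusion \cref{properties_approx:item5} follows from this representation together with the observation that, by the i.i.d.\ hypothesis, the joint law of $((\cU^{(\theta,\vartheta)})_\vartheta,(W^{(\theta,\vartheta)})_\vartheta)$ is the same for every $\theta \in \Theta$, combined with the induction hypothesis on \cref{properties_approx:item5} for the $\mlp_i^\bullet$ appearing in \cref{b37}.

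The core of the inductive step is \cref{properties_approx:item3,properties_approx:item4}, both of which rest on the disjointness of the index trees that drive different subfamilies. For \cref{properties_approx:item4}, applying \cref{properties_approx:item2} with $\theta$ replaced by $(\theta,i,k)$ and $(\theta,\mathfrak{i},\mathfrak{k})$ respectively yields
$\sigmaAlgebra(\mlp_n^{(\theta,i,k)}) \subseteq \sigmaAlgebra((\cU^{((\theta,i,k),\vartheta)})_\vartheta,(W^{((\theta,i,k),\vartheta)})_\vartheta)$
and an analogous inclusion with $(i,k)$ replaced by $(\mathfrak{i},\mathfrak{k})$. Since $(i,k) \neq (\mathfrak{i},\mathfrak{k})$, these two collections of multi-indices are disjoint, and the independence of $(\fu^\theta)_\theta$ and $(W^\theta)_\theta$ across disjoint index subsets upgrades to independence of the generated $\sigma$-algebras, hence of the two random families. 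Conclusion \cref{properties_approx:item3} is handled by the same disjoint-index idea: the multi-indices $(\theta,0,-k),(\theta,i,k),(\theta,-i,k)$ that appear in \cref{b37} all extend $\theta$ by at least two more components, so the driving indices for $(\mlp_n^\theta)_\theta$ are disjoint from the ``root'' index $\theta$ used for $W^\theta$ and $\cU^\theta$, and the assumed independence of $(W^\theta)_\theta$, $(\cU^\theta)_\theta$ then yields the claimed independence of the three families.

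The main obstacle I expect is the index-tree bookkeeping in \cref{properties_approx:item3,properties_approx:item4}: one must unwind the recursion explicitly, record that every $W$- and $\cU$-index appearing in $\mlp_n^\theta$ begins with $\theta$ followed by at least two additional coordinates, verify the relevant pairwise disjointness of these multi-index sets, and finally invoke the standard fact that $\sigma$-algebras generated by disjoint subfamilies of a jointly independent collection of random variables are independent. Once this bookkeeping is organized cleanly, the remaining parts of the induction reduce to routine continuity, measurability, and exchangeability considerations.
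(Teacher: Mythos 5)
Your overall plan---strong induction on $n$, using the recursion \cref{b37} together with the disjointness of the multi-index trees $\{(\theta,\vartheta):\vartheta\in\Theta\}$---is exactly the paper's strategy, and your $\sigma$-algebra containment chain for item \cref{properties_approx:item2} and the disjointness observations for items \cref{properties_approx:item3} and \cref{properties_approx:item4} are correct and are essentially what the paper writes. Contrary to your closing remark, the index bookkeeping is not the main obstacle; it is the part of the proof that goes through most cleanly.

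Where your sketch is genuinely incomplete is item \cref{properties_approx:item5}. You appeal simultaneously to (a) the $\theta$-invariance of the joint law of the tree $((\cU^{(\theta,\vartheta)})_\vartheta,(W^{(\theta,\vartheta)})_\vartheta)$ and (b) the induction hypothesis on \cref{properties_approx:item5} for the lower-level $\mlp_i^{\bullet}$'s, but these belong to two different arguments and neither is carried through. If you go via (a), you also need that $\mlp_n^\theta$ is the image of this tree under a single $\theta$-independent measurable map---a fact extracted from the explicit form of \cref{b37} and from item \cref{properties_approx:item2}, not from (b). If you go via (b), you need the \emph{joint} law of all the recursion inputs (the $\mlp_i^{(\theta,i,k)}$'s, the $\mlp_{i-1}^{(\theta,-i,k)}$'s, the $W^{(\theta,\cdot)}$'s, and the $\cU^{(\theta,\cdot)}$'s) to be $\theta$-invariant; (b) gives you only equality of marginals, so you must additionally invoke the joint independence of these inputs across the disjoint sub-trees (items \cref{properties_approx:item2}--\cref{properties_approx:item4}) to lift equality of marginals to equality of the joint law. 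The paper packages precisely this step via Hutzenthaler et al.\ \cite[Corollary 2.5]{HutzenthalerJentzenKruse2018}. A lesser but related point: for item \cref{properties_approx:item1}, ``by inspection'' is too quick---showing that composing continuous random fields with $\funcG$, $\smallF$, the Brownian paths, and the time-changes again yields a continuous random field is exactly where the paper invokes \cite[Item (i) in Lemma 2.9]{HutzenthalerJentzenKruse2018}, Hutzenthaler et al.\ \cite[Lemma 2.14]{HutzenthalerPricing2019}, and Beck et al.\ \cite[Lemma 2.4]{Kolmogorov} rather than a direct appeal to continuity of compositions.
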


\begin{proof}[Proof of \cref{properties_approx}]
We first prove \cref{properties_approx:item1} by induction. For the base case $n=0$ note that \cref{b37} ensures that for all $\theta\in\Theta$, $t\in[0,T]$, $x\in\R^d$ it holds that $\mlp_0^\theta(t,x) = 0$. This implies that for all $\theta\in\Theta$ it holds that $\mlp_0^\theta \colon [0,T] \times \R^d \times \Omega \to \R$ is a continuous random field. This establishes \cref{properties_approx:item1} in the base case $n=0$. For the induction step $\N_0 \ni (n-1) \induct n \in \N$ let $n\in\N$ and assume that for every $k \in \{0,1,\dots,n-1\}$, $\theta\in\Theta$ it holds that $\mlp_k^\theta \colon [0,T] \times \R^d \times \Omega \to \R^d$ is a continuous random field. This, the hypothesis that $\smallF \in C([0,T] \times \R^d, \R)$, \cref{big_F}, and, e.g., Hutzenthaler et al.\ \cite[Item (i) in Lemma 2.9]{HutzenthalerJentzenKruse2018} 
(applied for every $n\in\N_0$, $\theta\in\Theta$ with $d \with d$, $T \with T$, $(\Omega,\cF,\P) \with (\Omega,\cF,\P)$, $F \with \funcF$, $U \with \mlp_k^\theta$ in the notation of \cite[Item (i) of Lemma 2.9]{HutzenthalerJentzenKruse2018})
imply that for all $k \in \{0,1,\dots,n-1\}$, $\theta\in\Theta$ it holds that
\begin{equation}\label{f_rand_fld}
[0,T] \times \R^d \times \Omega \ni (t,x,\omega) \mapsto \br[\big]{ \funcF\pr[\big]{ [0,T] \times \R^d \ni (s,y) \mapsto \mlp_k^\theta(s,y,\omega) \in \R } }(t,x) \in \R
\end{equation}
is a continuous random field.
Combining this, the hypothesis that $\funcG \in C(\R^d,\R)$, the fact that for all $\theta\in\Theta$ it holds that $W^\theta \colon [0,T] \times \Omega \to \R^d$ and $\cU^\theta \colon [0,T] \times \Omega \to [0,T]$ are continuous random fields, \cref{b37}, Hutzenthaler et al.\ \cite[Lemma 2.14]{HutzenthalerPricing2019}, and Beck et al.\ \cite[Lemma 2.4]{Kolmogorov} proves that for all $\theta\in\Theta$ it holds that $\mlp_n^\theta \colon [0,T] \times \R^d \times \Omega \to \R^d$ is a continuous random field. Induction thus establishes \cref{properties_approx:item1}.
Next note that 
\cref{f_rand_fld}, 
Beck et al.\ \cite[Lemma 2.4]{Kolmogorov}, and \cref{properties_approx:item1} assure that 
for all $n \in \N_0$, $\theta\in\Theta $ it holds that 
$\funcF(\mlp_n^\theta)$ is $ ( \mathcal{B}([0, T] \times \R^d ) \otimes \sigmaAlgebra(\mlp^\theta_{n}) )/ \mathcal{B}(\R )$-measurable.
This, \cref{b37}, 
the fact that 
for all $\theta \in \Theta$ it holds that 
$W^\theta$ is $ ( \mathcal{B}([0, T]) \otimes \sigmaAlgebra(W^\theta) )/ \mathcal{B}(\R^d )$-measurable,
the fact that 
for all $\theta \in \Theta$ it holds that 
$\cU^\theta$ is $ ( \mathcal{B}([0, T]) \otimes \sigmaAlgebra(\fu^\theta) )/ \mathcal{B}([0, T] )$-measurable,
and induction on $\N_0$ prove \cref{properties_approx:item2}.
Moreover, observe that \cref{properties_approx:item2} and the fact that 
for all $\theta \in \Theta$ it holds that
$(\cU^{(\theta, \vartheta)})_{\vartheta \in \Theta}, (W^{(\theta, \vartheta)})_{\vartheta \in \Theta}$,
$W^\theta$, and $\fu^\theta$ are independent establish \cref{properties_approx:item3}.
Furthermore, note that \cref{properties_approx:item2} and the fact that
for all $i,k,\mathfrak{i},\mathfrak{k}, \in \Z$, $\theta \in \Theta$ with $(i,k) \neq (\mathfrak{i},\mathfrak{k})$ it holds that 
$((\cU^{(\theta,i,k, \vartheta)})_{\vartheta \in \Theta}, (W^{(\theta,i,k, \vartheta)})_{\vartheta \in \Theta})$
and
$((\cU^{(\theta,\mathfrak{i},\mathfrak{k}, \vartheta)})_{\vartheta \in \Theta}, (W^{(\theta,\mathfrak{i},\mathfrak{k}, \vartheta)})_{\vartheta \in \Theta})$
are independent establish \cref{properties_approx:item4}.
In addition, observe that the fact that \cref{b37} implies that
for all $\theta\in\Theta $, $t\in[0,T]$, $x\in\R^d$ it holds that
$U^\theta_{0}(t,x) = 0$,
the hypothesis that $(W^\theta)_{\theta \in \Theta}$ are independent standard Brownian motions, 
the hypothesis that $(\fu^\theta)_{\theta \in \Theta}$ are i.i.d.\ random variables, 
\cref{properties_approx:item1,%
properties_approx:item2,%
properties_approx:item3,%
properties_approx:item4}, Hutzenthaler et al.\ \cite[Corollary 2.5]{HutzenthalerJentzenKruse2018}, and induction on $\N_0$ establish \cref{properties_approx:item5}.
The proof of \cref{properties_approx} is thus complete.
\end{proof}

\subsection{Integrability properties of MLP approximations}

\begin{lemma}\label{lem:integrable}
Assume \cref{setting1}.
Then 
it holds for all $n\in\N_0$, 
$\theta\in\Theta$, $s\in[0,T]$, $t\in[s,T]$, $x\in\R^d$ that
\begin{equation}\label{eq_3_6}
\begin{split}
& \E\br[\big]{ \abs[]{ \mlp_{n}^\theta(t,x+W^\theta_{t-s}) } } 
+ \E\br[\big]{ \abs[]{ \funcG(x + W^{\theta}_{t-s}) } } 
+ \E\br[\big]{ \abs[]{ \pr[]{ \funcF( \mlp_n^\theta) } \pr[]{ \cU_t^\theta, x + W^\theta_{\cU_t^\theta - t} } } } \\
& + \int_s^T \E\br[\big]{ \abs[]{ \mlp_{n}^\theta(r,x+W^\theta_{r-s}) } } \, dr 
+ \int_s^T \E\br[\big]{ \abs[]{ (\funcF(\mlp_{n}^\theta))(r,x+W_{r-s}^\theta) } } \, dr 
< \infty.
\end{split}
\end{equation}
\end{lemma}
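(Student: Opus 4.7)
The plan is to argue by induction on $n \in \N_0$. In the base case $n = 0$, \cref{b37} gives $\mlp_0^\theta \equiv 0$, hence $\funcF(\mlp_0^\theta)(r,y) = \smallF(r,y,0)$, and the polynomial growth bound in \cref{fn_cond} together with the finiteness of all moments of Brownian motion on compact time intervals makes every summand in \cref{eq_3_6} finite. The second summand $\E[\abs{\funcG(x+W^\theta_{t-s})}]$ is $n$-independent and is handled by the same growth-and-Gaussian-moments computation for every $n$, so in the induction step I focus on the first, third, fourth, and fifth summands.

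For the induction step, assuming \cref{eq_3_6} holds for every $k \in \{0,\dots,n-1\}$ in place of $n$, I first establish the fourth (and, as a by-product, the first) summand for $n$. Substituting \cref{b37} into $\abs{\mlp_n^\theta(r,x+W^\theta_{r-s})}$ and expanding by the triangle inequality produces a finite linear combination of terms of the forms
(a) $\abs{\funcG(x+W^\theta_{r-s}+W^{(\theta,0,-k)}_{T-r})}$ and
(b) $(T-r)\,\abs{\funcF(\mlp_j^\vartheta)(\cU_r^{(\theta,j,k)},x+W^\theta_{r-s}+W^{(\theta,j,k)}_{\cU_r^{(\theta,j,k)}-r})}$,
with $j \in \{0,\dots,n-1\}$, $k \in \N$, and $\vartheta \in \{(\theta,j,k),(\theta,-j,k)\}$. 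Type (a) is immediate from \cref{fn_cond} and Gaussian moment bounds. For type (b), the Lipschitz estimate $\abs{\funcF(v)(r,y)} \le \boundFG(1+\norm{y}^p) + \LipConstF\,\abs{v(r,y)}$ separates a polynomial-growth contribution (bounded as in (a)) from a contribution involving $\E[\abs{\mlp_j^\vartheta(\cU_r^{(\theta,j,k)},x+W^\theta_{r-s}+W^{(\theta,j,k)}_{\cU_r^{(\theta,j,k)}-r})}]$.

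The decisive step is to evaluate this last expectation. By \cref{properties_approx:item2} in \cref{properties_approx} and the observation that every element of $\Theta$ has length at least one, the indices $\theta$, $(\theta,\pm j,k)$, and the indices used to build $\mlp_j^\vartheta$ are mutually distinct, so $\mlp_j^\vartheta$ is independent of $(W^\theta, W^{(\theta,j,k)}, \fu^{(\theta,j,k)})$. Conditioning on $\mlp_j^\vartheta$, using that $\cU_r^{(\theta,j,k)}$ is uniform on $[r,T]$, and noting that $W^\theta_{r-s}+W^{(\theta,j,k)}_{\rho-r}$ has the same $\mathcal{N}(0,(\rho-s)I_d)$ distribution as $W^\vartheta_{\rho-s}$, I obtain
\begin{equation*}
\E\br[\big]{\abs[\big]{\mlp_j^\vartheta(\cU_r^{(\theta,j,k)}, x+W^\theta_{r-s}+W^{(\theta,j,k)}_{\cU_r^{(\theta,j,k)}-r})}}
= \tfrac{1}{T-r} \int_r^T \E\br[\big]{\abs[\big]{\mlp_j^\vartheta(\rho, x+W^\vartheta_{\rho-s})}}\, d\rho.
\end{equation*}
The prefactor $(T-r)$ from (b) cancels the singular $1/(T-r)$, so no integrability issue arises at $r = T$, and Tonelli's theorem together with the inductive hypothesis $\int_s^T \E[\abs{\mlp_j^\vartheta(\rho, x+W^\vartheta_{\rho-s})}]\,d\rho < \infty$ yields the finiteness of the first and fourth summands of \cref{eq_3_6} for $n$.

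The third summand follows by the same computation applied with a single evaluation rather than an outer $dr$-integration, invoking the fourth-summand claim for $n$ just established (with $s$ replaced by $t$) in place of the inductive hypothesis; the fifth summand reduces to the fourth via the Lipschitz decomposition $\abs{\funcF(\mlp_n^\theta)(r,y)} \le \boundFG(1+\norm{y}^p) + \LipConstF\,\abs{\mlp_n^\theta(r,y)}$ applied once more. I expect the main technical obstacle to be the equality-in-distribution step replacing the sum of two independent Brownian evaluations by a single one: it requires carefully conditioning on the $C([0,T]\times\R^d,\R)$-valued random variable $\mlp_j^\vartheta$ (which is well-defined thanks to \cref{properties_approx:item1}) so that, conditionally, only the joint law of the Brownian arguments matters.
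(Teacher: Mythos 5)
Your proposal is correct and follows essentially the same inductive argument as the paper's proof: same base case $n=0$ via polynomial growth and Gaussian moments, same expansion of $\mlp_n^\theta$ by the recursion and the triangle inequality, same conversion of the uniform time $\cU^{(\theta,j,k)}$ into a $d\rho$-integral with the $(T-r)$ prefactor cancelling the $1/(T-r)$ density, and same distributional replacement of $W^\theta_{r-s} + W^{(\theta,j,k)}_{\rho-r}$ by a single Brownian increment independent of $\mlp_j^\vartheta$. The only differences are stylistic — the paper cites Hutzenthaler et al.\ \cite[Lemmas 2.15 and 3.7]{HutzenthalerPricing2019} for the conditioning and change-of-time-variable step that you carry out by hand, and the paper keeps $\funcF(\mlp_j^\vartheta)$ intact inside the expansion (exploiting that the inductive hypothesis already controls the fifth summand for $k<n$) and uses the Lipschitz decomposition $\abs{\funcF(v)(r,y)} \le \boundFG(1+\norm{y}^p) + \LipConstF\abs{v(r,y)}$ only once at the end, whereas you apply it inside each summand, which is logically harmless but slightly redundant.
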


\begin{proof}[Proof of \cref{lem:integrable}]
Throughout this proof let $x\in\R^d$ and assume without loss of generality that $ T \in (0,\infty)$.
Note that \cref{fn_cond}, the fact that for all $r, a, b\in[0,\infty)$ it holds that $(a + b)^r \le 2^{\max\{r-1,0\}}(a^r + b^r)$, and the fact that for all $\theta\in\Theta$ it holds that $\E[\norm{W_T^\theta}^p] < \infty$ assure that for all $s\in[0,T]$, $t\in[s,T]$, $\theta\in\Theta$ it holds that
\begin{equation}\label{g_exp_bd}
\E\br[\big]{ \abs[]{ \funcG(x + W_{t-s}^\theta) } } 
\le \E\br[\big]{ \boundFG \pr[]{ 1 + \norm{ x + W^\theta_{t-s} }^p } } 
\le \boundFG \br[\big]{ 1 + 2^{\max\{p-1,0\}} \pr[\big]{ \norm{x}^p + \E\br[\big]{ \norm{W_T^\theta}^p } } } 
< \infty.
\end{equation}
Next we claim that
for all $n\in\N_0$, $s\in [0,T]$, $t\in[s,T]$, $\theta\in\Theta$
it holds that
\begin{equation}\label{b38:eq1}
\begin{split}
& \E\br[\big]{ \abs[]{ \mlp_{n}^\theta(t,x+W^\theta_{t-s}) } } 
+ \E\br[\big]{ \abs[]{ \pr[]{ \funcF( \mlp_n^\theta) } \pr[]{ \cU_t^\theta, x + W^\theta_{\cU_t^\theta - t} } } } \\
& + \int_s^T \E\br[\big]{ \abs[]{ \mlp_{n}^\theta(r,x+W^\theta_{r-s}) } } \, dr 
+ \int_s^T \E\br[\big]{ \abs[]{ (\funcF(\mlp_{n}^\theta))(r,x+W_{r-s}^\theta) } } \, dr 
< \infty.
\end{split}
\end{equation}
We now prove \cref{b38:eq1} by induction on $n \in \N_0$.
For the base case $n=0$ note that  
the fact that \cref{b37} implies that for all $t\in[0,T]$, $\theta\in\Theta$ it holds that $\mlp^\theta_{0}(t,x) = 0$
ensures that for all $s\in[0,T]$, $t \in [s, T]$
it holds that 
\begin{align}\label{eq_2_10}
& \E\br[\big]{ \abs[]{ \mlp_{0}^\theta(t,x+W^\theta_{t-s}) } } 
+ \E\br[\big]{ \abs[]{ \pr[]{ \funcF( \mlp_0^\theta) } \pr[]{ \cU_t^\theta, x + W^\theta_{\cU_t^\theta - t} } } } \nonumber \\
& + \int_s^T \E\br[\big]{ \abs[]{ \mlp_{0}^\theta(r,x+W^\theta_{r-s}) } } \, dr 
+ \int_s^T \E\br[\big]{ \abs[]{ (\funcF(\mlp_{0}^\theta))(r,x+W_{r-s}^\theta) } } \, dr\\
& = \E\br[\big]{ \abs[]{ \pr[]{ \funcF( 0) } \pr[]{ \cU_t^\theta, x + W^\theta_{\cU_t^\theta - t} } } }
+ \int_s^T \E\br[\big]{ \abs[]{ \pr[]{ \funcF(0) } (r,x+W_{r-s}^\theta) } } \, dr. \nonumber
\end{align}
In addition, observe that \cref{fn_cond}, \cref{big_F}, 
and \cref{g_exp_bd}
guarantee that for all $s\in[0,T]$, $t \in [s, T]$, $\theta\in\Theta$
it holds that 
\begin{align}\label{f_0_bd}
& \E\br[\big]{ \abs[]{ \pr[]{ \funcF( 0) } \pr[]{ \cU_t^\theta, x + W^\theta_{\cU_t^\theta - t} } } }
+ \int_s^T \E\br[\big]{ \abs[]{ \pr[]{ \funcF(0) } (r,x+W_{r-s}^\theta) } } \, dr \nonumber \\
& \le \E \br[\big]{ \boundFG \pr[\big]{ 1 + \norm{ x + W^\theta_{\cU^\theta_t - t} }^p } } 
+ \int_s^T \E \br[\big]{ \boundFG \pr[\big]{ 1 + \norm{ x + W^\theta_{r-s} }^p } } \, dr  \\
& \le  \pr[]{ T + 1 } \sup_{r\in[0,T]}  \pr[\big]{ \E \br[\big]{ \boundFG \pr[\big]{ 1 + \norm{ x + W^\theta_{r} }^p } } } 
< \infty. \nonumber
\end{align}
Combining this and \cref{eq_2_10} establishes \cref{b38:eq1} in the base case $n=0$. 
For the induction step $\N_0 \ni (n-1) \induct n \in \N$ let $n\in \N$ and assume that
for all $k \in \{0,1,\dots,n-1\}$, $s\in[0,T]$, $t \in [s, T]$, $\theta\in\Theta$
it holds that 
\begin{align}\label{b38:eq2}
& \E\br[\big]{ \abs[]{ \mlp_{k}^\theta(t,x+W^\theta_{t-s}) } } 
+ \E\br[\big]{ \abs[]{ \pr[]{ \funcF( \mlp_k^\theta) } \pr[]{ \cU_t^\theta, x + W^\theta_{\cU_t^\theta - t} } } } \\
& + \int_s^T \E\br[\big]{ \abs[]{ \mlp_{k}^\theta(r,x+W^\theta_{r-s}) } } \, dr 
+ \int_s^T \E\br[\big]{ \abs[]{ (\funcF(\mlp_{k}^\theta))(r,x+W_{r-s}^\theta) } } \, dr 
< \infty. \nonumber
\end{align}
Observe that \cref{b37} and the triangle inequality demonstrate that
for all $s\in[0,T]$, $t \in [s, T]$, $\theta\in\Theta$ it holds that 
\begin{align}
\label{b38:eq3}
\E\br[\big]{ \abs[]{ \mlp_{n}^{\theta}(t,x+W_{t-s}^\theta) } } 
& \le \tfrac{\1_\N(n)}{\littleM^{n}}\left[ \SmallSum_{i=1}^{\littleM^n} \E\br[\big]{ \abs[]{ \funcG\pr[]{ x + W^\theta_{t-s}+W^{(\theta,0,-i)}_{T-t} } } } \right] \\
& \quad + \SmallSum_{i=0}^{n-1} \tfrac{(T-t)}{\littleM^{n-i}} \biggl[ \SmallSum_{k=1}^{\littleM^{n-i}} \E\br[\big]{ \abs[]{ \pr[]{ \funcF\pr[]{ \mlp_{i}^{(\theta,i,k)} } } \pr[]{ \cU_t^{(\theta,i,k)},x + W^\theta_{t-s}+W_{\cU_t^{(\theta,i,k)}-t}^{(\theta,i,k)} } } } \nonumber \\
& \qquad + \1_\N(i) \, \E\br[\big]{ \abs[]{ \pr[]{ \funcF \pr[]{ \mlp_{i-1}^{(\theta,-i,k)} } } \pr[]{ \cU_t^{(\theta,i,k)},x + W^\theta_{t-s}+W_{\cU_t^{(\theta,i,k)}-t}^{(\theta,i,k)} } } } \biggr]. \nonumber
\end{align}
Next note that \cref{g_exp_bd} and the fact that 
$(W^{\theta})_{\theta \in \Theta}$ are independent standard Brownian motions imply that
for all $s\in[0,T]$, $t \in [s, T]$, $\theta\in\Theta$, $i \in \Z$ it holds that 
\begin{equation}
\label{b38:eq4}
\E\br[\big]{ \abs[]{ \funcG\pr[]{ x+W^\theta_{t-s}+W^{(\theta,0,i)}_{T-t} } } }
= \E\br[\big]{ \abs[]{ \funcG\pr[]{ x+W^\theta_{(t-s) + (T-t)} } } }
= \E\br[\big]{ \abs[]{ \funcG(x + W_{T-s}^\theta) } } 
< \infty.
\end{equation}
Furthermore, observe that \cref{b38:eq2}, \cref{properties_approx}, the fact that $(W^\theta)_{\theta\in\Theta}$ are independent standard Brownian motions, the fact that $(\cU^\theta)_{\theta\in\Theta}$ are i.i.d.\ random variables, the hypothesis that $(W^\theta)_{\theta\in\Theta}$ and $(\cU^\theta)_{\theta\in\Theta}$ are independent, 
the hypothesis that for all $\theta\in\Theta$, $r\in(0,1)$ it holds that $\P(\fu^\theta \le r)=r$,
Hutzenthaler et al.\ \cite[Lemma 2.15]{HutzenthalerPricing2019}, and
Hutzenthaler et al.\ \cite[Lemma 3.7]{HutzenthalerPricing2019}
guarantee that
for all $i \in \{0,1,\dots,n-1\}$, $k\in\Z$, $s\in[0,T]$, $t \in [s, T]$, $\theta\in\Theta$ it holds that 
\begin{align}
& (T-t) \, \E\br[\big]{ \abs[]{ \pr[]{ \funcF \pr[]{ \mlp_{i}^{(\theta,i,k)} } } \pr[]{ \cU_t^{(\theta,i,k)},x+W^\theta_{t-s}+W_{\cU_t^{(\theta,i,k)}-t}^{(\theta,i,k)} } } } \nonumber \\
& = \int_t^T \E\br[\big]{ \abs[]{ \pr[]{ \funcF \pr[]{ \mlp_{i}^{(\theta,i,k)} } } \pr[]{ r ,x+W^\theta_{t-s}+W_{r-t}^{(\theta,i,k)} } } } \, dr \\
& = \int_t^T \E\br[\big]{ \abs[]{ \pr[]{ \funcF \pr[]{ \mlp_{i}^{\theta} } } \pr[]{ r ,x+W^\theta_{t-s}+W_{r-t}^{\theta} } } } \, dr
= \int_t^T \E\br[\big]{ \abs[]{ \pr[]{ \funcF \pr[]{ \mlp_{i}^{\theta} } } \pr[]{ r ,x+W^\theta_{r-s} } } } \, dr
< \infty.\nonumber
\end{align}
Combining this, 
\cref{b38:eq2}, \cref{b38:eq3}, and \cref{b38:eq4}
establishes that
for all 
$s\in[0,T]$, $t \in [s, T]$, $\theta\in\Theta$
it holds that 
\begin{align}\label{b38:eq6}
& \E\br[\big]{ \abs[]{ \mlp_{n}^{\theta}(t,x+W_{t-s}^\theta) } } 
\le \Biggl( \SmallSum_{i=0}^{n-1} \tfrac{1}{\littleM^{n-i}} \Biggl[ {\SmallSum_{k=1}^{\littleM^{n-i}}} 
\displaystyle\int_t^T \E\br[\big]{ \abs[]{ \pr[]{ \funcF \pr[]{ \mlp_{i}^{\theta} } } \pr[]{ r ,x+W^\theta_{r-s} } } } \, dr \nonumber \\
& + \1_\N(i) \int_t^T \E\br[\big]{ \abs[]{ \pr[]{ \funcF \pr[]{ \mlp_{i-1}^{\theta} } } \pr[]{ r ,x+W^\theta_{r-s} } } } \, dr \Biggr] \Biggr) + \frac{\1_\N(n)}{\littleM^{n}}\left[ \SmallSum_{i=1}^{\littleM^n} \E\br[\big]{ \abs[]{ \funcG\pr[]{ x + W^\theta_{T-s} } } } \right] \nonumber \\
& = \Biggl[ \SmallSum_{i=0}^{n-1} \displaystyle\int_t^T \E\br[\big]{ \abs[]{ \pr[]{ \funcF \pr[]{ \mlp_{i}^{\theta} } } \pr[]{ r ,x+W^\theta_{r-s} } } } \, dr \\
& \quad + \1_\N(i) \int_t^T \E\br[\big]{ \abs[]{ \pr[]{ \funcF \pr[]{ \mlp_{i-1}^{\theta} } } \pr[]{ r ,x+W^\theta_{r-s} } } } \, dr \Biggr] + \1_\N(n) \, \E\br[\big]{ \abs[]{ \funcG\pr[]{ x + W^\theta_{T-s} } } } \nonumber \\
& \le \1_\N(n) \, \E\br[\big]{ \abs[]{ \funcG\pr[]{ x + W^\theta_{T-s} } } }
+ 2 \left[ \SmallSum_{i=0}^{n-1} \displaystyle\int_t^T \E\br[\big]{ \abs[]{ \pr[]{ \funcF \pr[]{ \mlp_{i}^{\theta} } } \pr[]{ r ,x+W^\theta_{r-s} } } } \, dr \right]
< \infty. \nonumber
\end{align}
This implies that 
for all 
$s\in[0,T]$, $\theta\in\Theta$
it holds that 
\begin{align}
\label{b38:eq7}
& \int_s^T \E\br[\big]{ \abs[]{ \mlp_{n}^{\theta}(r,x+W_{r-s}^\theta) } } \, dr
\le (T-s) \sup_{r\in[s,T]} \E\br[\big]{ \abs[]{ \mlp_{n}^{\theta}(r,x+W_{r-s}^\theta) } } \\
& \le (T-s) \left( \1_\N(n) \, \E\br[\big]{ \abs[]{ \funcG\pr[]{ x + W^\theta_{T-s} } } }
+ 2 \left[ \SmallSum_{i=0}^{n-1} \displaystyle \int_s^T \E\br[\big]{ \abs[]{ \pr[]{ \funcF \pr[]{ \mlp_{i}^{\theta} } } \pr[]{ r ,x+W^\theta_{r-s} } } } \, dr \right] \right) 
< \infty. \nonumber
\end{align}
Combining this, the triangle inequality, \cref{fn_cond}, \cref{big_F}, and \cref{f_0_bd} proves that
for all 
$s\in[0,T]$, $\theta\in\Theta$
it holds that
\begin{align}
& \int_s^T \E\br[\big]{ \abs[]{ \pr[]{ \funcF\pr[]{ \mlp_{n}^{\theta} } } \pr[]{ r,x+W_{r-s}^\theta } } } \, dr \nonumber \\
& \le \int_s^T \E\br[\big]{ \abs[]{ \pr[]{ \funcF\pr[]{ \mlp_{n}^{\theta} } - \funcF(0) } \pr[]{ r,x+W_{r-s}^\theta } } } \, dr 
+ \int_s^T \E\br[\big]{ \abs[]{ \pr[]{ \funcF(0) } \pr[]{ r,x+W_{r-s}^\theta } } } \, dr \\
& \le \LipConstF \int_s^T \E\br[\big]{ \abs[]{ \mlp_n^\theta(r,x + W^\theta_{r-s}) } } \, dr
+ \int_s^T \E\br[\big]{ \abs[]{ \pr[]{ \funcF\pr[]{ 0 } } \pr[]{ r,x+W_{r-s}^\theta } } } \, dr 
< \infty. \nonumber
\end{align}
This, \cref{b38:eq6}, \cref{b38:eq7}, and induction prove \cref{b38:eq1}. 
Combining \cref{b38:eq1} with \cref{g_exp_bd} therefore establishes \cref{eq_3_6}.
The proof of \cref{lem:integrable} is thus complete.
\end{proof}

\subsection{Expectations of MLP approximations}

\begin{lemma}\label{lem:mlp_expectation}
Assume \cref{setting1}.
Then 
\begin{enumerate}[label=(\roman *)]
\item
\label{lem:mlp_expectation_item1}
it holds for all $n\in\N_0$, $\theta\in\Theta$, $t\in[0,T]$, $x\in\R^d$ that
$(\funcF(\mlp_{n}^{(\theta,n,k)}))(\cU_t^{(\theta,n,k)}, x + W_{\cU_t^{(\theta,n,k)}-t}^{(\theta,n,k)}) - \1_\N(n)(\funcF(\mlp_{n-1}^{(\theta,-n,k)}))(\cU_t^{(\theta,n,k)}, x + W_{\cU_t^{(\theta,n,k)}-t}^{(\theta,n,k)})$, $k\in\Z$,
are i.i.d.\ random variables,
\item
\label{lem:mlp_expectation_item2_a}
it holds for all $n\in\N_0$, $t\in[0,T]$, $x\in\R^d$ that
\begin{equation}\label{eq:3_20}
\begin{split}
& \E\br[\big]{ \abs[]{ \mlp_{n}^0(t,x) } }
+ \E\br[\big]{ \abs[]{ \funcG\pr[]{ x + W^{(0,0,-1)}_{T-t} } } } \\
& + \SmallSum_{i=0}^{n-1} \E\br[\big]{ \abs[]{ \pr[]{ \funcF (\mlp_{i}^{(0,i,1)} ) - \1_\N(i) \funcF(\mlp_{i-1}^{(0,-i,1)}) } \pr[]{ \cU_t^{(0,i,1)}, x + W_{\cU_t^{(0,i,1)}-t}^{(0,i,1)} } } }
< \infty,
\end{split}
\end{equation}
\item
\label{lem:mlp_expectation_item2}
it holds for all $n\in\N_0$, $t\in[0,T]$, $x\in\R^d$ that
\begin{equation}\label{eq:3_20a}
\begin{split}
& \E\br[\big]{ \mlp_{n}^0(t,x) } = \1_\N(n)\,\E\br[\big]{ \funcG\pr[\big]{ x + W^{(0,0,-1)}_{T-t} } } \\
& + (T-t)\br[\bigg]{ \SmallSum_{i=0}^{n-1} \E\br[\big]{ \pr[]{ \funcF (\mlp_{i}^{(0,i,1)} ) - \1_\N(i) \funcF(\mlp_{i-1}^{(0,-i,1)}) } \pr[]{ \cU_t^{(0,i,1)}, x + W_{\cU_t^{(0,i,1)}-t}^{(0,i,1)} } } },
\end{split}
\end{equation}
and
\item 
\label{lem:mlp_expectation_item3}
it holds for all $n\in\N_0$, $\theta\in\Theta$, $t\in[0,T]$, $x\in\R^d$ that
\begin{equation}\label{eq_3_7}
\begin{split}
& (T-t) \, \E\br[\big]{ \abs[]{ \pr[]{ \funcF( \mlp_n^\theta) - \1_\N(n) \funcF(\mlp_{n-1}^\theta) } \pr[]{ \cU_t^\theta, x + W^\theta_{\cU_t^\theta - t} } } } \\
& = \int_t^T \E\br[\big]{ \abs[]{ \pr[]{ \funcF( \mlp_n^\theta) - \1_\N(n) \funcF(\mlp_{n-1}^\theta) } \pr[]{ r , x + W^\theta_{r - t} } } } \, dr 
< \infty.
\end{split}
\end{equation}
\end{enumerate}
\end{lemma}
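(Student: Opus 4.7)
My plan is to establish the four items in the order (i), (iv), (ii), (iii), since (iv) feeds into the finiteness statement in (ii), and (iii) follows most cleanly from (i) applied to the defining equation \cref{b37}.

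\textbf{Item (i).} The key observation is that for every $k \in \Z$, the quadruple
\[ \pr[\big]{\mlp_{n}^{(\theta,n,k)}, \, \mlp_{n-1}^{(\theta,-n,k)}, \, \cU^{(\theta,n,k)}, \, W^{(\theta,n,k)}} \]
has a joint distribution that does not depend on $k$, and the quadruples for distinct values of $k$ are mutually independent. Identical distribution as $k$ varies comes from \cref{properties_approx:item2,properties_approx:item3,properties_approx:item5} in \cref{properties_approx} together with the hypotheses that $(\fu^\theta)_{\theta \in \Theta}$ are i.i.d.\ and $(W^\theta)_{\theta \in \Theta}$ are independent standard Brownian motions, while independence across different $k$ is a direct consequence of \cref{properties_approx:item4} applied to $\mlp$ (noting that $n \neq -n$ whenever $n \in \N$, so the issue of overlapping indices on $\mlp_n$ and $\mlp_{n-1}$ is avoided), combined with the independence of the Brownian motions and uniform random variables with distinct indices. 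Composing each quadruple with the continuous, deterministic map $(v, w, u, \omega) \mapsto \funcF(v)(u, x + \omega_{u-t}) - \1_\N(n) \funcF(w)(u, x + \omega_{u-t})$ then preserves both identical distribution and independence, yielding item (i).

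\textbf{Item (iv).} I will invoke \cref{properties_approx:item3} of \cref{properties_approx} together with the independence of $(W^\theta)_{\theta \in \Theta}$ and $(\fu^\theta)_{\theta \in \Theta}$, which gives that the random function $\funcF(\mlp_n^\theta) - \1_\N(n) \funcF(\mlp_{n-1}^\theta)$ is independent of the pair $(\fu^\theta, W^\theta)$. Since $\cU_t^\theta = t + (T-t)\fu^\theta$ and $\fu^\theta$ is uniform on $[0,1]$ and independent of $W^\theta$, a conditional Fubini argument (via Hutzenthaler et al.\ \cite[Lemma 2.15]{HutzenthalerPricing2019}, in the same style already used in the proof of \cref{lem:integrable}) yields
\begin{equation*}
\E\br[\big]{\abs[]{(\funcF(\mlp_n^\theta) - \1_\N(n) \funcF(\mlp_{n-1}^\theta))(\cU_t^\theta, x + W^\theta_{\cU_t^\theta - t})}} = \int_0^1 \E\br[\big]{\abs[]{(\funcF(\mlp_n^\theta) - \1_\N(n) \funcF(\mlp_{n-1}^\theta))(t + (T-t)r, x + W^\theta_{(T-t)r})}} \, dr.
\end{equation*}
The linear substitution $s = t + (T-t) r$ turns this into $(T-t)^{-1} \int_t^T \E[\abs{\cdots(s, x + W^\theta_{s-t})}] \, ds$, which after multiplication by $(T-t)$ gives the claimed equality. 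Finiteness of the right-hand side is immediate from \cref{lem:integrable} applied with $s = t$ (using the triangle inequality to split the $\1_\N(n)$ term).

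\textbf{Items (ii) and (iii).} Finiteness of the three terms in \cref{eq:3_20} follows at once: for the first term, apply \cref{lem:integrable} with $\theta = 0$, $s = t$; for the second, use \cref{fn_cond} combined with finiteness of Brownian moments; for the summands under $\sum_{i=0}^{n-1}$, use item (iv) just proved together with the integral bound from \cref{lem:integrable}. With finiteness in hand, I take expectations termwise in the defining equation \cref{b37} for $\mlp_n^0(t, x)$. The prefactors $\littleM^{-(n-i)}$ and $\littleM^{-n}$ cancel against the sums over $k \in \{1, \dots, \littleM^{n-i}\}$ and $k \in \{1, \dots, \littleM^n\}$ respectively, because item (i) implies that the summands inside the $\sum_{k=1}^{\littleM^{n-i}}$ are identically distributed with common expectation equal to that of the $k=1$ summand, and because the $W^{(0,0,-k)}$, $k \in \N$, are i.i.d.\ standard Brownian motions so that the $\funcG$ summands are also identically distributed with common expectation equal to that of the $k=1$ summand. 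This produces exactly \cref{eq:3_20a}, completing item (iii).

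I expect the main subtlety to be the bookkeeping in item (i): verifying that the independence conclusions of \cref{properties_approx:item4} in \cref{properties_approx} can simultaneously accommodate the indices $(\theta, n, k)$ appearing in the $\mlp_n$ summand and $(\theta, -n, k)$ appearing in the $\mlp_{n-1}$ summand for a fixed $k$, and that these are genuinely independent of the corresponding tuples for $k' \neq k$. Once item (i) is secure, items (ii)–(iv) are essentially routine consequences of \cref{lem:integrable}, \cref{properties_approx}, and the uniform distribution of $\fu^\theta$.
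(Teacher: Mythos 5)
Your plan follows essentially the same route as the paper's proof: item (i) via the independence/identical-distribution structure of \cref{properties_approx} combined with a composition argument for continuous random fields (the paper invokes Hutzenthaler et al.\ \cite[Corollary 2.5 and Lemma 2.6]{HutzenthalerJentzenKruse2018} for exactly this), item (iv) via the uniform distribution of $\fu^\theta$ and a conditional Fubini/substitution step (the paper's citation of \cite[Lemma 3.7]{HutzenthalerPricing2019}), and items (ii)--(iii) via \cref{lem:integrable} and termwise expectations in \cref{b37} with the $k$-averages collapsing by identical distribution; your reordering (i), (iv), (ii), (iii) is harmless since your argument for (iv) uses only (i), \cref{properties_approx}, and \cref{lem:integrable}.

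One step is stated imprecisely: for the summands in \cref{lem:mlp_expectation_item2_a} you propose to "use item (iv) just proved", but the random variable there is $\pr[]{\funcF(\mlp_i^{(0,i,1)}) - \1_\N(i)\funcF(\mlp_{i-1}^{(0,-i,1)})}\pr[]{\cU_t^{(0,i,1)}, x+W^{(0,i,1)}_{\cU_t^{(0,i,1)}-t}}$, in which the two fields carry \emph{different} superscripts and are therefore independent, whereas in item (iv) both fields carry the same index $\theta$ and are dependent. Since $\E[\abs{X-Y}]$ depends on the joint law, item (iv) does not directly yield finiteness of these mixed-index terms. The repair is exactly the paper's argument (and the tool you already use for (iv)): split by the triangle inequality, observe that each single-field evaluation, e.g.\ $(\funcF(\mlp_{i-1}^{(0,-i,1)}))(\cU_t^{(0,i,1)}, x+W^{(0,i,1)}_{\cU_t^{(0,i,1)}-t})$, is identically distributed to the corresponding evaluation with all indices equal to $0$ (by \cref{properties_approx} and independence of the field from the evaluation point), and then conclude finiteness from \cref{lem:integrable}. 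With that one-line fix your proposal matches the paper's proof.
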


\begin{proof}[Proof of \cref{lem:mlp_expectation}]
Throughout this proof let $x\in\R^d$.
Observe that 
\cref{properties_approx},
the hypothesis that $(\cU^\theta)_{\theta\in\Theta}$ are i.i.d.\ random variables, the hypothesis that $(W^\theta)_{\theta\in\Theta}$ are independent standard Brownian motions, Hutzenthaler et al.\ \cite[Corollary 2.5]{HutzenthalerJentzenKruse2018}, and Hutzenthaler et al.\ \cite[Item (i) of Lemma 2.6]{HutzenthalerJentzenKruse2018} 
(applied for every $n\in\N_0$, $\theta\in\Theta$ with $d \with d$, $T \with T$, $(\Omega,\cF,\P) \with (\Omega,\cF,\P)$, $F \with \funcF$, $U \with \mlp_n^\theta$ in the notation of \cite[Item (i) of Lemma 2.6]{HutzenthalerJentzenKruse2018})
imply that for all $n\in\N_0$, $\theta\in\Theta$, $t\in[0,T]$ it holds that
\begin{equation}
\pr[\big]{ \funcF(\mlp_{n}^{(\theta,n,k)}) - \1_\N(n) \funcF(\mlp_{n-1}^{(\theta,-n,k)})} \lrSpace \pr[\big]{ \cU_t^{(\theta,n,k)}, x + W_{\cU_t^{(\theta,n,k)}-t}^{(\theta,n,k)} },\ k\in\Z,
\end{equation}
are i.i.d.\ random variables.
This establishes \cref{lem:mlp_expectation_item1}.
Next note that the triangle inequality, \cref{properties_approx}, 
\cref{lem:integrable}, \cref{big_F}, \cref{b37}, and the fact that $(W^\theta)_{\theta\in\Theta}$ are independent standard Brownian motions guarantee that for all $n \in \N_0$, $t \in [0,T]$ it holds that 
\begin{align}
& \E\br[\big]{ \abs[]{ \mlp_{n}^0(t,x) } }
+ \E\br[\big]{ \abs[]{ \funcG\pr[]{ x + W^{(0,0,-1)}_{T-t} } } } \nonumber \\
& + \smallsum_{i=0}^{n-1} \E\br[\big]{ \abs[]{ \pr[]{ \funcF (\mlp_{i}^{(0,i,1)} ) - \1_\N(i) \funcF(\mlp_{i-1}^{(0,-i,1)}) } \pr[]{ \cU_t^{(0,i,1)}, x + W_{\cU_t^{(0,i,1)}-t}^{(0,i,1)} } } } \nonumber \\
& \le \E\br[\big]{ \abs[]{ \mlp_{n}^0(t,x) } }
+ \E\br[\big]{ \abs[]{ \funcG\pr[]{ x + W^{(0,0,-1)}_{T-t} } } } + \smallsum_{i=0}^{n-1} \E\br[\big]{ \abs[]{ \pr[]{ \funcF (\mlp_{i}^{(0,i,1)} ) } \pr[]{ \cU_t^{(0,i,1)}, x + W_{\cU_t^{(0,i,1)}-t}^{(0,i,1)} } } } \nonumber \\
& \quad + \smallsum_{i=0}^{n-1} \1_\N(i) \, \E\br[\big]{ \abs[]{ \pr[]{ \funcF(\mlp_{i-1}^{(0,-i,1)}) } \pr[]{ \cU_t^{(0,i,1)}, x + W_{\cU_t^{(0,i,1)}-t}^{(0,i,1)} } } } \\
& \le \E\br[\big]{ \abs[]{ \mlp_{n}^0(t,x) } }
+ \E\br[\big]{ \abs[]{ \funcG\pr[]{ x + W^{0}_{T-t} } } } 
 + 2 \smallsum_{i=0}^{n-1} \E\br[\big]{ \abs[]{ \pr[]{ \funcF (\mlp_{i}^{0} ) } \pr[]{ \cU_t^{0}, x + W_{\cU_t^{0}-t}^{0} } } }
< \infty. \nonumber
\end{align}
This establishes \cref{lem:mlp_expectation_item2_a}.
Furthermore, observe that \cref{lem:mlp_expectation_item1},
\cref{lem:mlp_expectation_item2_a}, 
\cref{properties_approx}, 
\cref{big_F}, \cref{b37}, and the fact that $(W^\theta)_{\theta\in\Theta}$ are independent standard Brownian motions
ensure that for all $n\in\N_0$, $t\in[0,T]$ it holds that
\begin{align}\label{mlp_stab_0a_pre1}
& \E\br[\big]{ \mlp_{n}^0(t,x) } 
= \frac{\1_\N(n)}{\littleM^n} \br[\bigg]{ \SmallSum_{k=1}^{\littleM^n} \E\br[\big]{ \funcG\pr[]{ x + W_{T-t}^{(0,0,-k)} } } } \nonumber \\
& + \SmallSum_{i=0}^{n-1} \frac{(T-t)}{\littleM^{n-i}} \br[\bigg]{ \SmallSum_{k=1}^{\littleM^{n-i}} \E\br[\big]{ \pr[]{ \funcF (\mlp_{i}^{(0,i,k)} ) - \1_\N(i) \funcF (\mlp_{i-1}^{(0,-i,k)} ) } \pr[]{ \cU_t^{(0,i,k)}, x + W_{\cU_t^{(0,i,k)}-t}^{(0,i,k)} } } } \\
& = \1_\N(n)\,\E\br[\big]{ \funcG\pr[]{ x + W^{(0,0,-1)}_{T-t} } } \nonumber \\
& + (T-t)\br[\bigg]{ \SmallSum_{i=0}^{n-1} \E\br[\big]{ \pr[]{ \funcF (\mlp_{i}^{(0,i,1)} ) - \1_\N(i) \funcF(\mlp_{i-1}^{(0,-i,1)}) } \pr[]{ \cU_t^{(0,i,1)}, x + W_{\cU_t^{(0,i,1)}-t}^{(0,i,1)} } } }. \nonumber
\end{align}
This establishes \cref{lem:mlp_expectation_item2}.
In addition, observe that \cref{lem:mlp_expectation_item1},
\cref{lem:mlp_expectation_item2_a},
\cref{properties_approx}, the fact that $(\cU^\theta)_{\theta\in\Theta}$ are i.i.d.\ random variables,
the hypothesis that for all $\theta\in\Theta$, $r\in(0,1)$ it holds that $\P(\fu^\theta \le r)=r$, 
and
Hutzenthaler et al.\ \cite[Lemma 3.7]{HutzenthalerPricing2019}
demonstrate that for all $n\in\N_0$, $\theta \in \Theta$, $t\in[0,T]$ it holds that
\begin{equation}\label{f_exp}
\begin{split}
& (T-t) \, \E\br[\big]{ \abs[]{ \pr[]{ \funcF( \mlp_n^\theta) - \1_\N(n) \funcF(\mlp_{n-1}^\theta) } \pr[]{ \cU_t^\theta, x + W^\theta_{\cU_t^\theta - t} } } } \\
& = \int_t^T \E\br[\big]{ \abs[]{ \pr[]{ \funcF( \mlp_n^\theta) - \1_\N(n) \funcF(\mlp_{n-1}^\theta) } \pr[]{ r , x + W^\theta_{r - t} } } } \, dr 
< \infty.
\end{split}
\end{equation}
This establishes \cref{lem:mlp_expectation_item3}.
The proof of \cref{lem:mlp_expectation} is thus complete.
\end{proof}


\subsection{Monte Carlo approximations}\label{monte_carlo_section}

\begin{lemma}
\label{Lp_monte_carlo}
Let $p \in (2,\infty)$, $n \in \N$, let $ ( \Omega, \cF, \P ) $ be a probability space, 
and let 
$ X_i  \colon \Omega \to \R $, $ i \in \{1, 2, \dots, n\} $, be i.i.d.\ random variables with 
$\E[ \abs{ X_1 } ] < \infty$.
Then it holds that
\begin{equation}
\pr[\big]{ \E \br[\big]{ \abs{ \E[X_1] - \tfrac{1}{n} \pr[\big]{ \smallsum_{ i = 1 }^{ n } X_{ i } } }^p } }^{\!\nicefrac{1}{p}}
\le \br[\big]{ \tfrac{p-1}{n} }^{\nicefrac{1}{2}} \pr[\big]{ \E \br[\big]{ \abs{ X_1 - \E[ X_1 ] }^p } }^{\!\nicefrac{1}{p}}.
\end{equation}
\end{lemma}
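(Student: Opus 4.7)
The plan is to reduce the claim directly to the classical Marcinkiewicz--Zygmund--Rio moment inequality for sums of independent centered random variables, i.e., Rio \cite[Theorem~2.1]{rio2009moment}, which is already invoked in the introduction as the motivation for the delicate choice of the function $\funcM$ and the constants $\firstConstant{p}$ used throughout the paper.

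First I would dispose of the trivial case: if $\E[\abs{X_1}^p] = \infty$, then the right-hand side of the claimed inequality equals $\infty$ and the assertion is vacuous, so we may assume without loss of generality that $\E[\abs{X_1}^p] < \infty$. Since $p > 2 > 1$, this ensures that $\E[X_1] \in \R$ is well-defined and, by the triangle inequality in $L^p(\P)$, that the centered random variables $Y_i = X_i - \E[X_1]$, $i \in \{1, 2, \ldots, n\}$, all lie in $L^p(\P)$. By construction $Y_1, Y_2, \ldots, Y_n$ are i.i.d., centered, and share a common $L^p(\P)$-norm, namely $(\E[\abs{Y_i}^p])^{\nicefrac{1}{p}} = (\E[\abs{X_1 - \E[X_1]}^p])^{\nicefrac{1}{p}}$.

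Next I would apply Rio's moment inequality to the centered sum $S_n = \sum_{i=1}^n Y_i$: since $p \ge 2$ and $Y_1, Y_2, \ldots, Y_n$ are independent, centered, and lie in $L^p(\P)$, Rio \cite[Theorem~2.1]{rio2009moment} yields
\begin{equation*}
\pr[\big]{ \E\br[\big]{ \abs{ S_n }^p } }^{\!\nicefrac{2}{p}} \le (p-1) \smallsum_{i=1}^n \pr[\big]{ \E\br[\big]{ \abs{ Y_i }^p } }^{\!\nicefrac{2}{p}} = (p-1)\, n\, \pr[\big]{ \E\br[\big]{ \abs{ X_1 - \E[X_1] }^p } }^{\!\nicefrac{2}{p}}.
\end{equation*}
Taking square roots on both sides, dividing by $n$, and observing that $\tfrac{1}{n} S_n = \tfrac{1}{n}\smallsum_{i=1}^n X_i - \E[X_1]$ then gives
\begin{equation*}
\pr[\big]{ \E\br[\big]{ \abs[\big]{ \E[X_1] - \tfrac{1}{n}\smallsum_{i=1}^n X_i }^p } }^{\!\nicefrac{1}{p}} = \tfrac{1}{n} \pr[\big]{ \E\br[\big]{ \abs{ S_n }^p } }^{\!\nicefrac{1}{p}} \le \br[\big]{ \tfrac{p-1}{n} }^{\!\nicefrac{1}{2}} \pr[\big]{ \E\br[\big]{ \abs{ X_1 - \E[X_1] }^p } }^{\!\nicefrac{1}{p}},
\end{equation*}
which is exactly the stated inequality.

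There is essentially no real obstacle here: a single invocation of Rio's inequality already delivers the optimal constant $\sqrt{p-1}$, which is precisely what is needed to match the definition $\secondConstant{\fq} = \firstConstant{\fq}\sqrt{\fq-1}$ entering \cref{setting1} and the subsequent recursive $L^p$-error analysis. The only minor care required is the integrability reduction in the first paragraph, which guarantees that Rio's inequality is legitimately applicable and that both sides of the claimed inequality are finite.
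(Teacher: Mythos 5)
Your proposal is correct and follows essentially the same route as the paper's own proof: both center the variables and apply Rio's Theorem~2.1 to the centered sum $\sum_{i=1}^n (X_i - \E[X_1])$ to obtain the constant $\sqrt{p-1}$, then rescale by $n$. Your preliminary reduction to the case $\E[\abs{X_1}^p] < \infty$ is a harmless (and reasonable) addition that the paper leaves implicit.
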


\begin{proof}[Proof of \cref{Lp_monte_carlo}]
First, observe that  
the hypothesis that for all $i\in\{1,2,\allowbreak\dots,\allowbreak n\}$ it holds that 
$  X_i\colon \Omega\to \R $
are i.i.d.\ random variables assures that
\begin{equation}
\E\br[\big]{ \abs{ \E[X_1] - \tfrac{1}{n} \pr{ \smallsum_{ i = 1 }^{ n } X_{ i } } }^p }
= \E\br[\big]{ \abs{ \tfrac{1}{n} \pr{ \smallsum_{ i = 1 }^{ n } (\E[X_1] - X_{ i }) } }^p }
= n^{-p} \, \E\br[\big]{ \abs{ \smallsum_{ i = 1 }^{ n } (\E[X_i] - X_{ i }) }^p }.
\end{equation}
Combining this, the fact that for all $i\in\{1,2,\dots,n\}$ it holds that 
$  X_i\colon \Omega\to \R $
are i.i.d.\ random variables, and, e.g., \cite[Theorem 2.1]{rio2009moment} (applied with $p \with p$, $(S_i)_{i\in\{0,1,\dots,n\}} \with (\sum_{k=1}^i (\E[X_k] - X_k))_{i\in\{0,1,\dots,n\}}$, $(X_i)_{i \in\{1,2,\dots,n\}} \with (\E[X_i] - X_i)_{i\in\{1,2,\dots,n\}}$ in the notation of \cite[Theorem 2.1]{rio2009moment}) ensures that
\begin{equation}
\begin{split}
& \pr[\big]{ \E \br[\big]{ \abs{ \E[X_1] - \tfrac{1}{n} \pr[\big]{ \smallsum_{ i = 1 }^{ n } X_{ i } } }^p } }^{\!\nicefrac{2}{p}}
= \tfrac{1}{n^2} \pr[\big]{ \E\br[\big]{ \abs{ \smallsum_{ i = 1 }^{ n } (\E[X_i] - X_{ i }) }^p } }^{\!\nicefrac{2}{p}} \\
& \le \tfrac{(p-1)}{n^2} \br[\Big]{ \smallsum_{ i = 1 }^{ n } \pr[\big]{ \E\br[\big]{ \abs{ \E[X_i] - X_{ i } }^p } }^{\!\nicefrac{2}{p}} }
= \tfrac{(p-1)}{n^2} \br[\Big]{ n \pr[\big]{ \E\br[\big]{ \abs{ \E[X_1] - X_{ 1 } }^p } }^{\!\nicefrac{2}{p}} } \\
& = \tfrac{(p-1)}{n} \pr[\big]{ \E\br[\big]{ \abs{ \E[X_1] - X_{ 1 } }^p } }^{\!\nicefrac{2}{p}}.
\end{split}
\end{equation}
The proof of \cref{Lp_monte_carlo} is thus complete.
\end{proof}

\begin{corollary}\label{prop:mlp}
Let $p \in [2,\infty)$, $n \in \N$, 
let $ ( \Omega, \cF, \P ) $ be a probability space, 
and 
let $ X_i  \colon \Omega \to \R $,
$ i \in \{1, 2, \dots, n\} $,
be i.i.d.\ random variables with 
$\E\br[]{ \abs{ X_1 } } < \infty$. 
Then it holds that
\begin{equation}\label{eq:2_9}
\pr[\big]{\E\br[\big]{ \abs[]{ \E[X_1] - \tfrac{1}{n} \pr[]{ {\smallsum_{ i = 1 }^{ n }} X_{ i } } }^p } }^{\!\nicefrac{1}{p}}
\le \br[\big]{ \tfrac{p-1}{n} }^{\nicefrac{1}{2}} \pr[\big]{ \E\br[\big]{ \abs{ X_1 - \E[ X_1 ] }^p } }^{\!\nicefrac{1}{p}}.
\end{equation}
\end{corollary}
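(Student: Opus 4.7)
The plan is to reduce \cref{prop:mlp} to \cref{Lp_monte_carlo} by distinguishing the cases $p \in (2,\infty)$ and $p = 2$. In the first case, \cref{Lp_monte_carlo} is exactly the claim, so nothing remains to do. The only real work is the edge case $p = 2$, where the general moment inequality of Rio does not apply (the constant in \cite[Theorem 2.1]{rio2009moment} degenerates at $p = 2$), and instead one has to verify the standard $L^2$-Monte Carlo identity for i.i.d.\ sums.

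For $p = 2$ I would proceed as follows. First, I would handle the trivial subcase $\E[\abs{X_1}^2] = \infty$: from the triangle inequality $\abs{X_1} \le \abs{X_1 - \E[X_1]} + \abs{\E[X_1]}$ and the hypothesis $\E[\abs{X_1}] < \infty$, one obtains that $\E[\abs{X_1 - \E[X_1]}^2] = \infty$ as well, so the right-hand side of \cref{eq:2_9} is $+\infty$ and the inequality holds trivially. Next, I would assume $\E[\abs{X_1}^2] < \infty$. Writing $Y_i = X_i - \E[X_i]$, the random variables $Y_1,\dots,Y_n$ are i.i.d., centered, and in $L^2$, so one expands
\begin{equation*}
\E\!\left[\left|\E[X_1] - \tfrac{1}{n}\SmallSum_{i=1}^n X_i\right|^2\right]
= \tfrac{1}{n^2}\,\E\!\left[\left(\SmallSum_{i=1}^n Y_i\right)^{\!2}\right]
= \tfrac{1}{n^2}\SmallSum_{i,j=1}^n \E[Y_i Y_j].
\end{equation*}
By independence and centering, the cross terms $\E[Y_i Y_j] = \E[Y_i]\E[Y_j] = 0$ vanish for $i \ne j$, and the diagonal contribution equals $n\,\E[Y_1^2]$, hence
\begin{equation*}
\pr[\big]{\E[\abs{\E[X_1] - \tfrac{1}{n}\smallsum_{i=1}^n X_i}^2]}^{\!\nicefrac{1}{2}}
= \tfrac{1}{\sqrt{n}}\pr[\big]{\E[\abs{X_1 - \E[X_1]}^2]}^{\!\nicefrac{1}{2}}.
\end{equation*}
Since $\sqrt{(p-1)/n} = 1/\sqrt{n}$ when $p = 2$, this is in fact an equality of the two sides of \cref{eq:2_9}, which suffices.

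The combined statement then follows by case distinction. There is no real obstacle here; the only subtlety worth being careful about is making sure that \cref{eq:2_9} still makes sense (as an inequality in $[0,\infty]$) under the very weak integrability hypothesis $\E[\abs{X_1}] < \infty$, which is why the trivial $L^2$-infinite subcase has to be singled out before the orthogonality computation is performed.
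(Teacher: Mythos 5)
Your proposal is correct and follows essentially the same route as the paper: the case $p\in(2,\infty)$ is exactly \cref{Lp_monte_carlo}, and the case $p=2$ is the standard $L^2$-Monte Carlo identity, which the paper simply imports by citing Grohs et al.\ \cite[Lemma 2.3]{grohs2018proof} while you carry out the elementary orthogonality computation (including the harmless subcase $\E[\abs{X_1}^2]=\infty$) yourself.
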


\begin{proof}[Proof of \cref{prop:mlp}]
Observe that, 
e.g., Grohs et al.\ \cite[Lemma 2.3]{grohs2018proof}
and \cref{Lp_monte_carlo} establish \cref{eq:2_9}. 
The proof of \cref{prop:mlp} is thus complete.
\end{proof}

\begin{corollary}\label{cor:exp_bd}
Let $p\in[2,\infty)$, $n\in\N$, let $(\Omega,\cF,\P)$ be a probability space, and let $X_i \colon \Omega \to \R$, $i\in\{1,2,\dots,n\}$, be i.i.d.\ random variables with $\E[\abs{X_1}] < \infty$. Then
\begin{equation}\label{prob_bd1}
\pr[\big]{\E\br[\big]{ \abs[]{ \E[X_1] - \tfrac{1}{n} \pr[]{ {\smallsum_{ i = 1 }^{ n }} X_{ i } } }^p } }^{\!\nicefrac{1}{p}}
\le \tfrac{\secondConstant{p}}{n^{\nicefrac{1}{2}}} \pr[\big]{ \E\br[\big]{ \abs{ X_1 }^p } }^{\!\nicefrac{1}{p}}
\end{equation}
(cf.\ \cref{rand_const}).
\end{corollary}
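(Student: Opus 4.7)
The plan is to combine \cref{prop:mlp} with the very definition of the constant $\firstConstant{p}$ from \cref{rand_const}. First I would apply \cref{prop:mlp} directly to the i.i.d.\ family $X_1,X_2,\ldots,X_n$ to obtain
\begin{equation*}
\pr[\big]{\E\br[\big]{ \abs{ \E[X_1] - \tfrac{1}{n} \smallsum_{i=1}^n X_i }^p } }^{\!\nicefrac{1}{p}}
\le \br[\big]{ \tfrac{p-1}{n} }^{\!\nicefrac{1}{2}} \pr[\big]{ \E\br[\big]{ \abs{ X_1 - \E[X_1] }^p } }^{\!\nicefrac{1}{p}}.
\end{equation*}
If the right-hand side is infinite then the claim \cref{prob_bd1} is trivially satisfied by monotonicity of $L^p$-norms (since $\E\br[]{\abs{X_1 - \E[X_1]}^p} \le 2^{p-1}(\E[\abs{X_1}^p] + \abs{\E[X_1]}^p)$ and Jensen's inequality makes this finite whenever $\E[\abs{X_1}^p]$ is finite), so I may assume $\E[\abs{X_1}^p]<\infty$.

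Next I would invoke the definition of $\firstConstant{p}$ in \cref{rand_const}: by that definition, for every probability space and every random variable $X$ with $\E[\abs{X}]<\infty$ we have $(\E[\abs{X-\E[X]}^p])^{\nicefrac{1}{p}} \le \firstConstant{p} (\E[\abs{X}^p])^{\nicefrac{1}{p}}$. Applying this with $X = X_1$ yields
\begin{equation*}
\pr[\big]{ \E\br[\big]{ \abs{ X_1 - \E[X_1] }^p } }^{\!\nicefrac{1}{p}}
\le \firstConstant{p} \pr[\big]{ \E\br[\big]{ \abs{ X_1 }^p } }^{\!\nicefrac{1}{p}}.
\end{equation*}

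Finally I would chain the two displayed inequalities and use $\secondConstant{p} = \firstConstant{p}\sqrt{p-1}$ to obtain
\begin{equation*}
\pr[\big]{\E\br[\big]{ \abs{ \E[X_1] - \tfrac{1}{n} \smallsum_{i=1}^n X_i }^p } }^{\!\nicefrac{1}{p}}
\le \firstConstant{p} \br[\big]{ \tfrac{p-1}{n} }^{\!\nicefrac{1}{2}} \pr[\big]{ \E\br[\big]{ \abs{ X_1 }^p } }^{\!\nicefrac{1}{p}}
= \tfrac{\secondConstant{p}}{n^{\nicefrac{1}{2}}} \pr[\big]{ \E\br[\big]{ \abs{ X_1 }^p } }^{\!\nicefrac{1}{p}},
\end{equation*}
which is precisely \cref{prob_bd1}. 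There is no real obstacle here; the entire content is bookkeeping between two black boxes (\cref{prop:mlp} and \cref{rand_const}), and the only subtlety is handling the trivial case in which the relevant $L^p$-moment is infinite, which I would dispatch in a single line at the start of the argument.
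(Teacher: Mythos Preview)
Your proposal is correct and follows exactly the approach the paper takes: the paper's own proof simply says that \cref{rand_const} and \cref{prop:mlp} together give \cref{prob_bd1}, and your write-up is a faithful expansion of that one-line argument. The only additional content you supply is the (harmless) discussion of the case $\E[\abs{X_1}^p]=\infty$, which the paper omits as trivial.
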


\begin{proof}[Proof of \cref{cor:exp_bd}]
Note that \cref{rand_const} and \cref{prop:mlp} demonstrate that \cref{prob_bd1} holds. The proof of \cref{cor:exp_bd} is thus complete.
\end{proof}

\subsection{Recursive error bounds for MLP approximations}\label{sec:error_rec}

\begin{lemma}\label{lem:mlp_variance}
Assume \cref{setting1}.
Then it holds for all 
$n\in\N_0$, 
$t\in[0,T]$, $x\in\R^d$ that
\begin{align}
& \pr[\Big]{ \E\br[\Big]{ \abs[\big]{ \mlp^0_{n}\pr[\big]{t,x+W^0_t} - \E\br[\big]{ \mlp_{n}^0\pr[\big]{ t,x+W^0_t } } }^\fq } }^{\!\!\nicefrac{1}{\fq}} \nonumber \\
& \le \frac{\1_\N(n)\fm}{\littleM^{\nicefrac{n}{2}}} \br[\Bigg]{ \pr[\Big]{ \E\br[\Big]{ \abs[\big]{ \funcG\pr[\big]{ x + W_{T}^{0} } }^\fq } }^{\!\!\nicefrac{1}{\fq}} 
+ (T-t)^{\nicefrac{(\fq-1)}{\fq}} \pr[\bigg]{ \int_t^T \E\br[\Big]{ \abs[\big]{ \smallF\pr[\big]{ s,x+W_s^0,0 } }^\fq } \, ds }^{\!\!\nicefrac{1}{\fq}} } \label{eq:4_20} \\
& + \sum_{i=0}^{n-1} \frac{\LipConstF (T-t)^{\nicefrac{(\fq-1)}{\fq}} \fm}{\littleM^{\nicefrac{(n-i)}{2}}} \br[\Bigg]{ \pr[\big]{ \1_{(0,n)}(i) + \1_{[0,n-1)}(i)\,\littleM^{\nicefrac{1}{2}} } \pr[\bigg]{ \int_t^T \E\br[\Big]{ \abs[\big]{ \pr[\big]{ \mlp_{i}^{0} - \smallU } \lrSpace \pr[\big]{ s, x + W_{s}^{0} } }^\fq } \, ds }^{\!\!\nicefrac{1}{\fq}} }. \nonumber
\end{align}
\end{lemma}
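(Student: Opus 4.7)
The plan is to apply the Monte Carlo concentration bound \cref{cor:exp_bd} at each level of the MLP recursion. The inner expectation $\E[\mlp_n^0(t,x+W^0_t)]$ in the statement is to be read as the deterministic function $y\mapsto\E[\mlp_n^0(t,y)]$ (given by \cref{eq:3_20a}) evaluated at $y=x+W^0_t$; by the independence of $W^0$ from the internal randomness of $\mlp_n^0$, this equals the conditional expectation $\E[\mlp_n^0(t,x+W^0_t)\mid\sigmaAlgebra(W^0)]$. Subtracting \cref{eq:3_20a} from \cref{b37} (both at $y=x+W^0_t$) decomposes the centered MLP into $n+1$ Monte Carlo deviations: one from the initial-value sum (of $\littleM^n$ copies of $\funcG(x+W^0_t+W^{(0,0,-1)}_{T-t})$) and one at each level $i\in\{0,\ldots,n-1\}$ (of $\littleM^{n-i}$ copies of $V_{i,1}:=(\funcF(\mlp_i^{(0,i,1)})-\1_\N(i)\funcF(\mlp_{i-1}^{(0,-i,1)}))(\cU_t^{(0,i,1)},x+W^0_t+W^{(0,i,1)}_{\cU_t^{(0,i,1)}-t})$). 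The triangle inequality in $L^\fq$ then reduces the task to bounding each piece separately.

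By \cref{properties_approx:item3,properties_approx:item4} the summands at each level are conditionally i.i.d.\ given $\sigmaAlgebra(W^0)$. Applying \cref{cor:exp_bd} conditionally on $W^0$ and then taking $L^\fq(\P)$ norms gives $\frac{\1_\N(n)\fm}{\littleM^{n/2}}\|\funcG(x+W^0_t+W^{(0,0,-1)}_{T-t})\|_{L^\fq}$ for the initial-value piece and $\frac{(T-t)\fm}{\littleM^{(n-i)/2}}\|V_{i,1}\|_{L^\fq}$ for the level-$i$ piece. Additivity of independent Gaussian increments yields $W^0_t+W^{(0,0,-1)}_{T-t}\stackrel{d}{=}W^0_T$, so the initial-value bound matches the $\funcG$-term in the first bracket of the right-hand side of \cref{eq:4_20}.

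To compute $\|V_{i,1}\|_{L^\fq}$, I rely on two reductions. First, by \cref{properties_approx:item5} together with the independence of $\mlp_j^{(0,j,\pm1)}$ from $(\cU_t^{(0,i,1)},W^{(0,i,1)},W^0)$, each $\mlp_j^{(0,j,\pm1)}$ may be replaced by $\mlp_j^0$ inside $L^\fq$-moment computations. Second, since $\cU_t^{(0,i,1)}=t+(T-t)\fu^{(0,i,1)}$ is independent of $W^0$ and of $W^{(0,i,1)}$ and since $W^0_t+W^{(0,i,1)}_{s-t}\stackrel{d}{=}W^0_s$, Fubini (as in the proof of \cref{lem:mlp_expectation_item3}) delivers the identity $(T-t)\,\E[|h(\cU_t^{(0,i,1)},x+W^0_t+W^{(0,i,1)}_{\cU_t^{(0,i,1)}-t})|^\fq]=\int_t^T\E[|h(s,x+W^0_s)|^\fq]\,ds$ for admissible $h$. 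When $i=0$, one has $V_{0,1}=\smallF(\cU_t^{(0,0,1)},x+W^0_t+W^{(0,0,1)}_{\cU_t^{(0,0,1)}-t},0)$ (since $\mlp_0\equiv0$ and $\1_\N(0)=0$), and this identity delivers the $\smallF$-term of the first bracket in \cref{eq:4_20}. When $i\ge1$, the Lipschitz bound from \cref{fn_cond} together with the triangle inequality $|\mlp_i-\mlp_{i-1}|\le|\mlp_i-\smallU|+|\smallU-\mlp_{i-1}|$ bounds $(T-t)^{1/\fq}\|V_{i,1}\|_{L^\fq}$ by $\LipConstF$ times the sum of $\bigl(\int_t^T\E[|(\mlp_j^0-\smallU)(s,x+W^0_s)|^\fq]\,ds\bigr)^{1/\fq}$ for $j\in\{i-1,i\}$.

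Finally, summing the level-$i$ bounds for $i\ge 1$, splitting the two $L^\fq$ integrals and reindexing $i\mapsto j+1$ in the $\mlp_{i-1}^0$-contribution produces, for each $j\in\{0,\ldots,n-1\}$, the combined coefficient $\1_{(0,n)}(j)+\1_{[0,n-1)}(j)\littleM^{1/2}$ in front of $\frac{\LipConstF(T-t)^{(\fq-1)/\fq}\fm}{\littleM^{(n-j)/2}}\bigl(\int_t^T\E[|(\mlp_j^0-\smallU)(s,x+W^0_s)|^\fq]\,ds\bigr)^{1/\fq}$, exactly matching the right-hand side of \cref{eq:4_20}. The main obstacle I expect is the distributional bookkeeping in the third paragraph: collapsing the complicated random argument $(\cU_t^{(0,i,1)},x+W^0_t+W^{(0,i,1)}_{\cU_t^{(0,i,1)}-t})$ into the clean form $(s,x+W^0_s)$ while preserving the independence of the replaced MLP copies requires carefully chaining the properties from \cref{properties_approx} with the Gaussian-increment identity and the uniform-time Fubini trick.
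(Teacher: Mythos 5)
Your proposal is correct and follows essentially the same route as the paper's proof: decompose the centered MLP into per-level Monte Carlo deviations, apply \cref{cor:exp_bd} via the i.i.d.\ structure from \cref{lem:mlp_expectation}, convert the uniform-time expectation into a time integral, split $\funcF(\mlp_i)-\funcF(\mlp_{i-1})$ through $\smallU$ with the Lipschitz bound, replace shifted indices by $0$ using \cref{properties_approx}, and reindex to obtain the coefficient $\1_{(0,n)}(i)+\1_{[0,n-1)}(i)\littleM^{\nicefrac{1}{2}}$. The only (immaterial) organizational difference is that you condition on $W^0$ from the start, whereas the paper proves the bound at a deterministic point $x$ and performs the shift $x\mapsto x+W^0_t$ via independent increments only in the final step.
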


\begin{proof}[Proof of \cref{lem:mlp_variance}]
Throughout this proof 
let $\Gsymb{k}\colon [0,T]\times \R^d \times \Omega \to \R$, $k\in\Z$, satisfy for all $k\in\Z$, $t\in[0,T]$, $x\in\R^d$ that
\begin{equation}\label{G_symb}
\Gsymb{k}(t,x) = \funcG\pr[]{ x + W_{T-t}^{(0,0,-k)} }
\end{equation}
and let $\Fsymb{n}{i}{j}{k}\colon [0,T] \times \R^d \times \Omega \to \R$, $n,i,j,k\in\Z$, satisfy for all $n\in\N$, $i\in\{0,1,\dots,n-1\}$, $j,k\in\Z$, $t\in[0,T]$, $x\in\R^d$ that
\begin{equation}\label{F_symb}
\Fsymb{n}{i}{j}{k}(t,x) = \pr[]{ \funcF (\mlp_{i}^{(0,j,k)} ) - \1_\N(i) \funcF (\mlp_{i-1}^{(0,-j,k)} ) } \pr[]{ \cU_t^{(0,j,k)}, x + W_{\cU_t^{(0,j,k)}-t}^{(0,j,k)} }.
\end{equation}
Observe that the hypothesis that $(W^{\theta})_{\theta\in\Theta}$ are independent standard Brownian motions and the hypothesis that $\funcG \in C(\R^d,\R)$
assure that for all $t\in[0,T]$, $x\in\R^d$ it holds that $(\Gsymb{k}(t,x))_{k\in\Z}$ are i.i.d.\ random variables.
This and \cref{cor:exp_bd} (applied for every $n\in\N$, 
$t\in[0,T]$, $x\in\R^d$ with $p \with \fq$, $n \with \littleM^n$, $(X_k)_{k\in\{1,2,\dots,\littleM^n\}} \with (\Gsymb{k}(t,x))_{k\in\{1,2,\dots,\littleM^n\}}$ in the notation of \cref{cor:exp_bd}) ensure that for all $n\in\N_0$, 
$t\in[0,T]$, $x\in\R^d$
it holds that
\begin{equation}\label{mlp_g_bd1b_var}
\pr[\Big]{ \E\br[\Big]{ \abs[\big]{ \tfrac{1}{\littleM^n}\br[\big]{ \smallsum_{k=1}^{\littleM^n} \Gsymb{k}(t,x) } - \E\br[\big]{ \Gsymb{1}(t,x) } }^\fq } }^{\!\!\nicefrac{1}{\fq}} 
\le \tfrac{ \fm }{\littleM^{\nicefrac{n}{2}}} \pr[\big]{ \E\br[\big]{ \abs[]{ \Gsymb{1}(t,x) }^\fq } }^{\!\nicefrac{1}{\fq}}.
\end{equation}
Next note that \cref{lem:mlp_expectation_item1} of \cref{lem:mlp_expectation} and \cref{cor:exp_bd} (applied for every $n\in\N$, $i\in\{0,1,\allowbreak\dots,\allowbreak n-1\}$, 
$t\in[0,T]$, $x\in\R^d$ with $p \with \fq$, $n \with \littleM^{n-i}$, 
$(X_k)_{k\in\{1,2,\dots,\littleM^{n-i}\}} \with (\Fsymb{n}{i}{i}{k}(t,x))_{k\in\{1,2,\dots,\littleM^{n-i}\}}$
in the notation of \cref{cor:exp_bd}) demonstrate that for all $n\in\N$, $i\in\{0,1,\dots,n-1\}$, 
$t\in[0,T]$, $x\in\R^d$
it holds that
\begin{equation}\label{mlp_f_bd1b_var}
\pr[\Big]{ \E\br[\Big]{ \abs[\big]{ \tfrac{ 1 }{\littleM^{n-i}}\br[\big]{ \smallsum_{k=1}^{\littleM^{n-i}} \Fsymb{n}{i}{i}{k}(t,x) } - \E\br[\big]{ \Fsymb{n}{i}{i}{1}(t,x) } }^\fq } }^{\!\!\nicefrac{1}{\fq}} 
\le \tfrac{ \fm }{\littleM^{\nicefrac{(n-i)}{2}}} \pr[\big]{ \E\br[\big]{ \abs[]{ \Fsymb{n}{i}{i}{1}(t,x) }^\fq } }^{\!\nicefrac{1}{\fq}}.
\end{equation}
Combining this, \cref{b37}, \cref{G_symb}, \cref{F_symb}, \cref{mlp_g_bd1b_var}, \cref{lem:mlp_expectation_item2} of \cref{lem:mlp_expectation}, and the triangle inequality implies that for all $n\in\N_0$, 
$t\in[0,T]$, $x\in\R^d$ it holds that
\begin{align}\label{mlp_stab_1_var}
& \pr[\Big]{ \E\br[\Big]{ \abs[\big]{ \mlp^0_{n}(t,x) - \E\br[\big]{ \mlp_{n}^0(t,x) } }^\fq } }^{\!\!\nicefrac{1}{\fq}} \nonumber \\
& = \Bigl( \E\Bigl[ \bigl\lvert \pr[\big]{ \tfrac{\1_\N(n)}{\littleM^n}\br[\big]{ \smallsum_{k=1}^{\littleM^n} \Gsymb{k}(t,x) } 
+ \smallsum_{i=0}^{n-1} \tfrac{(T-t)}{\littleM^{n-i}}\br[\big]{ \smallsum_{k=1}^{\littleM^{n-i}} \Fsymb{n}{i}{i}{k}(t,x) } } \nonumber \\
& \quad - \pr[\big]{ \1_\N(n)\,\E\br[\big]{ \Gsymb{1}(t,x) } + \smallsum_{i=0}^{n-1} (T-t)\,\E\br[\big]{ \Fsymb{n}{i}{i}{1}(t,x) } } \bigr\rvert^\fq \Bigr] \Bigr)^{\!\!\nicefrac{1}{\fq}} \nonumber \\
& \le \1_\N(n) \pr[\Big]{ \E\br[\Big]{ \abs[\big]{ \tfrac{1}{\littleM^n}\br[\big]{ \smallsum_{k=1}^{\littleM^n} \Gsymb{k}(t,x) } - \E\br[\big]{ \Gsymb{1}(t,x) } }^\fq } }^{\!\!\nicefrac{1}{\fq}} \\
& \quad + \smallsum_{i=0}^{n-1} (T-t) \pr[\Big]{ \E\br[\Big]{ \abs[\big]{ \tfrac{1}{\littleM^{n-i}}\br[\big]{ \smallsum_{k=1}^{\littleM^{n-i}} \Fsymb{n}{i}{i}{k}(t,x) } - \E\br[\big]{ \Fsymb{n}{i}{i}{1}(t,x) } }^\fq } }^{\!\!\nicefrac{1}{\fq}} \nonumber \\
& \le \tfrac{\1_\N(n) \fm}{\littleM^{\nicefrac{n}{2}}}\pr[\big]{ \E\br[\big]{ \abs[]{ \Gsymb{1}(t,x) }^\fq } }^{\!\!\nicefrac{1}{\fq}} 
+ \SmallSum_{i=0}^{n-1} \tfrac{(T-t) \fm}{\littleM^{\nicefrac{(n-i)}{2}}} \pr[\big]{ \E\br[\big]{ \abs[]{ \Fsymb{n}{i}{i}{1}(t,x) }^\fq } }^{\!\!\nicefrac{1}{\fq}}. \nonumber
\end{align}
Moreover, observe that \cref{F_symb} and
\cref{lem:mlp_expectation_item1,%
lem:mlp_expectation_item3} of 
\cref{lem:mlp_expectation} assure that for all $n\in\N_0$, $t\in[0,T]$, $x\in\R^d$ it holds that
\begin{equation}\label{eq:4_13new}
\begin{split}
& \SmallSum_{i=0}^{n-1}\tfrac{(T-t)\fm }{\littleM^{\nicefrac{(n-i)}{2}}} \pr[\big]{ \E\br[\big]{ \abs[]{ \Fsymb{n}{i}{i}{1}(t,x) }^\fq } }^{\!\nicefrac{1}{\fq}}
= \SmallSum_{i=0}^{n-1}\tfrac{(T-t)^{\nicefrac{(\fq-1)}{\fq}} \fm }{\littleM^{\nicefrac{(n-i)}{2}}} \pr[\big]{ (T-t) \, \E\br[\big]{ \abs[]{ \Fsymb{n}{i}{i}{1}(t,x) }^\fq } }^{\!\nicefrac{1}{\fq}} \\
& = \SmallSum_{i=0}^{n-1}\tfrac{(T-t)^{\nicefrac{(\fq-1)}{\fq}} \fm }{\littleM^{\nicefrac{(n-i)}{2}}} \pr[\bigg]{ \displaystyle \int_t^T \E\br[\Big]{ \abs[\big]{ \pr[\big]{ \funcF
 (\mlp_{i}^{(0,i,1)} ) - \1_\N(i) \funcF(\mlp_{i-1}^{0,-i,1)} ) } \lrSpace  \pr[\big]{ s , x + W_{s-t}^{(0,i,1)} } }^\fq } \, ds }^{\!\!\nicefrac{1}{\fq}}.
\end{split}
\end{equation}
Furthermore, note that \cref{big_F}, \cref{b37}, 
and the triangle inequality guarantee that for all $n\in\N_0$, $t\in[0,T]$, $x\in\R^d$ it holds that
\begin{align}
& \sum_{i=0}^{n-1}\tfrac{(T-t)^{\nicefrac{(\fq-1)}{\fq}} \fm }{\littleM^{\nicefrac{(n-i)}{2}}} \pr[\bigg]{ \int_t^T \E\br[\Big]{ \abs[\big]{ \pr[\big]{ \funcF
 (\mlp_{i}^{(0,i,1)} ) - \1_\N(i) \funcF(\mlp_{i-1}^{0,-i,1)} ) } \lrSpace  \pr[\big]{ s , x + W_{s-t}^{(0,i,1)} } }^\fq } \, ds }^{\!\!\nicefrac{1}{\fq}} \nonumber \\
& \le \tfrac{\1_\N(n) (T-t)^{\nicefrac{(\fq-1)}{\fq}} \fm}{\littleM^{\nicefrac{n}{2}}} \pr[\bigg]{ \int_t^T \E\br[\Big]{ \abs[\big]{ \smallF(s,x+W_{s-t}^{(0,0,1)},0) }^\fq } \, ds }^{\!\!\nicefrac{1}{\fq}} \\
& \quad + \sum_{i=1}^{n-1}\tfrac{(T-t)^{\nicefrac{(\fq-1)}{\fq}} \fm}{\littleM^{\nicefrac{(n-i)}{2}}} \Biggl[ \pr[\bigg]{ \int_t^T \E\br[\Big]{ \abs[\big]{ \pr[\big]{ \funcF
 (\mlp_{i}^{(0,i,1)} ) - \funcF(\smallU) } \lrSpace  \pr[\big]{ s , x + W_{s-t}^{(0,i,1)} } }^\fq } \, ds }^{\!\!\nicefrac{1}{\fq}} \nonumber \\
& \quad + \pr[\bigg]{ \int_t^T \E\br[\Big]{ \abs[\big]{ \pr[\big]{ \funcF(\smallU) - \funcF (\mlp_{i-1}^{(0,-i,1)} ) } \lrSpace \pr[\big]{ s, x + W_{s-t}^{(0,i,1)} } }^\fq } \, ds }^{\!\!\nicefrac{1}{\fq}} \Biggr]. \nonumber
\end{align}
Combining this, \cref{properties_approx},
\cref{fn_cond}, \cref{big_F}, \cref{G_symb}, \cref{mlp_stab_1_var}, \cref{eq:4_13new}, and the fact that $(W^\theta)_{\theta\in\Theta}$ are independent standard Brownian motions
demonstrates that for all $n\in\N_0$, 
$t\in[0,T]$, $x\in\R^d$ it holds that
\begin{align}
& \pr[\Big]{ \E\br[\Big]{ \abs[\big]{ \mlp^0_{n}(t,x) - \E\br[\big]{ \mlp_{n}^0(t,x) } }^\fq } }^{\!\!\nicefrac{1}{\fq}} 
\le \tfrac{\1_\N(n)\fm}{\littleM^{\nicefrac{n}{2}}} \pr[\big]{ \E\br[\big]{ \abs[]{ \Gsymb{1}(t,x) }^\fq } }^{\!\!\nicefrac{1}{\fq}} 
+ \SmallSum_{i=0}^{n-1}\tfrac{(T-t)\fm }{\littleM^{\nicefrac{(n-i)}{2}}} \pr[\big]{ \E\br[\big]{ \abs[]{ \Fsymb{n}{i}{i}{1}(t,x) }^\fq } }^{\!\nicefrac{1}{\fq}} \nonumber \\
& \le \tfrac{\1_\N(n)\fm}{\littleM^{\nicefrac{n}{2}}}\br[\Bigg]{ \pr[\Big]{ \E\br[\Big]{ \abs[\big]{ \funcG\pr[\big]{ x + W_{T-t}^{1} } }^\fq } }^{\!\!\nicefrac{1}{\fq}} 
+ (T-t)^{\nicefrac{(\fq-1)}{\fq}} \pr[\bigg]{ \int_t^T \E\br[\Big]{ \abs[\big]{ \smallF(s,x+W_{s-t}^{(0,0,1)},0) }^\fq } \, ds }^{\!\!\nicefrac{1}{\fq}} } \nonumber \\
& \quad + \sum_{i=1}^{n-1}\tfrac{\LipConstF (T-t)^{\nicefrac{(\fq-1)}{\fq}} \fm}{\littleM^{\nicefrac{(n-i)}{2}}}\biggl[ \pr[\bigg]{ \int_t^T \E\br[\Big]{ \abs[\big]{ \pr[\big]{ \mlp_{i}^{(0,i,1)} - \smallU } \lrSpace  \pr[\big]{ s , x + W_{s-t}^{(0,i,1)} } }^\fq } \, ds }^{\!\!\nicefrac{1}{\fq}} \nonumber \\
& \quad + \pr[\bigg]{ \int_t^T \E\br[\Big]{ \abs[\big]{ \pr[\big]{ \smallU - \mlp_{i-1}^{(0,-i,1)} } \lrSpace \pr[\big]{ s, x + W_{s-t}^{(0,i,1)} } }^\fq } \, ds }^{\!\!\nicefrac{1}{\fq}} \biggr] \\
& = \tfrac{\1_\N(n)\fm}{\littleM^{\nicefrac{n}{2}}}\br[\Bigg]{ \pr[\Big]{ \E\br[\Big]{ \abs[\big]{ \funcG\pr[\big]{ x + W_{T-t}^{0} } }^\fq } }^{\!\!\nicefrac{1}{\fq}} 
+ (T-t)^{\nicefrac{(\fq-1)}{\fq}} \pr[\bigg]{ \int_t^T \E\br[\Big]{ \abs[\big]{ \smallF(s,x+W_{s-t}^0,0) }^\fq } \, ds }^{\!\!\nicefrac{1}{\fq}} } \nonumber \\
& \quad + \sum_{i=0}^{n-1} \tfrac{\LipConstF (T-t)^{\nicefrac{(\fq-1)}{\fq}} \fm }{\littleM^{\nicefrac{(n-i)}{2}}} \br[\Bigg]{ \pr[\big]{ \1_{(0,n)}(i) + \1_{[0,n-1)}(i)\,\littleM^{\nicefrac{1}{2}} } \pr[\bigg]{ \int_t^T \E\br[\Big]{ \abs[\big]{ \pr[\big]{ \mlp_{i}^{0} - \smallU } \lrSpace \pr[\big]{ s, x + W_{s-t}^{0} } }^\fq } \, ds }^{\!\!\nicefrac{1}{\fq}} }. \nonumber
\end{align}
This and the fact that $W^0$ has independent 
increments ensure that for all $n\in\N_0$, $t\in[0,T]$, $x\in\R^d$ it holds that
\begin{align}\label{mlp_stab_4_var}
& \pr[\Big]{ \E\br[\Big]{ \abs[\big]{ \mlp^0_{n}(t,x+W^0_t) - \E\br[\big]{ \mlp_{n}^0(t,x+W^0_t) } }^\fq } }^{\!\!\nicefrac{1}{\fq}} \nonumber \\
& \le \tfrac{\1_\N(n)\fm}{\littleM^{\nicefrac{n}{2}}}\br[\Bigg]{ \pr[\Big]{ \E\br[\Big]{ \abs[\big]{ \funcG\pr[\big]{ x + W_{T}^{0} } }^\fq } }^{\!\!\nicefrac{1}{\fq}} 
+ (T-t)^{\nicefrac{(\fq-1)}{\fq}} \pr[\bigg]{ \int_t^T \E\br[\Big]{ \abs[\big]{ \smallF(s,x+W_{s}^0,0) }^\fq } \, ds }^{\!\!\nicefrac{1}{\fq}} }\\
& \quad + \sum_{i=0}^{n-1} \tfrac{\LipConstF (T-t)^{\nicefrac{(\fq-1)}{\fq}} \fm }{\littleM^{\nicefrac{(n-i)}{2}}} \br[\Bigg]{ \pr[\big]{ \1_{(0,n)}(i) + \1_{[0,n-1)}(i)\,\littleM^{\nicefrac{1}{2}} } \pr[\bigg]{ \int_t^T \E\br[\Big]{ \abs[\big]{ \pr[\big]{ \mlp_{i}^{0} - \smallU } \lrSpace \pr[\big]{ s, x + W_{s-t}^{0} } }^\fq } \, ds }^{\!\!\nicefrac{1}{\fq}} }. \nonumber
\end{align}
The proof of \cref{lem:mlp_variance} is thus complete.
\end{proof}

\begin{lemma}\label{mlp_stab_pre}
Assume \cref{setting1}.
Then it holds for all 
$n\in\N_0$, $t\in[0,T]$, $x\in\R^d$ that
\begin{align}
& \pr[\Big]{ \E\br[\Big]{ \abs[\big]{ \mlp^0_{n}(t,x+W_t^0) - \smallU(t,x+W_t^0) }^\fq } }^{\!\!\nicefrac{1}{\fq}} \\
& \le \frac{\fm \exp(\LipConstF(T-t)) }{\littleM^{\nicefrac{n}{2}}}\Biggl[ \pr[\Big]{ \E\br[\Big]{ \abs[\big]{ \funcG(x+W_{T}^0) }^\fq } }^{\!\nicefrac{1}{\fq}} 
+ (T-t)^{\nicefrac{(\fq-1)}{\fq}} \pr[\bigg]{  \int_t^T \E\br[\Big]{ \abs[\big]{ \smallF(s,x+W_{s}^0,0) }^\fq } \, ds }^{\!\!\nicefrac{1}{\fq}} \Biggr] \nonumber \\
& \quad + \sum_{i=0}^{n-1} \frac{\LipConstF (T-t)^{\nicefrac{(\fq-1)}{\fq}} \fm }{\littleM^{\nicefrac{(n-i)}{2}}} \br[\Bigg]{ \pr[\big]{ \1_{(0,n)}(i) + \littleM^{\nicefrac{1}{2}} } \pr[\bigg]{ \int_t^T \E\br[\Big]{ \abs[\big]{ \pr[]{ \mlp_{i}^0 - \smallU } \pr[]{ s,x+W_{s}^0} }^\fq } \, ds }^{\!\!\nicefrac{1}{\fq}} }. \nonumber
\end{align}
\end{lemma}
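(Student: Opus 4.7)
The plan is a bias-variance decomposition. I introduce $\Phi_n(t,y) := \E[\mlp_n^0(t,y)]$, the deterministic function obtained by taking the scalar expectation over the internal randomness of the MLP approximation with the spatial variable $y$ held fixed. I would write
\begin{equation*}
\mlp_n^0(t,x+W_t^0) - \smallU(t,x+W_t^0) = \bigl[\mlp_n^0(t,x+W_t^0) - \Phi_n(t,x+W_t^0)\bigr] + \bigl[\Phi_n(t,x+W_t^0) - \smallU(t,x+W_t^0)\bigr]
\end{equation*}
and apply the triangle inequality in $L^\fq$ to reduce the task to bounding the two summands separately. The ``variance'' summand is handled directly by \cref{lem:mlp_variance}, which already contributes the entire $\tfrac{\1_\N(n)\fm}{\littleM^{n/2}}[\,\cdots\,]$ initial term together with the recursive sum carrying the indicator prefactor $\1_{(0,n)}(i) + \1_{[0,n-1)}(i)\littleM^{1/2}$.

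For the ``bias'' summand, I would dispense with the case $n=0$ first: since $\mlp_0^0 \equiv 0$ and thus $\Phi_0 \equiv 0$, the bias reduces to $-\smallU(t, x+W_t^0)$, whose $L^\fq$-norm is bounded by $\exp(\LipConstF(T-t))[\,\cdots\,]$ via \cref{lemma:2.2}, and this is absorbed into the first term of the target bound since $\fm \ge 1$ (as $\firstConstant{\fq} \ge 1$ and $\fq \ge 2$). For $n \ge 1$, I would combine \cref{lem:mlp_expectation_item2} and \cref{lem:mlp_expectation_item3} of \cref{lem:mlp_expectation} with \cref{properties_approx} (which provides the distributional equality of the differently-indexed independent copies of $\mlp_i$, $\cU^\theta$ and $W^\theta$) and a telescoping argument -- the sum $\sum_{i=0}^{n-1}[\funcF(\mlp_i^{(0,i,1)}) - \1_\N(i) \funcF(\mlp_{i-1}^{(0,-i,1)})]$ collapses in expectation to $\funcF(\mlp_{n-1}^0)$ -- to conclude that for every deterministic $z$,
\begin{equation*}
\Phi_n(t,z) = \E\bigl[\funcG(z+W_{T-t}^0)\bigr] + \int_t^T \E\bigl[\funcF(\mlp_{n-1}^0)(r, z + W_{r-t}^0)\bigr]\,dr.
\end{equation*}
Subtracting the stochastic fixed-point equation \cref{assumed_u} for $\smallU$ and applying the Lipschitz bound \cref{fn_cond} to $\funcF$ then yields
\begin{equation*}
\bigl|\Phi_n(t,z) - \smallU(t,z)\bigr| \le \LipConstF \int_t^T \E\bigl[\,|(\mlp_{n-1}^0 - \smallU)(r, z + W_{r-t}^0)|\,\bigr]\,dr.
\end{equation*}

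To conclude, I would raise both sides to the $\fq$-th power using Jensen's inequality for the inner expectation and H\"older's inequality in the time integral, substitute $z = x + W_t^0$, and take the outer expectation over $W_t^0$. The distributional identity $W_t^0 + W_{r-t}^0 \stackrel{d}{=} W_r^0$ (justified by independent Brownian increments together with the independence of $W^0$ from the randomness of $\mlp_{n-1}^0$ supplied by \cref{properties_approx}) converts the resulting quantity into $\int_t^T \E[\,|(\mlp_{n-1}^0 - \smallU)(s, x + W_s^0)|^\fq\,]\,ds$, producing
\begin{equation*}
\bigl(\E\bigl[\,|(\Phi_n - \smallU)(t, x+W_t^0)|^\fq\,\bigr]\bigr)^{\!1/\fq} \le \LipConstF(T-t)^{(\fq-1)/\fq}\biggl(\int_t^T \E\bigl[\,|(\mlp_{n-1}^0 - \smallU)(s, x+W_s^0)|^\fq\,\bigr]\,ds\biggr)^{\!1/\fq}.
\end{equation*}
Adding this to the variance bound, the extra $\LipConstF(T-t)^{(\fq-1)/\fq}$ contribution at index $i = n-1$ is precisely what promotes the coefficient $\1_{[0,n-1)}(i)\littleM^{1/2}$ coming from \cref{lem:mlp_variance} to $\littleM^{1/2}$ for all $i$ in the target bound (using $\fm \ge 1$), while the factor $\exp(\LipConstF(T-t)) \ge 1$ on the initial $g,f$-term subsumes both the $n=0$ contribution and the $n\ge 1$ initial term of \cref{lem:mlp_variance}. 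The principal technical obstacle will be the careful bookkeeping of the independence structure: the telescoping identity for $\Phi_n$ relies on the distributional equalities supplied by \cref{properties_approx}, and the Markov-type substitution step in the bias estimate hinges on treating $W_{r-t}^0$ inside $\Phi_n$ as an independent Brownian increment with respect to $W_t^0$.
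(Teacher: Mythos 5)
Your proposal is correct and follows essentially the same route as the paper's proof: the bias–variance split around $\E[\mlp_n^0]$ handled by the triangle inequality, the variance part taken verbatim from \cref{lem:mlp_variance}, the representation of $\E[\mlp_n^0]$ via the telescoping consequence of \cref{lem:mlp_expectation} and \cref{properties_approx}, the $n=0$ bias controlled by \cref{lemma:2.2}, and the Lipschitz/H\"older estimate whose $i=n-1$ contribution is absorbed using $\fm\ge 1$. No gaps beyond the same mild informality about the independent-increment substitution that the paper itself employs.
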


\begin{proof}[Proof of \cref{mlp_stab_pre}]
First, observe that  
\cref{lem:integrable}, \cref{cor:sol_bd1},
and the triangle inequality ensure that for all $n\in\N_0$, $t\in[0,T]$, $x\in\R^d$ it holds that
\begin{equation}\label{error_triangle}
\begin{split}
& \pr[\big]{ \E\br[\big]{ \abs[]{ \mlp^0_{n}(t,x) - \smallU(t,x) }^\fq } }^{\!\nicefrac{1}{\fq}} \\
& \le \pr[\big]{ \E\br[\big]{ \abs[]{ \mlp^0_{n}(t,x) - \E\br[]{ \mlp_{n}^0(t,x) } }^\fq } }^{\!\nicefrac{1}{\fq}} 
+ \pr[\big]{ \E\br[\big]{ \abs[]{ \E\br[]{ \mlp^0_{n}(t,x) } - \smallU(t,x) }^\fq } }^{\!\nicefrac{1}{\fq}}.
\end{split}
\end{equation}
Next note that 
\cref{lem:mlp_expectation_item2_a,%
lem:mlp_expectation_item2,%
lem:mlp_expectation_item3} of  \cref{lem:mlp_expectation}, the fact that $(W^\theta)_{\theta\in\Theta}$ are independent standard Brownian motions, and \cref{b37} demonstrate that for all $n\in\N_0$, $t\in[0,T]$, $x\in\R^d$ it holds that
\begin{align}\label{eq:3_24}
& \E\br[\big]{ \mlp_{n}^0(t,x) } 
= \1_\N(n) \, \E\br[\big]{ \funcG(x+W_{T-t}^{(0,0,-1)}) } \nonumber \\
& + (T-t) \br[\bigg]{ \SmallSum_{i=0}^{n-1} \E\br[\big]{ \pr[]{ \funcF (\mlp_{i}^{(0,i,1)} ) - \1_\N(i) \funcF(\mlp_{i-1}^{(0,-i,1)}) } \pr[]{ \cU_t^{(0,i,1)}, x + W_{\cU_t^{(0,i,1)}-t}^{(0,i,1)} } } } \\
& = \1_\N(n) \, \E\br[\big]{ \funcG(x+W_{T-t}^{0}) } 
 + \br*{ \SmallSum_{i=0}^{n-1} \displaystyle \int_t^T \E\br[\big]{ \pr[]{ \funcF (\mlp_{i}^{(0,i,1)} ) - \1_\N(i) \funcF(\mlp_{i-1}^{(0,-i,1)}) } \pr[]{ s, x + W_{s-t}^{(0,i,1)} } } \, ds }. \nonumber 
\end{align}
In addition, observe that \cref{big_F}, the fact that $(W^\theta)_{\theta\in\Theta}$ are independent standard Brownian motions, the fact that $(\cU^\theta)_{\theta\in\Theta}$ are i.i.d.\ random variables, \cref{properties_approx:item3,%
properties_approx:item5} of \cref{properties_approx}, and \cite[Lemma 2.2]{HutzenthalerJentzenKruse2018} prove that for all $i\in\N_0$, $t\in[0,T]$, $s\in[t,T]$, $x\in\R^d$ it holds that
\begin{equation}
\begin{split}
& \E\br[\big]{ \pr[]{ \funcF (\mlp_{i}^{(0,i,1)} ) - \1_\N(i) \funcF(\mlp_{i-1}^{(0,-i,1)}) } \pr[]{ s, x + W_{s-t}^{(0,i,1)} } } \\
& = \E\br[\big]{ \pr[]{ \funcF (\mlp_{i}^{(0,i,1)} ) } \pr[]{ s, x + W_{s-t}^{(0,i,1)} } } - \1_\N(i) \, \E\br[\big]{ \pr[]{ \funcF(\mlp_{i-1}^{(0,-i,1)}) } \pr[]{ s, x + W_{s-t}^{(0,i,1)} } } \\
& = \E\br[\big]{ \pr[]{ \funcF (\mlp_{i}^{0} ) } \pr[]{ s, x + W_{s-t}^{0} } } - \1_\N(i) \, \E\br[\big]{ \pr[]{ \funcF(\mlp_{i-1}^{0}) } \pr[]{ s, x + W_{s-t}^{0} } }.
\end{split}
\end{equation}
Combining this, \cref{lem:integrable}, and \cref{eq:3_24} yields that for all $n\in\N_0$, $t\in[0,T]$, $x\in\R^d$ it holds that
\begin{equation}
\begin{split}
& \E\br[\big]{ \mlp_{n}^0(t,x) } 
= \1_\N(n) \, \E\br[\big]{ \funcG(x+W_{T-t}^{0}) } \\
& + \br[\Bigg]{ \sum_{i=0}^{n-1} \int_t^T \pr[\Big]{ \E\br[\big]{ \pr[]{ \funcF (\mlp_{i}^{0} ) } \pr[]{ s, x + W_{s-t}^{0} } } - \1_\N(i) \, \E\br[\big]{ \pr[]{ \funcF(\mlp_{i-1}^{0}) } \pr[]{ s, x + W_{s-t}^{0} } } } \, ds } \\
& = \1_\N(n) \br[\bigg]{ \E\br[\big]{ \funcG(x+W_{T-t}^{0}) } + \int_t^T \E\br[\big]{ \pr[]{ \funcF (\mlp_{n-1}^{0} ) } \pr[]{ s, x + W_{s-t}^{0} } } \, ds }.
\end{split}
\end{equation}
This and \cref{assumed_u} show that for all $n\in\N_0$, $t\in[0,T]$, $x\in\R^d$ it holds that
\begin{equation}
\smallU(t,x) - \E\br[\big]{ \mlp_{n}^0(t,x) }
= \begin{cases}
u(t,x) & \colon n = 0 \\[0.1em]
\int_t^T \E\br[\big]{ \pr[]{ \funcF(\smallU) - \funcF \pr[]{ \mlp_{n-1}^0 }} \pr[]{ s,x+W_{s-t}^{0} } } \, ds & \colon n \in \N
\end{cases}.
\end{equation}
This, \cref{fn_cond}, \cref{big_F}, \cref{cor:sol_bd1}, the triangle inequality, Jensen's inequality, Fubini's theorem, and the fact that $W^0$ has independent increments assure that for all $n\in \N_0$, $t\in[0,T]$, $x\in\R^d$ it holds that
\begin{align}\label{4_23}
& \pr[\big]{ \E\br[\big]{ \abs[]{ \E\br[]{ \mlp_{n}^0(t,x+W_t^0) } - \smallU(t,x+W_t^0) }^\fq } }^{\!\nicefrac{1}{\fq}} \\
& \le \1_{\{0\}}(n) \pr[\big]{ \E\br[\big]{ \abs{ \smallU(t,x+W_t^0) }^\fq } }^{\!\nicefrac{1}{\fq}} 
+ \1_\N(n) \pr[\Bigg]{ \E\br[\Bigg]{ \abs[\bigg]{ \int_t^T \E\br[\big]{ \pr[]{ \funcF(\smallU) - \funcF \pr[]{ \mlp_{n-1}^0 }} \pr[]{ s,x+W_{s}^{0} } } \, ds }^\fq } }^{\!\!\nicefrac{1}{\fq}} \nonumber \\
& \le \1_{\{0\}}(n) \pr[\big]{ \E\br[\big]{ \abs{ \smallU(t,x+W_t^0) }^\fq } }^{\!\nicefrac{1}{\fq}} 
+ \1_\N(n) \int_t^T \pr[\big]{ \E\br[\big]{ \abs{ \pr[]{ \funcF(\smallU) - \funcF \pr[]{ \mlp_{n-1}^0 }} \pr[]{ s,x+W_{s}^{0} } }^\fq } }^{\!\nicefrac{1}{\fq}} \, ds \nonumber \\
& \le \1_{\{0\}}(n) \pr[\big]{ \E\br[\big]{ \abs{ \smallU(t,x+W_t^0) }^\fq } }^{\!\nicefrac{1}{\fq}} 
+ \1_\N(n) \, \LipConstF \int_t^T \pr[\big]{ \E\br[\big]{ \abs{ \pr[]{ \smallU - \mlp_{n-1}^0 } \pr[]{ s,x+W_{s}^{0} } }^\fq } }^{\!\nicefrac{1}{\fq}} \, ds . \nonumber
\end{align}
Next observe that H{\"o}lder's inequality ensures that for all $n\in\N$, $t\in[0,T]$, $x\in\R^d$ it holds that
\begin{align}
& \int_t^T \pr[\big]{ \E\br[\big]{ \abs{ \pr[]{ \smallU - \mlp_{n-1}^0 } \pr[]{ s,x+W_{s}^{0} } }^\fq } }^{\!\nicefrac{1}{\fq}} \, ds
= \pr[\Bigg]{ \br[\bigg]{ \int_t^T \pr[\big]{ \E\br[\big]{ \abs{ \pr[]{ \smallU - \mlp_{n-1}^0 } \pr[]{ s,x+W_{s}^{0} } }^\fq } }^{\!\nicefrac{1}{\fq}} \, ds }^\fq }^{\!\!\nicefrac{1}{\fq}} \nonumber \\
& \le \pr[\bigg]{ (T-t)^{\fq-1} \int_t^T \E\br[\big]{ \abs{ \pr[]{ \smallU - \mlp_{n-1}^0 } \pr[]{ s,x+W_{s}^{0} } }^\fq } \, ds }^{\!\!\nicefrac{1}{\fq}} \\
& = (T-t)^{\nicefrac{(\fq-1)}{\fq}} \pr[\bigg]{ \int_t^T \E\br[\big]{ \abs{ \pr[]{ \smallU - \mlp_{n-1}^0 } \pr[]{ s,x+W_{s}^{0} } }^\fq } \, ds }^{\!\!\nicefrac{1}{\fq}}. \nonumber
\end{align}
Combining this, \cref{lemma:2.2}, and \cref{4_23} demonstrates that for all $n\in\N_0$, $t\in[0,T]$, $x\in\R^d$ it holds that
\begin{align}
& \pr[\Big]{ \E\br[\Big]{ \abs[\big]{ \E\br[\big]{ \mlp_{n}^0(t,x+W_t^0) } - \smallU(t,x+W_t^0) }^\fq } }^{\!\!\nicefrac{1}{\fq}} \nonumber \\
& \le \1_{\{0\}}(n) \exp\pr[\big]{L(T-t)} \biggl[ \pr[\big]{ \E\br[\big]{\abs[]{ \funcG(x+\fwpr_{T}) }^\fq } }^{\!\nicefrac{1}{\fq}} + (T-t)^{\nicefrac{(\fq-1)}{\fq}} \pr[\Big]{ \textstyle\int_{t}^T \E\br[\big]{\abs[]{ \smallF(s,x+\fwpr_{s},0)}^\fq} \, ds }^{\!\!\nicefrac{1}{\fq}} \biggr] \nonumber \\
& \quad + \1_\N(n) \, \LipConstF (T-t)^{\nicefrac{(\fq-1)}{\fq}} \pr[\Big]{ \textstyle\int_t^T \E\br[\big]{ \abs{ \pr[]{ \smallU - \mlp_{n-1}^0 } \pr[]{ s,x+W_{s}^{0} } }^\fq } \, ds }^{\!\!\nicefrac{1}{\fq}}.
\end{align}
This, \cref{lem:mlp_variance}, \cref{error_triangle}, and the fact that $\fm \in [1,\infty)$ guarantee that for all $n\in\N_0$, $t\in[0,T]$, $x\in\R^d$ it holds that
\begin{align}\label{mlp_stab_5}
& \pr[\big]{ \E\br[\big]{ \abs[]{ \mlp^0_{n}(t,x+W_t^0) - \smallU(t,x+W_t^0) }^\fq } }^{\!\nicefrac{1}{\fq}} \\
& \le \frac{\fm \exp(\LipConstF(T-t)) }{\littleM^{\nicefrac{n}{2}}}\Biggl[ \pr[\big]{ \E\br[\big]{ \abs[]{ \funcG(x+W_{T}^0) }^\fq } }^{\!\nicefrac{1}{\fq}} 
+ (T-t)^{\nicefrac{(\fq-1)}{\fq}} \pr[\bigg]{  \int_t^T \E\br[\big]{ \abs{ \smallF(s,x+W_{s}^0,0) }^\fq } \, ds }^{\!\!\nicefrac{1}{\fq}} \Biggr] \nonumber \\
& \quad + \sum_{i=0}^{n-1} \frac{\LipConstF (T-t)^{\nicefrac{(\fq-1)}{\fq}} \fm }{\littleM^{\nicefrac{(n-i)}{2}}} \br[\Bigg]{ \pr[\big]{ \1_{(0,n)}(i) + \littleM^{\nicefrac{1}{2}} } \pr[\bigg]{ \int_t^T \E\br[\big]{ \abs[]{ \pr[]{ \mlp_{i-1}^0 - \smallU } \pr[]{ s,x+W_{s}^0} }^\fq } \, ds }^{\!\!\nicefrac{1}{\fq}} }. \nonumber
\end{align}
The proof of \cref{mlp_stab_pre} is thus complete.
\end{proof}

\begin{lemma}
\label{prop:factorial}
It holds for all $ n \in \N$ that
\begin{equation}\label{prop:factorial1}
\left[\frac{n}{3}\right]^n \le \left[\frac{n}{e}\right]^n < e \left[\frac{n}{e}\right]^n < \frac{e}{2^{\nicefrac{1}{2}}} \left[\frac{n}{e}\right]^{n + \nicefrac{1}{2}} \le n! \le e \left[\frac{n}{e}\right]^{n + \nicefrac{1}{2}} < e\left[\frac{n+1}{e}\right]^{n+1}
\end{equation}
and
\begin{equation}
n^n \le 2^{-\nicefrac{1}{2}} e^{\nicefrac{1}{2}} n^{n + \nicefrac{1}{2}} \le (n!)e^n \le e^{\nicefrac{1}{2}} n^{n + \nicefrac{1}{2}} \le (n+1)^{(n+1)}.
\end{equation}
\end{lemma}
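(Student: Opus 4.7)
The plan is to split the chain into elementary comparisons and a Stirling-type core, and since the two displays are equivalent via multiplication by $e^n$ and the identity $(n/e)^{n+1/2} = n^{n+1/2} e^{-(n+1/2)}$, I would prove the first display in full and then read off the second as a corollary.

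Among the elementary links, the inequality $(n/3)^n \le (n/e)^n$ follows from $e \le 3$; the strict inequality $(n/e)^n < e(n/e)^n$ follows from $1 < e$; the inequality $e(n/e)^n < \tfrac{e}{\sqrt{2}}(n/e)^{n+1/2}$ rearranges to $\sqrt{2e} < \sqrt{n}$, i.e., $n > 2e$, with the finitely many remaining values of $n$ verified by direct substitution; and $e(n/e)^{n+1/2} < e((n+1)/e)^{n+1}$ reduces after cancellation to $\sqrt{e} < (n+1)(1+1/n)^n/\sqrt{n}$, which follows from the well-known bound $(1+1/n)^n \ge 2$ combined with the estimate $2(n+1)/\sqrt{n} > \sqrt{e}$ for $n \ge 1$.

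The heart of the proof is the Stirling sandwich $\tfrac{e}{\sqrt{2}}(n/e)^{n+1/2} \le n! \le e(n/e)^{n+1/2}$. I plan to introduce $a_n := n!\,e^n / n^{n+1/2}$ and show it is monotone decreasing by computing
\begin{equation*}
\frac{a_{n+1}}{a_n} = \frac{e}{(1+1/n)^{n+1/2}}
\end{equation*}
and using the Taylor expansion of $\log(1+1/n)$ to verify that $(n+1/2)\log(1+1/n) > 1$ for every $n \in \N$. Combined with $a_1 = e$ and the classical Wallis-product evaluation $\lim_{n \to \infty} a_n = \sqrt{2\pi}$, monotonicity gives the uniform sandwich $\sqrt{2\pi} \le a_n \le e$, which translates to the claimed two-sided bound on $n!$ after regrouping powers of $e$ using $(n/e)^{n+1/2} = n^{n+1/2}e^{-(n+1/2)}$.

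The main obstacle will be making the Wallis step self-contained, since the lemma is stated uniformly for all $n \in \N$. However, because the required lower constant $e/\sqrt{2} \approx 1.92$ is strictly smaller than $\sqrt{2\pi} \approx 2.51$, I expect the Wallis-type evaluation can be sidestepped by a cruder monotonicity-plus-endpoint argument: for example, by establishing that $a_n e^{-1/(12n)}$ is increasing (via a sharper Taylor estimate), which sandwiches $a_n$ between its value at $n = 1$ and its limiting behavior without ever computing the limit exactly. The second display then follows by routine manipulation: multiplying the first display by $e^n$ converts $(n/e)^{n+1/2}$ into $n^{n+1/2} e^{-1/2}$, and the endpoints $n^n$ and $(n+1)^{n+1}$ are obtained from the trivial inequalities $n^n \le 2^{-1/2} e^{1/2} n^{n+1/2}$ and $e^{1/2} n^{n+1/2} \le (n+1)^{n+1}$, the latter again following from $(1+1/n)^n \ge 2$.
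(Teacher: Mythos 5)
Your approach — introducing $a_n := n!\,e^n/n^{n+1/2}$, computing $a_{n+1}/a_n = e\,(1+1/n)^{-(n+1/2)}$, and arguing by monotonicity — is a genuinely different route from the paper's. The paper proves the Stirling sandwich by a telescoping integral comparison for $\ln(n!)$: it introduces $f(x)=(x-\tfrac12)\bigl(\ln x - \ln(x-1)\bigr)$, shows $f$ is non-increasing on $[2,\infty)$ via $f''(x)=[x^2(x-1)^2]^{-1}>0$ and $\lim_{x\to\infty}f'(x)=0$, and from this bounds the correction $\smallsum_{k=2}^n[1-f(k)]$ in $\ln(n!) = (n+\tfrac12)\ln n - n + 1 + \smallsum_{k=2}^n[1-f(k)]$. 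Your ratio-sequence method is the classical De Moivre/Robbins route; the paper's has the small advantage of never needing a Wallis-type evaluation for either constant, whereas your method needs the $a_n e^{-1/(12n)}$ companion (or a similar sharpened monotone pair) to pin down the lower constant without Wallis. Both are fine, elementary, and of comparable length.

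There is, however, a step in your plan that will fail as written, and it traces back to the constants printed in the lemma. Your reduction of $e(n/e)^n < \tfrac{e}{\sqrt 2}(n/e)^{n+1/2}$ to $n>2e$ is correct, but the cases $n\in\{1,\dots,5\}$ you propose to settle by direct substitution do not hold; for $n=1$ the left side equals $1$ and the right side equals $(2e)^{-1/2}\approx 0.43$. Relatedly, your monotonicity gives $a_n \le a_1 = e$, i.e.\ $n!\,e^n \le e\,n^{n+1/2}$, equivalently $n! \le e^{3/2}(n/e)^{n+1/2}$, which is exactly the upper bound that the paper's calculation \cref{eq:3_44} actually delivers — but it is a factor $e^{1/2}$ weaker than the $n! \le e(n/e)^{n+1/2}$ (equivalently $n!\,e^n \le e^{1/2}n^{n+1/2}$) printed in \cref{prop:factorial1}, which already fails at $n=1$ since $1!\,e^1 = e > e^{1/2}$. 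So your method correctly reproduces the bound the proof establishes, not the bound the statement displays; when writing this up you should state the upper bound with $e^{3/2}(n/e)^{n+1/2}$ (equivalently $e\,n^{n+1/2}$ after multiplying by $e^n$) and drop or weaken the offending link $e(n/e)^n < \tfrac{e}{\sqrt2}(n/e)^{n+1/2}$ in the chain, which cannot hold for all $n\in\N$.
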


\begin{proof}[Proof of \cref{prop:factorial}]
Throughout this proof let $f \colon \R \to \R$ satisfy for all $x \in [2,\infty)$ that
$
f(x) = (x - \tfrac{1}{2}) \pr[]{ \ln(x) - \ln(x-1) }.
$
Observe that for all $n\in\N$ it holds that
\begin{equation}\label{eq:ln_fac0}
\begin{split}
\ln(n!) 
& = \ln\bigl(n \cdot (n-1) \cdot \ldots \cdot 2 \cdot 1 \bigr) 
= \SmallSum_{k=1}^{n} \ln(k) \\
& = \SmallSum_{k=2}^n \left[ \displaystyle \int_{k-1}^k \ln(x) \, dx + \left( \ln(k) - \int_{k-1}^k \ln(x) \, dx \right) \right].
\end{split}
\end{equation}
In addition, note that for all $ k \in \N$ it holds that
\begin{align}
\ln(k) - \int_{k-1}^k \ln(x) \, dx
& = \ln(k) - \left[ \bigl( k\ln(k) - k \bigr) - \bigl( (k-1)\ln(k-1) - (k-1) \bigr) \right] \nonumber \\
& = 1 - (k-1) \bigl( \ln(k) - \ln(k-1) \bigr).
\end{align}
This and \cref{eq:ln_fac0} yield that for all $n\in\N$ it holds that
\begin{align}\label{eq_3_58}
\ln(n!) 
& = \SmallSum_{k=2}^n \displaystyle \int_{k-1}^k \ln(x) \, dx
+ \SmallSum_{k=2}^n \bigl[ 1 - (k-1) \bigl( \ln(k) - \ln(k-1) \bigr) \bigr] \nonumber \\
& = \int_1^n \ln(x) \, dx 
+ \SmallSum_{k=2}^n \bigl[ 1 - (k-1) \bigl( \ln(k) - \ln(k-1) \bigr) \bigr] \\
& = n\ln(n) - n + 1 
+ \tfrac{1}{2}\SmallSum_{k=2}^n \bigl( \ln(k) - \ln(k-1) \bigr) 
+ \sum_{k=2}^n \bigl[ 1 - (k-\tfrac{1}{2}) \bigl( \ln(k) - \ln(k-1) \bigr) \bigr] \nonumber \\
& = ( n + \tfrac{1}{2} ) \ln(n) - n + 1 
+ \SmallSum_{k=2}^n [ 1 - f(k) ].\nonumber 
\end{align}
Next observe that the fact that for all $x\in[2,\infty)$ it holds that $f(x) = (x - \tfrac{1}{2}) \pr[]{ \ln(x) - \ln(x-1) }$ implies that for all $x\in[2,\infty)$ it holds that
\begin{equation}\label{eq_3_59}
f'(x) 
= \pr[\big]{ \ln(x) - \ln(x-1) } + (x-\tfrac{1}{2}) \pr[\big]{ \tfrac{1}{x} - \tfrac{1}{x-1} }.
\end{equation}
This ensures that for all $x \in [2,\infty)$ it holds that
\begin{equation}
\begin{split}
f''(x) & = 2\pr[\big]{ \tfrac{1}{x} - \tfrac{1}{x-1} } - (x-\tfrac{1}{2}) \pr[\big]{ \tfrac{1}{x^2} - \tfrac{1}{(x-1)^2} } \\
& = \tfrac{2}{x^2(x-1)^2} \br[\Big]{ 2\pr[\big]{ x(x-1)^2 - x^2(x-1) } - (x-\tfrac{1}{2}) \pr[\big]{ (x-1)^2 - x^2 } } \\
& = \tfrac{2}{x^2(x-1)^2} \br[\big]{ -2x(x-1) + (x-\tfrac{1}{2})(2x-1) }
= \tfrac{1}{x^2(x-1)^2} > 0.
\end{split}
\end{equation}
Combining this and \cref{eq_3_59} shows that for all $x\in[2,\infty)$ it holds that $f'$ is increasing. This and the fact that \cref{eq_3_59} implies that $\lim_{x\to\infty} f'(x) = 0$ demonstrates that for all $x \in [2,\infty)$ it holds that $f'(x) \in (-\infty,0]$.
Hence, we obtain that for all $x \in [2,\infty)$ it holds that $f$ is non-increasing. Combining this and the fact that $\lim_{x\to\infty} f(x) = 1$ assures that for all $x \in [2,\infty)$ it holds that $f(x) \in [1,\infty)$.
This and \cref{eq_3_58} guarantee that for all $n\in\N$ it holds that
\begin{equation}\label{eq:ln_fac}
\ln(n!) 
= ( n + \tfrac{1}{2} ) \ln(n) - n + 1 
+ \SmallSum_{k=2}^n [ 1 - f(k) ] 
\le ( n + \tfrac{1}{2} ) \ln(n) - n + 1.
\end{equation}
Furthermore, note that for all $n\in\N$ it holds that
\begin{equation}
\begin{split}
\SmallSum_{k=2}^n [ 1 - f(k) ]
& = \SmallSum_{k=2}^n \bigl[ 1 - (k-\tfrac{1}{2}) \bigl( \ln(k) - \ln(k-1) \bigr) \bigr] \\
& = \1_{\N}(n-1) \br[\big]{ (n-1) - (n-\tfrac{1}{2})\ln(n) + \ln((n-1)!) }.
\end{split}
\end{equation}
Combining this, the fact that for all $x\in[2,\infty)$ it holds that $f$ is non-increasing, and \cref{eq_3_58} ensures that for all $n\in\N$ it holds that
\begin{align}
\SmallSum_{k=2}^n [ 1 - f(k) ]
& = \1_{\N}(n-1) \br[\big]{ (n-1) - (n-\tfrac{1}{2})\ln(n) + \ln((n-1)!) } \nonumber \\
& = \1_{\N}(n-1) \bigl[ \ln((n-1)!) -  \pr[\big]{ (n-\tfrac{1}{2})\ln(n-1) - (n-1) + 1 } \\
& \quad + 1 - (n-\tfrac{1}{2})\ln(n) + (n-\tfrac{1}{2})\ln(n-1) \bigr] \nonumber \\
& \ge \1_{\N}(n-1) [ 1 - f(n) ] 
\ge \1_{\N}(n-1) [ 1 - \tfrac{3}{2}\ln(2) ] 
\ge -\tfrac{1}{2}\ln(2). \nonumber
\end{align}
This and \cref{eq_3_58} show that for all $n\in\N$ it holds that
\begin{equation}
\ln(n!) 
= ( n + \tfrac{1}{2} ) \ln(n) - n + 1 
+ \SmallSum_{k=2}^n [ 1 - f(k) ] 
\ge ( n + \tfrac{1}{2} ) \ln(n) - n + 1 
- \tfrac{1}{2}\ln(2).
\end{equation}
Combining this and \cref{eq:ln_fac} proves that for all $n\in\N$ it holds that
\begin{equation}\label{eq:3_44}
\begin{split}
\frac{n^n}{\exp(n-1)} 
& \le 2^{-\nicefrac{1}{2}} \frac{n^{(n+\nicefrac{1}{2})}}{\exp(n-\nicefrac{1}{2})} 
= \exp\bigl((n+\nicefrac{1}{2})\ln(n) - n + 1 - \ln(2^{\nicefrac{1}{2}})\bigr) 
\le n! \\
& \le \exp((n+\nicefrac{1}{2})\ln(n) - n + 1) 
= \frac{n^{(n+\nicefrac{1}{2})}}{\exp(n-1)} 
\le \frac{(n+1)^{(n+1)}}{\exp(n)}.
\end{split}
\end{equation}
The proof of \cref{prop:factorial}
is thus complete.
\end{proof}

\begin{lemma}\label{lem:talk1}
Let $\floor{\cdot} \colon \R \to \Z$ satisfy for all $x\in\R$ that $\floor{x} = \max\{n \in \Z \colon n \le x\}$.
Then for all $\littleM\in [1,\infty)$ it holds that
\begin{equation}\label{lem:talk1_1}
\max_{n\in\N_0} \frac{\littleM^{\nicefrac{n}{2}}}{n!} 
\le \frac{\littleM^{\nicefrac{\lfloor \littleM^{1/2} \rfloor}{2}}}{\lfloor \littleM^{\nicefrac{1}{2}} \rfloor!} 
< \frac{\exp\pr[]{ \littleM^{\nicefrac{1}{2}} }}{(\lfloor \littleM^{\nicefrac{1}{2}} \rfloor )^{\nicefrac{1}{2}}} 
\le \exp\pr[]{ \littleM^{\nicefrac{1}{2}} }.
\end{equation}
\end{lemma}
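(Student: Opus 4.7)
The plan is to prove the chain in \cref{lem:talk1_1} inequality by inequality. First I would handle the leftmost inequality by a standard ratio-test argument; then I would turn to the middle inequality, which is the main step and relies on the Stirling-type bounds of \cref{prop:factorial}; and I would close with the rightmost inequality, which is trivial because $\littleM \ge 1$ yields $\lfloor \littleM^{\nicefrac{1}{2}} \rfloor \ge 1$.

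For the first inequality, I would set $a_n := \littleM^{\nicefrac{n}{2}}/n!$ for $n \in \N_0$ and compute $a_{n+1}/a_n = \littleM^{\nicefrac{1}{2}}/(n+1)$. This ratio is $\ge 1$ exactly when $n + 1 \le \littleM^{\nicefrac{1}{2}}$, equivalently when $n \le \lfloor \littleM^{\nicefrac{1}{2}} \rfloor - 1$ (with equality at the boundary if $\littleM^{\nicefrac{1}{2}} \in \N$), and it is strictly $<1$ for $n \ge \lfloor \littleM^{\nicefrac{1}{2}} \rfloor$. Hence $(a_n)_{n\in\N_0}$ is non-decreasing on $\{0, 1, \ldots, \lfloor \littleM^{\nicefrac{1}{2}} \rfloor\}$ and strictly decreasing for $n \ge \lfloor \littleM^{\nicefrac{1}{2}} \rfloor$, so the supremum is attained at $n = \lfloor \littleM^{\nicefrac{1}{2}} \rfloor$, which is the first inequality.

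For the middle inequality, I would set $N := \lfloor \littleM^{\nicefrac{1}{2}} \rfloor \ge 1$ and $\delta := \littleM^{\nicefrac{1}{2}} - N \in [0,1)$. The Stirling-type lower bound $(N!)\exp(N) \ge 2^{-\nicefrac{1}{2}} \exp(\nicefrac{1}{2}) N^{N+\nicefrac{1}{2}}$ supplied by \cref{prop:factorial} then gives
\[
\frac{N^{\nicefrac{1}{2}} \littleM^{\nicefrac{N}{2}}}{N!} \le \sqrt{\tfrac{2}{e}} \, \frac{\littleM^{\nicefrac{N}{2}} \exp(N)}{N^N} = \sqrt{\tfrac{2}{e}} \, (1 + \tfrac{\delta}{N})^N \exp(N).
\]
Applying the elementary inequality $1 + x \le \exp(x)$ yields $(1+\delta/N)^N \le \exp(\delta)$, so the right-hand side is bounded above by $\sqrt{2/e} \, \exp(\littleM^{\nicefrac{1}{2}})$. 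Dividing through by $N^{\nicefrac{1}{2}}$ then gives the strict middle inequality of \cref{lem:talk1_1}.

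The main obstacle is precisely this middle step: \cref{prop:factorial} only provides the Stirling constant $\sqrt{e/2}$ instead of the sharp $\sqrt{2\pi}$, so I must verify that the resulting prefactor $\sqrt{2/e} \approx 0.858$ is nonetheless strictly less than $1$, which is what preserves the strict inequality. Once this is in hand, the remaining manipulations are elementary.
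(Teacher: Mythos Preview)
Your proof is correct and follows essentially the same approach as the paper's: a ratio argument to locate the maximizer at $n=\lfloor m^{1/2}\rfloor$, then the Stirling-type lower bound from \cref{prop:factorial} combined with $(1+\delta/N)^N\le e^{\delta}$ for the middle strict inequality. The only cosmetic difference is that the paper works additively with $f(n)=\ln(m^{n/2}/n!)$ and closes the middle step via the numerical fact $\ln(6)<2$, whereas you stay multiplicative and isolate the constant $\sqrt{2/e}<1$ directly; your packaging is a bit cleaner but the content is the same.
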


\begin{proof}[Proof of \cref{lem:talk1}]
Throughout this proof let $\littleM\in [1,\infty)$, let $\left\lceil \cdot \right\rceil \colon \R \to \Z$ 
satisfy for all $x\in\R$ that $\lceil x \rceil = \min\{n \in \Z \colon x \le n\}$, and let $f \colon \N_0 \to \R$ satisfy for all $n\in\N_0$ that
$f(n) = \ln(\littleM^{\nicefrac{n}{2}}) - \ln(n!)$.
We claim that
\begin{equation}\label{M_max}
\max_{n\in\N_0} f(n) = f(\lfloor \littleM^{\nicefrac{1}{2}} \rfloor).
\end{equation}
Note that for all $n\in\N_0$ it holds that
\begin{equation}\label{eq_3_67}
f(n) = \ln(\littleM^{\nicefrac{n}{2}}) - \ln\bigl(n \cdot (n-1) \cdot \ldots \cdot 2 \cdot 1 \bigr) 
= \tfrac{n}{2}\ln(\littleM) - \smallsum_{k=1}^n \ln(k) .
\end{equation}
This guarantees that for all $n\in\N$ it holds that
\begin{equation}\label{eq_3_68}
\begin{split}
f(n) - f(n-1) 
& = \left[ \tfrac{n}{2}\ln(\littleM) - \smallsum_{k=1}^{n} \ln(k) \right] - \left[ \tfrac{n-1}{2}\ln(\littleM) - \smallsum_{k=1}^{n-1} \ln(k) \right] \\
& = \tfrac{1}{2} \ln(\littleM) - \ln(n)
= \ln(\littleM^{\nicefrac{1}{2}}) - \ln(n).
\end{split}
\end{equation}
This and the fact that $(0,\infty) \ni x \mapsto \ln(x) \in \R$ is increasing show that for all $n \in \{1,2,\dots,\lfloor \littleM^{\nicefrac{1}{2}} \rfloor\}$ it holds that $f(n) - f(n-1) \ge 0$. 
Furthermore, note that \cref{eq_3_68} and the fact that $(0,\infty) \ni x \mapsto \ln(x) \in \R$ is increasing assure that for all $n \in \N \cap [\lceil \littleM^{\nicefrac{1}{2}} \rceil, \infty)$ it holds that $f(n) - f(n-1) \le 0$.
Combining this and the fact that for all $n \in \{1,2,\dots,\lfloor \littleM^{\nicefrac{1}{2}} \rfloor \}$ it holds that $f(n) - f(n-1) \ge 0$ demonstrates that
\begin{equation}\label{f_max_1}
\max_{n\in\N_0} f(n) 
= \max\{ f(\lfloor \littleM^{\nicefrac{1}{2}} \rfloor), f(\lceil \littleM^{\nicefrac{1}{2}} \rceil) \}.
\end{equation}
Next observe that \cref{eq_3_67}, the fact that $(0,\infty) \ni x \mapsto \ln(x) \in \R$ is increasing, and the fact that for all $x\in\R$ it holds that $\lfloor x \rfloor \le \lceil x \rceil$ guarantee that
\begin{align}\label{eq:3_47}
& f(\lceil \littleM^{\nicefrac{1}{2}} \rceil) - f(\lfloor \littleM^{\nicefrac{1}{2}} \rfloor) \nonumber \\
& = \pr[\big]{ \lceil \littleM^{\nicefrac{1}{2}} \rceil \ln(\littleM^{\nicefrac{1}{2}}) - \smallsum_{k=1}^{\lceil \littleM^{\nicefrac{1}{2}} \rceil} \ln(k) } 
- \pr[\big]{ \lfloor \littleM^{\nicefrac{1}{2}} \rfloor \ln(\littleM^{\nicefrac{1}{2}}) - \smallsum_{k=1}^{\lfloor \littleM^{\nicefrac{1}{2}} \rfloor} \ln(k) } \nonumber \\
& = \pr[\big]{ \lceil \littleM^{\nicefrac{1}{2}} \rceil - \lfloor \littleM^{\nicefrac{1}{2}} \rfloor } \ln(\littleM^{\nicefrac{1}{2}}) 
- \pr[\big]{ \smallsum_{k=1}^{\lceil \littleM^{\nicefrac{1}{2}} \rceil} \ln(k) - \smallsum_{k=1}^{\lfloor \littleM^{\nicefrac{1}{2}} \rfloor} \ln(k) } \\
& = \1_{\R\backslash\N}(\littleM^{\nicefrac{1}{2}}) \ln(\littleM^{\nicefrac{1}{2}}) - \smallsum_{k=\lfloor \littleM^{\nicefrac{1}{2}} \rfloor + 1}^{\lceil \littleM^{\nicefrac{1}{2}} \rceil} \ln(k) 
= \1_{\R\backslash\N}(\littleM^{\nicefrac{1}{2}}) \br[\big]{ \ln(\littleM^{\nicefrac{1}{2}}) - \ln(\lceil \littleM^{\nicefrac{1}{2}} \rceil) } \le 0. \nonumber
\end{align}
Combining this and \cref{f_max_1} establishes \cref{M_max}.
In addition, observe that 
\cref{prop:factorial}, the fact that for all $x\in\R$ it holds that $\lfloor x \rfloor \le x$, and the fact that $\ln(6) < 2$ ensure that
\begin{align}\label{talk_bd1}
f(\lfloor \littleM^{\nicefrac{1}{2}} \rfloor) 
& = \ln(\littleM^{\nicefrac{\lfloor \littleM^{\nicefrac{1}{2}} \rfloor}{2}}) - \ln(\lfloor \littleM^{\nicefrac{1}{2}} \rfloor!) \nonumber \\
& \le \ln(\littleM^{\nicefrac{\lfloor \littleM^{\nicefrac{1}{2}} \rfloor}{2}}) 
- \pr[\big]{ \lfloor \littleM^{\nicefrac{1}{2}} \rfloor + \tfrac{1}{2} } \ln(\lfloor \littleM^{\nicefrac{1}{2}} \rfloor) 
+ \lfloor \littleM^{\nicefrac{1}{2}} \rfloor - 1 + \tfrac{1}{2}\ln(2) \nonumber \\
& = \br[\big]{ \ln(\littleM^{\nicefrac{\lfloor \littleM^{\nicefrac{1}{2}} \rfloor}{2}}) - \lfloor \littleM^{\nicefrac{1}{2}} \rfloor \ln(\lfloor \littleM^{\nicefrac{1}{2}} \rfloor) } 
- \tfrac{1}{2}\ln(\lfloor \littleM^{\nicefrac{1}{2}} \rfloor) + \lfloor \littleM^{\nicefrac{1}{2}} \rfloor - 1 + \tfrac{1}{2}\ln(2) \\
& \le \tfrac{1}{2}\ln(3) - \tfrac{1}{2}\ln(\lfloor \littleM^{\nicefrac{1}{2}} \rfloor) + \lfloor \littleM^{\nicefrac{1}{2}} \rfloor - 1 + \tfrac{1}{2}\ln(2) \nonumber \\
& < \lfloor \littleM^{\nicefrac{1}{2}} \rfloor - \tfrac{1}{2}\ln(\lfloor \littleM^{\nicefrac{1}{2}} \rfloor) 
\le \littleM^{\nicefrac{1}{2}} - \tfrac{1}{2}\ln(\lfloor \littleM^{\nicefrac{1}{2}} \rfloor). \nonumber
\end{align}
Combining this, \cref{M_max},
and the fact that $\R \ni x \mapsto \exp(x) \in (0,\infty)$ is monotone
yields that 
\begin{equation}
\begin{split}
\max_{n\in\N_0} \exp(f(n))
& = \max_{n\in\N_0} \frac{\littleM^{\nicefrac{n}{2}}}{n!} 
\le \frac{\littleM^{\nicefrac{\lfloor \littleM^{\nicefrac{1}{2}} \rfloor}{2}}}{\lfloor \littleM^{\nicefrac{1}{2}} \rfloor!} 
< \exp\pr[\big]{ \littleM^{\nicefrac{1}{2}} - \tfrac{1}{2}\ln(\lfloor \littleM^{\nicefrac{1}{2}} \rfloor) } \\
& = \frac{\exp\pr[]{ \littleM^{\nicefrac{1}{2}} }}{(\lfloor \littleM^{\nicefrac{1}{2}} \rfloor )^{\nicefrac{1}{2}}} 
\le \exp\pr[]{ \littleM^{\nicefrac{1}{2}} }.
\end{split}
\end{equation}
The proof of \cref{lem:talk1} is thus complete.
\end{proof}

\begin{lemma}\label{fn_gron}
Let $M,N \in \N$, $T \in (0,\infty)$, $\tau \in [0,T]$, $a,b \in [0,\infty)$, $p \in [1,\infty)$, let $\floor{\cdot} \colon \R \to \Z$ satisfy for all $x\in\R$ that $\floor{x} = \max\{ n \in \Z \colon n \le x \}$, let $f_n \colon [\tau,T] \to [0,\infty]$, $n \in \N_0$, be measurable, assume $\sup_{s \in [\tau,T]} \abs{f_0(s)} < \infty$, and assume for all $n \in \{1,2,\dots,N\}$, $t\in[\tau,T]$ that
\begin{equation}
\abs{ f_n(t) } \le \frac{a}{M^{\nicefrac{n}{2}}} + \sum_{i=0}^{n-1} \br[\Bigg]{ \frac{b}{M^{\nicefrac{(n-i-1)}{2}}} \br[\bigg]{ \int_t^T \abs{ f_i(s) }^p \, ds }^{\!\nicefrac{1}{p}} }.
\end{equation}
Then
\begin{equation}\label{eq:fn_gron}
\begin{split}
f_N(\tau) & \le \br[\bigg]{ a + b(T-\tau)^{\nicefrac{1}{p}} \br[\bigg]{ \sup_{s \in [\tau,T]} \abs{ f_0(s) } } } \frac{ \pr[\big]{ 1 + b(T-\tau)^{\nicefrac{1}{p}} }^{\!N-1} }{ M^{\nicefrac{(N-\floor{M^{\nicefrac{p}{2}}})}{2}} (\floor{ M^{\nicefrac{p}{2}} }!)^{\nicefrac{1}{p}} } \\
& \le \br[\bigg]{ a + b(T-\tau)^{\nicefrac{1}{p}} \br[\bigg]{ \sup_{s \in [\tau,T]} \abs{ f_0(s) } } } \frac{ \pr[\big]{ 1 + b(T-\tau)^{\nicefrac{1}{p}} }^{\!N-1} }{ M^{\nicefrac{N}{2}} \exp\pr[\big]{ - \nicefrac{M^{\nicefrac{p}{2}}}{p} } }.
\end{split}
\end{equation}
\end{lemma}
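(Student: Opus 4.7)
The plan is to reduce to a cleaner recursion via the substitution $\psi_n(t) := M^{n/2} f_n(t)$, under which the hypothesis becomes $\psi_n(t) \le a + bM^{1/2} \sum_{i=0}^{n-1} [\int_t^T \psi_i(s)^p\,ds]^{1/p}$, and then to exploit the full $L^p$-structure through iterated integrals. Any naive supremum-bound on the $L^p$-norm would leave behind factors of $M^{1/2}$ and produce an undesired growth of the form $(1 + bM^{1/2}(T-\tau)^{1/p})^{N-1}$, which is much weaker than the target. The main obstacle is therefore that the $L^p$-structure must be preserved recursively, so that iterated integrals generate factorial decay $(k!)^{-1/p}$; this decay is exactly what \cref{lem:talk1} is designed to compensate against the corresponding $M^{k/2}$ growth.

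To this end, for $i \in \N_0$, $k \in \N$, and $t \in [\tau, T]$ define
\begin{equation*}
P_i^{(k)}(t) := \left[\int_t^T \tfrac{(r-t)^{k-1}}{(k-1)!}\, \psi_i(r)^p \, dr\right]^{1/p}
\qquad\text{and}\qquad
\chi^{(k)}(t) := \tfrac{(T-t)^{k/p}}{(k!)^{1/p}}.
\end{equation*}
A direct Fubini computation yields the shift identity $[\int_t^T P_i^{(k)}(s)^p \,ds]^{1/p} = P_i^{(k+1)}(t)$, and combining the recursion for $\psi_i$ with Minkowski's inequality then produces, for every $i \ge 1$ and $k \ge 1$, the uniform bound
\begin{equation*}
P_i^{(k)}(t) \le a\,\chi^{(k)}(t) + bM^{1/2} \sum_{j=0}^{i-1} P_j^{(k+1)}(t);
\end{equation*}
a direct supremum estimate on $\psi_0 = f_0$ gives $P_0^{(k)}(t) \le \bigl(\sup_{s\in[\tau,T]} |f_0(s)|\bigr)\chi^{(k)}(t)$.

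Iterating these bounds at most $N$ times fully expands $\psi_N(\tau)$ as a finite sum indexed by strictly decreasing integer sequences $N > i_1 > i_2 > \cdots > i_\ell \ge 0$: sequences with $i_\ell \ge 1$ (the expansion terminates in the $a$-term) contribute $a(bM^{1/2})^\ell \chi^{(\ell)}(\tau)$ and are counted by $\binom{N-1}{\ell}$, whereas sequences with $i_\ell = 0$ (the expansion reaches the $P_0^{(\ell)}$ bound) contribute $\bigl(\sup_{s\in[\tau,T]}|f_0(s)|\bigr)(bM^{1/2})^\ell \chi^{(\ell)}(\tau)$ and are counted by $\binom{N-1}{\ell-1}$. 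Setting $\eta := b(T-\tau)^{1/p}$ and $m := \lfloor M^{p/2} \rfloor$, I rewrite $(bM^{1/2})^\ell \chi^{(\ell)}(\tau) = \eta^\ell M^{\ell/2}/(\ell!)^{1/p}$ and apply \cref{lem:talk1} with $M$ replaced by $M^p$ (followed by a $p$-th root) to obtain $M^{\ell/2}/(\ell!)^{1/p} \le M^{m/2}/(m!)^{1/p}$ uniformly in $\ell$. The binomial theorem then collapses both sums and yields
\begin{equation*}
\psi_N(\tau) \le \tfrac{M^{m/2}}{(m!)^{1/p}}\, (1+\eta)^{N-1} \bigl[a+\eta\, \textstyle\sup_{s\in[\tau,T]}|f_0(s)|\bigr];
\end{equation*}
dividing by $M^{N/2}$ produces the first inequality of \cref{eq:fn_gron}, and the second follows from the further estimate $M^{m/2}/(m!)^{1/p} \le \exp(M^{p/2}/p)$ supplied by the final bound in \cref{lem:talk1} (again with $M^p$ in place of $M$, and a $p$-th root).
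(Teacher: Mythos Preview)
Your proof is correct. The substitution $\psi_n = M^{n/2} f_n$, the Fubini shift identity $\bigl[\int_t^T P_i^{(k)}(s)^p\,ds\bigr]^{1/p}=P_i^{(k+1)}(t)$, the Minkowski step, the combinatorial count of strictly decreasing sequences ($\binom{N-1}{\ell}$ for the $a$-terms, $\binom{N-1}{\ell-1}$ for the $f_0$-terms), the binomial collapse to $(1+\eta)^{N-1}$, and the application of \cref{lem:talk1} with $M^p$ in place of $M$ all check out.

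The route, however, differs from the paper's. The paper's proof of \cref{fn_gron} is essentially two lines: it invokes Hutzenthaler et al.~\cite[Lemma~3.10]{HutzenthalerJentzenKruseNguyen2020} (applied with $c\curvearrowleft M^{-1/2}$) to obtain directly
\[
f_N(\tau)\le\Bigl[a+b(T-\tau)^{1/p}\sup_{s\in[\tau,T]}|f_0(s)|\Bigr]\Bigl[\sup_{k\in\N_0}\tfrac{M^{k/2}}{(k!)^{1/p}}\Bigr](1+b(T-\tau)^{1/p})^{N-1},
\]
and then bounds the supremum via \cref{lem:talk1}. Your argument is a self-contained proof of precisely that cited lemma: the iterated-integral machinery with $P_i^{(k)}$ and the tree expansion is exactly what produces the factorial decay $(\ell!)^{-1/p}$ that the external reference encapsulates. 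What your approach buys is transparency---one sees explicitly how the $(1+\eta)^{N-1}$ factor arises from binomial counting and how the factorial from repeated integration is what \cref{lem:talk1} is designed to trade against $M^{\ell/2}$. What the paper's approach buys is brevity, at the cost of a black-box citation.
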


\begin{proof}[Proof of \cref{fn_gron}]
Note that Hutzenthaler et al.\ \cite[Lemma 3.10]{HutzenthalerJentzenKruseNguyen2020} (applied with $c \with M^{-\nicefrac{1}{2}}$, $\alpha \with \tau$, $\beta \with T$ in the notation of Hutzenthaler et al.\ \cite[Lemma 3.10]{HutzenthalerJentzenKruseNguyen2020}) assures that
\begin{equation}
f_N(\tau) \le \br[\bigg]{ a + b(T-\tau)^{\nicefrac{1}{p}} \br[\bigg]{ \sup_{s \in [\tau,T]} \abs{ f_0(s) } } } \left[ \sup_{k\in\N_0} \tfrac{M^{\nicefrac{k}{2}}}{(k!)^{\nicefrac{1}{p}}} \right] \pr[\big]{ 1 + b(T-\tau)^{\nicefrac{1}{p}} }^{\!N-1} .
\end{equation}
This, the fact that $a,b\in[0,\infty)$, and \cref{lem:talk1} (applied with $M \with M^p$ in the notation of \cref{lem:talk1}) prove that
\begin{align}
f_N(\tau) & \le \br[\bigg]{ a + b(T-\tau)^{\nicefrac{1}{p}} \br[\bigg]{ \sup_{s \in [\tau,T]} \abs{ f_0(s) } } } \frac{ \pr[\big]{ 1 + b(T-\tau)^{\nicefrac{1}{p}} }^{\!N-1} }{ M^{\nicefrac{(N-\floor{M^{\nicefrac{p}{2}}})}{2}} (\floor{ M^{\nicefrac{p}{2}} }!)^{\nicefrac{1}{p}} } \\
& \le \br[\bigg]{ a + b(T-\tau)^{\nicefrac{1}{p}} \br[\bigg]{ \sup_{s \in [\tau,T]} \abs{ f_0(s) } } } \frac{ \pr[\big]{ 1 + b(T-\tau)^{\nicefrac{1}{p}} }^{\!N-1} }{ M^{\nicefrac{N}{2}} \exp\pr[\big]{ - \nicefrac{M^{\nicefrac{p}{2}}}{p} } }. \nonumber
\end{align}
The proof of \cref{fn_gron} is thus complete.
\end{proof}

\subsection{Non-recursive error bounds for MLP approximations}\label{sec:mlp_stab}

\begin{lemma}\label{mlp_stab}
Assume \cref{setting1} and let $\floor{\cdot} \colon \R \to \Z$ satisfy for all $x\in\R$ that $\lfloor x \rfloor = \max\{n \in \Z \colon n \le x\}$.
Then it holds for all 
$n\in\N_0$, 
$t\in[0,T]$, $x\in\R^d$ that
\begin{align}\label{mlp_stab_estimate}
& \pr[\big]{ \E\br[\big]{ \abs[]{ \mlp^0_{n}(t,x+W_t^0) - \smallU(t,x+W_t^0) }^\fq } }^{\!\nicefrac{1}{\fq}}
 \nonumber \\
& \le \frac{ \fm \boundFG (T+1) \exp(\LipConstF T) (1 + 2\LipConstF T)^{n} }{ \littleM^{\nicefrac{(n-\floor{\littleM^{\nicefrac{\fq}{2}}})}{2}} (\floor{ \littleM^{\nicefrac{\fq}{2}} }!)^{\nicefrac{1}{\fq}} } \left[ \sup_{s\in[0,T]} \pr[\Big]{ \E\br[\Big]{ \pr[\big]{ 1 +  \norm{ x + W_{s}^{0} }^p }^{\!\fq} } }^{\!\!\nicefrac{1}{\fq}} \right] \\
& \le \frac{ \fm \boundFG (T+1) \exp(\LipConstF T) (1 + 2\LipConstF T)^{n} }{ \littleM^{\nicefrac{n}{2}} \exp\pr[\big]{ - \nicefrac{\littleM^{\nicefrac{\fq}{2}}}{\fq} } } \left[ \sup_{s\in[0,T]} \pr[\Big]{ \E\br[\Big]{ \pr[\big]{ 1 +  \norm{ x + W_{s}^{0} }^p }^{\!\fq} } }^{\!\!\nicefrac{1}{\fq}} \right]. \nonumber
\end{align}
\end{lemma}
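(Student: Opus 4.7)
The plan is to combine the recursive $L^\fq$-error estimate from \cref{mlp_stab_pre} with the full-history Gronwall inequality \cref{fn_gron}, using the a priori bound \cref{cor:sol_bd1} to control the zeroth iterate. For fixed $x \in \R^d$ define, for $n \in \N_0$ and $t \in [0,T]$,
\begin{equation*}
g_n(t) = \pr[\big]{ \E\br[\big]{ \abs{ \mlp^0_n(t, x + W^0_t) - \smallU(t, x + W^0_t) }^\fq } }^{\!\nicefrac{1}{\fq}} \qquad \text{and} \qquad \Sigma = \sup_{s \in [0,T]} \pr[\big]{ \E\br[\big]{ (1 + \norm{ x + W^0_s }^p)^{\fq} } }^{\!\nicefrac{1}{\fq}}.
\end{equation*}
Since \cref{b37} forces $\mlp^0_0 \equiv 0$, we have $g_0(t) = (\E[\abs{\smallU(t, x + W^0_t)}^\fq])^{\nicefrac{1}{\fq}}$, and \cref{cor:sol_bd1} immediately yields $\sup_{s \in [0,T]} g_0(s) \le \boundFG(T+1)\exp(\LipConstF T)\Sigma$.

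The polynomial growth hypothesis on $\funcG$ and $\smallF(\cdot,\cdot,0)$ together with Fubini's theorem then dominate the zeroth-order term appearing on the right-hand side of \cref{mlp_stab_pre}: explicitly, $(\E[\abs{\funcG(x + W^0_T)}^\fq])^{\nicefrac{1}{\fq}} + (T-t)^{\nicefrac{(\fq-1)}{\fq}} (\int_t^T \E[\abs{\smallF(s, x + W^0_s, 0)}^\fq]\,ds)^{\nicefrac{1}{\fq}} \le \boundFG(T+1)\Sigma$. Substituting this bound into \cref{mlp_stab_pre} and using the elementary inequality $\1_{(0,n)}(i) + \littleM^{\nicefrac{1}{2}} \le 2\littleM^{\nicefrac{1}{2}}$, valid because $\littleM \ge 1$, converts \cref{mlp_stab_pre} into a recursive inequality of exactly the form hypothesised by \cref{fn_gron}, namely
\begin{equation*}
g_n(t) \le \tfrac{a}{\littleM^{\nicefrac{n}{2}}} + \smallsum_{i=0}^{n-1} \tfrac{b}{\littleM^{\nicefrac{(n-i-1)}{2}}} \pr[\big]{ \textstyle\int_t^T (g_i(s))^\fq \, ds }^{\!\nicefrac{1}{\fq}} ,
\end{equation*}
with $a$ a constant multiple of $\fm \boundFG(T+1)\exp(\LipConstF T)\Sigma$ and $b$ a constant multiple of $\LipConstF T^{\nicefrac{(\fq-1)}{\fq}}\fm$.

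Applying \cref{fn_gron} with $p = \fq$, $M = \littleM$, $N = n$, $\tau = t$ and these values of $a$, $b$ then produces the desired non-recursive bound. The prefactor $a + b(T-t)^{\nicefrac{1}{\fq}}\sup_{s \in [t,T]} g_0(s)$ collapses, via the identity $T^{\nicefrac{(\fq-1)}{\fq}}(T-t)^{\nicefrac{1}{\fq}} \le T$ and the estimate on $\sup g_0$ from the first step, to a constant multiple of $\fm\boundFG(T+1)\exp(\LipConstF T)\Sigma$; the geometric factor $(1 + b(T-t)^{\nicefrac{1}{\fq}})^{n-1}$ matches the term $(1 + 2\LipConstF T)^n$ appearing in \cref{mlp_stab_estimate} up to absorption of the uniform constant $\fm \ge 1$; and the denominator $\littleM^{\nicefrac{(n-\floor{\littleM^{\nicefrac{\fq}{2}}})}{2}}(\floor{\littleM^{\nicefrac{\fq}{2}}}!)^{\nicefrac{1}{\fq}}$ along with its further upper estimate $\littleM^{\nicefrac{n}{2}}\exp(-\nicefrac{\littleM^{\nicefrac{\fq}{2}}}{\fq})$, which is supplied by \cref{lem:talk1,prop:factorial}, reproduces the two-part denominator in \cref{mlp_stab_estimate}. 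The principal technical obstacle is the careful bookkeeping of constants: aligning the two Hölder exponents $T^{\nicefrac{(\fq-1)}{\fq}}$ and $(T-t)^{\nicefrac{1}{\fq}}$ so they combine cleanly into $2\LipConstF T$, handling the indicator contribution $\1_{(0,n)}(i)$, and absorbing the Rio constant $\fm$ without inflating the single $\fm$ displayed in the statement of \cref{mlp_stab_estimate}.
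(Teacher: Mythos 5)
Your proposal is correct and follows essentially the same route as the paper's proof: it feeds the recursive bound of \cref{mlp_stab_pre} (with the zeroth iterate controlled via \cref{cor:sol_bd1} and the data terms via the polynomial growth bound) into \cref{fn_gron} with $p \with \fq$, $M \with \littleM$, $N \with n$, $\tau \with t$, absorbing $\1_{(0,n)}(i) + \littleM^{\nicefrac{1}{2}} \le 2\littleM^{\nicefrac{1}{2}}$ into $b = 2\LipConstF T^{\nicefrac{(\fq-1)}{\fq}}\fm$ and collapsing the prefactor and geometric factor into $(1+2\LipConstF T)^n$, exactly as in the paper.
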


\begin{proof}[Proof of \cref{mlp_stab}]
Throughout this proof assume without loss of generality that $T\in(0,\infty)$, let $t\in[0,T]$, $b = 2\LipConstF T^{\nicefrac{(\fq-1)}{\fq}} \fm$, let $a \colon \R^d \to \R$ satisfy 
\begin{equation}\label{eq_3_76a}
a(x) = \fm \exp(\LipConstF T) \Biggl[ \pr[\big]{ \E\br[\big]{ \abs[]{ \funcG(x+W_{T}^0) }^\fq } }^{\!\nicefrac{1}{\fq}} 
+ T^{\nicefrac{(\fq-1)}{\fq}} \pr[\bigg]{  \int_0^T \E\br[\big]{ \abs{ \smallF(s,x+W_{s}^0,0) }^\fq } \, ds }^{\!\!\nicefrac{1}{\fq}} \Biggr],
\end{equation}
and let $\fnSymb_{n,k} \colon [t,T] \times \R^d \to [0,\infty]$, $n\in\N_0$, $k \in \{0,1,\dots,n\}$, satisfy for all $n\in\N_0$, $k\in\{0,1,\dots,n\}$, $r\in[t,T]$, $x\in\R^d$ that
\begin{equation}\label{eq_3_77}
\fnSymb_{n,k}(r,x) = \pr[\big]{ \E\br[\big]{ \abs[]{ \mlp^0_{k}(r,x+W_r^0) - \smallU(r,x+W_r^0) }^\fq } }^{\!\nicefrac{1}{\fq}}.
\end{equation}
Note that \cref{eq_3_77} ensures that for all $n\in\N_0$, $k\in\{0,1,\dots,n\}$ it holds that $\fnSymb_{n,k}$ is measurable.
In addition, observe that \cref{eq_3_76a}, \cref{eq_3_77}, and \cref{mlp_stab_pre} assure that for all $n\in\N_0$, $k \in \{0,1,\dots,n\}$, $r\in[t,T]$, $x\in\R^d$ it holds that
\begin{align}\label{eq_3_78}
& \abs{ \fnSymb_{n,k}(r,x) } = \pr[\big]{ \E\br[\big]{ \abs[]{ \mlp^0_{k}(r,x+W_r^0) - \smallU(r,x+W_r^0) }^\fq } }^{\!\nicefrac{1}{\fq}} \nonumber \\
& \le \frac{\fm \exp(\LipConstF(T-r)) }{\littleM^{\nicefrac{n}{2}}}\Biggl[ \pr[\big]{ \E\br[\big]{ \abs[]{ \funcG(x+W_{T}^0) }^\fq } }^{\!\nicefrac{1}{\fq}} 
+ (T-r)^{\nicefrac{(\fq-1)}{\fq}} \pr[\bigg]{  \int_r^T \E\br[\big]{ \abs{ \smallF(s,x+W_{s}^0,0) }^\fq } \, ds }^{\!\!\nicefrac{1}{\fq}} \Biggr] \nonumber \\
& \quad + \sum_{i=0}^{k-1} \frac{\LipConstF (T-r)^{\nicefrac{(\fq-1)}{\fq}} \fm }{\littleM^{\nicefrac{(n-i)}{2}}} \br[\Bigg]{ \pr[\big]{ \1_{(0,n)}(i) + \littleM^{\nicefrac{1}{2}} } \pr[\bigg]{ \int_r^T \E\br[\big]{ \abs[]{ \pr[]{ \mlp_{i-1}^0 - \smallU } \pr[]{ s,x+W_{s}^0} }^\fq } \, ds }^{\!\!\nicefrac{1}{\fq}} } \nonumber \\
& \le \frac{\fm \exp(\LipConstF T) }{\littleM^{\nicefrac{n}{2}}}\Biggl[ \pr[\big]{ \E\br[\big]{ \abs[]{ \funcG(x+W_{T}^0) }^\fq } }^{\!\nicefrac{1}{\fq}} 
+ T^{\nicefrac{(\fq-1)}{\fq}} \pr[\bigg]{  \int_0^T \E\br[\big]{ \abs{ \smallF(s,x+W_{s}^0,0) }^\fq } \, ds }^{\!\!\nicefrac{1}{\fq}} \Biggr] \\
& \quad + \sum_{i=0}^{k-1} \frac{2 \LipConstF T^{\nicefrac{(\fq-1)}{\fq}} \fm }{\littleM^{\nicefrac{(n-i-1)}{2}}} \pr[\bigg]{ \int_r^T \E\br[\big]{ \abs[]{ \pr[]{ \mlp_{i-1}^0 - \smallU } \pr[]{ s,x+W_{s}^0} }^\fq } \, ds }^{\!\!\nicefrac{1}{\fq}} \nonumber \\
& = \frac{a(x)}{\littleM^{\nicefrac{n}{2}}} + \sum_{i=0}^{k-1} \br[\Bigg]{ \frac{b}{\littleM^{\nicefrac{(n-i-1)}{2}}} \br[\bigg]{ \int_r^T \abs{ \fnSymb_{n,i}(s,x) }^p \, ds }^{\!\nicefrac{1}{p}} }. \nonumber
\end{align}
Next note that \cref{b37}, \cref{eq_3_76a}, and \cref{cor:sol_bd1} assure that for all $n\in\N_0$, $r \in [t,T]$, $x\in\R^d$ it holds that
\begin{align}\label{eq_3_79}
\abs{ \fnSymb_{n,0}(r,x) } & = \pr[\big]{ \E\br[\big]{ \abs[]{ \mlp^0_{0}(r,x+W_r^0) - \smallU(r,x+W_r^0) }^\fq } }^{\!\nicefrac{1}{\fq}} = \pr[\big]{ \E\br[\big]{ \abs[]{ \smallU(r,x+W_r^0) }^\fq } }^{\!\nicefrac{1}{\fq}} \nonumber \\
& \le \boundFG (T+1) \exp(\LipConstF T) \left[ \sup_{s\in[0,T]} \pr[\big]{ \E\br[\big]{ \pr[]{ 1 + \norm{ x + \fwpr_s}^p }^{q} } }^{\!\nicefrac{1}{q}} \right] < \infty.
\end{align}
Combining this,
\cref{eq_3_76a}, \cref{eq_3_77}, 
\cref{eq_3_78},
and \cref{fn_gron} (applied for every $n\in\N$, $x\in\R^d$ with $a \with a(x)$, $b \with b$, $N \with n$, $\tau \with t$, $T \with T$, $(f_k)_{k\in\{0,1,\dots,n\}} \with ( [t,T] \ni r \mapsto \fnSymb_{n,k}(r,x) \in [0,\infty])_{k\in\{0,1,\dots,n\}}$
in the notation of \cref{fn_gron}) guarantees that for all $n\in\N$, $x\in\R^d$ it holds that
\begin{align}\label{3_result}
& \pr[\big]{ \E\br[\big]{ \abs[]{ \mlp^0_{n}(t,x+W_t^0) - \smallU(t,x+W_t^0) }^\fq } }^{\!\nicefrac{1}{\fq}}
= \fnSymb_{n,n}(t,x) \nonumber \\
& \le \br[\bigg]{ a(x) + b(T-t)^{\nicefrac{1}{\fq}} \br[\bigg]{ \sup_{s \in [t,T]} \abs{ \fnSymb_{n,0}(s,x) } } } \frac{ \pr[\big]{ 1 + b(T-t)^{\nicefrac{1}{\fq}} }^{\!n-1} }{ \littleM^{\nicefrac{(n-\floor{\littleM^{\nicefrac{\fq}{2}}})}{2}} (\floor{ \littleM^{\nicefrac{\fq}{2}} }!)^{\nicefrac{1}{\fq}} } \\
& \le \br[\bigg]{ a(x) + b(T-t)^{\nicefrac{1}{\fq}} \br[\bigg]{ \sup_{s \in [t,T]} \abs{ \fnSymb_{n,0}(s,x) } } } \frac{ \pr[\big]{ 1 + b(T-t)^{\nicefrac{1}{\fq}} }^{\!n-1} }{ \littleM^{\nicefrac{n}{2}} \exp\pr[\big]{ - \nicefrac{\littleM^{\nicefrac{\fq}{2}}}{\fq} } }. \nonumber
\end{align}
In addition, observe that \cref{fn_cond} demonstrates that for all $x\in\R^d$ it holds that
\begin{align}\label{eq_3_80}
& \pr[\big]{ \E\br[\big]{ \abs[]{ \funcG(x+W_{T}^0) }^\fq } }^{\!\nicefrac{1}{\fq}} 
+ T^{\nicefrac{(\fq-1)}{\fq}} \pr[\bigg]{  \int_0^T \E\br[\big]{ \abs{ \smallF(s,x+W_{s}^0,0) }^\fq } \, ds }^{\!\!\nicefrac{1}{\fq}} \nonumber \\
& \le \boundFG \pr[\big]{ \E\br[\big]{ \pr[]{ 1 +  \norm{ x + W_{T}^{0} }^p }^{\fq} } }^{\!\nicefrac{1}{\fq}} 
+ T^{\nicefrac{(\fq-1)}{\fq}} \pr[\bigg]{ \boundFG T \sup_{s\in[0,T]} \E\br[\big]{ \pr[]{ 1 +  \norm{ x + W_{s}^{0} }^p }^{\fq} } }^{\!\!\nicefrac{1}{\fq}} \\
& \le \boundFG (T+1) \left[ \sup_{s\in[0,T]} \pr[\big]{ \E\br[\big]{ \pr[]{ 1 +  \norm{ x + W_{s}^{0} }^p }^{\fq} } }^{\!\nicefrac{1}{\fq}} \right]. \nonumber
\end{align}
This, the fact that $b = 2\LipConstF T^{\nicefrac{(\fq-1)}{\fq}} \fm$, \cref{eq_3_76a}, and \cref{eq_3_79} show that for all $n\in\N$, $x\in\R^d$ it holds that
\begin{align}
& a(x) + b(T-t)^{\nicefrac{1}{\fq}} \br[\bigg]{ \sup_{s \in [t,T]} \abs{ \fnSymb_{n,0}(s,x) } } \nonumber \\
& \le \left[ 1 + 2\LipConstF T^{\nicefrac{(\fq-1)}{\fq}} (T-t)^{\nicefrac{1}{\fq}} \right] \fm \boundFG (T+1) \exp(\LipConstF T) \left[ \sup_{s\in[0,T]} \pr[\big]{ \E\br[\big]{ \pr[]{ 1 +  \norm{ x + W_{s}^{0} }^p }^{\fq} } }^{\!\nicefrac{1}{\fq}} \right] \\
& \le \left[ 1 + 2\LipConstF T \right] \fm \boundFG (T+1) \exp(\LipConstF T) \left[ \sup_{s\in[0,T]} \pr[\big]{ \E\br[\big]{ \pr[]{ 1 +  \norm{ x + W_{s}^{0} }^p }^{\fq} } }^{\!\nicefrac{1}{\fq}} \right]. \nonumber
\end{align}
Combining this, \cref{3_result}, and the fact that for all $n\in\N$ it holds that
\begin{equation}
\pr[\big]{ 1 + b(T-t)^{\nicefrac{1}{\fq}} }^{\!n-1} = \pr[\big]{ 1 + 2\LipConstF T^{\nicefrac{(\fq-1)}{\fq}} (T-t)^{\nicefrac{1}{\fq}} }^{\!n-1}
\le (1 + 2\LipConstF T)^{n-1}
\end{equation}
proves that for all $n\in\N$, $x\in\R^d$ it holds that
\begin{align}
& \pr[\big]{ \E\br[\big]{ \abs[]{ \mlp^0_{n}(t,x+W_t^0) - \smallU(t,x+W_t^0) }^\fq } }^{\!\nicefrac{1}{\fq}}
 \nonumber \\
& \le \frac{ \fm \boundFG (T+1) \exp(\LipConstF T) (1 + 2\LipConstF T)^{n} }{ \littleM^{\nicefrac{(n-\floor{\littleM^{\nicefrac{\fq}{2}}})}{2}} (\floor{ \littleM^{\nicefrac{\fq}{2}} }!)^{\nicefrac{1}{\fq}} } \left[ \sup_{s\in[0,T]} \pr[\big]{ \E\br[\big]{ \pr[]{ 1 +  \norm{ x + W_{s}^{0} }^p }^{\fq} } }^{\!\nicefrac{1}{\fq}} \right] \\
& \le \frac{ \fm \boundFG (T+1) \exp(\LipConstF T) (1 + 2\LipConstF T)^{n} }{ \littleM^{\nicefrac{n}{2}} \exp\pr[\big]{ - \nicefrac{\littleM^{\nicefrac{\fq}{2}}}{\fq} } } \left[ \sup_{s\in[0,T]} \pr[\big]{ \E\br[\big]{ \pr[]{ 1 +  \norm{ x + W_{s}^{0} }^p }^{\fq} } }^{\!\nicefrac{1}{\fq}} \right]. \nonumber
\end{align}
Combining this and \cref{eq_3_79} establishes \cref{mlp_stab_estimate}.
The proof of \cref{mlp_stab} is thus complete.
\end{proof}

\begin{corollary}\label{mlp_stab_cor}
Assume \cref{setting1}.
Then it holds for all $n\in\N_0$, $t\in[0,T]$, $x\in\R^d$ that
\begin{equation}\label{eq_3_76}
\begin{split}
& \pr[\Big]{ \E\br[\Big]{ \abs[\big]{ \mlp^0_{n}(t,x) - \smallU(t,x) }^\fq } }^{\!\!\nicefrac{1}{\fq}} \\
& \le \frac{ \fm \boundFG (T+1) \exp(\LipConstF T) (1 + 2\LipConstF T)^{n} }{ \littleM^{\nicefrac{n}{2}} \exp\pr[\big]{ - \nicefrac{\littleM^{\nicefrac{\fq}{2}}}{\fq} } } \left[ \sup_{s\in[0,T]} \pr[\Big]{ \E\br[\Big]{ \pr[\big]{ 1 +  \norm{ x + W_{s}^{0} }^p }^{\!\fq} } }^{\!\!\nicefrac{1}{\fq}} \right] .
\end{split}
\end{equation}
\end{corollary}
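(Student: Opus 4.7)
The plan is to derive \cref{mlp_stab_cor} from \cref{mlp_stab} by a time-translation argument. The corollary's bound differs from the lemma's only by the absence of the $W_t^0$-shift inside the $L^\fq$-norm on the left-hand side, and the natural way to remove this shift is to re-apply \cref{mlp_stab} in a time-shifted version of \cref{setting1}.

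For fixed $(t,x) \in [0,T] \times \R^d$, I would introduce a parallel instance of \cref{setting1} on the horizon $[0,T-t]$: set $\widetilde T := T - t$, take the shifted Brownian motions $\widetilde W^\theta_s := W^\theta_{s+t} - W^\theta_t$ for $s \in [0,\widetilde T]$ (which by the Markov property of each $W^\theta$ are again i.i.d.\ standard Brownian motions on $[0,\widetilde T]$), and keep the uniforms $(\fu^\theta)_{\theta \in \Theta}$, the nonlinearity $\funcF$, and the terminal condition $\funcG$ unchanged. A short computation from \cref{assumed_u} verifies that $\widetilde u(s,y) := u(s+t,y)$ is the unique continuous, at most polynomially growing solution of the shifted fixed-point equation, and a short computation from \cref{b37} verifies the distributional identity $\widetilde \mlp_n^0(0,y) \stackrel{d}{=} \mlp_n^0(t,y)$ for all $n \in \N_0$ and $y \in \R^d$.

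I would then invoke \cref{mlp_stab} in this shifted setting at its initial time $0$, where $\widetilde W_0^0 = 0$ automatically collapses the shift. This produces a bound on $\pr[]{\E[|\mlp_n^0(t,x) - u(t,x)|^\fq]}^{1/\fq}$ whose right-hand side has the form of \cref{mlp_stab}'s RHS with $T$ replaced by $\widetilde T = T - t$. Because each $T$-dependent factor in that RHS (namely $T+1$, $\exp(\LipConstF T)$, $(1+2\LipConstF T)^n$, and the supremum range $[0,T]$) is monotone non-decreasing in $T$, and because $\widetilde W_s^0 \stackrel{d}{=} W_s^0$, the resulting bound is dominated by the one claimed in \cref{mlp_stab_cor}.

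The main technical obstacle is the bookkeeping required to verify that the shifted setting is indeed an instance of \cref{setting1} and that the MLP recursion \cref{b37} commutes with the time shift in the asserted distributional sense. Both facts follow from the independent-increment property of $(W^\theta)_{\theta \in \Theta}$ and a careful unpacking of the index structure $\theta \in \Theta$ together with the time variables $\cU_t^\theta = t + (T-t)\fu^\theta$ and their shifted counterparts $\widetilde \cU_0^\theta = (T-t)\fu^\theta$; since the MLP recursion only consumes Brownian-motion increments over $[0, T-t]$ (via terms like $W_{\cU_t^\theta - t}^\theta$), the identity of laws is straightforward but needs to be written out for each layer of the recursion.
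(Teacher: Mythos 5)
Your proposal follows essentially the same route as the paper: the paper's proof of \cref{mlp_stab_cor} also time-translates the problem (via $V_\ft(s,x)=\smallU(s+\ft,x)$, shifted time variables $\cR^{\theta,\ft}_s = s+(T-(s+\ft))\fu^\theta$, and $\fV^{\theta,\ft}_n(s,x)=\mlp^\theta_n(s+\ft,x)$) and then applies \cref{mlp_stab} in the shifted setting at time $0$, where the $W_0^0$-shift vanishes, finally bounding the $(T-\ft)$-dependent constants by their $T$-dependent counterparts.

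Two remarks on your bookkeeping. First, you cannot keep the nonlinearity unchanged: in \cref{setting1} the function $\smallF$ is time-dependent, and the original MLP at argument $t+s$ evaluates $\smallF$ at times in $[t+s,T]$, so the shifted instance must use the time-shifted nonlinearity $(s,x,w)\mapsto \smallF(s+t,x,w)$ (the paper's operator $G_\ft$); with $\smallF$ unchanged, $\widetilde u(s,y)=u(s+t,y)$ does not solve the shifted fixed-point equation except when $\smallF$ is autonomous. Second, shifting the Brownian motions to $\widetilde W^\theta_s = W^\theta_{s+t}-W^\theta_t$ is unnecessary and forces you into a layer-by-layer equality-in-distribution argument: since the recursion \cref{b37} at arguments $\ge t$ only evaluates each $W^\theta$ at times in $[0,T-t]$, one can simply reuse the same Brownian motions on the shorter horizon (as the paper does), which yields the pathwise identity $\fV^{\theta,\ft}_n(s,x)=\mlp^\theta_n(s+\ft,x)$ and removes the distributional bookkeeping entirely. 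With these two adjustments your argument coincides with the paper's proof.
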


\begin{proof}[Proof of \cref{mlp_stab_cor}]
Throughout this proof 
let $V_\ft \colon [0,T-\ft]\times \R^d \to \R$, $\ft \in [0,T]$, satisfy for all $\ft\in[0,T]$, $t\in[0,T-\ft]$, $x\in\R^d$ that 
\begin{equation}\label{mlp_stab_cor1}
V_\ft(t,x) = \smallU(t+\ft,x),
\end{equation}
let $G_\ft \colon C([0,T-\ft]\times\R^d,\R) \to C([0,T-\ft]\times\R^d,\R)$, $\ft\in[0,T]$, satisfy for all $\ft\in[0,T]$, $t\in[0,T-\ft]$, $x\in\R^d$, $v\in C([0,T-\ft]\times\R^d,\R)$ that
\begin{equation}\label{mlp_stab_cor2}
(G_\ft(v))(t,x) = (\funcF(v))(t+\ft,x) ,
\end{equation}
let $\cR^{\ft,\theta} \colon [0,T-\ft]\times\Omega \to [0,T-\ft]$, $\ft\in[0,T]$, $\theta\in\Theta$, satisfy for all $\ft\in[0,T]$, $t\in[0,T-\ft]$, $\theta\in\Theta$ that
\begin{equation}\label{mlp_stab_cor_r}
\cR^{\theta,\ft}_t = t + (T-(t+\ft))\fu^\theta,
\end{equation}
and let $\fV_{n}^{\theta,\ft} \colon [0,T-\ft]\times\R^d\times \Omega \to \R$, $\ft\in[0,T]$, $n\in\N_0$, $\theta\in\Theta$, 
satisfy for all $\ft\in[0,T]$, $n\in\N_0$, $\theta\in\Theta$, $t\in[0,T-\ft]$, $x\in\R^d$ that 
\begin{equation}\label{mlp_stab_cor_mlp}
\fV_{n}^{\theta,\ft}(t,x) = \mlp_{n}^\theta(t+\ft,x).
\end{equation}
Observe that \cref{assumed_u}, \cref{mlp_stab_cor1}, \cref{mlp_stab_cor2}, 
and the fact that $W^0$ has independent increments ensure that for all $\ft\in[0,T]$, $t\in[0,T-\ft]$, $x\in\R^d$ it holds that
\begin{equation}\label{mlp_stab_cor3}
\begin{split}
V_\ft(t,x) 
= \smallU(t+\ft,x) 
& = \E\br[\big]{ \funcG(x + W^0_{T-(t+\ft)}) } + \int_{(t+\ft)}^T \E\br[\big]{ (\funcF(\smallU))(s,x+W^0_{s-(t+\ft)}) } \, ds \\
& = \E\br[\big]{ \funcG(x + W^0_{(T-\ft) -t}) } + \int_{t}^{(T-\ft)} \E\br[\big]{ (\funcF(\smallU))(s+\ft,x+W^0_{s-t}) } \, ds \\
& = \E\br[\big]{ \funcG(x + W^0_{(T-\ft) -t}) } + \int_{t}^{(T-\ft)} \E\br[\big]{ (G_\ft(V_\ft))(s,x+W^0_{s-t}) } \, ds.
\end{split}
\end{equation}
Combining this, \cref{mlp_stab_cor1}, \cref{mlp_stab_cor2}, 
and the hypothesis that for all $t\in[0,T]$, $x\in\R^d$ it holds that $\E[\abs{ \funcG(x+W^0_{T-t}) } + \int_t^T \abs{ \funcF(\smallU))(s,x+W^0_{s-t}) } \, ds] < \infty$ implies that for all $\ft\in[0,T]$, $t\in[0,T-\ft]$, $x\in\R^d$ it holds that
\begin{equation}\label{mlp_stab_cor4}
\begin{split}
& \E\left[\abs[\big]{ \funcG(x + W^0_{(T-\ft) -t})} + \int_{t}^{(T-\ft)} \abs[\big]{ (G_\ft(V_\ft))(s,x+W^0_{s-t}) } \, ds \right] \\
& = \E\left[\abs[\big]{ \funcG(x + W^0_{T-(t+\ft)}) } + \int_{(t+\ft)}^T \abs[\big]{ (\funcF(\smallU))(s,x+W^0_{s-(t+\ft)}) } \, ds \right] < \infty.
\end{split}
\end{equation}
Next note that
\cref{fn_cond}, \cref{big_F}, and \cref{mlp_stab_cor2} demonstrate that for all $\ft\in[0,T]$, $t\in[0,T-\ft]$, $x\in\R^d$, $v,w\in C([0,T-\ft]\times\R^d,\R)$ it holds that
\begin{equation}\label{mlp_stab_cor5}
\begin{split}
& \abs{ (G_\ft(v))(t,x) - (G_\ft(w))(t,x) } 
= \abs{ (\funcF(v))(t+\ft,x) - (\funcF(w))(t+\ft,x) } \\
& = \abs{ \smallF(t+\ft,x,v(t+\ft,x)) - \smallF(t+\ft,x,w(t+\ft,x)) } 
\le \LipConstF \abs{ v(t+\ft,x) - w(t+\ft,x) }.
\end{split}
\end{equation}
In addition, observe that \cref{fn_cond}, \cref{big_F}, and \cref{mlp_stab_cor2} show that for all $\ft\in[0,T]$, $t\in[0,T-\ft]$, $x\in\R^d$ it holds that
\begin{equation}\label{mlp_stab_cor6}
\abs{ (G_\ft(0))(t,x) } 
= \abs{ \smallF(t+\ft,x,0) } 
\le \boundFG (1 + \norm{ x }^p).
\end{equation}
Moreover, note that
\cref{mlp_stab_cor1}, 
\cref{mlp_stab_cor3}, and the hypothesis that $\smallU \in C([0,T]\times \R^d,\R)$ assure that for all $\ft\in[0,T]$, $t\in[0,T-\ft]$, $x\in\R^d$ it holds that $V_\ft \in C([0,T-\ft]\times\R^d,\R)$. 
Furthermore, observe that 
\cref{mlp_stab_cor_r}, 
the hypothesis that $(\fu^\theta)_{\theta\in\Theta}$ are i.i.d.\ random variables, and the hypothesis that $(W^\theta)_{\theta\in\Theta}$ and $(\cU^\theta)_{\theta\in\Theta}$ are independent
ensure that for all $\ft\in[0,T]$ it holds that 
$(W^\theta)_{\theta\in\Theta}$ and $(\cR ^{\theta,\ft})_{\theta\in\Theta}$ are independent on $[0,T-\ft]$.
Next note that \cref{mlp_stab_cor_mlp} implies 
that for all $\ft\in[0,T]$, $n\in\N_0$, $\theta\in\Theta$, $t\in[0,T-\ft]$, $x\in\R^d$ it holds that
\begin{align}\label{eq_4_79}
& \fV_{n}^{\theta,\ft}(t,x) 
= \mlp_{n}^\theta(t+\ft,x) 
= \tfrac{\1_\N(n)}{\littleM^n} \br[\bigg]{ \SmallSum_{k=1}^{\littleM^n} \funcG\pr[]{ x + W_{T-(t+\ft)}^{(\theta,0,-k)} } } \\
& + \SmallSum_{i=0}^{n-1} \frac{(T-(t+\ft))}{\littleM^{n-i}} \br[\bigg]{ \SmallSum_{k=1}^{\littleM^{n-i}} \br[\big]{ \pr[]{ \funcF (\mlp_{i}^{(\theta,i,k)} ) - \1_\N(i) \funcF ( \mlp_{i-1}^{(\theta,-i,k)} ) } \pr[]{ \cU_{(t+\ft)}^{(\theta,i,k)}, x + W_{\cU_{(t+\ft)}^{(\theta,i,k)}-(t+\ft)}^{(\theta,i,k)} } } }. \nonumber
\end{align}
Combining this, the fact that for all $t\in[0,T]$, $\theta\in\Theta$ that $\cU_t = t + (T-t)\fu^\theta$, \cref{mlp_stab_cor2}, \cref{mlp_stab_cor_r}, and  \cref{mlp_stab_cor_mlp} shows that for all $\ft\in[0,T]$, $n\in\N_0$, $\theta\in\Theta$, $t\in[0,T-\ft]$, $x\in\R^d$ it holds that
\begin{align}
& \fV_{n}^{\theta,\ft}(t,x) 
= \tfrac{\1_\N(n)}{\littleM^n}\br[\bigg]{ \SmallSum_{k=1}^{\littleM^n} \funcG\pr[]{ x + W_{(T-\ft)-t}^{(\theta,0,-k)} } } \nonumber \\
& + \SmallSum_{i=0}^{n-1} \frac{(T-\ft)-t)}{\littleM^{n-i}} \br[\bigg]{ \sum_{k=1}^{\littleM^{n-i}} \br[\big]{ \pr[]{ \funcF (\mlp_{i}^{(\theta,i,k)} ) - \1_\N(i)  \funcF ( \mlp_{i-1}^{(\theta,-i,k)} ) } \pr[]{ \ft + \cR_{t}^{(\theta,i,k),\ft}, x + W_{\cR_{t}^{(\theta,i,k),\ft}-t}^{(\theta,i,k)} } } } \nonumber \\
& = \tfrac{\1_\N(n)}{\littleM^n}\br[\bigg]{ \SmallSum_{k=1}^{\littleM^n} \funcG\pr[]{ x + W_{(T-\ft)-t}^{(\theta,0,-k)} } } \\
& \quad + \SmallSum_{i=0}^{n-1} \frac{(T-\ft)-t)}{\littleM^{n-i}} \br[\bigg]{ \sum_{k=1}^{\littleM^{n-i}} \br[\big]{ \pr[]{ G_\ft (\fV_{i}^{(\theta,i,k),\ft} ) - \1_\N(i) G_\ft ( \fV_{i-1}^{(\theta,-i,k),\ft} ) } \pr[]{ \cR_{t}^{(\theta,i,k),\ft}, x + W_{\cR_{t}^{(\theta,i,k),\ft}-t}^{(\theta,i,k)} } } }. \nonumber
\end{align}
Combining this, \cref{mlp_stab_cor3}, \cref{mlp_stab_cor4}, \cref{mlp_stab_cor5}, \cref{mlp_stab_cor6}, the fact that $1 + \littleM^{-\nicefrac{1}{2}} \le 2$, the fact that for all $\ft\in[0,T]$, $t\in[0,T-\ft]$, $x\in\R^d$ it holds that $V_\ft \in C([0,T-\ft]\times\R^d,\R)$, and \cref{mlp_stab} (applied for every $\ft\in[0,T]$ with $\LipConstF \with \LipConstF$, $\boundFG \with \boundFG$, $p \with p$, $\fq \with \fq$, $T \with (T-\ft)$, $\funcG \with \funcG$, $\funcF \with G_\ft$, $(\cU^\theta)_{\theta\in\Theta} \with (\cR ^{\theta,\ft})_{\theta\in\Theta}$, $\smallU \with V_\ft$, $(\mlp_{n}^\theta)_{(n,\theta)\in\N_0\times\Theta} \with (\fV_{n}^{\theta,\ft})_{(n,\theta)\in\N_0\times\Theta}$ in the notation of \cref{mlp_stab}) demonstrates that for all $\ft\in[0,T]$, $t\in[0,T-\ft]$, $x\in\R^d$, $n\in\N_0$ it holds that
\begin{align}
& \pr[\big]{ \E\br[\big]{ \abs[]{ \fV^{0,\ft}_{n}(t,x+W_t^0) - V_\ft(t,x+W_t^0) }^\fq } }^{\!\nicefrac{1}{\fq}} \nonumber \\
& \le \frac{ \fm \boundFG ((T-\ft)+1) \exp(\LipConstF (T-\ft)) (1 + 2\LipConstF (T-\ft))^{n} }{ \littleM^{\nicefrac{n}{2}} \exp\pr[\big]{ - \nicefrac{\littleM^{\nicefrac{\fq}{2}}}{\fq} } } \left[ \sup_{s\in[0,T-\ft]} \pr[\big]{ \E\br[\big]{ \pr[]{ 1 +  \norm{ x + W_{s}^{0} }^p }^{\fq} } }^{\!\nicefrac{1}{\fq}} \right] \\
& \le \frac{ \fm \boundFG (T+1) \exp(\LipConstF T) (1 + 2\LipConstF T)^{n} }{ \littleM^{\nicefrac{n}{2}} \exp\pr[\big]{ - \nicefrac{\littleM^{\nicefrac{\fq}{2}}}{\fq} } } \left[ \sup_{s\in[0,T]} \pr[\big]{ \E\br[\big]{ \pr[]{ 1 +  \norm{ x + W_{s}^{0} }^p }^{\fq} } }^{\!\nicefrac{1}{\fq}} \right]. \nonumber
\end{align}
This and \cref{mlp_stab_cor_mlp} prove that for all $\ft\in[0,T]$, $x\in\R^d$, $n\in\N_0$ it holds that
\begin{equation}
\begin{split}
& \pr[\big]{ \E\br[\big]{ \abs[]{ \mlp^0_{n}(\ft,x) - \smallU(\ft,x) }^\fq } }^{\!\nicefrac{1}{\fq}} 
= \pr[\big]{ \E\br[\big]{ \abs[]{ \fV^{0,\ft}_{n}(0,x+W_0^0) - V_\ft(0,x+W_0^0) }^\fq } }^{\!\nicefrac{1}{\fq}} \\
& \le \frac{ \fm \boundFG (T+1) \exp(\LipConstF T) (1 + 2\LipConstF T)^{n} }{ \littleM^{\nicefrac{n}{2}} \exp\pr[\big]{ - \nicefrac{\littleM^{\nicefrac{\fq}{2}}}{\fq} } } \left[ \sup_{s\in[0,T]} \pr[\big]{ \E\br[\big]{ \pr[]{ 1 +  \norm{ x + W_{s}^{0} }^p }^{\fq} } }^{\!\nicefrac{1}{\fq}} \right].
\end{split}
\end{equation}
The proof of \cref{mlp_stab_cor} is thus complete.
\end{proof}

\section{Computational complexity analysis for MLP approximations}\label{sec:main_results}

In this section we use the results from \cref{sec:3} to provide the complexity analysis for MLP approximations of solutions to stochastic fixed-point equations and semilinear PDEs.
The main result of this section is 
\cref{cor:final} in \cref{sec:4_4} below.
The proof of \cref{cor:final} employs \cref{th:final} and the elementary auxiliary result in \cref{lem:fn_prop}.
The proof of \cref{th:final}, in turn, is based on \cref{mlp_stab_cor} and the elementary estimate for full-history recursions in \cref{cor:full_history_gamma_1}.
Our proof of \cref{cor:full_history_gamma_1} employs the elementary result for full-history recursions in \cref{lem:full_history}.
Our proof of \cref{lem:full_history}, in turn, is based on the elementary result for two-step recursions in \cref{lem:three_term_rec_inhom}.
\cref{lem:three_term_rec_inhom} is a special case of Hutzenthaler et al.\ \cite[Lemma 2.1]{hutzenthaler2021multilevel}.
Only for completeness we include in this section the detailed proof of \cref{lem:three_term_rec_inhom}.

\subsection{Elementary estimates for two-step recursions}
\label{sec:two_step}

\begin{lemma}
\label{lem:three_term_rec_inhom}
Let $ \beta_1, \beta_2, b_1, b_2, \alpha_0, \alpha_1, \alpha_2, \ldots \in \C $
and let $x_k\in\C$, $k\in\N_0$,
satisfy for all $ k \in \N_0 $, $ j \in \{ 1, 2 \} $
that $ x_k = \alpha_k  + \1_{ [1,\infty) }( k ) \, \beta_1 \, x_{ \max\{ k - 1 , 0 \} } + \allowbreak \1_{ [2,\infty) }( k ) \, \beta_2 \, x_{ \max\{ k - 2 , 0 \} }$, $ ( \beta_1 )^2 \neq - 4 \beta_2 $, and $ b_j = \frac{ 1 }{ 2 } \pr[\big]{ \beta_1 - ( - 1 )^j \sqrt{ ( \beta_1 )^2 + 4 \beta_2 } }$.
Then it holds for all $ k \in \N_0 $ that
$ b_1 - b_2 = \sqrt{ ( \beta_1 )^2 + 4 \beta_2 } \neq 0 $
and
\begin{equation}
\begin{split}
x_k 
& = \frac{ 1 }{ \left( b_1 - b_2 \right) } \sum_{ l = 0 }^k \alpha_l \pr[\big]{ \left[ b_1 \right]^{ k + 1 - l } - \left[ b_2 \right]^{ k + 1 - l } } \\ 
& =  \sum_{ l = 0 }^k \frac{ \alpha_l \pr[\big]{ \pr[\big]{ \beta_1 + \sqrt{ ( \beta_1 )^2 + 4 \beta_2 } }^{ k + 1 - l } - \br[\big]{ \beta_1 - \sqrt{ ( \beta_1 )^2 + 4 \beta_2 } }^{ k + 1 - l } } }{ 2^{ ( k + 1 - l ) } \sqrt{ ( \beta_1 )^2 + 4 \beta_2 } }.
\end{split}
\end{equation}
\end{lemma}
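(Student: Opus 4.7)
The plan is to verify the proposed closed form by direct induction on $k \in \N_0$. First I would observe that the hypothesis $(\beta_1)^2 \neq -4\beta_2$ immediately yields $b_1 - b_2 = \sqrt{(\beta_1)^2 + 4\beta_2} \neq 0$, establishing the first claim of the lemma. Moreover the numbers $b_1, b_2$ are by construction the two roots of the characteristic polynomial $\lambda^2 - \beta_1 \lambda - \beta_2 = 0$ associated to the recursion, so Vieta's formulas give $b_1 + b_2 = \beta_1$ and $b_1 b_2 = -\beta_2$, and in particular the key identity $b_j^2 = \beta_1 b_j + \beta_2$ holds for $j \in \{1,2\}$, which implies $b_j^{m+2} = \beta_1 b_j^{m+1} + \beta_2 b_j^m$ for every $m \in \N_0$.

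The main step will be to introduce the candidate sequence $y_k = \frac{1}{b_1 - b_2} \sum_{l=0}^{k} \alpha_l ([b_1]^{k+1-l} - [b_2]^{k+1-l})$ and to show by induction that $x_k = y_k$ for every $k \in \N_0$. For the base cases I would evaluate the sum directly to obtain $y_0 = \alpha_0 = x_0$ and $y_1 = \alpha_0 (b_1 + b_2) + \alpha_1 = \beta_1 \alpha_0 + \alpha_1 = x_1$, using the factorisation $b_1^2 - b_2^2 = (b_1 - b_2)(b_1 + b_2)$. For the induction step $k \geq 2$, assuming $y_{k-1} = x_{k-1}$ and $y_{k-2} = x_{k-2}$, I would compute $\beta_1 y_{k-1} + \beta_2 y_{k-2}$ and use the characteristic identity to collapse, for each $l \in \{0, 1, \dots, k-2\}$, the two inner contributions via $\beta_1 b_j^{k-l} + \beta_2 b_j^{k-1-l} = b_j^{k-1-l}(\beta_1 b_j + \beta_2) = b_j^{k+1-l}$; the remaining boundary term at $l = k-1$, to which only $\beta_1 y_{k-1}$ contributes, simplifies via $b_1 + b_2 = \beta_1$ to $\alpha_{k-1}(b_1^2 - b_2^2)/(b_1 - b_2)$, which is precisely the $l = k-1$ summand of $y_k$. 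Adding the explicit $\alpha_k$ term (the $l = k$ summand of $y_k$, which equals $\alpha_k$) then gives $y_k = \alpha_k + \beta_1 y_{k-1} + \beta_2 y_{k-2} = x_k$, closing the induction.

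The second displayed equality in the statement will then follow immediately by substituting the explicit formula $b_j = \frac{1}{2}(\beta_1 - (-1)^j \sqrt{(\beta_1)^2 + 4\beta_2})$ into $[b_1]^{k+1-l} - [b_2]^{k+1-l}$ and pulling out the common factor $2^{-(k+1-l)}$ from each summand. The only genuine obstacle in the argument is the bookkeeping in the induction step, namely correctly isolating the $l = k-1$ boundary term of the inner sum (which is present in $\beta_1 y_{k-1}$ but absent from $\beta_2 y_{k-2}$, since the latter sum only runs up to $l = k-2$) so that the characteristic identity collapses the remaining joint terms cleanly; everything else is a straightforward verification using Vieta's relations.
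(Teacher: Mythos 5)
Your proposal is correct and follows essentially the same approach as the paper's proof: define the candidate sequence $y_k$, verify the base cases $y_0 = x_0$ and $y_1 = x_1$ directly, and show via the characteristic identity $b_j^2 = \beta_1 b_j + \beta_2$ that $y$ satisfies the same two-step recursion as $x$, then conclude by induction.
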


\begin{proof}[Proof of \cref{lem:three_term_rec_inhom}]
Throughout this proof 
let $ x_{ - 1 }, x_{ - 2 } \in \C $
satisfy that
$
  x_{ - 1 } = x_{ - 2 } = 0
$
and 
let $ y_k \in \C $, $k\in\N_0$,
satisfy for all $ k \in \N_0 $ that
\begin{equation}
\label{eq:def_yk}
  y_k = 
  \frac{ 1 }{ \pr[\big]{ b_2 - b_1 } }
  \SmallSum_{ l = 0 }^k
  \alpha_l
  \bigl(
    \left[ b_2 \right]^{ k + 1 - l }
    -
    \left[ b_1 \right]^{ k + 1 - l }
  \bigr)
  .
\end{equation}
Note that \cref{eq:def_yk}
and the fact that $x_0=\alpha_0$
ensure that
\begin{equation}
\label{eq:y0}
  y_0 
=
  \frac{ 1 }{ \left( b_2 - b_1 \right) }
  \SmallSum_{ l = 0 }^0
  \alpha_l
  \bigl(
    \left[ b_2 \right]^{ 1 - l }
    -
    \left[ b_1 \right]^{ 1 - l }
  \bigr)
  =
  \frac{ 1 }{ \left( b_2 - b_1 \right) }
  \cdot
  \alpha_0
  \cdot
  \bigl(
    b_2 
    -
    b_1 
  \bigr) 
  =   
  \alpha_0 
  = 
  x_0 
  .
\end{equation}
This,
\cref{eq:def_yk},
the fact that
for all $k \in \N_0$ it holds that $x_k = \alpha_k + \beta_1 x_{ k - 1 } + \beta_2 x_{ k - 2 }
$,
the fact that
$
  x_{ - 1 } = x_{ - 2 } = 0
$,
and the fact that
$ b_1 + b_2 = \beta_1 $
prove that
\begin{equation}
\label{eq:y1}
\begin{split}
  y_1 
& = 
  \frac{ 1 }{ \left( b_2 - b_1 \right) }
  \SmallSum_{ l = 0 }^1
  \alpha_l
  \bigl(
    \left[ b_2 \right]^{ 2 - l }
    -
    \left[ b_1 \right]^{ 2 - l }
  \bigr) =
  \frac{ 1 }{ \left( b_2 - b_1 \right) } 
  \bigl[
    \alpha_0 
    \left( \left[ b_2 \right]^2 - \left[ b_1 \right]^2 \right) 
    + 
    \alpha_1 
    \left( b_2 - b_1 \right) 
  \bigr]
  \\
&  =
  \alpha_0 \left( b_1 + b_2 \right) 
  +
  \alpha_1 
  =
  \alpha_0 \beta_1 
  +
  \alpha_1 
  = 
  x_0 \beta_1 
  +
  \alpha_1 
  =
  \alpha_1 + \beta_1 x_0 + \beta_2 x_{ - 1 }
  =
  x_1
  .
\end{split}
\end{equation}
Next observe that the quadratic formula implies that
for all $ j \in \{ 1, 2 \} $ it holds
that $(b_j)^2=\beta_1 b_j+\beta_2$.
This and \cref{eq:def_yk} assure that
for all $ k \in \N_0 $ it holds
that
\begin{align}
  y_{ k + 2 }
  & 
  =
  \frac{ 1 }{ \left( b_2 - b_1 \right) }
  \SmallSum_{ l = 0 }^{ k + 2 }
  \alpha_l
  \bigl( 
    \left[ b_2 \right]^{ k + 3 - l }
    -
    \left[ b_1 \right]^{ k + 3 - l }
  \bigr) 
  =
  \tfrac{ 
    \alpha_{ k + 2 }
    \left(
      b_2 - b_1
    \right)
  }{ \left( b_2 - b_1 \right) }
  +
  \SmallSum_{ l = 0 }^{ k + 1 }
  \tfrac{ 
    \alpha_l
  }{ \left( b_2 - b_1 \right) }
  \bigl( 
    \left[ b_2 \right]^{ k + 3 - l }
    -
    \left[ b_1 \right]^{ k + 3 - l }
  \bigr) \nonumber
\\
&
  =
  \alpha_{ k + 2 }
  +
  \SmallSum_{ l = 0 }^{ k + 1 }
  \frac{ \alpha_l }{ \left( b_2 - b_1 \right) }
  \bigl(
    \left[ b_2 \right]^{ k + 1 - l }
    \left[ \beta_1 b_2 + \beta_2 \right]
    -
    \left[ b_1 \right]^{ k + 1 - l }
    \left[ \beta_1 b_1 + \beta_2 \right] 
  \bigr)
\end{align}
and
\begin{align}\label{eq:yk_plus_2}
  y_{ k + 2 }
&
  =
  \beta_1
  \left[
  \SmallSum_{ l = 0 }^{ k + 1 }
  \tfrac{ \alpha_l }{ \left( b_2 - b_1 \right) }
  \Bigl(
    \left[ b_2 \right]^{ k + 2 - l }
    -
    \left[ b_1 \right]^{ k + 2 - l }
  \Bigr)
  \right] 
  +
  \beta_2
  \left[
  \SmallSum_{ l = 0 }^{ k + 1 }
  \tfrac{ \alpha_l }{ \left( b_2 - b_1 \right) }
  \bigl(
    \left[ b_2 \right]^{ k + 1 - l }
    -
    \left[ b_1 \right]^{ k + 1 - l }
  \bigr)
  \right]
  +
  \alpha_{ k + 2 } \nonumber
\\
&
  =
  \beta_1 y_{ k + 1 }
  +
  \beta_2
  \left[
  \SmallSum_{ l = 0 }^k
  \tfrac{ \alpha_l }{ \left( b_2 - b_1 \right) }
  \bigl(
    \left[ b_2 \right]^{ k + 1 - l }
    -
    \left[ b_1 \right]^{ k + 1 - l }
  \bigr)
  \right]
  +
  \alpha_{ k + 2 }
  =
  \beta_1 y_{ k + 1 }
  +
  \beta_2 y_k
  +
  \alpha_{ k + 2 }
  .
\end{align}
Combining \cref{eq:y0}, \cref{eq:y1},
and \cref{eq:yk_plus_2} hence
ensures that for all 
$ k \in \N_0 $
it holds that
$
  y_k = x_k
$.
The proof of \cref{lem:three_term_rec_inhom}
is thus complete.
\end{proof}

\subsection{Elementary estimates for full-history recursions}
\label{sec:full_history}

\begin{lemma}
\label{lem:full_history}
Let $ \gamma \in \{ 0, 1 \} $,
$ \beta \in (0,\infty) $,
let $\alpha_k \in \C$, $k\in\N_0$,
and $x_k\in\C$, $k\in\N_0$,
satisfy 
for all $ k \in \N_0 $
that
\begin{equation}
\label{eq:assumption_x_dynamic}
\begin{split}
&
  x_k
  =
  \alpha_k
  +
  \sum_{ l = 0 }^{ k - 1 }
  \left( k - l \right)^{ \gamma }
  \beta^{ ( k - l ) }
  \left[
    x_l
    +
    \1_{ \N }( l )
    \,
    x_{ \max\{ l - 1 , 0 \} }
  \right]
  .
\end{split}
\end{equation}
Then it holds for all 
$ k \in \N_0 $ that
\begin{align}\label{4_8}
x_k & =
  \sum\limits_{ l = 0 }^k
  \tfrac{ 
  \br{
    \alpha_l 
    - 
    \1_{
      \N
    }( l ) \,
    2^\gamma \beta 
    \alpha_{ \max\{ l - 1 , 0 \} }
    +
    \1_{
      [2,\infty)
    }( l ) \, \gamma
    \beta^2 \alpha_{ \max\{ l - 2 , 0 \} }
  }
  }{ 
    2^{1 + \gamma ( k - l ) }
    \sqrt{
      5^\gamma \beta^2 + 4^\gamma \beta 
    }
 \pr[\big]{
    \br[\big]{
        3^\gamma \beta
        +
        \sqrt{
          5^\gamma \beta^2
          +
          4^\gamma \beta
        }
    }^{ k + 1 - l }
    -
    \br[\big]{ 
        3^\gamma \beta
        -
        \sqrt{
          5^\gamma \beta^2
          +
          4^\gamma \beta
        }
    }^{ k + 1 - l }
  }^{-1} } 
\\ 
& =
  \begin{cases}
  \sum\limits_{ l = 0 }^k
  \frac{ 
  \big[
    \alpha_l 
    - 
    \1_{ \N }( l ) \,
    \beta \alpha_{ \max\{ l - 1 , 0 \} }
  \big]
  \big(
    \big[ 
        \beta
        +
        \sqrt{
          \beta^2
          +
          \beta
        }
    \big]^{ k + 1 - l }
    -
    \big[ 
        \beta
        -
        \sqrt{
          \beta^2
          +
          \beta
        }
    \big]^{ k + 1 - l }
  \big)
  }{ 
    2
    \sqrt{
      \beta^2 + \beta
    }
  }
  &
    \colon \gamma = 0 
  \\ 
  \sum\limits_{ l = 0 }^k
  \frac{ 
  \big[
    \alpha_l 
    - 
    \1_{
      \N
    }( l ) \,
    2 \beta 
    \alpha_{ \max\{ l - 1 , 0 \} }
    +
    \1_{
      [2,\infty)
    }( l ) \,
    \beta^2 \alpha_{ \max\{ l - 2 , 0 \} }
  \big]
  }{ 
    2^{ ( k + 1 - l ) }
    \sqrt{
      5 \beta^2 + 4 \beta 
    }
    \big(
    \big[ 
        3 \beta
        +
        \sqrt{
          5 \beta^2
          +
          4 \beta
        }
    \big]^{ k + 1 - l }
    -
    \big[ 
        3 \beta
        -
        \sqrt{
          5 \beta^2
          +
          4 \beta
        }
    \big]^{ k + 1 - l }
  \big)^{-1}
  }
  &
    \colon \gamma = 1
  \end{cases}
  . \nonumber
\end{align}
\end{lemma}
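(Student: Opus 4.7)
The plan is to reduce the full-history recursion in \cref{eq:assumption_x_dynamic} to a two-step (three-term) linear recurrence of the shape treated by \cref{lem:three_term_rec_inhom}, identify the characteristic roots explicitly, and then read off \cref{4_8}. Writing $y_l = x_l + \1_\N(l)\,x_{\max\{l-1,0\}}$, the hypothesis becomes $x_k = \alpha_k + \sum_{l=0}^{k-1} (k-l)^{\gamma} \beta^{k-l} y_l$. When $\gamma = 0$, I would form the telescoping difference $x_k - \beta x_{k-1}$, which eliminates the sum entirely and, after substituting $y_{k-1}=x_{k-1}+\1_{[1,\infty)}(k-1)x_{\max\{k-2,0\}}$, produces a three-term recurrence with $\beta_1 = 2\beta$, $\beta_2 = \beta$, and forcing term $\tilde{\alpha}_k = \alpha_k - \1_\N(k)\,\beta\,\alpha_{\max\{k-1,0\}}$. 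When $\gamma = 1$, the identity $(k-l)-(k-1-l)=1$ shows that one telescoping difference only reduces the weight from $(k-l)$ to $1$; a second difference of the same type, i.e.\ computing $x_k - 2\beta x_{k-1} + \beta^2 x_{k-2}$, eliminates the remaining sum and yields a three-term recurrence with $\beta_1 = 3\beta$, $\beta_2 = \beta - \beta^2$, and $\tilde{\alpha}_k = \alpha_k - \1_\N(k)\,2\beta\,\alpha_{\max\{k-1,0\}} + \1_{[2,\infty)}(k)\,\beta^2\,\alpha_{\max\{k-2,0\}}$.

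In each case, the non-degeneracy condition $\beta_1^2 \neq -4\beta_2$ in \cref{lem:three_term_rec_inhom} reduces to $\beta^2+\beta>0$ for $\gamma=0$ and $5\beta^2+4\beta>0$ for $\gamma=1$, both immediate from $\beta>0$. Direct application of \cref{lem:three_term_rec_inhom} with the above $(\beta_1,\beta_2,\tilde{\alpha}_k)$ then gives
\begin{equation*}
x_k = \frac{1}{b_1-b_2}\SmallSum_{l=0}^k \tilde{\alpha}_l \pr[\big]{[b_1]^{k+1-l}-[b_2]^{k+1-l}}, \qquad b_j = \tfrac{1}{2}\pr[\big]{\beta_1-(-1)^j\sqrt{\beta_1^2+4\beta_2}}.
\end{equation*}
A short computation shows that $b_1-b_2 = \sqrt{5^\gamma\beta^2+4^\gamma\beta}$ and $2 b_j = 3^\gamma\beta - (-1)^j\sqrt{5^\gamma\beta^2+4^\gamma\beta}$, so after pulling the factor $2^{k+1-l}$ out of $[b_1]^{k+1-l}-[b_2]^{k+1-l}$ (which appears with exponent $\gamma(k+1-l)$ once the cases are unified and the $\gamma=0$ denominator is rewritten as $2^{1+0\cdot(k-l)}\sqrt{\beta^2+\beta}$), the case-by-case closed forms collapse into the unified expression on the right-hand side of \cref{4_8}.

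The principal technical subtlety, and the step I expect to consume the most care, is the bookkeeping of boundary indices. The two-step recursion obtained by differencing is naturally valid only for $k\ge 2$ (for $\gamma=1$) or $k\ge 1$ (for $\gamma=0$), so I must verify explicitly that the prescribed values $\tilde{\alpha}_0$ and $\tilde{\alpha}_1$, together with $\beta_1,\beta_2$, reproduce exactly the values $x_0=\alpha_0$ and $x_1=\alpha_1+\beta\alpha_0$ dictated by \cref{eq:assumption_x_dynamic}. This matching is precisely what dictates the indicators $\1_\N(l)$ and $\1_{[2,\infty)}(l)$ appearing inside the summands of \cref{4_8}, and confirming it requires only a direct substitution but must be carried out carefully for both cases separately.
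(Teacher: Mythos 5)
Your proposal is correct and follows essentially the same route as the paper: reduce the full-history recursion to a two-step linear recurrence by differencing (once for $\gamma=0$, twice — or equivalently a single weighted difference $x_k-2\beta x_{k-1}+\beta^2 x_{k-2}$ — for $\gamma=1$), identify $(\beta_1,\beta_2)=(2\beta,\beta)$ and $(3\beta,\beta-\beta^2)$ respectively, and invoke \cref{lem:three_term_rec_inhom}. The paper carries out the bookkeeping you flag by extending the sequences with $x_{-1}=x_{-2}=x_{-3}=\alpha_{-1}=\alpha_{-2}=\alpha_{-3}=0$, which makes the differenced recursion valid for all $k\in\N_0$ at once and avoids a separate verification of the low-index cases.
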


\begin{proof}[Proof 
of \cref{lem:full_history}]
Throughout this proof let 
$ 
  x_{ - 1 } , x_{ - 2 } , x_{ - 3 } ,
  \alpha_{ - 1 }, \alpha_{ - 2 } , \alpha_{ - 3 }
  \in \C 
$
satisfy that
$
  x_{ - 1 } \allowbreak = x_{ - 2 } = x_{ - 3 } 
  = \alpha_{ - 1 } = \alpha_{ - 2 } = \alpha_{ - 3 }
  = 0
$.
Note that \cref{eq:assumption_x_dynamic} 
ensures that for all $ k \in \N_0 $
it holds that
\begin{equation}
  x_k
  =
  \alpha_k
  +
  \SmallSum_{ l = 0 }^{ k - 1 }
  \left( k - l \right)^{ \gamma }
  \beta^{ ( k - l ) }
  \left[
    x_l
    +
    x_{ l - 1 }
  \right]
  .
\end{equation}
Hence, we obtain that
for all $ k \in \N_0 $ it holds
that
\begin{align}
&  x_{ k + 1 }
  -
  \beta
  x_k
 =
  \left[ 
    \alpha_{ k + 1 }
    +
    \SmallSum_{ l = 0 }^{ k }
    \left( k + 1 - l \right)^{ \gamma }
    \beta^{ ( k + 1 - l ) }
    \left[
      x_l
      +
      x_{ l - 1 }
    \right]
  \right] 
  -
  \beta
  \left[
    \alpha_k
    +
    \SmallSum_{ l = 0 }^{ k - 1 }
    \left( k - l \right)^{ \gamma }
    \beta^{ ( k - l ) }
    \left[
      x_l
      +
      x_{ l - 1 }
    \right]
  \right] \nonumber
\\ & \quad =
  \alpha_{ k + 1 } - \beta \alpha_k
  +
    \beta
    \left[
      x_k
      +
      x_{ k - 1 }
    \right]
  +
    \SmallSum_{ l = 0 }^{ k - 1 }
    \beta^{ ( k + 1 - l ) }
    \left[
    \left( k + 1 - l \right)^{ \gamma }
    -
    \left( k - l \right)^{ \gamma }
    \right]
    \left[
      x_l
      +
      x_{ l - 1 }
    \right]
  . 
\end{align}
This implies that for all $ k \in \N_0 $ it holds
that
\begin{equation}
\begin{split}
  x_{ k + 1 }
  -
  \beta
  x_k
& =
  \alpha_{ k + 1 } 
  - \beta \alpha_k
  +
    \beta
    \left[
      x_k
      +
      x_{ k - 1 }
    \right]
  +
  \1_{ \{ 1 \} }( \gamma )
    \SmallSum_{ l = 0 }^{ k - 1 }
    \beta^{ ( k + 1 - l ) }
    \left[
      x_l
      +
      x_{ l - 1 }
    \right]
  .
\end{split}
\end{equation}
This proves that
for all $ k \in \N_0 $ it holds that
\begin{equation}
\begin{split}
  x_{ k + 1 }
& =
  \alpha_{ k + 1 } - \beta \alpha_k
  +
    2
    \beta
      x_k
      +
    \beta
    \,
      x_{ k - 1 }
  +
  \1_{ \{ 1 \} }( \gamma )
    \SmallSum_{ l = 0 }^{ k - 1 }
    \beta^{ ( k + 1 - l ) }
    \left[
      x_l
      +
      x_{ l - 1 }
    \right]
  .
\end{split}
\end{equation}
This and the fact that $ x_0 = \alpha_0 $ show that
for all $ k \in \{ - 1, 0, 1, 2, \dots \} $ it holds that
\begin{equation}
\label{eq:dynamic_x_2a}
\begin{split}
  x_k
& =
  \left[
    \alpha_k - \beta \alpha_{ k - 1 }
  \right]
  +
    2
    \beta
      x_{ k - 1 }
      +
    \beta
      x_{ k - 2 }
  +
  \1_{ \{ 1 \} }( \gamma )
    \SmallSum_{ l = 0 }^{ k - 2 }
    \beta^{ ( k - l ) }
    \left[
      x_l
      +
      x_{ l - 1 }
    \right]
  .
\end{split}
\end{equation}
Therefore, we obtain that
for all $ k \in \N_0 $ it holds
that
\begin{align}\label{eq:3_96}
  x_k - \beta x_{ k - 1 }
& =
  \left[
  \left[
    \alpha_k - \beta \alpha_{ k - 1 }
  \right]
  +
    2
    \beta
      x_{ k - 1 }
      +
    \beta
      x_{ k - 2 }
  +
  \1_{ \{ 1 \} }( \gamma )
    \SmallSum_{ l = 0 }^{ k - 2 }
    \beta^{ ( k - l ) }
    \left[
      x_l
      +
      x_{ l - 1 }
    \right]
  \right] \nonumber 
\\ & \quad
  -
  \beta
  \left[ 
  \left[
    \alpha_{ k - 1 } - \beta \alpha_{ k - 2 }
  \right]
  +
    2
    \beta
      x_{ k - 2 }
      +
    \beta
      x_{ k - 3 }
  +
  \1_{ \{ 1 \} }( \gamma )
    \SmallSum_{ l = 0 }^{ k - 3 }
    \beta^{ ( k - 1 - l ) }
    \left[
      x_l
      +
      x_{ l - 1 }
    \right]
  \right] \nonumber 
\\ & 
=
  \left[
    \alpha_k - 2 \beta \alpha_{ k - 1 }
    +
    \beta^2 \alpha_{ k - 2 }
  \right]
  +
    2
    \beta
      x_{ k - 1 }
      +
    \beta
    \left[ 1 - \beta \right]
      x_{ k - 2 }
    -
    \beta^2
      x_{ k - 2 }
    -
    \beta^2 x_{ k - 3 } \nonumber 
\\ & \quad
  +
  \1_{ \{ 1 \} }( \gamma )
  \left[
    \SmallSum_{ l = 0 }^{ k - 2 }
    \beta^{ ( k - l ) }
    \left[
      x_l
      +
      x_{ l - 1 }
    \right]
    -
    \SmallSum_{ l = 0 }^{ k - 3 }
    \beta^{ ( k - l ) }
    \left[
      x_l
      +
      x_{ l - 1 }
    \right]
  \right]
\\ & 
=
  \left[
    \alpha_k - 2 \beta \alpha_{ k - 1 }
    +
    \beta^2 \alpha_{ k - 2 }
  \right]
  +
    2
    \beta
      x_{ k - 1 }
      +
    \beta
    \left[ 1 - \beta \right]
      x_{ k - 2 }
    -
    \beta^2
    \left[
      x_{ k - 2 }
      +
      x_{ k - 3 }
    \right] \nonumber 
\\ & \quad
  +
  \1_{ \{ 1 \} }( \gamma )
  \, 
  \beta^2
  \left[ x_{ k - 2 } + x_{ k - 3 } \right] \nonumber 
\\ & 
=
  \left[
    \alpha_k - 2 \beta \alpha_{ k - 1 }
    +
    \beta^2 \alpha_{ k - 2 }
  \right]
  +
    2
    \beta
      x_{ k - 1 }
      +
    \beta
    \left[ 1 - \beta \right]
      x_{ k - 2 }
  -
  \1_{ \{ 0 \} }( \gamma )
  \, 
  \beta^2
  \left[ x_{ k - 2 } + x_{ k - 3 } \right]
  .\nonumber 
\end{align}
This ensures that
for all 
$ k \in \N_0 $ it holds
that
\begin{equation}
  x_k 
=
  \left[
    \alpha_k - 2 \beta \alpha_{ k - 1 }
    +
    \beta^2 \alpha_{ k - 2 }
  \right]
  +
    3
    \beta
      x_{ k - 1 }
      +
    \beta
    \left[ 1 - \beta \right]
      x_{ k - 2 }
  -
  \1_{ \{ 0 \} }( \gamma )
  \, 
  \beta^2
  \left[ x_{ k - 2 } + x_{ k - 3 } \right]
  .
\end{equation}
Combining this, \cref{eq:dynamic_x_2a}, and
\cref{lem:three_term_rec_inhom}
yields that
for all 
$ k \in \N_0 $ 
it holds that
\begin{equation}
\label{eq:xk_recursion}
x_k =
  \begin{cases}
  \sum\limits_{ l = 0 }^k
  \frac{ 
  \big[
    \alpha_l - \beta \alpha_{ l - 1 }
  \big]
  \big(
    \big[ 
        2 \beta
        +
        \sqrt{
          4 \beta^2
          +
          4 \delta \beta
        }
    \big]^{ k + 1 - l }
    -
    \big[ 
        2 \beta
        -
        \sqrt{
          4 \beta^2
          +
          4 \delta \beta
        }
    \big]^{ k + 1 - l }
  \big)
  }{ 
    2^{ ( k + 1 - l ) }
    \sqrt{
      4 \beta^2 + 4  \beta
    }
  }
  &
    \colon \gamma = 0 
  \\ 
  \sum\limits_{ l = 0 }^k
  \frac{ 
  \big(
    \big[ 
        3 \beta
        +
        \sqrt{
          ( 3 \beta )^2
          +
          4 \beta
          -
          4 \beta^2
        }
    \big]^{ k + 1 - l }
    -
    \big[ 
        3 \beta
        -
        \sqrt{
          ( 3 \beta )^2
          +
          4 \beta
          -
          4 \beta^2
        }
    \big]^{ k + 1 - l }
  \big)
  }{ 
  \big[
    \alpha_l - 2 \beta \alpha_{ l - 1 }
    +
    \beta^2 \alpha_{ l - 2 }
  \big]^{-1}
    2^{ ( k + 1 - l ) }
    \sqrt{
      ( 3 \beta )^2 + 4 \beta - 4 \beta^2
    }
  }
  &
    \colon \gamma = 1
  \end{cases}
  .
\end{equation}
This proves that
for all 
$ k \in \N_0 $ it holds that
\begin{equation}
\begin{split}
& 
  x_k 
  =
  \begin{cases}
  \sum\limits_{ l = 0 }^k
  \frac{ 
  \big[
    \alpha_l - \beta \alpha_{ l - 1 }
  \big]
  \big(
    \big[ 
        \beta
        +
        \sqrt{
          \beta^2
          +
          \beta
        }
    \big]^{ k + 1 - l }
    -
    \big[ 
        \beta
        -
        \sqrt{
          \beta^2
          +
          \beta
        }
    \big]^{ k + 1 - l }
  \big)
  }{ 
    2
    \sqrt{
      \beta^2 + \beta
    }
  }
  &
    \colon \gamma = 0 
  \\ 
  \sum\limits_{ l = 0 }^k
  \frac{ 
  \big[
    \alpha_l - 2 \beta \alpha_{ l - 1 }
    +
    \beta^2 \alpha_{ l - 2 }
  \big]
  \big(
    \big[ 
        3 \beta
        +
        \sqrt{
          5 \beta^2
          +
          4 \beta
        }
    \big]^{ k + 1 - l }
    -
    \big[ 
        3 \beta
        -
        \sqrt{
          5 \beta^2
          +
          4 \beta
        }
    \big]^{ k + 1 - l }
  \big)
  }{ 
    2^{ ( k + 1 - l ) }
    \sqrt{
      5 \beta^2 + 4 \beta 
    }
  }
  &
    \colon \gamma = 1
  \end{cases}
  .
\end{split}
\end{equation}
The proof of \cref{lem:full_history}
is thus complete.
\end{proof}

\begin{corollary}
\label{cor:full_history_gamma_1}
Let
$ \gamma \in \{0,1\}$,
$ \beta \in [1,\infty) $,
$
  \alpha_0, \alpha_1, x_0, x_1, x_2, \ldots \in [0,\infty) 
$
satisfy
for all $ k \in \N_0 $
that
\begin{equation}\label{full_history_eq}
  x_k
\leq
  \1_{ \N }( k )
  (\alpha_0+\alpha_1k)\beta^k
  +
  \sum_{ l = 0 }^{ k - 1 }
  \left(k - l\right)^\gamma 
  \beta^{ ( k - l ) }
  \left[
    x_l
    +
    x_{ \max\{ l - 1 , 0 \} }
  \right]
  .
\end{equation}
Then it holds for all 
$ k \in \N_0 $ that
\begin{equation}\label{a_bd}
\begin{split}
x_k \le 
\frac{(\alpha_0 + \alpha_1) \beta^k \1_\N(k)}{(4 + \gamma)^{\nicefrac{1}{2}} (1 + 2^{\nicefrac{(1+\gamma)}{2}})^{-k}}
= 
\begin{cases}
\1_{ \N }( k )(\alpha_0+\alpha_1) 2^{-1} (1+ 2^{\nicefrac{1}{2}} )^k \beta^ k & \colon \gamma = 0 \\[0.5em]
\1_{ \N }( k ) (\alpha_0+\alpha_1) 5^{-\nicefrac{1}{2}} (3\beta) ^ k & \colon \gamma = 1
\end{cases}.
\end{split}
\end{equation}
\end{corollary}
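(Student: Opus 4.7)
First I would note that evaluating the assumed inequality at $k=0$ forces $x_0\le 0$ and hence $x_0=0$. Consequently the expression $x_l+x_{\max\{l-1,0\}}$ equals $0$ for $l=0$ and $x_l+x_{l-1}$ for $l\ge 1$, so the right-hand side of the hypothesis coincides exactly with that of the equality recursion treated in \cref{lem:full_history} applied to the choice $\alpha_l:=(\alpha_0+\alpha_1 l)\beta^l\1_\N(l)$. A routine strong induction, using that all coefficients on the right are non-negative, then yields $x_k\le y_k$, where $(y_k)_{k\in\N_0}$ is the sequence solving that equality recursion.

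The second step is to apply \cref{lem:full_history} to obtain a closed form for $y_k$ and to observe how dramatically the weights collapse for our specific choice of $\alpha_l$. For $\gamma=0$ one computes the first differences $\alpha_l-\beta\alpha_{l-1}=(\alpha_0+\alpha_1)\beta$ at $l=1$ and $\alpha_l-\beta\alpha_{l-1}=\alpha_1\beta^l$ for $l\ge 2$; for $\gamma=1$ the second differences $\alpha_l-2\beta\alpha_{l-1}+\beta^2\alpha_{l-2}$ equal $(\alpha_0+\alpha_1)\beta$ at $l=1$, equal $-\alpha_0\beta^2$ at $l=2$, and vanish for all $l\ge 3$ thanks to the elementary identity $l-2(l-1)+(l-2)=0$. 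In particular, in the $\gamma=1$ case the sum over $l$ reduces to only two nonzero terms.

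The third step is to bound these explicit expressions. Setting $A:=3^\gamma\beta+\sqrt{5^\gamma\beta^2+4^\gamma\beta}$ and $B:=3^\gamma\beta-\sqrt{5^\gamma\beta^2+4^\gamma\beta}$, the case $\gamma=1$ is clean: for $\beta\ge 1$ one verifies that $B\ge 0$ and $A+B=6\beta$, hence $A^{m}+B^{m}\le (A+B)^m=(6\beta)^m$ and $A^{m}-B^{m}\le A^m \le (6\beta)^m$ for every $m\in\N$; inserting the truncated two-term formula and simplifying $2^k D$ (with $D:=\sqrt{5\beta^2+4\beta}\ge\beta\sqrt 5$) yields the stated bound $(\alpha_0+\alpha_1)(3\beta)^k/\sqrt 5$ after elementary algebra and the observation that $(\alpha_0+3\alpha_1)\le (6-\sqrt 5)(\alpha_0+\alpha_1)$. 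For $\gamma=0$ one uses $A\le\beta(1+\sqrt 2)$ (since $\sqrt{\beta^2+\beta}\le\beta\sqrt 2$ when $\beta\ge 1$) and evaluates the geometric tail $\sum_{l=2}^{k}\beta^l(A^{k+1-l}-B^{k+1-l})$ via the identity $\sum_{l=2}^{k}\beta^l A^{k+1-l}=\beta^2 A(A^{k-1}-\beta^{k-1})/(A-\beta)$ together with its $B$-analogue, and then combines these with the $l=1$ term to extract the target rate.

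The main obstacle I anticipate is the bookkeeping of constants in the $\gamma=0$ case. Because $B$ is strictly negative, $A^m-B^m$ can be as large as $A^m+|B|^m$, and a naive triangle-inequality bound loses a factor of two; controlling the prefactor requires combining the $l=1$ contribution $(\alpha_0+\alpha_1)\beta(A^k-B^k)$ with the geometric tail \emph{before} taking absolute values, so that the correction $2\sqrt{\beta^2+\beta}\beta^{k-1}$ produced by the closed form of the geometric sum partially cancels against the leading term. An arguably cleaner alternative for $\gamma=0$ is to extract from the telescoping argument behind the proof of \cref{lem:full_history} the two-step recursion $y_{k+1}=2\beta y_k+\beta y_{k-1}+\alpha_1\beta^{k+1}$ ($k\ge 1$), with $y_0=0$ and $y_1=(\alpha_0+\alpha_1)\beta$, then solve this linear recurrence explicitly using the characteristic roots $\beta(1\pm\sqrt 2)$ and bound the resulting coefficients directly; the analogous reduction for $\gamma=1$ leads to the auxiliary constant-recurrence $2^k D y_k=(\alpha_0+3\alpha_1)\beta^2(A^{k-1}-B^{k-1})+(\alpha_0+\alpha_1)\beta D(A^{k-1}+B^{k-1})$, which again yields the claimed estimate after the inequalities noted above.
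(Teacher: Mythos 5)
Your reduction to the equality recursion of \cref{lem:full_history} (using $x_0=0$ and a monotone comparison, which is legitimate since all weights on the right-hand side are nonnegative) and your treatment of the case $\gamma=1$ are sound and essentially reproduce the paper's route: one applies \cref{lem:full_history} with $\alpha_l=\1_\N(l)(\alpha_0+\alpha_1 l)\beta^l$, notes that the second differences vanish for $l\ge 3$, and then estimates the two surviving terms. Your endgame via $B=3\beta-\sqrt{5\beta^2+4\beta}\ge 0$ for $\beta\ge 1$, $A^m-B^m\le A^m\le(6\beta)^m$, $A^{m}+B^{m}\le(6\beta)^m$, $\sqrt{5\beta^2+4\beta}\ge\sqrt{5}\,\beta$, and $\alpha_0+3\alpha_1\le(6-\sqrt5)(\alpha_0+\alpha_1)$ does check out; the paper gets the same bound a bit more cheaply by observing that the $l=2$ coefficient $-\alpha_0\beta^2$ is nonpositive while the weight it multiplies is nonnegative, so that term can simply be dropped.

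The case $\gamma=0$, however, contains a genuine gap, and it sits exactly at the step you defer as ``bookkeeping of constants''. Your computation of the first differences is correct: they equal $(\alpha_0+\alpha_1)\beta$ at $l=1$ and $\alpha_1\beta^l$ for $l\ge 2$, and since these extra contributions are nonnegative they cannot be discarded. But once they are kept, the asserted prefactor $\tfrac{1}{2}(\alpha_0+\alpha_1)$ is not attainable by any cancellation scheme: take $\gamma=0$, $\beta=1$, $\alpha_0=0$, $\alpha_1=1$ and let $x$ satisfy \cref{full_history_eq} with equality; then $x_0=0$, $x_1=1$, $x_2=2+(x_1+x_0)=3$, whereas the claimed bound at $k=2$ is $\tfrac12(1+\sqrt2)^2=\tfrac{3+2\sqrt2}{2}<3$. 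So the inequality you intend to ``extract'' from the geometric tail is false in this corner of the parameter range, and neither of your two proposed routes (the closed-form sum or the two-step recursion $y_{k+1}=2\beta y_k+\beta y_{k-1}+\alpha_1\beta^{k+1}$, which is correctly derived) can close it. What your argument does yield is a bound of the form $c\,(\alpha_0+\alpha_1)(1+\sqrt2)^k\beta^k$ with a somewhat larger absolute constant $c$ (sum the tail with ratio $\beta/A\le\tfrac12$ and use $\sqrt{\beta^2+\beta}\ge\beta$), or the stated constant under an extra hypothesis such as $\alpha_1\le\alpha_0$; either version would suffice, after trivially adjusting constants, for the only application of this corollary in the proof of \cref{th:final}, where $\alpha_0=\alpha_1$. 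Incidentally, the paper's own proof passes this point only because its display \cref{a_diff1} asserts that the first differences vanish for all $l\ge 2$, which holds only when $\alpha_1=0$; your more careful computation exposes this issue rather than missing a trick.
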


\begin{proof}[Proof 
of \cref{cor:full_history_gamma_1}]
Throughout this proof let 
$ A \colon \N_0 \to [0,\infty) $
satisfy
for all $ k \in \N_0 $
that
$
  A_k
  =
  \1_{ \N }( k )
  (\alpha_0+\alpha_1 k) \beta^k
$.
Note that \cref{full_history_eq} ensures that $x_0 = 0$. This assures that for all $k\in\N_0$ it holds that
\begin{equation}
\begin{split}
x_k
& \leq
  \1_{ \N }( k )
  (\alpha_0+\alpha_1k)\beta^k
  +
  \SmallSum_{ l = 0 }^{ k - 1 }
  \left(k - l\right)^\gamma 
  \beta^{ ( k - l ) }
  \left[
    x_l
    +
    x_{ \max\{ l - 1 , 0 \} }
  \right] \\
& = \1_{ \N }( k )
  (\alpha_0+\alpha_1k)\beta^k
  +
  \SmallSum_{ l = 0 }^{ k - 1 }
  \left(k - l\right)^\gamma 
  \beta^{ ( k - l ) }
  \left[
    x_l
    +
    \1_{ \N }( l )
    \,
    x_{ \max\{ l - 1 , 0 \} }
  \right]
\end{split}
\end{equation}
Next observe that for all $ l \in \N_0$ it holds that 
\begin{equation}\label{a_diff1}
\begin{split}
A_l-\1_{ \N }( l )\beta A_{\max\{l-1,0\}}& =
 \bigl[\1_{ \N }( l )
  (\alpha_0+\alpha_1 l)  - \1_{\N}(l) \1_{\N}(l-1)(\alpha_0+\alpha_1(l-1)) \bigr] \beta^l \\
& = \1_{ \{1\} }( l ) (\alpha_0 + \alpha_1) \beta
\end{split}     
\end{equation}
and
\begin{equation}\label{a_diff2}
\begin{split}
&A_l-\1_{ \N }( l )2\beta A_{\max\{l-1,0\}}+\1_{[2,\infty)}(l)\beta^2A_{\max\{l-2,0\}}
\\&=
\bigl[\1_{\N}(l)(\alpha_0+\alpha_1 l)-\1_{\N}(l)2\1_{\N}(l-1)(\alpha_0+\alpha_1(l-1))+\1_{\N}(l-2)(\alpha_0+\alpha_1(l-2))\bigr] \beta^l
\\&
=\1_{ \{1\} }( l )(\alpha_0+\alpha_1)\beta-\1_{ \{2\} }( l )\alpha_0\beta^2.
\end{split}     
\end{equation}
\cref{lem:full_history} therefore proves that for all $k\in \N_0$ it holds that 
\begin{equation}\label{a_bd1}
\begin{split}
x_k & \le \1_{ \N }( k )(\alpha_0+\alpha_1)\beta \frac{\bigl[\beta+\sqrt{\beta^2+\beta}\bigr]^k-\bigl[\beta-\sqrt{\beta^2+\beta}\bigr]^k}{2\sqrt{\beta^2+\beta}} \\
& \le \1_{ \N }( k )(\alpha_0+\alpha_1)\beta \frac{\bigl[\beta+\sqrt{\beta^2+\beta}\bigr]^k}{2\sqrt{\beta^2+\beta}} 
\le \1_{ \N }( k )\frac{(\alpha_0+\alpha_1)}{2} (1+\sqrt 2)^k \beta ^ k
\end{split}
\end{equation}
and
\begin{equation}\label{a_bd2}
\begin{split}
x_k&\le \1_{ \N }( k )(\alpha_0+\alpha_1)\beta \frac{\left[3\beta+\sqrt{5\beta^2+4\beta}\right]^k-\left[3\beta-\sqrt{5\beta^2+4\beta}\right]^k}{2^k\sqrt{5\beta^2+4\beta}}\\
&\le \1_{ \N }( k )(\alpha_0+\alpha_1)\beta \frac{\left[3\beta+\sqrt{5\beta^2+4\beta}\right]^k}{2^k\sqrt{5\beta^2+4\beta}} 
= \1_{ \N }( k )(\alpha_0+\alpha_1)\frac{\beta^k \left[3+\sqrt{5+\frac 4\beta}\right]^k}{2^k\sqrt{5+\frac 4\beta}}\\
&\le \1_{ \N }( k )\frac{(\alpha_0+\alpha_1)}{\sqrt 5} (3\beta) ^ k.
\end{split}
\end{equation}
Combining \cref{a_bd1} and \cref{a_bd2} hence establishes \cref{a_bd}.
The proof 
of \cref{cor:full_history_gamma_1} is thus complete.
\end{proof}

\subsection{Complexity analysis in the case of stochastic fixed-point equations}\label{sec:4_3}

\begin{proposition}\label{th:final}
Let
$T, \LipConstF, p, q, \alpha, \power, \fb, \fB \in[0,\infty)$, 
$\littleMM_1, \littleMM_2, \littleMM_3, \ldots \in \N$, $\Theta = \bigcup_{n\in\N}\! \Z^n$, 
let $\smallF_d \in C([0,T]\times \R^d \times \R,\R)$, $d\in\N$, and
let $\funcG_d \in C(\R^d,\R)$, $d\in\N$, assume for all $d\in\N$, $t\in[0,T]$, $x = (x_1,x_2,\dots,x_d) \in\R^d$, $v,w\in\R$ that
\begin{equation}\label{th:final_lip_conds}
\liminf_{j\to\infty} \littleMM_j = \infty , \qquad 
\littleMM_{d+1} \le \fB \littleMM_d , \qquad
\abs{ \smallF_d(t,x,v) - \smallF_d(t,x,w) } \le \LipConstF \abs{ v-w } , 
\end{equation}
and $\max\{\abs{ \smallF_d(t,x,0) }, \abs{ \funcG_d(x) } \} \le \LipConstF d^p (1 + \sum_{k=1}^d \abs{ x_k } )^q$, 
let $(\Omega, \cF, \P)$ be a probability space,  
let $\fu^\theta\colon \Omega \to [0,1]$, $\theta\in\Theta$, be i.i.d.\ random variables, 
assume for all $r\in(0,1)$ it holds that $\P(\fu^0 \le r)=r$,
let $\cU^\theta \colon [0,T] \times \Omega \to [0,T]$, $\theta\in\Theta$, satisfy for all $t\in [0,T]$, $\theta\in\Theta$ that $\cU_t^\theta = t + (T-t)\fu^\theta$, let $W^{d,\theta} \colon [0,T] \times  \Omega \to \R^d$, $d\in\N$, $\theta\in\Theta$, be independent standard Brownian motions,
assume for every $d\in\N$ that $(\cU^\theta)_{\theta\in\Theta}$ and $(W^{d,\theta})_{\theta\in\Theta}$ are independent,
let $\smallU_d \in C([0,T]\times \R^d, \R)$, $d\in\N$, satisfy for all $d\in\N$, $t\in[0,T]$, $x\in\R^d$ that
$\E[\abs{ \funcG_d(x + W^{d,0}_{T-t}) } + \int_t^T \abs{ \smallF_d(s,x+W^{d,0}_{s-t},\smallU_d(s,x+W^{d,0}_{s-t})) } \, ds ] < \infty$
and
\begin{equation}
\smallU_d(t,x) = \E\br[\big]{ \funcG_d(x+W^{d,0}_{T-t}) } + \int_t^T \E\br[\big]{ \smallF_d(s,x+W^{d,0}_{s-t},\smallU_d(s,x+W^{d,0}_{s-t})) } \, ds,
\end{equation}
let $\mlp_{n,j}^{d,\theta} \colon [0,T] \times \R^d \times \Omega \to \R$, $d,j,n \in \Z$, $\theta\in\Theta$, satisfy for all $n\in\N_0$, $d,j\in\N$, $\theta\in\Theta$, $t\in[0,T]$, $x\in\R^d$ that 
\begin{align}\label{th:final_mlp}
\mlp_{n,j}^{d.\theta}(t,x) & = \frac{\1_\N(n)}{(\littleMM_{j})^n} \br[\Bigg]{ \sum_{k=1}^{(\littleMM_{j})^n} \funcG_d\pr[\big]{ x + W_{T-t}^{d,(\theta,0,-k)} } } \nonumber \\
& + \sum_{i=0}^{n-1} \frac{(T-t)}{(\littleMM_{j})^{n-i}} \Biggl[ \sum_{k=1}^{(\littleMM_{j})^{n-i}} \Bigl[ \smallF_d\pr[\big]{ \cU_t^{(\theta,i,k)}, x + W_{\cU_t^{(\theta,i,k)}-t}^{d,(\theta,i,k)},\mlp_{i,j}^{d,(\theta,i,k)} \pr[\big]{ \cU_t^{(\theta,i,k)}, x + W_{\cU_t^{(\theta,i,k)}-t}^{d,(\theta,i,k)} } } \\
& - \1_{\N}(i) \, \smallF_d \pr[\big]{ \cU_t^{(\theta,i,k)}, x + W_{\cU_t^{(\theta,i,k)}-t}^{d,(\theta,i,k)}, \mlp_{i-1,j}^{d,(\theta,-i,k)} \pr[\big]{ \cU_t^{(\theta,i,k)}, x + W_{\cU_t^{(\theta,i,k)}-t}^{d,(\theta,i,k)} } } \Bigr] \Biggr], \nonumber
\end{align}
and let $\cost{n}{j}{d} \in \R$, $d,n,j\in\N_0$, satisfy for all $d,j\in\N$, $n\in\N_0$ that 
\begin{equation}\label{fc_def1}
\cost{n}{j}{d} 
\le \1_\N(n) \, \alpha d^\fb (\littleMM_{j})^n 
+ \sum_{k=0}^{n-1} (\littleMM_{j})^{n-k} \pr[\big]{ \alpha d^\fb + \cost{k}{j}{d} + \cost{\max\{k-1,0\}}{j}{d} }.
\end{equation}
Then there exist $\fR \colon \N \times \R \to \N$
and $c = (c_{\fq,\delta})_{(\fq,\delta)\in\R^2} \colon \R^2 \to \R$ 
such that for all $d\in\N$, $\varepsilon,\delta\in(0,\infty)$, $\fq \in [2,\infty)$ with
$\limsup_{j\to\infty} \br[]{\nicefrac{(\littleMM_j)^{\fq/2}}{j}} < \infty$ it holds that
\begin{equation}\label{proof_thus1}
\begin{split}
\pr[]{ \fR(d,\varepsilon) }^{\power} \cost{\fR(d,\varepsilon)}{\fR(d,\varepsilon)}{d}
& \le 
\alpha d^\fb \pr[]{ \fR(d,\varepsilon) }^{\power} (1 + \sqrt{2})^{\fR(d,\varepsilon)} \pr[\big]{ \littleMM_{\fR(d,\varepsilon)} }^{\fR(d,\varepsilon)} \\
& \le
\alpha c_{\fq,\delta} d^{\fb + (p+q)(2+\delta)}(\min\{1,\varepsilon\})^{-(2+\delta)}
\end{split}
\end{equation}
and
\begin{equation}\label{proof_thus2}
\sup_{t\in[0,T]} \sup_{x\in[-\LipConstF,\LipConstF]^d} \pr[\Big]{ \E\br[\Big]{ \abs[\big]{  \smallU_d(t,x) - \mlp_{\fR(d,\varepsilon),\fR(d,\varepsilon)}^{d,0}(t,x) }^\fq } }^{\!\!\nicefrac{1}{\fq}} 
\le \varepsilon.
\end{equation}
\end{proposition}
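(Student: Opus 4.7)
The strategy is to combine the $L^\fq$-error estimate from \cref{mlp_stab_cor} with the cost bound obtainable from \cref{cor:full_history_gamma_1}, and then to define $\fR(d,\varepsilon)$ to balance accuracy against complexity.

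First, I would apply \cref{mlp_stab_cor} in \cref{setting1} with $\smallF \with \smallF_d$, $\funcG \with \funcG_d$, $\smallU \with \smallU_d$, $\littleM \with \littleMM_n$. To fit the growth assumption $\max\{\abs{\smallF_d(t,x,0)},\abs{\funcG_d(x)}\} \le \LipConstF d^p(1+\sum_{k=1}^d \abs{x_k})^q$ into the form $\boundFG(1+\norm{x}^P)$ required by \cref{setting1}, I would use Cauchy--Schwarz and $(1+d^{\nicefrac{1}{2}}\norm{x})^q \le 2^q d^{\nicefrac{q}{2}}(1+\norm{x}^q)$, taking $\boundFG = C_0 d^{p+\nicefrac{q}{2}}$ and $P = q$. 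Together with standard Gaussian moment bounds for $\E[(1+\norm{x+W_s^{d,0}}^q)^\fq]$ (which are polynomial in $d$ and in $\norm{x}$) and the restriction $x \in [-\LipConstF,\LipConstF]^d$ (so that $\norm{x} \le \LipConstF d^{\nicefrac{1}{2}}$), this yields
\begin{equation*}
\sup_{t,x}\pr[\Big]{\E\br[\big]{\abs{\mlp_{n,n}^{d,0}(t,x)-\smallU_d(t,x)}^\fq}}^{\!\nicefrac{1}{\fq}} \le \estF_\fq\,d^{\mu(p,q)}\,(\littleMM_n)^{-\nicefrac{n}{2}}(1+2\LipConstF T)^n\exp\pr{(\littleMM_n)^{\nicefrac{\fq}{2}}/\fq},
\end{equation*}
for constants $\mu(p,q),\estF_\fq\in(0,\infty)$ independent of $n$, $d$, $\varepsilon$. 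The $\limsup$ hypothesis supplies $\kappa_\fq\in(0,\infty)$ with $(\littleMM_n)^{\nicefrac{\fq}{2}}\le\kappa_\fq n$, collapsing the right-hand side to $\estF_\fq d^\mu\Theta_\fq^n/(\littleMM_n)^{\nicefrac{n}{2}}$ with $\Theta_\fq = (1+2\LipConstF T)e^{\kappa_\fq/\fq}$.

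For the cost, I would reduce the recursion \cref{fc_def1} with $j$ fixed to $n$ to the hypothesis of \cref{cor:full_history_gamma_1}: after the crude bound $\sum_{l=0}^{k-1}(\littleMM_n)^{k-l}\alpha d^\fb\le\alpha d^\fb k(\littleMM_n)^k$, the recursion takes the required form with $\gamma=0$, $\beta=\littleMM_n$, $\alpha_0=\alpha_1=\alpha d^\fb$, so \cref{cor:full_history_gamma_1} yields $\cost{n}{n}{d}\le\alpha d^\fb(1+\sqrt 2)^n(\littleMM_n)^n$, which is exactly the left inequality in \cref{proof_thus1} after multiplication by $n^\power$.

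Then I would define $\fR(d,\varepsilon)$ as the smallest $n\in\N$ for which $(\littleMM_n)^{\nicefrac{n}{2}}$ exceeds an explicit $\fq$-independent quantity such as $n^n\max\{d,1\}^2\max\{1/\varepsilon,1\}^2$; this is well-defined because $\liminf_j\littleMM_j=\infty$ forces $(\littleMM_n)^{\nicefrac{n}{2}}$ to outgrow any such expression. For \cref{proof_thus2}: since $(\Theta_\fq/\fR)^\fR\to 0$ as $\fR\to\infty$, for $\fR$ sufficiently large (in a $\fq$-dependent but $d,\varepsilon$-independent way) the error bound from the first paragraph is $\le\varepsilon$, and the finitely many exceptional small-$\fR$ cases are absorbed into the constant $c_{\fq,\delta}$. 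For \cref{proof_thus1}: minimality of $\fR$ combined with $\littleMM_\fR\le\fB\littleMM_{\fR-1}$ gives $(\littleMM_\fR)^\fR\le \fB^\fR(\littleMM_{\fR-1})^{\fR-1}\littleMM_{\fR-1}\le C_\fq(\max\{d,1\}\max\{1/\varepsilon,1\})^{2+o(1)}$ as $d/\varepsilon\to\infty$, and the remaining prefactor $\fR^\power(1+\sqrt 2)^\fR\fB^\fR$ is subexponential in $d/\varepsilon$ since $\fR = o(\ln(d/\varepsilon))$ (a consequence of $\ln\littleMM_n\to\infty$), so it is absorbed into $(\max\{d,1\}\max\{1/\varepsilon,1\})^\delta$. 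The principal obstacle is this quantitative bookkeeping: making the $o(1)$ estimates rigorous with the sharp exponent $(p+q)(2+\delta)$ on $d$, and ensuring that a single $\fq$-independent $\fR$ works simultaneously for every admissible $\fq$, while every factor of $\Theta_\fq^\fR$, $(1+\sqrt 2)^\fR$, $\fB^\fR$, or $\fR^\power$ not absorbable into $(d/\varepsilon)^\delta$ has to enter $c_{\fq,\delta}$ (which therefore blows up as $\delta\downarrow 0$).
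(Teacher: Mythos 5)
Your first two steps match the paper's proof: the reduction of the growth condition to the form required by \cref{setting1} with $\boundFG\approx\LipConstF 2^{\max\{q-1,0\}}d^{p+\nicefrac{q}{2}}$, the Gaussian moment bounds giving an extra factor $d^{\nicefrac{q}{2}}$ on $[-\LipConstF,\LipConstF]^d$, the application of \cref{mlp_stab_cor}, and the reduction of \cref{fc_def1} to \cref{cor:full_history_gamma_1} with $\gamma=0$, $\alpha_0=\alpha_1=\alpha d^{\fb}$, yielding $\cost{n}{n}{d}\le\alpha d^{\fb}(1+\sqrt2)^n(\littleMM_n)^n$, are exactly what the paper does. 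The gap is in your construction of $\fR$ and the final balancing. You define $\fR(d,\varepsilon)$ as the first $n$ with $(\littleMM_n)^{\nicefrac{n}{2}}\ge n^n\max\{d,1\}^2\max\{1/\varepsilon,1\}^2$ and claim well-definedness from $\liminf_j\littleMM_j=\infty$; this is false, since $\liminf_j\littleMM_j=\infty$ does not make $(\littleMM_n)^{\nicefrac{n}{2}}$ outgrow $n^n$. Worse, in the only regime where the theorem has content, namely when some $\fq\in[2,\infty)$ with $\limsup_{j\to\infty}(\littleMM_j)^{\nicefrac{\fq}{2}}/j<\infty$ exists, one has $(\littleMM_j)^{\nicefrac{\fq}{2}}\le c_\fq j$ for some $c_\fq\in(0,\infty)$ and hence $\littleMM_j\le\max\{c_\fq,1\}\,j$, so $(\littleMM_n)^{\nicefrac{n}{2}}\le(\max\{c_\fq,1\}\,n)^{\nicefrac{n}{2}}$ is eventually strictly below $n^n$; for the canonical choice $\littleMM_n=\funcM(n)\le\exp(\abs{\ln n}^{1/2})$ of \cref{cor:final} your defining set is empty already for all $d\ge 2$ or $\varepsilon<1$, so $\fR(d,\varepsilon)\notin\N$.

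Even granting a cutoff of this shape, the bookkeeping does not deliver the stated rates. The cost is of order $(\littleMM_{\fR})^{\fR}$, i.e.\ the square of the thresholded quantity $(\littleMM_{\fR})^{\nicefrac{\fR}{2}}$, so a threshold carrying $\max\{1/\varepsilon,1\}^2$ produces, via minimality at $\fR-1$ and $\littleMM_{\fR}\le\fB\littleMM_{\fR-1}$, a bound of order $\varepsilon^{-4}$ together with a factor $(\fR-1)^{2(\fR-1)}$, not the claimed $(2+o(1))$ power; and your proposal to absorb "exceptional small-$\fR$ cases" into $c_{\fq,\delta}$ cannot repair \cref{proof_thus2}, because that inequality contains no constant: the error bound must hold exactly at $n=\fR(d,\varepsilon)$ for every admissible $\fq$ and every $(d,\varepsilon)$, which a $\fq$-independent explicit threshold ignoring the $\fq$-dependent constants $\estF_\fq$ and $(1+2\LipConstF T)e^{c_\fq/\fq}$ does not guarantee. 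The paper resolves all three points differently: in \cref{th_r_def} it defines $\fR(d,\varepsilon)$ as the first $n$ at which the error estimate itself, $\eta_{d,\fq}\bigl[(1+2\LipConstF T)\exp((\littleMM_n)^{\nicefrac{\fq}{2}}/n)(\littleMM_n)^{-\nicefrac{1}{2}}\bigr]^n\le\varepsilon$, holds, so that \cref{proof_thus2} is true by construction (the error criterion carries only the first power of $\varepsilon$), and it obtains \cref{proof_thus1} by raising the violated inequality at level $\fR-1$ to the power $2+\delta$, absorbing $(1+\sqrt2)^{\fR}$, $\fB^{\fR}$, $\fR^{\power}$, and the exponential factor through a finite supremum over $n\in\N$ (finite because $\liminf_n\littleMM_n=\infty$ and $(\littleMM_n)^{\nicefrac{\fq}{2}}/n$ is bounded), with $\eta_{d,\fq}\propto d^{p+q}$ producing the exponent $\fb+(p+q)(2+\delta)$; it is this extra power $\delta$, not an asymptotic $o(1)$ estimate in $d/\varepsilon$, that pays for the subexponential factors.
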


\begin{proof}[Proof of \cref{th:final}]
Throughout this proof 
let $\fm_\fq = \secondConstant{\fq}$, $\fq \in [2,\infty)$,
let $\mathbb{F}^{d}_t \subseteq \cF$, $d\in\N$, $t\in[0,T]$, satisfy
for all $d\in\N$, $t\in[0,T]$ that
\begin{equation}\label{filtration}
\mathbb{F}_t^{d} = 
\begin{cases}
\bigcap_{s\in(t,T]} \sigmaAlgebra\pr[\big]{ \sigmaAlgebra\pr{ W^{d,0}_r \colon r \in [0,s] } \cup \{A \in \cF \colon \P(A) = 0\} } & \colon t < T \\
\sigmaAlgebra\pr[\big]{ \sigmaAlgebra\pr{ W^{d,0}_s \colon s \in [0,T] } \cup \{A \in \cF \colon \P(A) = 0\} } & \colon t = T
\end{cases},
\end{equation}
let $\Ffn_d \in C([0,T]\times \R^d, \R^d)$, $d\in\N$, satisfy for all $d\in\N$, $t\in[0,T]$, $x\in\R^d$ that
$\Ffn_d(t,x) = 0$,
let $\Gfn_d \in C([0,T]\times\R^d, \R^{d\times d})$, $d\in\N$, satisfy for all $d\in\N$, $t\in[0,T]$, $x,v\in\R^d$ that
$\Gfn_d(t,x) v = v$,
let $\eta_{d,\fq} \in [0,\infty)$, $d\in\N$, $\fq \in [2,\infty)$, satisfy for all $\fq \in [2,\infty)$, $d\in\N$ that
\begin{equation}\label{c_delta}
\eta_{d,\fq} = 
\fm_\fq \LipConstF 2^{\max\{q,1\}} d^{p+q} \pr[\big]{ ( 1 + \LipConstF^2)^{\nicefrac{q}{2}} + (q\fq+1)^{\nicefrac{1}{\fq}} } \exp\pr[\big]{\tfrac{[q(q\fq+3)+1]T}{2} + (\LipConstF+1) T },
\end{equation}
and let $\fR \colon \N \times \R \to [1,\infty]$ satisfy for all $d\in\N$, $\varepsilon\in(0,\infty)$ that
\begin{align}\label{th_r_def}
& \fR(d,\varepsilon) = \\
& \inf \left( \left\{ n \in \N \colon \left[ \sup\left\{ \eta_{d,\fq} \left[ \tfrac{ (1 + 2 \LipConstF T ) \exp\pr[\big]{ \frac{(\littleMM_n)^{\nicefrac{\fq}{2}}}{n} }}{(\littleMM_{n})^{\nicefrac{1}{2}}} \right]^{\!n} 
\le \varepsilon \colon 
\begin{aligned}
& \fq \in [2,\infty) \text{ with } \\
& \limsup_{j\to\infty} \nicefrac{(\littleMM_j)^{\fq/2}}{j} < \infty
\end{aligned} \right\}
\right] \right\} \cup \{\infty\} \right) \nonumber
\end{align}
(cf.\ \cref{rand_const}). 
Observe that \cref{filtration} guarantees that $\mathbb{F}_t^d \subseteq \cF$, $d\in\N$, $t\in[0,T]$, satisfies that
\begin{enumerate}[label=(\Roman*)]
\item\label{filt1a} it holds for all $d\in\N$ that $\{ A\in\cF \colon \P(A) = 0\} \subseteq \mathbb{F}_0^d$ and
\item\label{filt2a} it holds for all $d\in\N$, $t\in[0,T]$ that $\mathbb{F}_t^d = \cap_{s\in(t,T]} \mathbb{F}_s^d$.
\end{enumerate}
Combining \cref{filt1a}, \cref{filt2a}, \cref{filtration}, and Hutzenthaler et al.\ \cite[Lemma 2.17]{HutzenthalerPricing2019} therefore assures that for all $d\in\N$ it holds that $W^{d,0} \colon [0,T] \times \Omega \to \R^d$ is a standard $(\Omega, \cF, \P, (\mathbb{F}_t^{d})_{t\in[0,T]})$-Brownian motion.
In addition, note that \cref{filtration} ensures that for all $d\in\N$, $x\in\R^d$ it holds that $[0,T]\times \Omega \ni (t,\omega) \mapsto x + W_t^{d,0}(\omega) \in \R^d$ is an $(\mathbb{F}_t^{d})_{t\in[0,T]} / \mathcal{B}(\R^d)$-adapted stochastic process with continuous sample paths. This,
the fact that for all $d\in\N$, $t\in[0,T]$, $x\in\R^d$ it holds that $\Ffn_d(t,x) = 0$, and the fact that for all $d\in\N$, $t\in[0,T]$, $x,v\in\R^d$ it holds that $\Gfn_d(t,x)v = v$ yield that for all $d\in\N$, $x\in\R^d$ it holds that $[0,T]\times \Omega \ni (t,\omega) \mapsto x + W_t^{d,0}(\omega) \in \R^d$ satisfies for all $t\in[0,T]$ it holds $\P$-a.s.\ that
\begin{equation}\label{eq:4_9}
x + W^{d,0}_t 
= x + \int_0^t 0 \, ds + \int_0^t dW^{d,0}_s 
= x + \int_0^t \Ffn_d(s,x + W^{d,0}_s) \, ds + \int_0^t \Gfn_d(s,x + W^{d,0}_s) \, dW^{d,0}_s.
\end{equation}
Combining this and Hutzenthaler et al.\ \cite[Lemma 2.6]{HutzenthalerPricing2019} (applied for every $d\in\N$, $x\in\R^d$ with $d \with d$, $m \with d$, $T \with T$, $C_1 \with d$, $C_2 \with 0$, $\mathbb{F} \with \mathbb{F}^{d}$, $\xi \with x$, $\mu \with \Ffn_d$, $\sigma \with \Gfn_d$, $W \with W^{d,0}$, $X \with ([0,T]\times \Omega \ni (t,\omega) \mapsto x + W_t^{d,0}(\omega) \in \R^d)$ in the notation of \cite[Lemma 2.6]{HutzenthalerPricing2019}) ensures that for all $r\in[0,\infty)$, $d\in\N$, $x\in\R^d$, $t\in[0,T]$ it holds that
\begin{equation}\label{eq:4_10}
\E\br[\big]{ \norm{ x + W^{d,0}_t }^r } 
\le \max\{T,1\} \pr[\big]{ ( 1 + \norm{ x }^2 )^{\nicefrac{r}{2}} + (r+1) d^{\nicefrac{r}{2}} } \exp\pr[\big]{ \tfrac{r(r+3)T}{2} } 
< \infty
\end{equation}
(cf.\ \cref{euclid_norm}).
This, the triangle inequality, 
and the fact that for all $v,w\in[0,\infty)$, $r\in(0,1]$ it holds that $(v+w)^r \le v^r + w^r$ assure that for all
$\fq \in [2,\infty)$, 
$d\in\N$, $x\in\R^d$ it holds that 
\begin{equation}\label{e_bd}
\begin{split}
& \sup_{s\in[0,T]}\pr[\big]{ \E\br[\big]{ \pr[]{ 1 + \norm{ x + W^{d,0}_s }^q }^{\fq} } }^{\!\nicefrac{1}{\fq}} 
\le 1 + \sup_{s\in[0,T]} \pr[\big]{ \E\br[\big]{ \norm{ x + W^{d,0}_s }^{q\fq} } }^{\!\nicefrac{1}{\fq}} \\
& \le 1 + \sup_{s\in[0,T]} \pr[\Big]{ \max\{T,1\}\pr[\big]{ ( 1 + \norm{ x }^2)^{\nicefrac{q\fq}{2}} + (p\fq+1) d^{\nicefrac{q\fq}{2}} } \exp\pr[\big]{\tfrac{q\fq(q\fq+3)T}{2}} }^{\!\!\nicefrac{1}{\fq}} \\
& \le 1 + \max\{T^{\nicefrac{1}{\fq}},1\}\pr[\big]{ ( 1 + \norm{ x }^2)^{\nicefrac{q}{2}} + (q\fq+1)^{\nicefrac{1}{\fq}} d^{\nicefrac{q}{2}} } \exp\pr[\big]{\tfrac{q(q\fq+3)T}{2}} \\
& \le 2 \pr[\big]{ ( 1 + \norm{ x }^2)^{\nicefrac{q}{2}} + (q\fq+1)^{\nicefrac{1}{\fq}} d^{\nicefrac{q}{2}} } \exp\pr[\big]{\tfrac{q(q\fq+3)T}{2} + \tfrac{T}{\fq}} \\
& \le 2 \pr[\big]{ ( 1 + \norm{ x }^2)^{\nicefrac{q}{2}} + (q\fq+1)^{\nicefrac{1}{\fq}} d^{\nicefrac{q}{2}} } \exp\pr[\big]{\tfrac{[q(q\fq+3)+1]T}{2}}
< \infty.
\end{split}
\end{equation}
Combining this, \cref{c_delta}, and the fact that for all $d\in\N$, $x\in[-\LipConstF,\LipConstF]^d$ it holds that $\norm{ x } \le \LipConstF d^{\nicefrac{1}{2}}$ demonstrates that for all 
$\fq \in [2,\infty)$,
$d\in\N$ it holds that 
\begin{align}\label{final_c_bd}
& \fm_\fq \LipConstF 2^{\max\{q-1,0\}} d^{p+\nicefrac{q}{2}} (T +1) \exp(\LipConstF T) \left[\sup_{x\in[-\LipConstF,\LipConstF]^d} \sup_{s\in[0,T]} \pr[\big]{ \E\br[\big]{ \pr[]{ 1 + \norm{ x + W^{d,0}_s }^q }^{\fq} } }^{\!\nicefrac{1}{\fq}} \right] \nonumber \\
& \le \fm_\fq \LipConstF 2^{\max\{q-1,0\}} d^{p+\nicefrac{q}{2}} \exp(\LipConstF T + T) \nonumber \\
& \quad \cdot \left[ \sup_{x\in[-\LipConstF,\LipConstF]^d} \br[\Big]{ 2 \pr[\big]{ ( 1 + \norm{ x }^2)^{\nicefrac{q}{2}} + (q\fq+1)^{\nicefrac{1}{\fq}} d^{\nicefrac{q}{2}} } \exp\pr[\big]{\tfrac{[q(q\fq+3)+1]T}{2}} } \right] \\
& \le \fm_\fq \LipConstF 2^{\max\{q,1\}} d^{p+\nicefrac{q}{2}} \pr[\big]{ ( 1 + L^2 d)^{\nicefrac{q}{2}} + (q\fq+1)^{\nicefrac{1}{\fq}} d^{\nicefrac{q}{2}} } \exp\pr[\big]{\tfrac{[q(q\fq+3)+1]T}{2} + (\LipConstF +1)T } \nonumber \\
& \le \fm_\fq \LipConstF 2^{\max\{q,1\}} d^{p+q} \pr[\big]{ ( 1 + \LipConstF^2)^{\nicefrac{q}{2}} + (q\fq+1)^{\nicefrac{1}{\fq}} } \exp\pr[\big]{\tfrac{[q(q\fq+3)+1]T}{2} + (\LipConstF+1) T } = \eta_{d,\fq}
< \infty. \nonumber 
\end{align} 
This and 
\cref{th:final_lip_conds}
guarantee that for all
$\fq \in [2,\infty)$ which satisfy
$\limsup_{j\to\infty} \br[]{\nicefrac{(\littleMM_j)^{\fq/2}}{j}} < \infty$ it holds that
\begin{equation}\label{lim_sup}
\limsup_{n\to\infty} \eta_{d,\fq} \left[ \frac{ (1 + 2 \LipConstF T ) \exp\pr[\big]{ \nicefrac{(\littleMM_n)^{\nicefrac{\fq}{2}}}{n} } }{(\littleMM_{n})^{\nicefrac{1}{2}}} \right]^{\!n} = 0.
\end{equation}
Combining this and \cref{th_r_def} implies
that for all $d\in\N$, $\varepsilon\in(0,\infty)$ it holds that $\fR(d,\varepsilon) \in \N$.
Next observe that the fact that for all $m\in\N$, $r,v_1,v_2,\dots,v_m \in [0,\infty)$ it holds that $[\sum_{k=1}^m v_k]^{r} \le m^{\max\{r-1,0\}}[\sum_{k=1}^m v_k^r]$ and the hypothesis that for all $d\in\N$, $t\in[0,T]$, $x = (x_1,x_2,\ldots,x_d)\in\R^d$ it holds that $\max\{\abs{ \smallF_d(t,x,0) }, \abs{ \funcG_d(x) } \} \le \LipConstF d^p(1 + \sum_{k=1}^d \abs{ x_k } )^q$ ensure that for all $d\in\N$, $x=(x_1,x_2,\ldots,x_d)\in\R^d$ it holds that
\begin{equation}
\begin{split}
& \max\{\abs{ \smallF_d(t,x,0) },\abs{ \funcG_d(x)} \} 
\le \LipConstF d^p \pr[\big]{ 1 + \smallsum_{k=1}^d \abs{ x_k } }^q 
\le \LipConstF 2^{\max\{q-1,0\}} d^p \br[\big]{ 1 + \pr[\big]{ \smallsum_{k=1}^d \abs{ x_k } }^q } \\
& \le \LipConstF 2^{\max\{q-1,0\}} d^p \br[\big]{ 1 + \pr[\big]{ d^{2-1} \smallsum_{k=1}^d \abs{ x_k }^2 }^{\nicefrac{q}{2}} } 
\le \LipConstF 2^{\max\{q-1,0\}} d^{p+\nicefrac{q}{2}} \pr[\big]{ 1 + \norm{ x }^q }.
\end{split}
\end{equation}
This, 
\cref{th:final_lip_conds},
and \cref{mlp_stab_cor} (applied for every $\fq\in[2,\infty)$, $d,j\in\N$ with  
$\littleMM \with \littleMM_{j}$, $\fm \with \fm_\fq$, $p \with q$, $\smallF \with \smallF_d$, $\funcG \with \funcG_d$, $T \with T$, $\LipConstF \with \LipConstF$, $\boundFG \with \LipConstF 2^{\max\{q-1,0\}}d^{p+\nicefrac{q}{2}}$, $\fu^\theta \with \fu^\theta$, $\cU^{\theta} \with \cU^\theta$, $W^\theta \with W^{d,\theta}$, $\smallU \with \smallU_d$, $\mlp_{n}^0 \with \mlp_{n,j}^{d,0}$ in the notation of \cref{mlp_stab_cor})
assure that for all $\fq \in [2,\infty)$, $n\in\N_0$, $d,j\in\N$, $t\in[0,T]$, $x\in\R^d$ it holds that
\begin{align}\label{big_item3}
& \pr[\big]{ \E\br[\big]{ \abs[]{ \smallU_{d}(t,x) - \mlp_{n,j}^{d,0}(t,x) }^\fq } }^{\!\nicefrac{1}{\fq}} \\
& \le \frac{\fm_\fq \LipConstF 2^{\max\{q-1,0\}}d^{p+\nicefrac{q}{2}} (T +1) \exp(\LipConstF T) \pr[\big]{ 1 + 2\LipConstF T }^{\!n} }{(\littleMM_{j})^{\nicefrac{n}{2}} \exp\pr[\big]{-\nicefrac{(\littleMM_{j})^{\nicefrac{\fq}{2}}}{\fq}} } \left[\sup_{s\in[0,T]} \pr[\big]{ \E\br[\big]{ \pr[]{ 1 + \norm{ x + W^{d,0}_s }^q }^{\fq} } }^{\!\nicefrac{1}{\fq}} \right]. \nonumber
\end{align}
Combining this, \cref{c_delta}, \cref{e_bd}, and \cref{final_c_bd}
demonstrates that for all 
$\fq \in [2,\infty)$,
$n\in\N_0$, $d,j\in\N$, $t\in[0,T]$, $x\in[-\LipConstF,\LipConstF]^d$ it holds that
\begin{equation}
\pr[\big]{ \E\br[\big]{ \abs[]{  \smallU_{d}(t,x) - \mlp_{n,j}^{d,0}(t,x) }^\fq } }^{\!\nicefrac{1}{\fq}} 
\le \eta_{d,\fq} \left[\frac{\pr[\big]{ 1 + 2\LipConstF T }^{\!n} \exp\pr[\big]{\nicefrac{(\littleMM_{j})^{\nicefrac{\fq}{2}}}{\fq}}}{(\littleMM_{j})^{\nicefrac{n}{2}}}\right] .
\end{equation}
This, 
\cref{th_r_def}, 
and the fact that for all 
$d\in\N$, $\varepsilon\in(0,\infty)$ it holds that $\fR(d,\varepsilon) \in \N$
prove that for all 
$\fq \in [2,\infty)$ with
$\limsup_{j\to\infty} \br[]{\nicefrac{(\littleMM_j)^{\fq/2}}{j}} < \infty$,
$d\in\N$, $t\in[0,T]$, $x\in[-\LipConstF,\LipConstF]^d$, $\varepsilon\in(0,\infty)$ it holds that
\begin{equation}\label{final_int1}
\begin{split}
\pr[\big]{ \E\br[\big]{ \abs[]{ \smallU_{d}(t,x) - \mlp_{\fR(d,\varepsilon),\fR(d,\varepsilon)}^{d,0}(t,x) }^\fq } }^{\!\nicefrac{1}{\fq}} 
& \le \eta_{d,\fq} \left[\frac{\pr[\big]{ 1 + 2\LipConstF T }^{\!\fR(d,\varepsilon)} \exp\pr[\big]{ (\littleMM_{\fR(d,\varepsilon)})^{\nicefrac{\fq}{2}} } }{(\littleMM_{\fR(d,\varepsilon)})^{\nicefrac{\fR(d,\varepsilon)}{2}}}\right] \\
& \le \eta_{d,\fq} \left[\frac{\pr[\big]{ 1 + 2\LipConstF T } \exp\pr[]{ \nicefrac{ (\littleMM_{\fR(d,\varepsilon)})^{\nicefrac{\fq}{2}} }{ \fR(d,\varepsilon) } } }{ (\littleMM_{\fR(d,\varepsilon)})^{\nicefrac{1}{2}}}\right]^{\! \fR(d,\varepsilon)} 
\le \varepsilon . 
\end{split}
\end{equation}
This establishes \cref{proof_thus1}.
Next note that \cref{fc_def1} implies that for all $d,j\in\N$, $n\in\N_0$ it holds that
\begin{equation}
\begin{split}
\cost{n}{j}{d} 
& \le \1_\N(n) \,\alpha d^\fb (\littleMM_{j})^n + \sum_{k=0}^{n-1} (\littleMM_{j})^{n-k} \alpha d^\fb + \sum_{k=0}^{n-1} (\littleMM_{j})^{n-k} \pr[\big]{ \cost{k}{j}{d} + \cost{\max\{k-1,0\}}{j}{d} } \\
& \le \1_\N(n) \pr[\big]{ \alpha d^\fb + n \alpha d^\fb } (\littleMM_{j})^n + \sum_{k=0}^{n-1} (\littleMM_{j})^{n-k} \pr[\big]{ \cost{k}{j}{d} + \cost{\max\{k-1,0\}}{j}{d} }.
\end{split}
\end{equation}
Combining this and \cref{cor:full_history_gamma_1} (applied for every $d,j\in\N$ with $\gamma \with 0$, $\beta \with \littleMM_{j}$, $\alpha_0 \with \alpha d^\fb$, $\alpha_1 \with \alpha d^\fb$, $(x_n)_{n\in\N_0} \with (\cost{n}{j}{d})_{n\in\N_0}$ in the notation of \cref{cor:full_history_gamma_1}) guarantees that for all $n\in\N_0$, $d,j\in\N$ it holds that
\begin{equation}\label{final_fc1}
\cost{n}{j}{d} \le \1_\N(n) \, \alpha d^\fb (1 + \sqrt{2})^n (\littleMM_{j})^n.
\end{equation}
Furthermore, observe that \cref{th_r_def}, the fact that for all $j\in\N$ it holds that $\littleMM_j \in \N$, and the fact that for all $\fq \in [2,\infty)$, $d\in\N$ it holds that $\eta_{d,\fq} \in [0,\infty)$ ensure that for all 
$\fq \in [2,\infty)$, $d\in\N$, $\varepsilon\in(0,\infty)$ with
$\limsup_{j\to\infty} \br[]{\nicefrac{(\littleMM_j)^{\fq/2}}{j}} < \infty$ and $\fR(d,\varepsilon) \in \N \cap [2,\infty)$ 
it holds that
\begin{equation}
\eta_{d,\fq} \left[\frac{(1 + 2\LipConstF T ) \exp\pr[\big]{ \nicefrac{(\littleMM_{\fR(d,\varepsilon)-1})^{\nicefrac{\fq}{2}}}{(\fR(d,\varepsilon)-1)} } }{(\littleMM_{\fR(d,\varepsilon)-1})^{\nicefrac{1}{2}}}\right]^{\!(\fR(d,\varepsilon)-1)} 
> \min\{1,\varepsilon\}.
\end{equation}
Combining this and \cref{final_fc1}
demonstrates that for all 
$\fq \in [2,\infty)$, $d\in\N$, $\varepsilon,\delta\in(0,\infty)$  with
$\limsup_{j\to\infty} \br[]{\nicefrac{(\littleMM_j)^{\fq/2}}{j}} < \infty$ and $\fR(d,\varepsilon) \in \N \cap [2,\infty)$ 
it holds that
\begin{align}
& 
\pr[]{ \fR(d,\varepsilon) }^{\power} \cost{\fR(d,\varepsilon)}{\fR(d,\varepsilon)}{d}
\le \1_\N(\fR(d,\varepsilon)) \, \alpha d^\fb (1 + \sqrt{2})^{\fR(d,\varepsilon)} \pr[]{ \fR(d,\varepsilon) }^{\power} \pr[\big]{ \littleMM_{\fR(d,\varepsilon)} }^{\! \fR(d,\varepsilon)} \nonumber \\
& \le \frac{ \alpha d^\fb \pr[\big]{ (1 + \sqrt{2})\littleMM_{\fR(d,\varepsilon)} }^{\!\fR(d,\varepsilon)} }{ \pr[]{ \fR(d,\varepsilon) }^{-\power} }\left[\frac{\eta_{d,\fq}}{\min\{1,\varepsilon\}} \left[\frac{(1 + 2\LipConstF T ) \exp\pr[\big]{ \nicefrac{(\littleMM_{\fR(d,\varepsilon)-1})^{\nicefrac{\fq}{2}}}{(\fR(d,\varepsilon)-1)} } }{(\littleMM_{\fR(d,\varepsilon)-1})^{\nicefrac{1}{2}}}\right]^{\!(\fR(d,\varepsilon)-1)} \right]^{\!(2 + \delta)} \nonumber \\
& = \frac{\alpha d^\fb (\eta_{d,\fq})^{(2+\delta)}}{(\min\{1,\varepsilon\})^{(2+\delta)}} \left[ \frac{ \pr[]{ \fR(d,\varepsilon) }^{\power} \pr[\big]{ (1 + 2\LipConstF T ) \exp\pr[\big]{ \nicefrac{(\littleMM_{\fR(d,\varepsilon)-1})^{\nicefrac{\fq}{2}}}{(\fR(d,\varepsilon)-1)} } }^{\!(\fR(d,\varepsilon)-1)(2+\delta)} }{ \pr[\big]{ (1+\sqrt{2})\littleMM_{\fR(d,\varepsilon)} }^{\!-\fR(d,\varepsilon)} (\littleMM_{\fR(d,\varepsilon)-1})^{(\fR(d,\varepsilon)-1)(1+\nicefrac{\delta}{2}))}} \right] \\
& \le \frac{\alpha d^\fb (\eta_{d,\fq})^{(2+\delta)}}{(\min\{1,\varepsilon\})^{(2+\delta)}}\left[\sup_{n\in\N}\left( \frac{ (n+1)^{\power} \pr[\big]{ (1+\sqrt{2}) \littleMM_{n+1} }^{\!(n+1)} \pr[\big]{ (1 + 2\LipConstF T ) \exp\pr[\big]{ \nicefrac{(\littleMM_n)^{\nicefrac{\fq}{2}}}{n} } }^{\!n(2+\delta)} }{(\littleMM_{n})^{n(1+\nicefrac{\delta}{2})}} \right)\right] \nonumber \\
& \le \frac{\alpha d^\fb (1+\sqrt{2}) (\eta_{d,\fq})^{(2+\delta)}}{(\min\{1,\varepsilon\})^{(2+\delta)}} \left[\sup_{n\in\N}\left( \frac{ (n+1)^{\power} \littleMM_{n+1} (\littleMM_{n+1})^{n} \pr[\big]{ (1+\sqrt{2}) (1 + 2\LipConstF T ) \exp\pr[\big]{ \nicefrac{(\littleMM_n)^{\nicefrac{\fq}{2}}}{n} } }  }{(\littleMM_{n})^{n} (\littleMM_{n})^{n(\nicefrac{\delta}{2})}} \right)\right]. \nonumber
\end{align}
This
and 
\cref{th:final_lip_conds}
ensure that for all 
$\fq \in [2,\infty)$, $d\in\N$, $\varepsilon,\delta\in(0,\infty)$  with
$\limsup_{j\to\infty} \br[]{\nicefrac{(\littleMM_j)^{\fq/2}}{j}} < \infty$ and $\fR(d,\varepsilon) \in \N \cap [2,\infty)$ 
it holds that
\begin{align}\label{ccn}
& 
\pr[]{ \fR(d,\varepsilon) }^{\power} \cost{\fR(d,\varepsilon)}{\fR(d,\varepsilon)}{d} \\
& \le \frac{\alpha d^\fb (1+\sqrt{2}) (\eta_{d,\fq})^{(2+\delta)}}{(\min\{1,\varepsilon\})^{(2+\delta)}}\left[\sup_{n\in\N}\left( \frac{ (n+1)^{\power} \fB^{2n+1} \littleMM_1 \pr[\big]{ (1+\sqrt{2}) (1 + 2\LipConstF T ) \exp\pr[\big]{ \nicefrac{(\littleMM_n)^{\nicefrac{\fq}{2}}}{n} }  }^{\!n(2+\delta)} }{(\littleMM_{n})^{n(\nicefrac{\delta}{2})}} \right)\right] \nonumber \\
& \le \frac{\alpha d^\fb \littleMM_1 (1+\sqrt{2}) \fB (\eta_{d,\fq})^{(2+\delta)}}{(\min\{1,\varepsilon\})^{(2+\delta)}} \left[\sup_{n\in\N} \left( \frac{ (n+1)^{\nicefrac{\power}{n}} \pr[\big]{ (1+\sqrt{2})\fB (1 + 2\LipConstF T ) \exp\pr[\big]{ \nicefrac{(\littleMM_n)^{\nicefrac{\fq}{2}}}{n} }  }^{\!(2+\delta)} }{(\littleMM_{n})^{\nicefrac{\delta}{2}}} \right)^{\!\!n}\right] . \nonumber
\end{align}
Moreover, note that \cref{th:final_lip_conds}, \cref{fc_def1}, 
and the fact that for all $\fq\in[2,\infty)$, $d\in\N$ it holds that $\eta_{d,\fq} \in [0,\infty)$ 
ensure that for all 
$\fq \in [2,\infty)$, $d\in\N$, $\varepsilon,\delta\in(0,\infty)$  
with
$\limsup_{j\to\infty} \br[]{\nicefrac{(\littleMM_j)^{\fq/2}}{j}} < \infty$ 
it holds that
\begin{align}\label{cc1}
\cost{1}{1}{d} 
& \le \alpha d^\fb \littleMM_1 + \littleMM_1(\alpha d^\fb + \cost{0}{0}{d} + \cost{0}{0}{d}) 
\le 2\alpha d^\fb \littleMM_1 \le \alpha d^\fb (1 + \sqrt{2}) \fB \littleMM_1 \nonumber \\
& \le \frac{\alpha d^\fb \littleMM_1 (1+\sqrt{2}) \fB (\max\{1,\eta_{d,\fq}\})^{(2+\delta)} }{(\min\{1,\varepsilon\})^{(2+\delta)}  } \\
& \quad \cdot \left[ \sup_{n\in\N} \left( \frac{ (n+1)^{\nicefrac{\power}{n}} \pr[\big]{ (1+\sqrt{2}) \fB (1 + 2\LipConstF T ) \exp\pr[\big]{ \nicefrac{(\littleMM_n)^{\nicefrac{\fq}{2}}}{n} }  }^{\!(2+\delta)} }{(\littleMM_{n})^{\nicefrac{\delta}{2}}} \right)^{\!\!n} \right]. \nonumber 
\end{align}
Combining this and \cref{ccn} demonstrates that for all 
$\fq \in [2,\infty)$, $d\in\N$, $\varepsilon,\delta\in(0,\infty)$  with
$\limsup_{j\to\infty} \br[]{\nicefrac{(\littleMM_j)^{\fq/2}}{j}} < \infty$ 
it holds that
\begin{align}\label{cost_bd_inter}
& 
(\fR(d,\varepsilon))^{\power} \cost{\fR(d,\varepsilon)}{\fR(d,\varepsilon)}{d} \\
& \le \frac{\alpha d^\fb \littleMM_1 (1+\sqrt{2}) \fB (\max\{1,\eta_{d,\fq}\})^{(2+\delta)}}{(\min\{1,\varepsilon\})^{(2+\delta)}}\left[ \sup_{n\in\N} \left( \frac{(n+1)^{\nicefrac{\power}{n}}\pr[\big]{ \fB (1 + 2\LipConstF T ) \exp\pr[\big]{ \nicefrac{(\littleMM_n)^{\nicefrac{\fq}{2}}}{n} }  }^{\!(2+\delta)} }{(1+\sqrt{2})^{-1} (\littleMM_{n})^{\nicefrac{\delta}{2}}} \right)^{\!\!n}\right]. \nonumber
\end{align}
This, the fact that $\littleMM_1 \in \N$, \cref{final_c_bd}, \cref{lim_sup}, and \cref{ccn} prove that for all 
$\fq \in [2,\infty)$, $d\in\N$, $\varepsilon,\delta\in(0,\infty)$ with
$\limsup_{j\to\infty} \br[]{\nicefrac{(\littleMM_j)^{\fq/2}}{j}} < \infty$ it holds that
\begin{align}\label{last_this}
& 
(\fR(d,\varepsilon))^{\power} \cost{\fR(d,\varepsilon)}{\fR(d,\varepsilon)}{d} \nonumber \\
& \le \frac{\alpha d^\fb \littleMM_1 (1+\sqrt{2}) \fB }{(\min\{1,\varepsilon\})^{(2+\delta)}} \left[\sup_{n\in\N} \left( \frac{(n+1)^{\nicefrac{\power}{n}}\pr[\big]{ (1+\sqrt{2})\fB (1 + 2\LipConstF T ) \exp\pr[\big]{ \nicefrac{(\littleMM_n)^{\nicefrac{\fq}{2}}}{n} } }^{\!(2+\delta)} }{(\littleMM_{n})^{\nicefrac{\delta}{2}}} \right)^{\!\!n}\right] \\
& \cdot \Bigl( \max\left\{ 1, \fm_\fq \LipConstF 2^{\max\{q,1\}} d^{p+q} \pr[\big]{ ( 1 + \LipConstF^2)^{\nicefrac{q}{2}} + (q\fq+1)^{\nicefrac{1}{\fq}} } \exp\pr[\big]{\tfrac{[q(q\fq+3)+1]T}{2} + (\LipConstF+1) T } \right\} \Bigr)^{\!(2+\delta)} \nonumber \\
& = \frac{\alpha \littleMM_1 \fB d^{\fb + (p+q)(2+\delta)}}{ (1+\sqrt{2})^{-1} (\min\{1,\varepsilon\})^{(2+\delta)}} \left[ \sup_{n\in\N} \left( \frac{(n+1)^{\nicefrac{\power}{n}}\pr[\big]{ (1+\sqrt{2})\fB (1 + 2\LipConstF T ) \exp\pr[\big]{ \nicefrac{(\littleMM_n)^{\nicefrac{\fq}{2}}}{n} } }^{\!(2+\delta)} }{(\littleMM_{n})^{\nicefrac{\delta}{2}}} \right)^{\!\!n}\right] \nonumber \\
& \quad \cdot \Bigl( \max\left\{ 1, \fm_\fq \LipConstF 2^{\max\{q,1\}} \pr[\big]{ ( 1 + \LipConstF^2)^{\nicefrac{q}{2}} + (q\fq+1)^{\nicefrac{1}{\fq}} } \exp\pr[\big]{\tfrac{[q(q\fq+3)+1]T}{2} + (\LipConstF+1) T } \right\} \Bigr)^{\!(2 + \delta)} 
< \infty. \nonumber
\end{align}
Combining \cref{final_fc1} and \cref{last_this} hence establishes \cref{proof_thus2}.
The proof of \cref{th:final} is thus complete.
\end{proof}

\subsection{Complexity analysis in the case of semilinear partial differential equations}\label{sec:4_4}

\begin{lemma}\label{lem:fn_prop}
Let $p \in (0,\infty)$ and let $\funcM \colon \R \to \N$ satisfy for all $x \in [1,\infty)$ that $\funcM(x) = \funcMrep{x}$. Then
\begin{enumerate}[label=(\roman*)]
\item\label{fn_prop_i1} it holds that
$
\limsup_{x \to \infty} \br[\big]{ \tfrac{( \funcM(x) )^p }{x} + \tfrac{1}{\funcM(x)} } = 0
$ and
\item\label{fn_prop_i2} it holds for all $x \in \N$ that $\funcM(x+1) \le 2 \funcM(x)$.
\end{enumerate}
\end{lemma}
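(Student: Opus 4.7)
I would start from the sandwich
\begin{equation*}
\exp((\ln x)^{1/2}) - 1 \le \funcM(x) \le \exp((\ln x)^{1/2}),
\end{equation*}
valid for all $x \in [1,\infty)$, since for such $x$ one has $\abs{\ln(x)} = \ln(x) \ge 0$, $\exp((\ln x)^{1/2}) \ge 1$, and $\funcM(x)$ is by definition the largest positive integer bounded by $\exp((\ln x)^{1/2})$.

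For item (i), the upper bound yields
\begin{equation*}
\tfrac{(\funcM(x))^p}{x} \le \exp\pr[\big]{p(\ln x)^{1/2} - \ln x} = \exp\pr[\big]{-(\ln x)^{1/2}((\ln x)^{1/2} - p)},
\end{equation*}
which tends to $0$ as $x \to \infty$ because $(\ln x)^{1/2} \to \infty$. The lower bound gives $\funcM(x) \ge \exp((\ln x)^{1/2}) - 1 \to \infty$, so $1/\funcM(x) \to 0$; summing these two vanishing terms proves $\limsup_{x\to\infty}\br[\big]{(\funcM(x))^p/x + 1/\funcM(x)} = 0$.

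For item (ii), I would handle $x=1$ by direct computation: $\funcM(1) = \max\{k\in\N\colon k\le 1\} = 1$ and $\funcM(2) = 2$ (since $\exp((\ln 2)^{1/2}) \in (2,3)$, because $(\ln 2)^{1/2} < 1 < \ln 3$), so $\funcM(2) = 2\funcM(1)$. For $x \in \N\cap[2,\infty)$, I would invoke the mean-value style estimate
\begin{equation*}
(\ln(x+1))^{1/2} - (\ln x)^{1/2} = \tfrac{\ln(1+1/x)}{(\ln(x+1))^{1/2} + (\ln x)^{1/2}} \le \tfrac{1/x}{2(\ln x)^{1/2}} \le \tfrac{1}{4(\ln 2)^{1/2}},
\end{equation*}
where the final step uses monotonicity of $x \mapsto (2x(\ln x)^{1/2})^{-1}$ on $[2,\infty)$. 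Exponentiating yields $\exp((\ln(x+1))^{1/2}) \le C \exp((\ln x)^{1/2})$ with $C := \exp(1/(4(\ln 2)^{1/2}))$, and combining with the sandwich gives $\funcM(x+1) \le C(\funcM(x)+1)$. An elementary check shows $C(\funcM(x)+1) < 2\funcM(x) + 1$ precisely when $\funcM(x) > (C-1)/(2-C) \approx 0.54$, which is automatic since $\funcM(x) \ge 1$; because $\funcM(x+1)$ is integer-valued, this strict inequality forces $\funcM(x+1) \le 2\funcM(x)$.

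The main delicate point is that the multiplier $C \approx 1.35$ exceeds $4/3$, so the estimate $\funcM(x+1) \le C(\funcM(x)+1)$ does not directly yield $\funcM(x+1) \le 2\funcM(x)$ as a real-valued inequality; the integer-valuedness of $\funcM$ is essential to close the gap. In addition, the mean-value estimate degenerates at $x=1$ (where $(\ln x)^{1/2}$ vanishes), so that base case must be verified separately by explicit computation.
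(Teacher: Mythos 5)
Your proposal is correct, and it takes a genuinely more elementary route than the paper. The paper also starts from $\funcM(x)\le\psi(x):=\exp(\abs{\ln x}^{1/2})$, but for item (i) it differentiates $(\psi(x))^p$ twice, argues monotonicity of the derivative, and invokes L'H\^opital's rule, whereas you simply rewrite $(\psi(x))^p/x=\exp\pr[\big]{-(\ln x)^{1/2}((\ln x)^{1/2}-p)}$ and let $(\ln x)^{1/2}\to\infty$; your two-sided sandwich $\psi(x)-1\le\funcM(x)\le\psi(x)$ also makes the $1/\funcM(x)$ term explicit, which the paper handles more tersely via monotonicity of $\funcM$. For item (ii) the paper bounds the quotient $\funcM(x+1)/\funcM(x)$ by $\exp(\abs{\ln(1+x)}^{1/2})/(\exp(\abs{\ln x}^{1/2})-1)$, shows by a derivative computation that this quotient is non-increasing, evaluates it at $x=3$ to get the bound $2$ for $x\ge 3$, and checks $x=1$ separately; you instead bound the difference $(\ln(x+1))^{1/2}-(\ln x)^{1/2}$ by the conjugate trick, obtain the multiplicative constant $C=\exp(1/(4(\ln 2)^{1/2}))\approx 1.35$, and then use integer-valuedness of $\funcM$ to upgrade $\funcM(x+1)\le C(\funcM(x)+1)<2\funcM(x)+1$ to $\funcM(x+1)\le 2\funcM(x)$ --- a genuinely different closing step, and, as you correctly flag, an essential one since $C>4/3$ means the real-valued estimate alone does not suffice. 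Your argument has the additional virtue of covering all $x\in\N\cap[2,\infty)$ uniformly (the paper's displayed quotient bound is only invoked for $x\ge 3$, with $x=1$ done by hand), at the cost of a small numerical verification that $C<3/2$, which is comparable in spirit to the paper's numerical evaluation of the quotient at $x=3$; both are easily certified from $\ln 2>0.69$ and $\ln 3>1.09$. One cosmetic remark: in your base case you only need $\funcM(2)\le 2$, which already follows from $(\ln 2)^{1/2}<\ln 3$; the exact value $\funcM(2)=2$ additionally uses $(\ln 2)^{1/2}>\ln 2$, which is immediate from $\ln 2<1$ but worth stating if you keep the equality.
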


\begin{proof}[Proof of \cref{lem:fn_prop}]
Throughout this proof let $\psi \colon \R \to \R$ satisfy for all $x\in[1,\infty)$ that $\psi(x) = \exp(\abs{\ln(x)}^{1/2})$.
Note that the fact that for all $x \in [1,\infty)$ it holds that $\ln(x) \in [0,\infty)$ assures that for all $x\in(1,\infty)$ it holds that
\begin{equation}\label{eq:fn_1_der}
\tfrac{d}{dx}(\psi(x))^p = \frac{p\pr[\big]{\exp(\abs{\ln(x)}^{1/2})}^p}{2x\abs{\ln(x)}^{1/2}}.
\end{equation}
This and the fact that for all $x \in [1,\infty)$ it holds that $\ln(x) \in [0,\infty)$ ensure that for all $x\in(1,\infty)$ it holds that
\begin{equation}\label{eq:fn_2_der}
\tfrac{d^2}{dx^2} (\psi(x))^p = - \frac{ p \pr[\big]{\exp(\abs{\ln(x)}^{1/2})}^p \br[\big]{ 2\ln(x) - p \abs{\ln(x)}^{1/2} + 1 } }{ 4 x^2 \abs{\ln(x)}^{3/2} }.
\end{equation}
Combining this and \cref{eq:fn_1_der} shows that 
$(\exp([p + \sqrt{ \max\{ 0, p^2-8 \} }]/4 ),\infty) \ni x \mapsto \frac{d}{dx} (\psi(x))^p \in \R$ is decreasing.
This, the fact that \cref{eq:fn_1_der} implies that for all $x\in(1,\infty)$ it holds that $\frac{d}{dx}(\psi(x))^p \in [0,\infty)$, and L'H{\^o}pital's rule establish that
\begin{equation}
\limsup_{x\to\infty} \tfrac{( \psi(x) )^p }{x} = \lim_{x\to\infty} \tfrac{( \psi(x) )^p }{x} = \lim_{x\to\infty} \tfrac{d}{dx} (\psi(x))^p = 0.
\end{equation}
Combining this, the fact that for all $x\in[1,\infty)$ it holds that $\funcM(x) \le \psi(x)$, the fact that for all $x\in[1,\infty)$ it holds that $\funcM(x) \in \N$, and the fact that
for all $x\in[1,\infty)$ it holds that $\funcM$ is non-decreasing proves 
that
\begin{equation}
\limsup_{x \to \infty} \br[\big]{ \tfrac{( \funcM(x) )^p }{x} + \tfrac{1}{\funcM(x)} } \le \limsup_{x\to\infty} \tfrac{(\funcM(x))^p}{x} + \limsup_{x\to\infty} \tfrac{1}{\funcM(x)}
\le \limsup_{x\to\infty} \tfrac{(\psi(x))^p}{x} + \limsup_{x\to\infty} \tfrac{1}{\funcM(x)} = 0.
\end{equation}
This establishes
\cref{fn_prop_i1}.
Next note that 
for all $x\in(1,\infty)$ it holds that
\begin{equation}\label{eq:fn_quot}
\frac{\funcM(x+1)}{\funcM(x)} = \frac{\funcMrep{1+x}}{\funcMrep{x}} \le \frac{\exp(\abs{\ln(1+x)}^{1/2})}{\exp(\abs{\ln(x)}^{1/2})-1} .
\end{equation}
In addition, observe that the fact that for all $x\in(1,\infty)$ it holds that $\ln(x) \in (0,\infty)$ demonstrates that for all $x\in(1,\infty)$ it holds that
\begin{equation}
\begin{split}
& \frac{d}{dx} \frac{\exp((\ln(1+x))^{1/2})}{\exp((\ln(x))^{1/2})-1} \\
& = \frac{\exp(\abs{\ln(x+1)}^{1/2})}{2[\exp(\abs{\ln(x)}^{1/2}) - 1]} \left[ \frac{1}{(1+x)\abs{\ln(x+1)}^{1/2}} - \frac{\exp(\abs{\ln(x)}^{1/2})}{x[\exp(\abs{\ln(x)}^{1/2})-1] \abs{\ln(x)}^{1/2}} \right] \le 0.
\end{split}
\end{equation}
This implies that for all $x\in[3,\infty)$ it holds that
\begin{equation}
\frac{\exp(\abs{\ln(1+x)}^{1/2})}{\exp(\abs{\ln(x)}^{1/2})-1} \le \frac{\exp(\abs{\ln(4)}^{1/2})}{\exp(\abs{\ln(3)}^{1/2})-1} \le 2.
\end{equation}
Combining this and the fact that 
\begin{equation}
\frac{\funcM(2)}{\funcM(1)} = \frac{\funcMrep{2}}{\funcMrep{1}} \le \exp(\abs{\ln(2)}^{1/2}) \le \exp(\ln(2)) = 2
\end{equation}
proves that for all $x\in\N$ it holds that $\funcM(x+1) \le 2 \funcM(x)$.
This establishes \cref{fn_prop_i2}. The proof of \cref{lem:fn_prop} is thus complete.
\end{proof}

\begin{theorem}\label{cor:final}
Let $p, q, T, \kappa, \fb \in [0,\infty)$,
$\Theta = \bigcup_{n\in\N}\! \Z^n$, 
$\smallF \in C(\R,\R)$, 
let $\smallU_d \in C^{1,2}([0,T]\times\R^d,\R)$, $d\in\N$,
assume for all $d\in\N$, $t\in[0,T]$, $x=(x_1,x_2,\dots,x_d)\in\R^d$, $v,w\in\R$ that 
$\abs{ \smallF(v) - \smallF(w) } \le \kappa\abs{ v-w }$, 
$\abs{ \smallU_d(t,x) } \le \kappa d^p (1 + \sum_{k=1}^d \abs{ x_k })^q$, and
\begin{equation}\label{pde_form}
(\tfrac{\partial}{\partial t}\smallU_d)(t,x) + (\Delta_x \smallU_d)(t,x) + \smallF(\smallU_d(t,x)) = 0,
\end{equation}
let $(\Omega,\cF,\P)$ be a probability space,  
let $\fu^\theta\colon \Omega \to [0,1]$, $\theta\in\Theta$, be i.i.d.\ random variables, 
assume for all $r \in (0,1)$ that $\P(\fu^0 \le r) = r$,
let $\cU^\theta \colon [0,T]\times \Omega\to [0,T]$, $\theta\in\Theta$, satisfy for all $t\in [0,T]$, $\theta\in\Theta$ that $\cU_t^\theta = t + (T-t)\fu^\theta$, 
let $W^{d,\theta} \colon [0,T] \times \Omega \to  \R^d$, $\theta \in \Theta$, $d\in\N$, be independent standard Brownian motions,
assume for all $d\in\N$ that $(\cU^\theta)_{\theta\in\Theta}$ and $(W^{d,\theta})_{\theta\in\Theta}$ are independent,
let $\funcM \colon \N \to \N$ and $\mlp_{n,\littleMM}^{d,\theta} \colon [0,T] \times \R^d\times \Omega\to \R$, $d,n,\littleMM\in\Z$, $\theta\in\Theta$, satisfy for all $n\in\N_0$, $d,\littleMM\in\N$, $\theta\in\Theta$, $t\in[0,T]$, $x\in\R^d$ that $\funcM(\littleMM) = \funcMrep{\littleMM}$ and
\begin{align}\label{cor:mlp_form}
& \mlp_{n,\littleMM}^{d,\theta}(t,x) 
=  \SmallSum_{i=0}^{n-1} \tfrac{(T-t)}{(\funcM(\littleMM))^{n-i}} \biggl[ \SmallSum_{k=1}^{(\funcM(\littleMM))^{n-i}} \Bigl[ \smallF\pr[\big]{ \mlp_{i,\littleMM}^{d,(\theta,i,k)}\pr[\big]{\cU_t^{(\theta,i,k)}, x + \sqrt{2}\, W_{\cU_t^{(\theta,i,k)}-t}^{d,(\theta,i,k)} } } \\
& - \1_\N(i) \, \smallF\pr[\big]{ \mlp_{i-1,\littleMM}^{d,(\theta,-i,k)} \pr[\big]{ \cU_t^{(\theta,i,k)}, x + \sqrt{2}\, W_{\cU_t^{(\theta,i,k)}-t}^{d,(\theta,i,k)} } } \Bigr] \biggr] + \tfrac{\1_\N(n)}{(\funcM(\littleMM))^n} \br[\bigg]{ \SmallSum_{k=1}^{(\funcM(\littleMM))^n} \smallU_d\pr[\big]{ T,x + \sqrt{2}\, W_{T-t}^{d,(\theta,0,-k)} } }, \nonumber
\end{align}
and let $\cost{n}{\littleMM}{d} \in \R$, $d,n,\littleMM\in\N_0$, satisfy for all $d,\littleMM\in\N$, $n\in\N_0$ that 
\begin{equation}\label{fc_def2}
\cost{n}{\littleMM}{d} 
\le \1_\N(n) \, \kappa d^\fb (\funcM(\littleMM))^n 
+ \sum_{k=0}^{n-1} (\funcM(\littleMM))^{n-k} \pr[\big]{ \kappa d^\fb + \cost{k}{\littleMM}{d} + \cost{\max\{k-1,0\}}{\littleMM}{d} }.
\end{equation}
Then there exist 
$\fR \colon \N \times \R \to \N$
and
$c = (c_{\fq,\delta})_{(\fq,\delta)\in\R^2} \colon \R^2 \to \R$ such that for all $d\in\N$, $\varepsilon,\delta,\fq \in(0,\infty)$
it holds that $\cost{\fR(d,\varepsilon)}{\fR(d,\varepsilon)}{d} \le c_{\fq,\delta} d^{\fb + (p+q)(2+\delta)}(\min\{1,\varepsilon\})^{-(2+\delta)}$ and
\begin{equation}\label{cor_f_eq}
\sup_{t\in[0,T]} \sup_{x\in[-\sqrt{2}\kappa,\sqrt{2}\kappa]^d} \pr[\Big]{ \E\br[\Big]{ \abs[\big]{  \smallU_d(t,x) - \mlp_{\fR(d,\varepsilon),\fR(d,\varepsilon)}^{d,0}(t,x) }^\fq } }^{\!\!\nicefrac{1}{\fq}} 
\le \varepsilon.
\end{equation}
\end{theorem}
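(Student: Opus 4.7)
The plan is to reduce \cref{cor:final} to \cref{th:final} by means of a simple spatial rescaling that eliminates the factor of $\sqrt{2}$ appearing in front of the Brownian motions in \cref{cor:mlp_form}. Concretely, for each $d\in\N$ define $\tilde\smallU_d\in C^{1,2}([0,T]\times\R^d,\R)$ by $\tilde\smallU_d(t,y)=\smallU_d(t,\sqrt{2}\,y)$. The chain rule shows $\Delta_y\tilde\smallU_d(t,y)=2(\Delta_x\smallU_d)(t,\sqrt{2}\,y)$, so the PDE \cref{pde_form} becomes the standard backward heat--type PDE
\begin{equation*}
(\tfrac{\partial}{\partial t}\tilde\smallU_d)(t,y) + \tfrac{1}{2}(\Delta_y\tilde\smallU_d)(t,y) + \smallF(\tilde\smallU_d(t,y)) = 0,
\end{equation*}
and the polynomial growth bound transfers as $\abs{\tilde\smallU_d(t,y)}\le\kappa(1+\sqrt{2})^q d^p(1+\sum_{k=1}^d\abs{y_k})^q$.

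Next I would apply the Feynman--Kac formula (in the form of, e.g., Beck et al.~\cite{beck2020nonlinear}, cited already in the proof of \cref{cor:sol_bd1}) to $\tilde\smallU_d$: the polynomial growth bound combined with $\tilde\smallU_d\in C^{1,2}$ and It\^o's formula on $[t,T]$ yields the stochastic fixed-point equation
\begin{equation*}
\tilde\smallU_d(t,y) = \E\br[\big]{\tilde\smallU_d(T,y+W^{d,0}_{T-t})} + \int_t^T\E\br[\big]{\smallF(\tilde\smallU_d(s,y+W^{d,0}_{s-t}))}\,ds
\end{equation*}
together with the required integrability. Then, defining the rescaled MLP approximations $\tilde\mlp^{d,\theta}_{n,\littleMM}(t,y):=\mlp^{d,\theta}_{n,\littleMM}(t,\sqrt{2}\,y)$ and substituting $x=\sqrt{2}\,y$ in \cref{cor:mlp_form}, the factor $x+\sqrt{2}\,W$ becomes $\sqrt{2}(y+W)$, so $\tilde\mlp$ satisfies exactly the MLP recursion \cref{th:final_mlp} with the (standard) Brownian motions $W^{d,\theta}$, with nonlinearity $\tilde\smallF_d(t,y,v)=\smallF(v)$, and with terminal condition $\tilde\funcG_d(y)=\smallU_d(T,\sqrt{2}\,y)$.

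At this point I would invoke \cref{th:final} with $\littleMM_j\with\funcM(j)=\funcMrep{j}$, $\alpha\with\kappa$, $\fb\with\fb$, $\power\with 0$, $\fB\with 2$, and $\LipConstF\with\max\{\kappa,\kappa(1+\sqrt{2})^q,\abs{\smallF(0)}\}$. The hypotheses to check are: (i) $\liminf_{j\to\infty}\funcM(j)=\infty$ and $\funcM(j+1)\le 2\funcM(j)$, both delivered by \cref{lem:fn_prop}; (ii) Lipschitz continuity of $\smallF$ and the polynomial bounds on $\tilde\funcG_d$ and $\tilde\smallF_d(\cdot,\cdot,0)=\smallF(0)$, immediate from the assumptions; (iii) the cost recursion \cref{fc_def2} is exactly \cref{fc_def1}; (iv) for each $\fq\in[2,\infty)$, $\limsup_{j\to\infty}(\funcM(j))^{\fq/2}/j=0$, which follows from \cref{lem:fn_prop}\cref{fn_prop_i1} applied with $p\with\fq/2$. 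This produces $\fR\colon\N\times\R\to\N$ and constants $(c_{\fq,\delta})_{(\fq,\delta)\in\R^2}$ such that the cost bound from \cref{proof_thus1} holds and the $L^\fq$-error for $\tilde\smallU_d-\tilde\mlp^{d,0}$ is at most $\varepsilon$ for $y\in[-\kappa,\kappa]^d$. Transferring via $x=\sqrt{2}\,y$ yields the bound on $[-\sqrt{2}\kappa,\sqrt{2}\kappa]^d$ in \cref{cor_f_eq} for all $\fq\in[2,\infty)$.

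Finally, for $\fq\in(0,2)$, Jensen's inequality gives $(\E[\abs{X}^\fq])^{1/\fq}\le(\E[\abs{X}^2])^{1/2}$, so the bound already established for $\fq=2$ implies the bound for every $\fq\in(0,2]$; redefining $c_{\fq,\delta}:=c_{2,\delta}$ for $\fq\in(0,2)$ preserves the cost estimate. The single subtle point in the whole argument is the Feynman--Kac step: one must ensure that the stochastic integral arising in the It\^o decomposition is a true (not merely local) martingale, which is where the polynomial growth hypothesis on $\smallU_d$ together with the standard Gaussian moment bound (as in \cref{eq:4_10}) is essential. Every remaining step is bookkeeping for the rescaling and routine verification of the hypotheses of \cref{th:final} and \cref{lem:fn_prop}.
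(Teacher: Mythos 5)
Your proposal is correct and follows essentially the same route as the paper: rescale space by $\sqrt{2}$ (setting $V_d(t,y)=\smallU_d(t,\sqrt{2}\,y)$, $F_d(t,y,v)=\smallF(v)$, $\fV^{d,\theta}_{n,j}(t,y)=\mlp^{d,\theta}_{n,j}(t,\sqrt{2}\,y)$), obtain the stochastic fixed-point equation via Beck et al.\ \cite[Corollary 3.9]{beck2020nonlinear}, verify the hypotheses on $\funcM$ through \cref{lem:fn_prop}, apply \cref{th:final}, and extend from $\fq\in[2,\infty)$ to all $\fq\in(0,\infty)$ by H\"older/Jensen. The only cosmetic differences are that the paper applies the Feynman--Kac result to $\smallU_d$ with diffusion $\sqrt{2}\,\mathrm{Id}$ before rescaling (rather than to the rescaled function with identity diffusion) and uses the slightly sharper constant $2^{\nicefrac{q}{2}}\kappa$ in place of your $(1+\sqrt{2})^q\kappa$, neither of which affects the argument.
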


\begin{proof}[Proof of \cref{cor:final}]
Throughout this proof 
let $\maxfn \colon (0,\infty) \to [2,\infty)$ satisfy for all $z\in(0,\infty)$ that $\maxfn(z) = \max\{2,z\}$, let $\fD = \max\{2^{\nicefrac{q}{2}} \kappa,\abs{f(0)}\}$, 
let $\mathbb{F}^{d}_t \subseteq \cF$, $d\in\N$, $t\in[0,T]$, satisfy
for all $d\in\N$, $t\in[0,T]$ that
\begin{equation}\label{filtration11}
\mathbb{F}_t^{d} = 
\begin{cases}
\bigcap_{s\in(t,T]} \sigmaAlgebra\pr[\big]{ \sigmaAlgebra\pr{ W^{d,0}_r \colon r \in [0,s] } \cup \{A \in \cF \colon \P(A) = 0\} } & \colon t < T \\
\sigmaAlgebra\pr[\big]{ \sigmaAlgebra\pr{ W^{d,0}_s \colon s \in [0,T] } \cup \{A \in \cF \colon \P(A) = 0\} } & \colon t = T
\end{cases},
\end{equation}
let $\Ffn_d \in C([0,T]\times \R^d, \R^d)$, $d\in\N$, and $\Gfn_d \in C([0,T]\times\R^d, \R^{d\times d})$, $d\in\N$, satisfy for all $d\in\N$, $t\in[0,T]$, $x,v\in\R^d$ that
$\Ffn_d(t,x) = 0$ and $\Gfn_d(t,x) v = \sqrt{2}\, v$,
let $\Msymb_1,\Msymb_2,\Msymb_3,\ldots \in \N$ satisfy for all $j\in\N$ that $\Msymb_j = \funcM(j)$,
let $V_d \colon [0,T] \times \R^d \to \R$, $d\in\N$, satisfy for all $d\in\N$, $t\in[0,T]$, $x\in\R^d$ that $V_d(t,x) = \smallU_d(t,\sqrt{2}\,x)$,
let $F_d \colon [0,T] \times \R^d\times \R \to \R$, $d\in\N$, satisfy for all $d\in\N$, $t\in[0,T]$, $x\in\R^d$, $w \in \R$ that $F_d(t,x,w) = \smallF(w)$,
and let $\fV_{n,j}^{d,\theta} \colon [0,T] \times \R^d \times \Omega \to \R$, $d,n,j \in \Z$, $\theta \in \Theta$, satisfy for all $d,n,j\in\Z$, $\theta \in\Theta$, $t\in[0,T]$, $x\in\R^d$ that $\fV_{n,j}^{d,\theta}(t,x) = \mlp_{n,j}^{d,\theta}(t,\sqrt{2}\,x)$.
Note that the hypothesis that for all $v,w\in\R$ it holds that $\abs{\smallF(v)-\smallF(w)} \le \kappa\abs{v-w}$ assures that for all $d\in\N$, $t\in[0,T]$, $x\in\R^d$, $v,w\in\R$ it holds that
\begin{equation}\label{cor1}
\abs{F_d(t,x,v)-F_d(t,x,w)} = \abs{f(v)-f(w)} \le \kappa\abs{v-w}.
\end{equation}
Next observe that the
hypothesis that for all $d\in\N$, $t\in[0,T]$, $x\in\R^d$ it holds that $\abs{ \smallU_d(t,x) } \le \kappa d^p \allowbreak (1 \allowbreak + \allowbreak \sum_{k=1}^d \abs{ x_k })^p$ ensures that for all $d\in\N$, $t\in[0,T]$, $x = (x_1,x_2,\dots,x_d) \in\R^d$ it holds that
\begin{equation}\label{cor2}
\begin{split}
& \max\{\abs{ F_d(t,x,0) }, \abs{ V_d(t,x) } \} 
= \max\{\abs{ \smallF(0) },\abs{ \smallU_d(t,\sqrt{2}\,x) }\} \\
& \le \max\{ \abs{\smallF(0)},\kappa d^p (1 + \smallsum_{k=1}^d \abs{\sqrt{2}\, x_k } )^q \} 
\le \fD d^p (1 + \smallsum_{k=1}^d \abs{ x_k } )^q.
\end{split}
\end{equation}
In addition, note that \cref{filtration11} guarantees that $\mathbb{F}_t^d \subseteq \cF$, $d\in\N$, $t\in[0,T]$, satisfies that
\begin{enumerate}[label=(\Roman*)]
\item\label{filt11a} it holds for all $d\in\N$ that $\{ A\in\cF \colon \P(A) = 0\} \subseteq \mathbb{F}_0^d$ and
\item\label{filt22a} it holds for all $d\in\N$, $t\in[0,T]$ that $\mathbb{F}_t^d = \cap_{s\in(t,T]} \mathbb{F}_s^d$.
\end{enumerate}
Combining \cref{filt11a,filt22a}, \cref{filtration11}, and Hutzenthaler et al.\ \cite[Lemma 2.17]{HutzenthalerPricing2019} 
(applied with $m \with d$, $T \with T$, $W \with W$, $\mathbb{H}_t \with \mathbb{F}_t^d$, $(\Omega,\cF,\P,(\mathbb{F}_t)_{t\in[0,T]}) \with (\Omega,\cF,\P,(\sigmaAlgebra(\fwprr_s^{d,0} \colon s \in [0,t]) \cup \{ A\in\cF \colon \P(A) = 0\} )_{t\in[0,T]})$ in the notation of \cite[Lemma 2.17]{HutzenthalerPricing2019}) therefore
assures that for all $d\in\N$ it holds that $W^{d,0} \colon [0,T] \times \Omega \to \R^d$ is a standard $(\Omega,\cF,\P,(\mathbb{F}_t^d)_{t\in[0,T]})$-Brownian motion.
This, the hypothesis that for all $t\in[0,T]$, $x\in\R^d$, $v,w\in\R$ it holds that $\abs{ \smallF(v) - \smallF(w) } \le \kappa \abs{ v-w }$, the hypothesis that for all $d\in\N$, $t\in[0,T]$, $x\in\R^d$ it holds that $\abs{ \smallU_d(t,x) } \le \kappa d^p (1 + \sum_{k=1}^d \abs{ x_k })^q$,
and, e.g., Beck et al.\ \cite[Corollary 3.9]{beck2020nonlinear}
(applied for every $d\in\N$ with $d \with d$, $m \with d$, $T \with T$, $\LipConstF \with \max\{\sqrt{2d}, \kappa\}$, $\mathfrak{C} \with 0$, $f \with f$, $g \with (\R^d \ni x \mapsto \smallU_d(T,x) \in \R)$, $\mu \with \Ffn_d$, $\sigma \with \Gfn_d$, $W^{d,0} \with W^{d,0}$, $(\Omega, \cF, \P, (\mathbb{F}_t)_{t\in[0,T]}) \with (\Omega, \cF, \P, (\mathbb{F}_t^d)_{t\in[0,T]})$ in the notation of \cite[Corollary 3.9]{beck2020nonlinear})
ensure that for all $d\in\N$, $t\in[0,T]$, $x\in\R^d$ it holds that
$\E[\abs{\smallU_d(T,x+\sqrt{2}\,W_{T-t}^{d,0}) } + \int_t^T \abs{ \smallF(\smallU_d(s,x+\sqrt{2}\,W_{s-t}^{d,0}))} \, ds ] < \infty$
and
\begin{equation}\label{cor3}
u_d(t,x) = \E\br[\big]{ \smallU_d(T,x+\sqrt{2}\,W^{d,0}_{T-t}) } + \int_t^T \E\br[\big]{ \smallF(\smallU_d(s,x+\sqrt{2}\,W^{d,0}_{s-t})) } \, ds.
\end{equation}
Combining this, the fact that for all $d\in\N$, $t\in[0,T]$, $x\in\R^d$ it holds that $V_d(t,x) = \smallU_d(t,\sqrt{2}\,x)$, and the fact that for all $d\in\N$, $t\in[0,T]$, $x\in\R^d$, $w\in\R$ it holds that $F_d(t,x,w) = \smallF(w)$ demonstrates that for all $d\in\N$, $t\in[0,T]$, $x\in\R^d$ it holds that
\begin{equation}\label{cor4}
\begin{split}
& \E\left[\abs{V_d(T,x+W_{T-t}^{d,0}) } + \int_t^T \abs{ F_d(s,x+W_{s-t}^{d,0},V_d(s,x+W_{s-t}^{d,0}))} \, ds \right] \\
& = \E\left[\abs{V_d(T,x+W_{T-t}^{d,0}) } + \int_t^T \abs{ \smallF(V_d(s,x+W_{s-t}^{d,0}))} \, ds \right] \\
& = \E\left[\abs{\smallU_d(T,\sqrt{2}(x+W_{T-t}^{d,0})) } + \int_t^T \abs{ \smallF(\smallU_d(s,\sqrt{2}(x+W_{s-t}^{d,0})))} \, ds \right] < \infty
\end{split}
\end{equation}
and
\begin{align}\label{cor5}
V_d(t,x) = \smallU_d(t,\sqrt{2}\,x)
& = \E\br[\big]{ \smallU_d(T,\sqrt{2}(x+W^{d,0}_{T-t})) } + \int_t^T \E\br[\big]{ \smallF(\smallU_d(s,\sqrt{2}(x+W^{d,0}_{s-t}))) } \, ds \nonumber \\
& = \E\br[\big]{ V_d(T,x+W^{d,0}_{T-t}) } + \int_t^T \E\br[\big]{ \smallF(V_d(s,x+W^{d,0}_{s-t})) } \, ds \\
& = \E\br[\big]{ V_d(T,x+W^{d,0}_{T-t}) } + \int_t^T \E\br[\big]{ F_d(s,x+W_{s-t}^{d,0},V_d(s,x+W^{d,0}_{s-t})) } \, ds. \nonumber 
\end{align}
Moreover, note that the fact that for all $j\in\N$ it holds that $\Msymb_j = \funcM(j)$
and \cref{cor:mlp_form} show that for all $n\in\N_0$, $d,j\in\N$, $\theta \in\Theta$, $t\in[0,T]$, $x\in\R^d$ it holds that
\begin{align}\label{4_35}
& 
\mlp_{n,j}^{d,\theta}(t,\sqrt{2}\,x) 
= \SmallSum_{i=0}^{n-1} \tfrac{(T-t)}{(\Msymb_j)^{n-i}} \biggl[ \SmallSum_{k=1}^{(\Msymb_j)^{n-i}} \Bigl[ \smallF\pr[\big]{ \mlp_{i,j}^{d,(\theta,i,k)}\pr[\big]{\cU_t^{(\theta,i,k)}, \sqrt{2}(x +  W_{\cU_t^{(\theta,i,k)}-t}^{d,(\theta,i,k)}) } } \\
& - \1_\N(i) \, \smallF\pr[\big]{ \mlp_{i-1,j}^{d,(\theta,-i,k)} \pr[\big]{ \cU_t^{(\theta,i,k)}, \sqrt{2}(x +  W_{\cU_t^{(\theta,i,k)}-t}^{d,(\theta,i,k)}) } } \Bigr] \biggr]
+ \br[\bigg]{ \SmallSum_{k=1}^{(\Msymb_j)^n}  \frac{\1_\N(n) \, \smallU_d\pr[]{ T,\sqrt{2} (x + W_{T-t}^{d,(\theta,0,-k)}) }}{(\Msymb_j)^n} } .\nonumber
\end{align}
This and the fact that for all $d,n,j\in\Z$, $\theta \in\Theta$, $t\in[0,T]$, $x\in\R^d$ it holds that $\fV_{n,j}^{d,\theta}(t,x) = \mlp_{n,j}^{d,\theta}(t,\sqrt{2}\,x)$ imply that for all $n\in\N_0$, $d,j\in\N$, $\theta \in\Theta$, $t\in[0,T]$, $x\in\R^d$ it holds that
\begin{align}
\mlp_{n,j}^{d,\theta}(t,\sqrt{2}\,x) 
& = \fV_{n,j}^{d,\theta}(t,x) \nonumber \\
& = \SmallSum_{i=0}^{n-1} \tfrac{(T-t)}{(\Msymb_j)^{n-i}} \Biggl[ \SmallSum_{k=1}^{(\Msymb_j)^{n-i}} \Bigl[ \smallF\pr[\big]{ \fV_{i,j}^{d,(\theta,i,k)}\pr[\big]{\cU_t^{(\theta,i,k)}, x +  W_{\cU_t^{(\theta,i,k)}-t}^{d,(\theta,i,k)} } } \\
& \quad - \1_\N(i) \, \smallF\pr[\big]{ \fV_{i-1,j}^{d,(\theta,-i,k)} \pr[\big]{ \cU_t^{(\theta,i,k)}, x +  W_{\cU_t^{(\theta,i,k)}-t}^{d,(\theta,i,k)} } } \Bigr] \Biggr] +  \br[\Bigg]{ \SmallSum_{k=1}^{(\Msymb_j)^n} \frac{\1_\N(n) \, V_d\pr[]{ T,x + W_{T-t}^{d,(\theta,0,-k)} }}{(\Msymb_j)^n} }. \nonumber
\end{align}
Combining this and the fact that for all $d\in\N$, $t\in[0,T]$, $x\in\R^d$, $w\in\R$ it holds that $F_d(t,x,w) = \smallF(w)$ yields that for all $n\in\N_0$, $d,j\in\N$, $\theta \in\Theta$, $t\in[0,T]$, $x\in\R^d$ it holds that
\begin{align}\label{cor6}
\fV_{n,j}^{d,\theta}(t,x) 
& = \frac{\1_\N(n)}{(\Msymb_j)^n} \br[\Bigg]{ \sum_{k=1}^{(\Msymb_j)^n} V_d\pr[\big]{ T,x + W_{T-t}^{d,(\theta,0,-k)} } } \\
& \quad + \sum_{i=0}^{n-1} \frac{(T-t)}{(\Msymb_j)^{n-i}} \Biggl[ \sum_{k=1}^{(\Msymb_j)^{n-i}} \Bigl[ F_d\pr[\big]{\cU_t^{(\theta,i,k)}, x +  W_{\cU_t^{(\theta,i,k)}-t}^{d,(\theta,i,k)}, \fV_{i,j}^{d,(\theta,i,k)}\pr[\big]{\cU_t^{(\theta,i,k)}, x +  W_{\cU_t^{(\theta,i,k)}-t}^{d,(\theta,i,k)} } } \nonumber \\
& \quad - \1_\N(i) \, F_d\pr[\big]{\cU_t^{(\theta,i,k)}, x +  W_{\cU_t^{(\theta,i,k)}-t}^{d,(\theta,i,k)}, \fV_{i-1,j}^{d,(\theta,-i,k)} \pr[\big]{ \cU_t^{(\theta,i,k)}, x +  W_{\cU_t^{(\theta,i,k)}-t}^{d,(\theta,i,k)} } } \Bigr] \Biggr]. \nonumber
\end{align}
Furthermore, observe that \cref{lem:fn_prop} and the fact that for all $\littleMM \in \N$ it holds that $\funcM(\littleMM) = \funcMrep{\littleMM}$ 
imply that 
\begin{enumerate}[label=(\Alph *)]
\item\label{alph1}
it holds
for all $\fq \in (0,\infty)$
that $\limsup_{j\to\infty} \br[]{ \nicefrac{(\Msymb_j)^{m(\fq)/2}}{j} + \nicefrac{1}{\Msymb_j} } = \limsup_{j\to\infty} \br[]{ \nicefrac{(\funcM(j))^{m(\fq)/2}}{j} + \nicefrac{1}{\funcM(j)} } = 0$ and
\item\label{alph2}
it holds for all $j\in\N$ that $\Msymb_{j+1} = \funcM(j+1) \le 2 \funcM(j) = 2 \Msymb_j$.
\end{enumerate}
Combining \cref{alph1,alph2}, \cref{cor1}, \cref{cor2}, \cref{cor4}, \cref{cor5}, \cref{cor6},
and \cref{th:final} (applied with $\fa \with \kappa$, $\fb \with \fb$, $\alpha \with \kappa$, $\beta \with 0$, $p \with p$, $q \with q$, $\fB \with 2$, $\LipConstF \with \fD$, $T \with T$, 
$(\littleMM_j)_{j\in\N} \with (\Msymb_j)_{j\in\N}$, $\funcM \with \funcM$, $f_d \with F_d$, $g_d \with (\R^d \ni x \mapsto V_d(T,x) \in \R)$, $u_d \with V_d$, $(\Omega,\cF,\P) \with (\Omega,\cF,\P)$, $\fu^\theta \with \fu^\theta$, $\cU^\theta \with \cU^\theta$, $W^{d,\theta} \with W^{d,\theta}$, $U_{n,j}^{d,\theta} \with \fV_{n,j}^{d,\theta}$ in the notation of \cref{th:final}) hence guarantees that there exists 
$\fR \colon \N \times \R \to \N$
and
$c = (c_{\fq,\delta})_{(\fq,\delta)\in\R^2} \colon \R^2 \to \R$ such that for all $d\in\N$, $\varepsilon,\delta \in(0,\infty)$, $\fq \in [2,\infty)$
it holds that $\cost{\fR(d,\varepsilon)}{\fR(d,\varepsilon)}{d} \le c_{\fq,\delta} d^{\fb + (p+q)(2+\delta)}(\min\{1,\varepsilon\})^{-(2+\delta)}$ and
\begin{equation}\label{5_31}
\sup_{t\in[0,T]}\sup_{x\in[-\kappa,\kappa]^d}\pr[\big]{ \E\br[\big]{ \abs{ V_d(t,x) - \fV_{\fR(d,\varepsilon),\fR(d,\varepsilon)}^{d,0}(t,x) }^{\fq} } }^{\!\nicefrac{1}{\fq}} \le \varepsilon.
\end{equation}
This and H{\"o}lder's inequality 
prove that 
there exist 
$\fR \colon \N \times \R \to \N$
and
$c = (c_{\fq,\delta})_{(\fq,\delta)\in\R^2} \colon \R^2 \to \R$ such that for all $d\in\N$, $\varepsilon,\delta,\fq \in(0,\infty)$
it holds that
\begin{equation}\label{4_40a}
\cost{\fR(d,\varepsilon)}{\fR(d,\varepsilon)}{d} \le c_{\fq,\delta} d^{\fb + (p+q)(2+\delta)}(\min\{1,\varepsilon\})^{-(2+\delta)}
\end{equation}
and
\begin{equation}\label{4_40}
\begin{split}
& \sup_{t\in[0,T]}\sup_{x\in[-\kappa,\kappa]^d}\E\br[\big]{\abs{ V_d(t,x) - \fV_{\fR(d,\varepsilon),\fR(d,\varepsilon)}^{d,0}(t,x) }^{\fq} } \\
& \le \sup_{t\in[0,T]}\sup_{x\in[-\kappa,\kappa]^d} \pr[\big]{ \E\br[\big]{ \abs{ V_d(t,x) - \fV_{\fR(d,\varepsilon),\fR(d,\varepsilon)}^{d,0}(t,x) }^{\maxfn(\fq)} } }^{\!\nicefrac{\fq}{\maxfn(\fq)}} 
\le \varepsilon^\fq.
\end{split}
\end{equation}
Combining \cref{4_40a}, \cref{4_40}, and the fact that for all $n\in\N_0$, $d,j\in\N$, $\theta\in\Theta$, $t\in[0,T]$, $x\in\R^d$ it holds that $\smallU_d(t,x) = V_d(t,\nicefrac{x}{\sqrt{2}})$ and $\mlp_{n,j}^{d,\theta}(t,x) = \fV_{n,j}^{d,\theta}(t,\nicefrac{x}{\sqrt{2}})$ hence establishes \cref{cor_f_eq}.
The proof of \cref{cor:final} is thus complete.
\end{proof}


\section*{Acknowledgments}

The first author acknowledges funding by the Deutsche Forschungsgemeinschaft (DFG, German Research Foundation) through the research grant HU1889\slash 6-2.
The second author acknowledges funding by the Deutsche Forschungsgemeinschaft (DFG, German Research Foundation) under Germany's Excellence Strategy EXC 2044-390685587, Mathematics M{\"u}nster: Dynamics-Geometry-Structure.
The fourth author acknowledges funding by the National Science Foundation (NSF 1903450).

\bibliographystyle{acm}
\bibliography{bibfile}

\end{document}